\tikzstyle{n}=[circle, draw, fill, minimum size=6, inner sep=0]
\tikzstyle{ext}=[circle, draw,  minimum size=3, inner sep=0]
\tikzstyle{int}=[circle, draw, fill, minimum size=3, inner sep=0]
\tikzset{diagram/.style={matrix of math nodes, row sep=3em, column sep=2.5em, text height=1.5ex, text depth=0.25ex}}
\tikzset{diagram2/.style={matrix of math nodes, row sep=0.5em, column sep=0.5em, text height=1.5ex, text depth=0.25ex}}
\newtheorem{Thm}{Theorem}[section]
\newtheorem{Prop}[Thm]{Proposition}
\newtheorem{Lem}[Thm]{Lemma}
\newtheorem{Cor}[Thm]{Corollary}      
\newtheorem{Conj}[Thm]{Conjecture} 
\theoremstyle{remark}
\newtheorem{Rem}[Thm]{Remark}
\newcommand{\alg}[1]{\mathfrak{#1}}
\theoremstyle{definition}
\newtheorem{Def}[Thm]{Definition}
\newcommand{\K}{\mathbb C}
\newcommand{\Z}{\mathbb Z}
\newcommand{\Hom}{\mathrm{Hom}}
\newcommand{\dgra}{\mathrm{dgra}}
\newcommand{\dGra}{\mathsf{dGra}}
\newcommand{\Gra}{\mathsf{Gra}}
\newcommand{\Graphs}{\mathsf{Graphs}}
\newcommand{\KGra}{\mathsf{KGra}}
\newcommand{\kgra}{\mathrm{kgra}}
\newcommand{\Defo}{\mathsf{Def}}
\newcommand{\Dpoly}{D_{\rm{poly}}}
\newcommand{\OC}{\mathsf{OC}}
\newcommand{\dfGC}{\mathsf{dfGC}}
\newcommand{\fGC}{\mathsf{fGC}}
\newcommand{\fdGC}{\mathsf{dfGC}}
\newcommand{\ICG}{\mathsf{ICG}}
\newcommand{\GC}{\mathsf{GC}}
\newcommand{\SNbr}{\Gamma_{\bullet\!-\!\!\bullet}}
\newcommand{\grcup}{\Gamma_{\circ\!\phantom{-}\!\!\circ}}
\newcommand{\grHKR}{\Gamma_\mathrm{HKR}}
\newcommand{\tder}{\mathfrak{tder}}
\newcommand{\sder}{\mathfrak{sder}}
\newcommand{\TAut}{\mathsf{TAut}}
\newcommand{\SAut}{\mathsf{SAut}}
\newcommand{\grt}{\mathfrak{grt}}
\newcommand{\GRT}{\mathsf{GRT}}
\newcommand{\Lie}{\mathfrak{Lie}}
\newcommand{\Conf}{{\mathrm{Conf}}}
\newcommand{\p}{{\partial}}
\newcommand{\C}{{\rm C}}
\newcommand{\mU}{{\mathcal U}}
\newcommand{\op}[1]{{\mathsf{#1}}}
\newcommand{\psgn}[1]{{(-1)^{o(#1)}}}
\newcommand{\Exp}{{\mathrm{Exp}}}
\newcommand{\bbC}{{\mathbb{C}}}
\newcommand{\R}{{\mathbb{R}}}
\newcommand{\bbR}{{\mathbb{R}}}
\newcommand{\bbN}{{\mathbb{N}}}
\newcommand{\Li}{{\mathrm{Li}}}
\newcommand{\ba}{\begin{align*}}
\newcommand{\ea}{\end{align*}}
\newcommand{\be}[1]{\begin{equation}\label{#1}}
\newcommand{\ee}{\end{equation}}
\newcommand{\LaLie}{\Lambda\Lie}
\newcommand{\Reg}{{\mathrm{Reg}}}
\begin{document}

\title[P.~Etingof's conjecture about Drinfel{\cprime}d associators]{P.~Etingof's conjecture about Drinfel{\cprime}d associators}
\author{Carlo~A. Rossi}

\author{Thomas Willwacher}


\begin{abstract}
We construct a family of Drinfel{\cprime}d associators interpolating between the Knizhnik--Zamolodchikov associator, the Alekseev--Torossian associator and the anti-Knizhnik--Zamolodchikov associator.
We give explicit integral formul\ae\ for the family of elements of the Grothendieck-Teichm\"uller Lie algebra tangent to the family of associators. As an application, we settle a conjecture of Pavel Etingof about the Alekseev--Torossian associator.

Furthermore, we give explicit integral formul\ae\ for the family of stable formality morphisms corresponding (in a precise way) to the above family of associators, and for the family of graph cohomology classes corresponding to the above family of elements of the Grothendieck-Teichm\"uller Lie algebra. 
It follows in particular that the ``logarithmic'' Kontsevich formality morphism corresponds to the Knizhnik--Zamolodchikov associator.
\end{abstract}

\maketitle


\tableofcontents

\section{Introduction}\label{intro}
Drinfel{\cprime}d associators \cite{Dr} are algebraic objects which play a central {\em r\^ole} in many constructions in algebra. 
The set of Drinfel{\cprime}d associators is an infinite dimensional pro-algebraic variety, and a torsor for a pro-unipotent group, the Grothendieck-Teichm\"uller group $\GRT_1$.
Despite being widely studied, the set of Drinfel{\cprime}d associators and the group $\GRT_1$ are not yet fully understood. 
In particular, explicit constructions of elements are rare and all use inherently transcendental methods ({\em i.e.} integration). 
The following facts are however known:
\begin{enumerate}
\item[$i)$] there are explicit constructions of three Drinfel{\cprime}d associators, the Knizhnik--Zamolodchikov associator $\Phi_\mathrm{KZ}$, the anti-Knizhnik--Zamolodchikov associator $\Phi_{\overline{\mathrm{KZ}}}$ (see \cite{Dr}) and the Alekseev--Torossian associator $\Phi_\mathrm{AT}$ (see \cites{AT-1, SW}).
\item[$ii)$] Since the set of Drinfel{\cprime}d associators is a $\GRT_1$-torsor, there is a unique element $g\in \GRT_1$, whose action sends $\Phi_\mathrm{KZ}$ to $\Phi_{\overline{\mathrm{KZ}}}$.
\item[$iii)$] Because of the pro-unipotence of $\GRT_1$, there is a unique element $\psi$ in the Lie algebra $\grt_1$ of $\GRT_1$ obeying 
\[
g = \Exp(\psi),
\] 
where $\Exp(\bullet)$ denotes the exponential map from $\grt_1$ to $\GRT_1$.
\item[$iv)$] The Lie algebra $\grt_1$ is $\Z_{\geq 3}$-graded. 
\item[$v)$] Let $\{\sigma_3,\sigma_5,\dots\}$ be the list of the components of $\psi$ of odd degrees $3$, $5$ {\em etc.}: one can check that none of these elements vanishes.
\end{enumerate}
There is the following well-known and hard conjecture.
\begin{Conj}[Deligne-Drinfel{\cprime}d-Ihara conjecture]\label{conj:IDD}
The pro-nilpotent Lie algebra $\grt_1$ is isomorphic to the degree completion of the free Lie algebra $\Lie(\sigma_3,\sigma_5,\dots)$ generated by $\{\sigma_3,\sigma_5,\dots\}$.
\end{Conj}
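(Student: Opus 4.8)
As observed above, Conjecture~\ref{conj:IDD} is a celebrated and hard open problem, so the following is a plan of attack rather than an argument; its purpose is to locate the essential difficulty. The natural first move is to split the claimed isomorphism into two halves: one must show that $\sigma_3,\sigma_5,\dots$ (i) satisfy no relations beyond those forced in a free Lie algebra, and (ii) generate $\grt_1$. Since $\grt_1$ is a graded pro-nilpotent Lie algebra with finite-dimensional graded pieces, (ii) is equivalent to the dimension count $\dim(\grt_1)_n=\dim\Lie(\sigma_3,\sigma_5,\dots)_n$ for every $n$, the right-hand side being the explicit value given by a necklace/Witt count for one generator in each odd degree $\geq 3$.

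For (i) the plan is to pass to motives. One forms the Tannakian category $\mathcal{MT}(\Z)$ of mixed Tate motives over $\Z$; by Borel's computation of the $K$-theory of $\Z$ its motivic Galois Lie algebra, call it $\mathfrak{g}^{\mathrm{mot}}$, is free on one generator in each odd degree $3,5,7,\dots$. The motivic Galois action on the de Rham realisation of the motivic fundamental group of $\mathbb{P}^1\setminus\{0,1,\infty\}$ is compatible with the $\grt_1$-module structure on (motivic) multiple zeta values (the Ihara action), hence factors through a Lie algebra morphism $\mathfrak{g}^{\mathrm{mot}}\to\grt_1$; F.~Brown's theorem that this motivic fundamental group generates all of $\mathcal{MT}(\Z)$ forces the morphism to be injective, so $\mathfrak{g}^{\mathrm{mot}}$ is realised as a free Lie subalgebra of $\grt_1$. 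It then remains to check that its free generators coincide, up to rescaling and lower-degree corrections, with $\sigma_3,\sigma_5,\dots$, which one does by matching coefficients in low degrees, using that $\sigma_{2i+1}$ is detected by $\zeta(2i+1)$. This is the only place where the explicit integral formul\ae\ for $\psi$ obtained in the present paper would intervene.

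Part (ii) --- that $\sigma_3,\sigma_5,\dots$ actually generate $\grt_1$, equivalently that the inclusion $\mathfrak{g}^{\mathrm{mot}}\subseteq\grt_1$ above is surjective, equivalently the dimension count --- is the heart of the matter, and it is precisely here that every known method stalls; this is why the conjecture is open. One would need to bound $\dim(\grt_1)_n$ from above by the free-Lie-algebra value. The only relations among elements of $\grt_1$ currently under control are those coming from the pentagon and hexagon equations --- which, by Furusho's theorem, reduce to the pentagon alone --- and these are not known to cut the \emph{a priori} graded dimension down to the conjectural one, even though equality is confirmed by machine in low degrees. Establishing such a uniform upper bound, or equivalently a sufficiently strong ``no exotic periods'' transcendence statement for multiple zeta values, is the main obstacle, and it lies well outside the methods of this paper.
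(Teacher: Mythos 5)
You should note first that the paper offers no proof of Conjecture~\ref{conj:IDD} to compare against: it is stated there precisely as a well-known, hard open conjecture, and the only proved ingredient the paper uses is Brown's theorem \cite{Br-2} that $\Lie(\sigma_3,\sigma_5,\dots)\subset\grt_1$, i.e.\ the freeness half. Your proposal correctly reproduces this state of affairs: your part (i), via the Tannakian category $\mathcal{MT}(\Z)$, Borel's computation, and the faithfulness of the motivic Galois action on the fundamental group of $\mathbb{P}^1\setminus\{0,1,\infty\}$, is exactly the standard (Brown) route to the inclusion that the paper cites, and your part (ii) --- that the generation/dimension-count half is open and is not touched by any known upper bound on $\dim(\grt_1)_n$ --- is an accurate description of why the conjecture remains unproved. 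So as an honest strategic assessment rather than a proof, your write-up is consistent with the paper; no complete argument is possible here, and you do not pretend otherwise. Indeed, the paper's own results (Theorem~\ref{thm:etingof}) only ever invoke the already-proved inclusion direction.

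One small correction: you say the identification of the motivic generators with $\sigma_3,\sigma_5,\dots$ is where ``the explicit integral formul\ae\ for $\psi$ obtained in the present paper would intervene.'' The paper gives no formul\ae\ for $\psi$ or the $\sigma_{2j+1}$; on the contrary, it stresses that no explicit formul\ae\ for the $\sigma$'s are known. What it does provide are integral formul\ae\ for the different family $\tau_{2j+1}$ (equivalently the graph cocycles $x_{2j+1}$, cf.~\eqref{eq-ctgammasimpl}), tangent to the family $\Phi^t$; these are a priori not the $\sigma$'s, and relating the two families explicitly is itself nontrivial. So that sentence overstates what the present paper supplies for your step (i).
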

One half of this conjecture, namely that 
\[
\Lie(\sigma_3,\sigma_5,\dots)\subset \grt_1
\]
has recently been proved by F. Brown \cite{Br-2}.
\begin{Rem}
Observe that by constructing elements of $\grt_1$ using $\{\sigma_3,\sigma_5,\dots\}$ and exponentiating them, we obtain elements of $\GRT_1$, and by acting on $\Phi_\mathrm{KZ}$ we obtain many Drinfel{\cprime}d associators. 
In fact, Conjecture~\ref{conj:IDD} states that any Drinfel{\cprime}d associator may be obtained (uniquely) by this recipe.
\end{Rem}

Note that so far we have used only two of the above three explicitly known Drinfel{\cprime}d associators. 
One might consider the unique element $a$ of $\GRT_1$, whose action maps  $\Phi_\mathrm{KZ}$ to $\Phi_\mathrm{AT}$, and repeat the above analysis using $a$ in place of $g$. 
However, P.~Etingof's conjecture states that essentially ``nothing new'' is obtained in this way.
\begin{Conj}[P. Etingof's conjecture]\label{conj:etingof}
Let $a$, $g$ in $\GRT_1$ and $\{\sigma_3,\sigma_5,\dots\}$ in $\grt_1$ be as above.
\begin{itemize}
\item (Weak form) $\log(a)\in \Lie(\sigma_3,\sigma_5,\dots)\subset \grt_1$.
\item (Strong form) $g=a^2$.
\end{itemize}
\end{Conj}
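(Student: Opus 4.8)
\medskip
\noindent\textbf{Proof strategy.} We aim for the strong form $g=a^{2}$, from which the weak form will be deduced at the end. The plan is to exhibit $\Phi_{\mathrm{KZ}}$, $\Phi_{\mathrm{AT}}$ and $\Phi_{\overline{\mathrm{KZ}}}$ as three points of a single explicit one-parameter family of associators $\{\Phi_{s}\}_{s\in[-1,1]}$, with $\Phi_{1}=\Phi_{\mathrm{KZ}}$, $\Phi_{0}=\Phi_{\mathrm{AT}}$, $\Phi_{-1}=\Phi_{\overline{\mathrm{KZ}}}$, all lying (after the standard normalisation to $\mu=1$) in the $\GRT_{1}$-torsor of associators. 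Let $h_{s}\in\GRT_{1}$ be the unique element with $h_{s}\cdot\Phi_{\mathrm{AT}}=\Phi_{s}$, so $h_{0}=e$. Since $a\cdot\Phi_{\mathrm{KZ}}=\Phi_{\mathrm{AT}}$ and $\Phi_{\mathrm{KZ}}=h_{1}\cdot\Phi_{\mathrm{AT}}$, the torsor axioms give $a=h_{1}^{-1}$; since $g\cdot\Phi_{\mathrm{KZ}}=\Phi_{\overline{\mathrm{KZ}}}$, they give $g=h_{-1}h_{1}^{-1}$. Hence it suffices to prove $h_{-1}=h_{1}^{-1}$, for then $g=h_{1}^{-2}=(h_{1}^{-1})^{2}=a^{2}$.

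\smallskip
\noindent The family I would build by Kontsevich's integral formula from the one-parameter family of propagators
\[
\omega_{s}\;=\;\omega_{\mathrm{AT}}+\tfrac{s}{2\pi i}\,\d\log|z|,\qquad\omega_{\mathrm{AT}}=\tfrac1{2\pi}\,\d\arg(z),
\]
on the Fulton--MacPherson compactified configuration spaces (of points in the upper half-plane, resp. on a disk with two marked boundary points). The deformation term $\d\log|z|$ is \emph{exact}; moreover $\tfrac1{2\pi i}\d\log z=\omega_{\mathrm{AT}}+\tfrac1{2\pi i}\d\log|z|$, so $\omega_{1}=\tfrac1{2\pi i}\d\log z$ is the holomorphic (KZ) propagator, $\omega_{0}=\omega_{\mathrm{AT}}$, $\omega_{-1}=\tfrac1{-2\pi i}\d\log\bar z$ is anti-holomorphic, and $\overline{\omega_{s}}=\omega_{-s}$ for real $s$. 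One then checks: each $\Phi_{s}$ is an associator (pentagon and hexagon come from $\d\omega_{s}=0$ and the boundary behaviour of $\omega_{s}$, neither affected by adding an exact form; the singularity of $\log|z|$ along the diagonals is controlled by the tangential-base-point regularisation, as for $\Phi_{\mathrm{KZ}}$); comparing $\omega_{1}$ with the reduced Knizhnik--Zamolodchikov connection gives $\Phi_{1}=\Phi_{\mathrm{KZ}}$, while $\Phi_{0}=\Phi_{\mathrm{AT}}$ is the known description of the Alekseev--Torossian associator through the Kontsevich angle form; and $\overline{\omega_{s}}=\omega_{-s}$ gives $\overline{\Phi_{s}}=\Phi_{-s}$, whence $\Phi_{-1}=\overline{\Phi_{\mathrm{KZ}}}=\Phi_{\overline{\mathrm{KZ}}}$ and $\overline{h_{s}}=h_{-s}$. (The scalar $\mu(s)$ varies along the family, with $\overline{\mu(s)}=\mu(-s)$ and $\mu(1)=2\pi i$, so an $s$-dependent rescaling is needed to stay inside the $\GRT_{1}$-torsor; this causes no trouble.)

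\smallskip
\noindent The crux is to show that $s\mapsto h_{s}$ is a \emph{one-parameter subgroup}: $h_{s}=\Exp(s\,\psi_{0})$ for a single fixed $\psi_{0}\in\grt_{1}$, equivalently that the velocity $\dot\Phi_{s}$, transported to $\grt_{1}$ via the free transitive $\GRT_{1}$-action, is independent of $s$. The mechanism: $\dot\Phi_{s}$ is computed by replacing, one edge at a time, the propagator $\omega_{s}$ by the \emph{$s$-independent} exact form $\tfrac1{2\pi i}\d\log|z|$; integrating that edge by parts (Stokes) collapses its contribution onto the boundary faces of the compactified configuration spaces, which factorise into products of smaller configuration spaces, and re-summing these boundary terms identifies $\dot\Phi_{s}$ with the action on $\Phi_{s}$ of an explicit, $s$-independent element $\psi_{0}\in\grt_{1}$---this is exactly the content of the explicit integral formul\ae\ for the tangent elements (and for the corresponding graph-cohomology classes) announced in the introduction. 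Granting this, $h_{-1}=\Exp(-\psi_{0})=h_{1}^{-1}$, so $g=a^{2}$. (As a consistency check, $\overline{h_{s}}=h_{-s}$ forces $\overline{\psi_{0}}=-\psi_{0}$, in agreement with the fact that the explicit integrand for $\psi_{0}$ is negated by complex conjugation of its coefficients.) Finally the weak form follows: $\log a=-\psi_{0}$, and the explicit formula for $\psi_{0}$, together with Brown's theorem \cite{Br-2} (which gives $\Lie(\sigma_{3},\sigma_{5},\dots)\subset\grt_{1}$), shows $\psi_{0}\in\Lie(\sigma_{3},\sigma_{5},\dots)$, so $\log a\in\Lie(\sigma_{3},\sigma_{5},\dots)$.

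\smallskip
\noindent The main obstacle is the one-parameter-subgroup statement, i.e. the $s$-independence of the transported velocity. Making the Stokes argument rigorous with the \emph{singular} potential $\log|z|$ is delicate: one must regularise near the diagonals, check that the $\log\varepsilon$ divergences cancel among boundary faces (or are pinned down by the tangential base points), verify that faces along which three or more points collide contribute nothing anomalous, and re-sum the surviving boundary terms into the clean statement that $\dot\Phi_{s}$ is the action of an $s$-independent element. A secondary, more routine, point is to verify that the rescaled family lies in a single $\GRT_{1}$-torsor and that its endpoints match the standard normalisations of $\Phi_{\mathrm{KZ}}$, $\Phi_{\mathrm{AT}}$ and $\Phi_{\overline{\mathrm{KZ}}}$.
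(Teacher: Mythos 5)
There is a genuine gap, and it is fatal: the strong form you set out to prove is in fact \emph{false}, and the precise point where your argument breaks is the ``one-parameter-subgroup'' claim that you yourself identify as the crux. When one carries out the Stokes/boundary analysis for the family of propagators $\omega_s=\omega_{\mathrm{AT}}+\tfrac{s}{2\pi i}\,\d\log|z|$ rigorously, the transported velocity is \emph{not} independent of the parameter. The reason is visible already at the level of the weight integrals: only the fiberwise $(n,n)$-components of the integrands survive, and the holomorphic and anti-holomorphic parts of $\omega_s$ are rescaled by different factors of the parameter; consequently a graph contribution of degree $2j+1$ is rescaled by $(t(1-t))^{2j}$ (in the parametrization $s=1-2t$), and the tangent element is $\tau^t=\sum_{j\ge 1}(t(1-t))^{2j}\tau_{2j+1}$. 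This is symmetric under $t\mapsto 1-t$ (your relation $\overline{h_s}=h_{-s}$ is correct and reflects exactly this symmetry), but it is \emph{not constant}, and symmetry of the integrand does not make the two path-ordered exponentials over $[0,\tfrac12]$ and $[\tfrac12,1]$ inverse (or equal) to one another, because the elements $\tau^t$ at different times do not commute: reversing the time variable reverses the ordering. Concretely, writing $a=\mathcal P\exp\bigl(\int_0^{1/2}\tau^s\,ds\bigr)$ and $b=\mathcal P\exp\bigl(\int_{1/2}^{1}\tau^s\,ds\bigr)$, so that $g=b\,a$, the coefficient of the word $\tau_3\tau_5$ (equivalently $\sigma_3'\sigma_5'$ after normalization, using Brown's freeness result) is
\[
c_a=\tfrac12\int_0^{1/2}[s_1(1-s_1)]^4\Bigl(\int_0^{s_1}[s_2(1-s_2)]^2\,ds_2\Bigr)ds_1=\tfrac{1199}{309657600},
\qquad
c_b=\tfrac{283}{103219200},
\]
and $c_a\neq c_b$, so $a\neq b$ and $g\neq a^2$. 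Thus no amount of care with the $\log\varepsilon$ regularization can rescue the step ``$h_s=\Exp(s\,\psi_0)$''; the family of associators exists and interpolates exactly as you describe, but it is not the orbit of a one-parameter subgroup of $\GRT_1$.

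What does survive of your proposal is the weak form, but it must be derived from the $t$-dependent velocity rather than from a single $\psi_0$: since $\partial_t\Phi^t=\tau^t\cdot\Phi^t$ with $\tau^t$ a combination of fixed odd-degree elements $\tau_3,\tau_5,\dots\in\grt_1$, one gets $\log a\in\Lie(\tau_3,\tau_5,\dots)$ (completed), and comparing $\log g=\log\mathcal P\exp\bigl(\int_0^1\tau^s\,ds\bigr)$ with $\psi$ degree by degree identifies the $\tau_{2j+1}$, modulo brackets of lower ones, with multiples of the $\sigma_{2j+1}$, whence $\log a\in\Lie(\sigma_3,\sigma_5,\dots)$. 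So the correct resolution of the conjecture is: weak form true, strong form false --- the opposite of what your argument was designed to show for the strong half.
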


The first main result of this paper is the following Theorem, which in particular settles Conjecture \ref{conj:etingof}.

\begin{Thm}\label{thm:etingof}
There is a family of Drinfel{\cprime}d associators $\Phi^t$ over $\mathbb R$ and elements $\{\tau_3,\tau_5,\dots\}$ of $\grt_1$ of odd degrees, such that
\begin{align*}
\Phi^0 &=\Phi_\mathrm{KZ} &\Phi^{\frac 1 2} &=\Phi_\mathrm{AT}  & \Phi^1 &=\Phi_{\overline{\mathrm{KZ}}}
\end{align*}
and 
\begin{equation}
\label{equ:Phithor}
\p_t \Phi^t =\tau^t \cdot \Phi^t
\end{equation}
where 
\be{equ:tautdef}
\tau^t := \sum_{j=1}^\infty (t(1-t))^{2j} \tau_{2j+1}\in\grt_1.
\ee
In particular, it follows that the weak form of P. Etingof's Conjecture \ref{conj:etingof} is correct, while the strong form is incorrect.  
\end{Thm}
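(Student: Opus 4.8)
The plan is to construct the interpolating family $\Phi^t$ not directly by manipulating associators, but by exploiting the well-known equivalence between Drinfel'd associators and homotopy classes of stable formality morphisms (or, equivalently, between $\grt_1$ and the degree-zero cohomology of the Kontsevich graph complex $\GC$). On the graph-complex side one has a great deal of flexibility: a one-parameter family of integral weights (propagators) depending on $t$ can be written down explicitly. Concretely, I would first recall Kontsevich's configuration-space integrals on the upper half-plane, but replace the standard angle-form propagator by a $t$-dependent convex combination of the holomorphic and anti-holomorphic Knizhnik--Zamolodchikov forms — roughly $\omega^t = (1-t)\,\omega_{\mathrm{hol}} + t\,\omega_{\overline{\mathrm{hol}}}$, or more precisely a propagator on the Kontsevich eye whose boundary behaviour interpolates between the two. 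The values $t=0,\,\tfrac12,\,1$ should be arranged so that $\omega^{0}$, $\omega^{1/2}$, $\omega^{1}$ reproduce the propagators known to yield $\Phi_\mathrm{KZ}$ (the ``logarithmic'' formality morphism), $\Phi_\mathrm{AT}$ (the propagator of Alekseev--Torossian / \'Severa--Willwacher), and $\Phi_{\overline{\mathrm{KZ}}}$ respectively.

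Next I would differentiate the resulting weight integrals in $t$. Stokes' theorem on the compactified configuration spaces, applied to the primitive $\p_t\omega^t$, produces a sum of boundary contributions; the bulk terms assemble into the action of a cocycle in $\GC$ (equivalently an element of $\grt_1$) on the formality morphism, and this cocycle is exactly $\tau^t$. The combinatorial identity \eqref{equ:Phithor} then follows by transporting this graph-complex statement across the equivalence with associators, using the compatibility of the equivalence with the $\grt_1$-action (the $\GRT_1$-torsor structure). The odd-degree elements $\tau_{2j+1}$ arise as the homogeneous components, in the loop-order / valence grading, of the $t$-derivative of the propagator; the appearance of the prefactor $(t(1-t))^{2j}$ is forced by the symmetry $t \leftrightarrow 1-t$ exchanging holomorphic and anti-holomorphic data (which sends $\Phi^t$ to a conjugate/opposite associator) together with the degree count, so that only even powers of $t(1-t)$ survive and the degree-$2j+1$ piece carries exactly the power $2j$.

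Granting \eqref{equ:Phithor}, the conjecture follows formally. Integrating $\p_t\Phi^t = \tau^t\cdot\Phi^t$ from $0$ to $1$ and using Brown's theorem that $\Lie(\sigma_3,\sigma_5,\dots)\subset\grt_1$ is a sub-Lie-algebra which is moreover a \emph{free} Lie algebra on those generators, I would note that each $\tau_{2j+1}$ lies in $\grt_1$ in odd degree; the point is then that the group element $g = \Exp(\psi)$ steering $\Phi_\mathrm{KZ}$ to $\Phi_{\overline{\mathrm{KZ}}}$ is the time-$1$ flow of the time-dependent vector field $\tau^t$, and likewise $a$ is the time-$\tfrac12$ flow. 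Because $\tau^t$ is an even function of $t-\tfrac12$, the flow from $0$ to $1$ and the flow from $0$ to $\tfrac12$ are related, and the generators of the resulting subgroup are (Lie-)generated by the $\tau_{2j+1}$; combined with the fact that $\psi$'s odd components $\sigma_{2j+1}$ non-trivially involve the $\tau$'s (one checks the leading term $\tau_3 = \sigma_3$ up to scalar, matching $\grt_1$ in degree $3$ being one-dimensional), this puts $\log a$ inside $\Lie(\sigma_3,\sigma_5,\dots)$, giving the weak form. For the refutation of the strong form, I would compare $\Exp(\psi)$ against $a^2 = \Exp(\log a)^2$: if $g=a^2$ held, then by the BCH formula and the $t\leftrightarrow 1-t$ symmetry one would need $\psi = 2\log(a) + (\text{brackets})$ with the bracket corrections vanishing, which forces a rigid relation among the $\tau_{2j+1}$ that fails already at the first non-linear order — a finite, explicitly checkable computation in low weight (degree $7$ or $9$ of $\grt_1$), where the relevant graph cohomology is known and the integral $\tau_5$, $\tau_7$ can be evaluated (or their non-vanishing/independence established) to exhibit the discrepancy.

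The main obstacle I anticipate is the analytic heart of the first two paragraphs: writing down a propagator $\omega^t$ that (a) is a genuine smooth form on the Kontsevich eye with the correct boundary normalization for \emph{every} $t$, (b) specializes correctly at $t=0,\tfrac12,1$, and (c) has a $t$-derivative tame enough that the Stokes/convergence argument goes through and the boundary terms organize into a single well-defined cocycle $\tau^t \in \grt_1$ rather than into the larger graph complex with external legs. Controlling the vanishing of the ``unwanted'' boundary strata (those that would spoil the associator axioms or introduce non-$\grt_1$ terms) is exactly the kind of delicate configuration-space-integral bookkeeping that is technically heavy; everything after \eqref{equ:Phithor} is essentially formal Lie theory plus one low-degree numerical check.
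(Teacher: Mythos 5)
Your overall toolkit (a $t$-interpolating propagator, differentiation under the integral, Stokes on compactified configuration spaces producing a $\grt_1$-direction, then the formal endgame: weak form by a triangularity argument between the $\sigma$'s and the $\tau$'s, strong form refuted by an explicit low-degree coefficient comparison using Brown's freeness) is in the spirit of the paper. But the central step of your plan for Theorem \ref{thm:etingof} itself has a genuine gap: you propose to obtain \eqref{equ:Phithor} and the endpoint identifications by ``transporting'' the graph-complex family $\mU^t$, $x^t$ across the equivalence between homotopy classes of stable formality morphisms and Drinfel{\cprime}d associators. That equivalence is normalized only at $t=\tfrac12$ (Kontsevich's morphism $\leftrightarrow\Phi_\mathrm{AT}$); it does not tell you which associator the logarithmic-propagator morphism $\mU^0$ corresponds to. Indeed, the statement that $\mU^0$ corresponds to $\Phi_\mathrm{KZ}$ is part $iii)$ of Theorem \ref{thm:stablefamily}, and in the paper it is \emph{deduced from} Theorem \ref{thm:etingof}, not used to prove it; so your route is circular at the $t=0,1$ endpoints. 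Moreover, transporting an infinitesimal graph-complex action only yields an equation for homotopy classes and for the cohomology class of $x^t$ under $H^0(\GC)\cong\grt_1$; to get the literal ODE \eqref{equ:Phithor} for an actual family of group-like elements $\Phi^t\in\mathsf T_3$, with $\tau^t$ given by explicit integral formul\ae, one needs an associator-side construction.

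The paper supplies this by a mechanism your proposal does not contain: it builds a family of flat $\sder_k$-valued connections $\nabla^t_k=d-\omega^t_{\mathrm{AT},k}$ on $\underline C_k$ from fiberwise configuration-space integrals with the propagator $\theta^t$, shows that $\nabla^0_k$, $\nabla^{1/2}_k$, $\nabla^1_k$ are \emph{literally} the KZ, AT and anti-KZ connections (the coefficients of all graphs with internal vertices vanish at $t=0,1$ by holomorphic type counting), proves the infinitesimal gauge equivalence $\p_t\nabla^t_k=\nabla^t_k a^t_k$ via the regularized Stokes theorem, and then defines $\Phi^t$ as a \emph{regularized} parallel transport, with the $\log\varepsilon$ divergences controlled by the boundary asymptotics of $a^t_3$ (Propositions \ref{p-asingular} and \ref{prop:fastdecay}); the element $\tau^t$ arises as the constant term $\nu^t_2$ of $a^t_2$ and is matched with $\phi(x^t)$ only through the explicit map $\phi:\GC_\mathrm{cl}\to\grt_1$ and Proposition \ref{p-nuingrt}. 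Without some substitute for this associator-side analysis (the flat connections, their gauge transformation, and the regularization of the holonomy), your plan yields neither $\Phi^0=\Phi_\mathrm{KZ}$ and $\Phi^1=\Phi_{\overline{\mathrm{KZ}}}$ nor the identity \eqref{equ:Phithor} as an identity of concrete associators; your own closing paragraph correctly identifies this as the hard part, but it is precisely the part that remains unproved in the proposal.
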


Furthermore, the Drinfel{\cprime}d associators $\Phi^t$ and the elements $\{\tau_3,\tau_5,\dots\}$ are given by explicit integral expressions.
This may be helpful, as no explicit formul\ae\ for the elements $\{\sigma_3,\sigma_5,\dots\}$ are known.

\begin{Rem}
Note that equations \eqref{equ:Phithor} and \eqref{equ:tautdef} may be used to compute all coefficients occurring in the Alekseev-Torossian associator combinatorially from the known formula for $\Phi_\mathrm{KZ}$ in terms of multiple zeta values \cite{Le-Mu}.
In particular it follows that all coefficients occurring in $\Phi_\mathrm{AT}$ are rational polynomials in multiple zeta values and $\frac 1 {\pi i}$.
\end{Rem}

The second main result of this paper is a theorem similar to Theorem~\ref{thm:etingof}, but in the realm of deformation quantization. 
In deformation quantization, there are similar algebraic structures, which resemble $\GRT_1$, $\grt_1$ and the torsor of Drinfel{\cprime}d associators. 
Concretely, the analogue of the torsor of Drinfel{\cprime}d associators is the set of~\emph{stable formality morphisms} introduced in \cite{Dol}. 
They are acted upon by closed elements of degree $0$ of M.~Kontsevich's graph complex $\GC$, which is a pro-nilpotent differential graded (dg for short) Lie algebra. 
The definitions of these objects will be recalled in more detail in Section~\ref{s-2} below. 
In fact, V. Dolgushev has proven the following Theorem.
\begin{Thm}[V. Dolgushev~\cite{Dol}]
\label{thm:dlgstable}
The exponential group of the $0$-th graph cohomology $\Exp(H^0(\GC))$ acts freely and transitively on the set of homotopy classes of stable formality morphisms.
\end{Thm}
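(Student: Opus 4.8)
The plan is to show that the set of homotopy classes of stable formality morphisms is a torsor over $\Exp(H^0(\GC))$ by the obstruction-theoretic argument standard for deformation functors; the complete proof is in \cite{Dol}, and I only outline its structure here.

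\emph{Encoding by Maurer--Cartan elements.} First I would realise stable formality morphisms --- which are, informally, universal $L_\infty$-morphisms from $T_\poly$ to $\Dpoly$ assembled out of graphs, in the spirit of Kontsevich's construction --- as Maurer--Cartan elements of a pro-nilpotent differential graded Lie algebra $\mg$, namely the associated convolution (``deformation'') complex built from a cofibrant resolution of the relevant operad and the receiving graph operad. Two Maurer--Cartan elements define homotopic stable formality morphisms precisely when they are gauge equivalent, so the set in question is $\mathrm{MC}(\mg)/{\sim}$; Kontsevich's formality morphism supplies a base point, so it is non-empty. I would then record that Kontsevich's graph complex $\GC$ acts on $\mg$ by differential graded Lie algebra derivations preserving the Maurer--Cartan equation and gauge equivalence, so that, $\GC$ being pro-nilpotent, exponentiation yields the asserted action of $\Exp(H^0(\GC))$ on $\mathrm{MC}(\mg)/{\sim}$.

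\emph{The key homological input.} The heart of the argument is the computation that, for every Maurer--Cartan element $\mu$ representing a stable formality morphism, the chain map $\GC \to \mg^\mu$ into the $\mu$-twisted complex induced by the $\GC$-action on $\mu$ induces an isomorphism from $H^0(\GC)$ onto the cohomology of $\mg^\mu$ in the degree where Maurer--Cartan elements live. I would prove this by equipping $\mg$ with an exhaustive, complete and bounded-below filtration (for instance by the number of internal vertices of graphs, refined by loop order and arity) and analysing the associated graded: the internal part of the differential should annihilate all graphs carrying superfluous structure, and what survives is identified with $\GC$ by the known acyclicity results for the polydifferential and ``hairy'' pieces. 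This step is where the real work lies: it requires a judicious choice of filtration together with several acyclicity lemmas for the associated graded pieces, and it ultimately rests on the identification of $H^0(\GC)$ with $\grt_1$.

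\emph{Transitivity and freeness.} Granting the key input, transitivity follows by a convergent induction along the filtration. Given two stable formality morphisms $F_0, F_1$, inspect their difference in the lowest filtration degree in which they disagree: its leading term is a cocycle in $H^\bullet(\mathrm{gr}\,\mg^{F_0})$ in the Maurer--Cartan degree, which by the key input is either a coboundary --- then a gauge transformation removes it --- or the image of an element of $H^0(\GC)$ --- then acting by the corresponding element of $\Exp(H^0(\GC))$ removes it. Replacing $F_0$ by the modified morphism strictly raises the filtration degree of the discrepancy; iterating and invoking pro-nilpotence for convergence produces an element of $\Exp(H^0(\GC))$ carrying $F_0$ to a morphism homotopic to $F_1$. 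For freeness, suppose $\Exp(g)\cdot F$ is homotopic to $F$ for some $g \in H^0(\GC)$ and some stable formality morphism $F$; running the same analysis on $F$ and $\Exp(g)\cdot F$, the leading filtration term of their difference is the image of $g$ under $\GC \to \mathrm{gr}\,\mg^F$, which by the key input represents a non-zero cohomology class unless $g = 0$, whereas homotopy equivalence forces it to be a coboundary. Hence $g = 0$, so the action is free; combined with transitivity this proves the theorem. I expect the isolated main obstacle to be the homological computation of the second step.
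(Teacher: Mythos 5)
This theorem is not proved in the paper at all: it is quoted from Dolgushev \cite{Dol}, and the only argument the paper itself supplies is the reduction explained in Subsection~\ref{ss-2-3}. Namely, Dolgushev proves free transitivity for the action of $\Exp(H^0(\fdGC))$, where $\fdGC$ is the full directed graph complex whose closed degree-zero part genuinely acts on $T_\mathrm{poly}$ by $L_\infty$-derivations and hence on stable formality morphisms; the stated form with the much smaller complex $\GC$ then follows from $H^0(\GC)=H^0(\fdGC)$ (Kontsevich and \cite{Will}). Your outline of Dolgushev's argument --- encoding stable formality morphisms as Maurer--Cartan elements of the convolution dg Lie algebra $\mathrm{Conv}(\mathfrak{oc}^\vee,\KGra)$, letting graph cocycles act, and running a filtration/obstruction induction hinged on a comparison of the graph complex with the twisted deformation complex --- is structurally faithful to \cite{Dol} and consistent with how the present paper frames and uses the result, so as a citation-plus-sketch it is acceptable.

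Two corrections are needed, however. First, your assertion that the key homological computation ``ultimately rests on the identification of $H^0(\GC)$ with $\grt_1$'' is incorrect: that identification is Willwacher's theorem \cite{Will}, it is logically independent of Dolgushev's torsor statement, and it enters the present paper only afterwards, when graph cohomology classes are translated into elements of $\grt_1$; Dolgushev's proof needs the quasi-isomorphism between the graph complex and the relevant twisted deformation complex, not any identification of what $H^0$ actually is. Second, your sketch has $\GC$ acting directly, whereas what acts naturally is the closed degree-zero part of the full directed complex $\fdGC$; passing from $\fdGC$ to the connected, $\geq 3$-valent, undirected subcomplex $\GC$ requires precisely the statement $H^0(\GC)=H^0(\fdGC)$ that the paper invokes, and omitting this leaves a small but real step of the statement, as phrased, unaccounted for.
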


In fact, $H^0(\GC)\cong \grt_1$, as shown in \cite{Will}, whence $\Exp(H^0(\GC))\cong \GRT_1$. 
Furthermore, the torsor of Drinfel{\cprime}d associators may be identified with the torsor formed by homotopy classes of stable formality morphisms. More concretely, the ``correct'' version of this identification is the unique one identifying the homotopy class of M.~Kontsevich's formality isomorphism with $\Phi_\mathrm{AT}$, and such that it is equivariant with respect to the action of $H^0(\GC)\cong \grt_1$.

Our second main result is the following.

\begin{Thm}\label{thm:stablefamily}
There is a family of stable formality morphisms $\mU^t$ over $\mathbb R$ and cocycles $\{x_3,x_5,\dots\}$ of degree $0$ in $\GC$, such that:
\begin{itemize}
\item[$i)$] The homotopy class of stable formality morphisms of  $\mU^t$  corresponds to the Drinfel{\cprime}d associator $\Phi^t$ from Theorem \ref{thm:etingof} for all $t$.

\item[$ii)$] The graph cohomology class represented by $x_{2j+1}$ corresponds to  $\tau_{2j+1}$ in $\grt_1$ (see Theorem~\ref{thm:etingof}) under the identification $\grt_1\cong H^0(\GC)$ for all $j=1,2,\dots$.

\item[$iii)$] $\mU^{\frac 1 2}$, resp.\ $\mU^0$, is the stable formality morphism constructed by means of the standard angular propagator, see~\cite{K}, resp.\ the logarithmic propagator, see \cite{ARTW}. 
In particular, this means that the homotopy class of the latter stable formality morphism corresponds to the Knizhnik--Zamolodchikov associator $\Phi_\mathrm{KZ}$.

\item[$iv)$] $x_{2j+1}$ is divergence-free for all $j\geq 1$, {\em i.e.} it commutes with the graph $\Gamma_{\lcirclearrowdown}$ with one vertex and one edge, see~\cite{Will}*{Section 2}.
\end{itemize}

\end{Thm}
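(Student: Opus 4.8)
The plan is to mirror, on the deformation-quantization side, the construction that produces the family $\Phi^t$ and the elements $\tau_{2j+1}$ on the associator side, using the dictionary between configuration-space integrals on the upper half-plane and on the two-dimensional disk. The starting point is a \emph{one-parameter family of propagators} on the Kontsevich eye (the Fulton-MacPherson compactification of two points in the upper half-plane), interpolating between the logarithmic propagator of \cite{ARTW} at $t=0$, Kontsevich's standard angular propagator at $t=\tfrac12$, and the ``anti-logarithmic'' propagator at $t=1$; concretely one takes a convex-type combination $\omega^t$ of the logarithmic and anti-logarithmic $1$-forms that still restricts correctly to the boundary strata (vanishing on the component where the first point approaches the real line, etc.), so that the Kontsevich-type integral weights $w_\Gamma^t$ built from $\omega^t$ satisfy the $L_\infty$ relations. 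Feeding these weights into the usual formula defines, for each $t$, a morphism $\mU^t\colon T_\poly \to D_\poly$ of $L_\infty$-algebras; the boundary conditions and a Stokes-theorem argument à la \cite{K} show it is a stable formality morphism in the sense of \cite{Dol}. Differentiating in $t$ produces a homotopy whose ``generator'' is a degree-$0$ element $\xi^t$ of $\GC$, and the first key computation is to identify $\partial_t \mU^t$ with the action of $\xi^t$, i.e.\ to establish the graph-complex analogue of \eqref{equ:Phithor}.

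The second step is to pin down the $t$-dependence of $\xi^t$. Here I would use exactly the symmetry that forces \eqref{equ:tautdef}: the propagator family $\omega^t$ is invariant under the simultaneous exchange $t\leftrightarrow 1-t$ together with the orientation-reversing involution of the eye, and the weights of all graphs with an odd number of edges that could contribute are killed by a parity argument (as in \cite{Will} for the vanishing of even-degree $\grt$-classes). This forces $\xi^t$ to be a power series in $t(1-t)$ with only even powers surviving, $\xi^t=\sum_{j\ge1}(t(1-t))^{2j} x_{2j+1}$ with each $x_{2j+1}$ a degree-$0$ cocycle in $\GC$; the fact that precisely the even powers of $t(1-t)$ occur is inherited from the matching statement for $\tau^t$ via the comparison map below. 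This simultaneously gives the cocycles of (ii) and the vanishing of the odd $t(1-t)$-powers.

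The third step is the transfer to the associator side. I would invoke the equivalence recalled after Theorem \ref{thm:dlgstable}: $H^0(\GC)\cong\grt_1$, $\Exp(H^0(\GC))\cong\GRT_1$, and the $\grt_1$-equivariant bijection between homotopy classes of stable formality morphisms and Drinfel\cprime d associators normalized so that Kontsevich's morphism $\mU^{1/2}$ corresponds to $\Phi_\mathrm{AT}$. Under this bijection the family $\mU^t$ maps to \emph{some} family of associators satisfying $\partial_t(\cdot)=\overline{\xi^t}\cdot(\cdot)$ with $\overline{\xi^t}\in\grt_1$ the image of $\xi^t$; since it passes through $\Phi_\mathrm{AT}$ at $t=\tfrac12$ and the differential equation together with the endpoint determines the solution, this family must coincide with $\Phi^t$ of Theorem \ref{thm:etingof}, whence $\overline{\xi^t}=\tau^t$ and $\overline{x_{2j+1}}=\tau_{2j+1}$. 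That the endpoint $\mU^0$ then corresponds to $\Phi^0=\Phi_\mathrm{KZ}$ (statement (iii)) is now automatic. Finally, for (iv), divergence-freeness of $x_{2j+1}$ amounts to the vanishing of the bracket with $\Gamma_{\lcirclearrowdown}$; this follows because the propagator $\omega^t$, like the logarithmic and standard ones, vanishes when its two arguments collide at an interior point, so tadpole-type contributions to $\partial_t\mU^t$ vanish, and hence $\xi^t$ — and each of its components $x_{2j+1}$ — lies in the divergence-free subcomplex, exactly as in \cite{Will}*{Section 2}.

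The main obstacle I anticipate is the \emph{first} step: verifying that the interpolating propagator $\omega^t$ has the right boundary behavior on \emph{all} codimension-one strata of the eye so that (a) the weights $w_\Gamma^t$ are well-defined (convergence near the strata) and (b) the Stokes/$L_\infty$ argument goes through unchanged for every $t$, and then carefully computing $\partial_t\mU^t$ as a graph-complex action rather than merely an $L_\infty$ homotopy. The parity/symmetry bookkeeping for the eye (orientations of graphs, the sign of the involution on each configuration space) is delicate and is where the precise form \eqref{equ:tautdef} has to be earned rather than asserted; I would handle it by reducing, via the comparison map of step three, to the already-established symmetry of $\tau^t$ so that the graph side only needs the weaker statement ``$\xi^t$ is even in $t(1-t)$.''
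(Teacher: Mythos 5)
Your overall architecture (interpolating propagator, sum-over-graphs morphisms $\mU^t$, differentiating in $t$ to get a degree-$0$ graph cochain, then transferring along the torsor identification) is the paper's architecture, but two of your steps contain genuine gaps. The most serious one is in your step three: you argue that the family of associators corresponding to $\mU^t$ satisfies $\partial_t(\cdot)=\overline{\xi^t}\cdot(\cdot)$, passes through $\Phi_\mathrm{AT}$ at $t=\tfrac12$, and ``since the differential equation together with the endpoint determines the solution'' must equal $\Phi^t$, \emph{whence} $\overline{\xi^t}=\tau^t$. This is circular: the ODE governing your family involves $\overline{\xi^t}$, the ODE \eqref{equ:Phithor} governing $\Phi^t$ involves $\tau^t$, and uniqueness of solutions only applies once the two ODEs are known to coincide, i.e.\ once $\overline{\xi^t}=\tau^t$ is already established. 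That identification is precisely the hard content here (Proposition~\ref{p-nuingrt} in the paper): one needs an explicit chain-level map $\GC\to\sder_2$ (the simplification in Lemma~\ref{l-altphi} of the map of \cite{Will}), the vanishing lemmas showing only internally trivalent, $1$-vertex-irreducible graphs contribute, and then a direct comparison of the resulting integral weights \eqref{eq-ctgammasimpl} with the constant term $\nu^t_2$ of the gauge transformation $a^t_2$ of the family of AT-type connections (Corollary~\ref{c-a2}), which is what ties $x^t$ to the $\tau^t$ appearing in Theorem~\ref{thm:etingof}. Relatedly, inheriting the form $\sum_j (t(1-t))^{2j}x_{2j+1}$ ``from $\tau^t$'' only controls cohomology classes; to get honest $t$-independent cocycles $x_{2j+1}$ one needs the cochain-level scaling $\tilde\beta^t_\Gamma=(4t(1-t))^{|V(\Gamma)|-2}\tilde\beta^{1/2}_\Gamma$ (Proposition~\ref{prop:tildeomega}), which comes from counting holomorphic versus antiholomorphic form factors on the complex fiber, not from a $t\leftrightarrow 1-t$ symmetry alone.

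The second gap is your argument for item $iv)$: divergence-freeness means $[x_{2j+1},\Gamma_{\lcirclearrowdown}]=0$, which is a nontrivial linear condition on the weights, namely $\sum_{e\in E(\Gamma)}(-1)^{e-1}c^t_{\Gamma\smallsetminus\{e\}}=0$ for every graph $\Gamma$; it is \emph{not} the statement that no tadpole graphs occur, so the vanishing of the propagator on the diagonal does not prove it. The paper obtains it from the pointwise identity $\sum_{\hat e}(-1)^{\hat e-1}\tilde\beta^t_{\Gamma\smallsetminus\{\hat e\}}=0$, which follows by antisymmetry of the double sum in \eqref{equ:tildebetadef}. Finally, the obstacle you correctly flag in step one — convergence of the weights and the validity of the quadratic ($L_\infty$) relations for the singular propagators $\omega^t$, $t\neq\tfrac12$ — is resolved in the paper by the regularized Stokes' Theorem~\ref{thm:regstokes} of \cite{ARTW} together with the factorization of the regularized boundary terms (Theorem~\ref{thm:factoring}), which shows the type~I boundary contributions are $(4t(1-t))$-power rescalings of the $t=\tfrac12$ ones, so Kontsevich's vanishing lemma still applies; your proposal identifies the difficulty but does not supply this mechanism.
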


Again, $\mU^t$ and $x_{2j+1}$ are given by explicit integral expressions. 
In particular, the formul\ae\ for the $x_{2j+1}$ (see~\eqref{eq-ctgammasimpl}) are the first explicit expressions (at least, to our knowledge) for graph cocycles representing the graph cohomology classes corresponding to the (conjectural) generators of $\grt_1$.

\begin{Rem}
One may also check that the stable formality morphisms $\mU^t$ can be globalized to formality morphisms for arbitrary smooth manifolds $M$. In other words they satisfy M. Kontsevich's conditions P1)-P5) \cite{K} sufficient for globalization. 
\end{Rem}

\subsection*{Structure of the paper}
The paper is roughly divided into two largely independent parts. 
The first part (Sections~\ref{s-5} and~\ref{s-6}) is devoted to the technical aspects of the proof of Theorem~\ref{thm:stablefamily}, and the second part (Sections~\ref{s-7} and~\ref{s-8}) is devoted to those of to the proof of Theorem~\ref{thm:etingof}.
Each part starts with a recollection of some preliminaries (Section~\ref{s-2} and Subsections~\ref{ss-7-1},~\ref{ss-7-2}). 
Finally, Theorem~\ref{thm:stablefamily} is proven in Subsection~\ref{ss-9-2}, while Subsection~\ref{ss-9-1} contains the proof of Theorem~\ref{thm:etingof}.

\subsection*{Acknowledgements}
We are grateful to P.~Etingof for sharing his insights about the relationship between the KZ-, anti-KZ- and AT-associator with the second author while he was in Harvard.
The first author thanks A.~Alekseev for having first indicated that the strong P.~Etingof conjecture might not be true, and for many enlightening discussions on singular propagators and related issues.
He also thanks J.~L\"offler for many discussions on the same subject at the Max Planck Institut f\"ur Mathematik.
We also thank heartily G.~Felder for his interest in the subject, for many useful discussions and questions on the subject of the paper, which surely helped to give it a better shape. 

\subsection*{Notation}
Unless otherwise stated we work over the ground field $\bbC$, i.~e., all vector spaces or differential graded vector spaces we consider are $\bbC$-vector spaces.
For a graded vector space $V$ we denote by $V[k]$ the $k$-fold desuspension of $V$. For example, if $V$ is concentrated in degree $0$, then $V[k]$ is concentrated in degree $-k$. 
We generally use cohomological conventions, so the differentials in complexes are of degree $+1$. 
The phrase \emph{differential graded} will be abbreviated to \emph{dg}.

We will denote the set of numbers from $1$ to $n$ by $[n]:=\{1,2,\dots, n\}$. The symmetric groups will be denoted by $\mathfrak S_n$.

We often use the language of operads and colored operads. A good introduction can be found in the textbook \cite{LV} by J.-L. Loday and B. Vallette. For an operad $\op P$ we will denote by $\Lambda\op P$ the desuspension of $\op P$, defined such that a $\Lambda\op P$-algebra structure on $V$ is the same as a $\op P$,-algebra structure on $V[1]$.
For a quadratic operad $\op P$ we denote by $\op P^\vee$ its Koszul dual coopered. For a coaugmented cooperad $\op C$ we denote by $\Omega(\op C)$ its operadic cobar construction.
The Lie operad is denoted by $\Lie$, and its minimal cofibrant resolution by $\Lie_\infty=\Omega(\Lie^\vee)$.

For handling signs it will be convenient to introduce the following notation. Let $n_1,\dots, n_k$ be pairwise distinct natural numbers. Then we define the expression $\psgn{n_1,\dots, n_k}$ to be the sign of the permutation 
\[
1,2,3,4,\dots \mapsto n_1,\dots, n_k, 1,2, \dots.
\]
In other words, one has the following recursion
\begin{align}\label{equ:odef}
\psgn{n_1} &:= (-1)^{n_1-1}
&
\psgn{n_1,\dots,n_k} &= \psgn{n_1,\dots,n_{k-1}} (-1)^{n_k-1-|\{j \mid n_j<n_k\}|} .
\end{align}
Note that the expression $\psgn{n_1,\dots, n_k}$ is antisymmetric in its arguments.

\section{Recollections about M. Kontsevich's graph complex and stable formality morphisms}\label{s-2}
\subsection{Kontsevich's graphs}\label{ss-2-1}
The notion of stable formality morphism puts in a more conceptual framework M. Kontsevich's seminal construction of the formality $L_\infty$-quasi-isomorphism $\mathcal U$ from~\cite{K}*{Subsection 6.3}. 
Since graphs will play a fundamental {\em r\^ole} in the present construction, let us briefly recall the relevant graph-theoretical objects and discuss their main features. 
In particular, we will make use of the graph operads $\dGra$ and $\KGra$, introduced (implicitly) by Kontsevich in~\cite{K2}*{Subsubsection 3.3.3} and~\cite{K}*{Subsection 6.1} and described in more detail in~\cite{Will}*{Section 2} or~\cite{Dol}*{Subsections 3.2-3.4}. 

Consider the set of directed graphs $\dgra_{n,k}$ with vertex set $n$ and edge set $k$. We can build an operad $\dGra$ such that the space of $n$-ary operations is
\[
 \dGra(n) = \bigoplus_{k\geq 0} \left( \mathrm{span}( \dgra_{n,k}) \otimes (\bbC[1])^{\otimes k} \right)_{S_k}.
\]
Here the symmetric group acts on graphs by permuting the edge labels and on the $\bbC[1]$ factors by permutations, with appropriate Koszul signs. 
%
%
Observe that, if $\Gamma$ in $\dGra(n)$ contains a multiple edge, {\em i.~e.} if there are at least two edges with the same direction between two distinct vertices of $\Gamma$, then $\Gamma$ is trivial.

Let us specify the operadic structure on $\dGra$. The composition $\Gamma_1\circ_i \Gamma_2$ of graph $\Gamma_1$, $\Gamma_2$ is defined by replacing the $i$-th vertex of $\Gamma_1$ by $\Gamma_2$, and summing over all ways of reconnecting the edges incident at vertex $i$. The ordering of the edges in $\Gamma_1\circ_i \Gamma_2$ is such that the edges of $\Gamma_1$ stand before those of $\Gamma_2$. 
Finally, there is an obvious right action of the symmetric group $\mathfrak S_n$ on $\dGra(n)$ by permuting the labels of the vertices.

Similarly, consider the set of \emph{admissible} graphs $\kgra_{n,m,k}$ (Kontsevich's graphs) with vertex set $[n]\sqcup [m]$ and edge set $k$, where admissible means that no edge starts at one of the vertices in $[m]$, cf. \cite{K}*{Subsection 6.1}. Following loc. cit. we will call the vertics in the set $[n]$ \emph{type I vertices} and vertices in the set $[m]$ \emph{type II vertices}.

Out of $\kgra_{n,m,k}$ we may build a two-colored operad $\KGra$ of graded vector spaces as follows.
We denote by $\mathfrak o$, $\mathfrak c$ the two colors of $\KGra$ ($\mathfrak o$, $\mathfrak c$ stays for open, closed respectively). We denote the spaces of operations with $n$ inputs of color $\mathfrak c$, $m$ inputs of color $\mathfrak o$ and with output in color $\mathfrak o$ or $\mathfrak c$ by $\KGra(n,m)^\mathfrak o$ and $\KGra(n,m)^\mathfrak c$, respectively.
We define 
\begin{align*}
\KGra(n,m)^\mathfrak c&=\begin{cases}
\dGra(n),& m=0\\
\{0\},& m\geq 1
\end{cases},
&
\KGra(n,m)^\mathfrak o
&=
\bigoplus_{k\geq 0} \left( \mathrm{span}( \kgra_{n,m,k}) \otimes (\bbC[1])^{\otimes k} \right)_{S_k}.
\end{align*}
In the terminology adopted by Kontsevich in~\cite{K}*{Subsection 6.1}, elements of $\KGra(n,m)^\mathfrak o$ are linear combinations of admissible graphs of type $(n,m)$.

To specify the operadic structure on $\KGra$, we resort again to partial insertions of directed, labeled graphs.
Partial insertions on $\KGra^\mathfrak c$ are exactly as for $\dGra$.
The guiding principle (plug-in and reconnect the vertices) remains the same as for $\dGra$, but we require no partial insertions from $\KGra^\mathfrak c\otimes\KGra^\mathfrak o$ either to $\KGra^\mathfrak c$ or $\KGra^\mathfrak o$.
Otherwise, there are partial insertions from $\KGra^\mathfrak o\otimes\KGra^\mathfrak c$ to $\KGra^\mathfrak o$ at either a vertex of color $\mathfrak c$ or $\mathfrak o$, and partial insertions from $\KGra^\mathfrak o\otimes\KGra^\mathfrak o$ to $\KGra^\mathfrak o$ at a vertex of color $\mathfrak o$.
Again, there is a natural re-labeling of vertices and an induced total order on the edges of the composite graphs.

There are three special elements of $\KGra$ which deserve separate attention.
First, there are two elements of $\KGra(2,0)^\mathfrak c$ of degree $-1$, and it will be convenient to consider their sum, which we denote by $\SNbr$. It is clear that $\SNbr$ is invariant with respect to the action of $\mathfrak S_2$ on the labels of its vertices.
Second, observe that an element of $\KGra(0,m)^\mathfrak o$, for $m\geq 1$, of strictly negative degree is necessarily trivial (because there would exist an edge departing from a vertex of color $\mathfrak o$). The unique element of $\KGra(0,2)^\mathfrak o$ of degree $0$ is denoted by $\grcup$.
Finally, we consider elements of $\KGra(1,m)^\mathfrak o$, for $m\geq 1$, of degree $-m$: the conditions on $\KGra(1,m)^\mathfrak o$ imply that there is exactly one such element of $\KGra(1,m)^\mathfrak o$, which we denote by $\grHKR^m$.
The special graphs $\SNbr$, $\grcup$ and $\grHKR^m$ are depicted in Figure~\ref{fig-HKR}.
\begin{figure}
\centering
\begin{tikzpicture}[>=latex]
\tikzstyle{k-int}=[draw,fill=gray!40,circle,inner sep=0pt,minimum size=2mm]
\tikzstyle{n-int}=[draw,fill=black,circle,inner sep=0pt,minimum size=2mm]
\tikzstyle{ext}=[draw,fill=white,circle,inner sep=0pt,minimum size=2mm]
\tikzstyle{spec}=[draw,rectangle,inner sep=0pt,minimum size=2mm]

\begin{scope}[scale=0.75]
\node[n-int,label={$\scriptstyle 1$}] (v1) at (-1.3,0) {};
\node[n-int,label={$\scriptstyle 2$}] (v2) at (-.4,0) {};
\node[n-int,label={$\scriptstyle 1$}] (v3) at (.4,0) {};
\node[n-int,label={$\scriptstyle 2$}] (v4) at (1.3,0) {};
\draw[thick,->] (v1) to (v2);
\draw[thick,->] (v4) to (v3);
\node at (0,-.75) {$\SNbr$};
\node at (0,0) {$+$};
\end{scope}

\begin{scope}[scale=0.75,shift={(3,0)}]
\node[ext,label={$\scriptstyle 1$}] (1) at (-.5,0) {};
\node[ext,label={$\scriptstyle 2$}] (2) at (.5,0) {};
\node at (0,-.75) {$\grcup$};
\end{scope}

\begin{scope}[scale=0.75,shift={(7,0)}]
\node[n-int] (v) at (0,1.5) {};
\node[ext,label={$\scriptstyle 1$}] (1) at (-1.5,0) {};
\node[ext,label=0:{$\scriptstyle 2$}] (2) at (-0.5,0) {};
\node[ext,label={$\scriptstyle m$}] (m) at (1.5,0) {};
\draw[thick,->] (v) to (1);
\draw[thick,->] (v) to (2);
\draw[thick,->] (v) to (m);
\node at (0.25,0.5) {$\dots$};
\node at (0,-0.75) {$\grHKR^m$};
\end{scope}

\end{tikzpicture}
\caption{\label{fig-HKR} The three special graphs in $\KGra$.}
\end{figure}
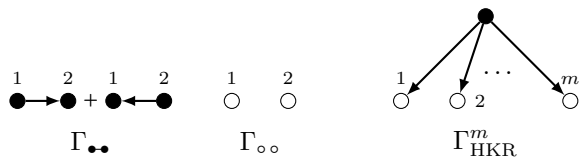

It is well-known that the pair $(T_\mathrm{poly}(A),A)$, $X=\mathbb K^d$ and $A=\mathbb K[X]$, forms a representation of $\KGra$. Here $T_\mathrm{poly}(A)=\bigwedge_A  \mathrm{Der}(A)$ is the set of multiderivations, or multivector fields. 
Let us first choose a set of global coordinates $\{x_i\}$ of $X$, so that $A=\mathbb K[x_1,\dots,x_d]$. We may then identify $T_\mathrm{poly}(A)$ with the graded $A$-module $(A[\theta_1,\dots,\theta_d])$, where $\{\theta_i\}$ denotes a set of graded variables of degree $1$, dual to $\{x_i\}$, which commute with $A$ and anticommute with each other. One should think of $\theta_i$ as $\frac \partial {\partial x_i}$ with shifted degree.
Observe that there are a natural injection $A\hookrightarrow T_\mathrm{poly}(A)$ and projection $T_\mathrm{poly}(A)\twoheadrightarrow A$.

We further consider the well-defined linear endomorphism $\tau$ of $T_\mathrm{poly}(X)^{\otimes 2}$ of degree $-1$ specified {\em via} 
\[
\tau=\partial_{\theta_i}\otimes\partial_{x_i},
\]
where summation over repeated indices is implicit.
With a directed edge $e=(i,j)$ of a graph $\Gamma$ in $\KGra(n,m)^\mathfrak c$ or $\KGra(n,m)^\mathfrak o$ we may associate a graded endomorphism $\tau_e$ of $T_\mathrm{poly}(A)^{\otimes (n+m)}$ by letting $\tau$ act on the $i$-th and $j$-th entry only: the order on $E(\Gamma)$ makes the assignment $\Gamma\mapsto \tau_\Gamma=\prod_{e\in E(\Gamma)}\tau_e$ a well-defined map from $\KGra(n,m)^\mathfrak c$ or $\KGra(n,m)^\mathfrak o$ to the endomorphisms of $T_\mathrm{poly}(A)^{\otimes (n+m)}$ of degree $\mathrm{deg}(\Gamma)$.  

Finally, with a graph $\Gamma$ in $\KGra(n,0)^\mathfrak c$, resp.\ $\KGra(n,m)^\mathfrak o$, we associate a multidifferential operator on $T_\mathrm{poly}(A)$ of arity $n$, resp.\ a multidifferential operator on $T_\mathrm{poly}(A)$ of arity $n$ with values in the multidifferential operators on $A$ of arity $m$, {\em via} the composite morphisms
\[
\begin{aligned}
&\xymatrix{& T_\mathrm{poly}(A)^{\otimes n}\ar^-{\tau_\Gamma}[r] & T_\mathrm{poly}(A)^{\otimes n}\ar^-{\mu_n}[r] & T_\mathrm{poly}(A),}\ \text{resp.}\\
&\xymatrix{T_\mathrm{poly}(A)^{\otimes n}\otimes A^{\otimes m}\ar@{^{(}->}[r] & T_\mathrm{poly}(A)^{\otimes (n+m)}\ar^-{\tau_\Gamma}[r] &  T_\mathrm{poly}(A)^{\otimes (n+m)} \ar^-{\mu_{n+m}}[r] & T_\mathrm{poly}(A)\ar@{->>}[r] & A},
\end{aligned}
\]
where $\mu_n$ denotes the $n$-th iterated multiplication morphism on a (graded) associative algebra.
Observe that we may also consider a graded, finite-dimensional linear manifold $X$, and the associated pair $(T_\mathrm{poly}(A),A)$, for $A=\mathbb K[x_1,\dots,x_d]$ with $\{x_i\}$ graded, is again a representation of $\KGra$.
By abuse of notation, we also denote by $\Gamma$ the above operators associated with the graph $\Gamma$.

\subsection{Stable formality morphism}\label{ss-2-2}
Let us consider the $2$-colored operad $\OC$ of dg vector spaces. Briefly, an algebra over $\OC$ is a triple $(\mathfrak g,A,\mathcal F)$, such that $(i)$ $\mathfrak g[1]$ is an $L_\infty$-algebra, $(ii)$ $A$ is an $A_\infty$-algebra and $(iii)$ $\mathcal F$ is an $L_\infty$-morphism from $\mathfrak g[1]$ to the Hochschild cochain complex $C^\bullet(A)$ of $A$ with values in itself and shifted degree, regarded as a dg Lie algebra with the $A_\infty$-Hochschild differential and the Gerstenhaber bracket.
It is well-known that $\OC$ is generated by three types of coroll\ae\ $\mathsf t^\mathfrak c_n$, $n\geq 2$, $\mathsf t_m^\mathfrak o$, $m\geq 2$ and $\mathsf t_{n,m}^\mathfrak o$, $n\geq 1$, $m\geq 0$, of degree $3-2n$, $2-m$ and $2-2n-m$ parametrizing the Taylor components of the pre-$L_\infty$- and pre-$A_\infty$-algebra structures and of a morphism of pre-$L_\infty$-algebras respectively; furthermore, the dg operad structure on $\OC$ is uniquely specified by the action of the differential $d_\OC$ on the generating coroll\ae, and encodes the fact that $\mathsf t_n^\mathfrak c$ define indeed an $L_\infty$-structure, $\mathsf t_m^\mathfrak o$ an $A_\infty$-structure and $\mathsf t_{n,m}^\mathfrak o$ a morphism of $L_\infty$-algebras.
We refer to~\cite{Dol}*{Subsection 4.1} for an explicit graphical representation of $d_\OC$, which will be useful later on.

Therefore, a morphism $\mathcal F$ of $2$-colored dg operads from $\OC$ to $\KGra$ is uniquely determined by the images of the generating coroll\ae\ of $\OC$, which are in turn linear combinations of graphs.
By further composing $\mathcal F$ with the morphism of $2$-colored dg operads $\KGra\to \mathrm{End}(T_\mathrm{poly}(A),A)$, for $A$ as above, the final result consists of $(i)$ an $L_\infty$-structure on $T_\mathrm{poly}(A)[1]$, $(ii)$ an $A_\infty$-structure on $A$ and $(iii)$ an $L_\infty$-morphism from $T_\mathrm{poly}(A)[1]$ to the Hochschild cochain complex of $A$ with shifted degree, the three of them parametrized by elements of $\KGra$.

For the pair $(T_\mathrm{poly}(A),A)$, there is a natural $L_\infty$-algebra structure on $T_\mathrm{poly}(A)[1]$ and a natural $A_\infty$-algebra structure on $A$, specified by the Schouten--Nijenhuis bracket and the (graded) commutative product: they admit the graphical counterparts $\SNbr$ and $\grcup$ in $\KGra$. 
Furthermore, the Hochschild--Kostant--Rosenberg quasi-isomorphism from $T_\mathrm{poly}(A)[1]$ to $C^\bullet(A)[1]$ can be considered as the first Taylor component of some $L_\infty$-morphism: it also admits the graphical interpretation {\em via} $\grHKR$ in $\KGra$.
\begin{Def}\label{d-stable}
A stable formality morphism $\mathcal F$ is a morphism of $2$-colored dg operads 
\[
\mathcal F:\OC\to \KGra, 
\]
whose induced representation on pairs $(T_\mathrm{poly}(A),A)$ coincides with the Schouten--Nijenhuis dg Lie algebra structure on $T_\mathrm{poly}(A)$, with the standard, (graded) commutative $A_\infty$-algebra structure on $A$, and such that the first Taylor component of the corresponding $L_\infty$-quasi-isomorphism coincides with the Hochschild--Kostant--Rosenberg quasi-isomorphism.   
\end{Def}

Stable formality morphisms may be identified with series of graphs
\[
\mathcal F(\mathsf t_{n,m}^\mathfrak o)=\sum_{\Gamma}\alpha_\Gamma\ \Gamma,\ \alpha_\Gamma\in\mathbb K,\ n\geq 1,\ m\geq 0,
\]
where the sum runs over a set of graphs $\Gamma\in \kgra_{n,m,k}$ forming a basis of the degree $2-2n-m$-subspace of $\KGra(n,m)$,
with the ``boundary conditions''
\begin{align*}
\mathcal F(\mathsf t_2^\mathfrak o)&=\grcup,
&
\mathcal F(\mathsf t_{1,m}^\mathfrak o)&=\grHKR^m,\ m\geq 0.
\end{align*}
Observe that the degree of $\mathsf t_m^\mathfrak o$ is strictly negative for $m\geq 3$, thus $\mathcal F(\mathsf t_m^\mathfrak o)$ is an element of $\KGra(0,m)^\mathfrak o$ of strictly negative degree and is automatically trivial by degree arguments.

In the language of $2$-colored operads, $\OC$ is the Cobar construction of a $2$-colored cooperad 
$\mathfrak{oc}^\vee$: thus, by the arguments of~\cite{Dol}*{Subsection 2.5}, a stable formality morphism $F$ as above is a Maurer--Cartan element in the convolution dg Lie algebra 
$$
\mathrm{Conv}(\mathfrak{oc}^\vee,\KGra) \cong \prod_{n\geq 1} \left(\KGra(n,0)^{\mathfrak c}\right)^{\mathfrak S_n} \oplus \prod_{n,m\geq 0} \left(\KGra(n,m)^{\mathfrak o}\right)^{\mathfrak S_n}.
$$
Then, the Maurer--Cartan equation for $F$ as above translates into the infinite family of quadratic relations between weights, which has been first explored in~\cite{K}*{Subsection 6.4}.

{\bf Notation:} Suppose we are given a collection of maps $\kgra_{n,m,k}\to \bbC$ (respectively $\dgra_{n,k}\to \bbC$), say $\Gamma\mapsto \alpha_\Gamma$, that are invariant under the $S_n$ and $S_k$ actions, with appropriate signs. The we can build an element 
\be{equ:sumgammadef}
\sum_\Gamma  \alpha_\Gamma \, \Gamma \in  \mathrm{Conv}(\mathfrak{oc}^\vee,\KGra) 
\ee
where the sum runs over all $n$ (and $m$) and over a set of graphs in $\cup_k\kgra_{n,m,k}$ (respectively in $\cup_k\dgra_{n,k}$) forming a basis of $\KGra(n,m)^\mathfrak o$ (respectively of $\KGra(n,0)^\mathfrak c$).

\subsection{The graph complex \texorpdfstring{$\GC$}{GC}}\label{ss-2-3}
Recall from the previous section that there is a map of operads $\LaLie_\infty\to \dGra$, factoring through the degree shifted Lie operad $\LaLie$. 

The full directed graph complex $\fdGC$ is just the deformation complex of that operad map, {\em i.e.}
\[
\fdGC := \Defo(\LaLie_\infty\to \dGra).
\]   
Concretely, the object on the right-hand side is the operadic convolution complex $\Hom_{\mathbb{S}}(\LaLie^\vee,\dGra)$, twisted by the Maurer--Cartan element corresponding to the operad map $\LaLie_\infty\to \dGra$.
Consequently,
\[
\fdGC = \prod_{n\geq 1} \dGra(n)^{\mathbb{S}_n}[2-2n]
\]
as vector spaces.
Note that it follows from the definition as a deformation complex that $\fdGC$ carries a natural dg Lie algebra structure.
Furthermore, by the action of $\dGra$ on $T_\mathrm{poly}(A)$ (for $A$ as above) we obtain a map of dg Lie algebras from $\fdGC$ to the Chevalley complex of $T_\mathrm{poly}(A)$. 
In particular, the sub-dg Lie algebra of closed degree zero elements of $\fdGC$ acts on $T_\mathrm{poly}(A)$ by $L_\infty$ derivations. 
Hence it acts also on all formality isomorphisms 
\[
T_\mathrm{poly}(A)\to \Dpoly(A)
\]
and it is not hard to check that this action factors through an action on all stable formality morphisms.

It was shown by V.~Dolgushev~\cite{Dol} that the induced action of $H^0(\fdGC)$ on the set of homotopy classes of stable formality morphisms is free and transitive.
This is almost the statement of Theorem~\ref{thm:dlgstable}, except that one replaces the complex $\fdGC$ by a much smaller subcomplex $\GC\subset\fdGC$ such that $H^0(\GC)=H^0(\fdGC)$. 
Concretely, $\dGra$ contains a sub-operad $\Gra$ of un-directed graphs, where the embedding $\Gra\to \dGra$ assigns to an un-directed graph the sum over all graphs obtained by assigning some orientations to the edges. So, pictorially,
\[
\begin{tikzpicture}[baseline=-0.65ex]
\draw (0,0)--(.5,0);
\end{tikzpicture}
\mapsto
\begin{tikzpicture}[baseline=-0.65ex,>=latex]
\draw[->] (0,0)--(.5,0);
\end{tikzpicture}
+
\begin{tikzpicture}[baseline=-0.65ex,>=latex]
\draw[<-] (0,0)--(.5,0);
\end{tikzpicture}.
\]

One may define an un-directed version of the graph complex 
\[
\fGC := \Defo(\LaLie_\infty\to \Gra)\cong \prod_{n\geq 1} \Gra(n)^{\mathbb{S}_n}[2-2n].
\]
Finally the subcomplex $\GC \subset \fGC$ is the subcomplex spanned by connected graphs, all vertices of which have valence at least $3$.
It was shown partly by~M.~Kontsevich and partly in~\cite{Will} that $H^\bullet(\fGC)=H^\bullet(\dfGC)$ and that this cohomology may be expressed through $H^\bullet(\GC)$. 
In particular, it follows that $H^0(\GC)=H^0(\fdGC)$.

\section{Compactified Configuration spaces {\em \`a la} Kontsevich}\label{s-3}

\subsection{Recollection: Stokes' Theorem for singular differential forms}\label{sec:regstokes}
One of the central tools in the proof of the main results of the present paper is a version of Stokes' Theorem for smooth differential forms with singularities on the boundary introduced in \cite{ARTW}. This section is devoted to briefly recalling the statement of this Theorem.

Following \cite{ARTW}, we consider an $n$-dimensional compact manifold with corners $K$. We assume that $K$ is covered by a system of charts $\{U_i\}_{i\in I}$ where $I$ is a partially ordered set.
We assume that the following two conditions hold:
\begin{itemize}
 \item $U_i\cap U_j=\emptyset$ unless $i\geq j$ or $i\leq j$.
 \item Each $U_i$ carries a free action of a torus $T_i$ preserving the boundary. For $i>j$ one has a natural injective group homomorphism $T_i \hookrightarrow T_j$ such that the inclusions $ U_i \cap U_j \hookrightarrow U_i, U_i \cap U_j \hookrightarrow U_j$ are $T_i$-equivariant. 
 \item There is a partition of unity $\{\rho_i\}_{i\in I}$ subordinate to the chosen covering, such that each $\rho_i$ is $T_i$-invariant.
\end{itemize}

Let the torus action of $T_i$ be generated by vector fields $v_{i,a}$, $a = 1, \dots , {\rm dim}(T_i)$, and define the multivector field
\[
 \xi_i = \bigwedge_{a=1}^{\mathrm{dim} T_i} v_{i,a} .
\]
A differential form $\omega$ is called \emph{$\xi$-basic}, for $\xi$ a multi-vector field, if $\iota_\xi \omega =0$ and $\iota_\xi d\omega =0$.
Let $\omega$ now be a smooth differential form on the interior $\K^\circ\subset K$ of top-1-degree. Then one defines $\omega$ as \emph{regularizable} if for each $i\in I$ there is a $\xi_i$-basic differential form $\alpha_i$ (the counterterm) such that $\omega-\alpha_i$ extends to the boundary $\p K\cap U_i$.
One defines the regularization $\Reg(\omega)$ of $\omega$ to be the top degree differential form on $\p K$ such that
\[
\Reg(\omega)\mid_{\p K\cap U_j}=(\omega-\alpha_j)\mid_{\p K\cap U_j}.
\]
It is shown in \cite{ARTW}*{Proposition 1} that the regularization is well-defined.
The regularized Stokes' Theorem can then be formulated as follows.
\begin{Thm}[Regularized Stokes' Theorem, \cite{ARTW}]\label{thm:regstokes}\label{t-stokes-reg-log}
Let $\omega$ be a regularizable top-1 degree form on $K$. Then, the differential form $d\omega$ is regular on $K$ and
\begin{equation}\label{equ:stokes}
 \int_K d\omega = \int_{\p K} \Reg(\omega).
\end{equation}
\end{Thm}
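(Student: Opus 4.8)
The plan is to prove both assertions at once by the classical device for singular Stokes' theorems: exhaust the interior $K^\circ$ by compact manifolds-with-corners obtained by pushing the boundary inward by a parameter $\varepsilon$, apply the ordinary Stokes' theorem on each of them, and control the two resulting limits. The two clauses of the ``$\xi$-basic'' hypothesis on the counterterms $\alpha_i$ will do exactly the two jobs needed.

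First I would extract two elementary consequences of the hypotheses. (a) Since $T_i$ acts freely, the decomposable multivector field $\xi_i=\bigwedge_a v_{i,a}$ is nowhere vanishing, so $\iota_{\xi_i}$ is injective on $n$-forms ($n=\dim K$); as $d\alpha_i$ is an $n$-form with $\iota_{\xi_i}d\alpha_i=0$, we get $d\alpha_i=0$, i.e. $\alpha_i$ is closed. Hence on $U_i$ one has $d\omega=d(\omega-\alpha_i)$, which extends smoothly to $\p K\cap U_i$ because $\omega-\alpha_i$ does; gluing over the cover, $d\omega$ extends to a smooth form on $K$, so it is regular and $\int_K d\omega$ converges absolutely. (b) If $\Sigma\subset U_i$ is a $T_i$-invariant hypersurface contained in the interior, then $\alpha_i|_\Sigma=0$: at a point $p\in\Sigma$ the orbit directions $v_{i,1}(p),\dots,v_{i,k_i}(p)$ are linearly independent and tangent to $\Sigma$, so completing them to a basis of $T_p\Sigma$ and evaluating the $(n-1)$-form $\alpha_i$ on it yields $\iota_{\xi_i}\alpha_i$ contracted with the remaining vectors, which vanishes. (If $T_i$ is the trivial torus the convention forces $\alpha_i=0$, so there is nothing to check.)

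Next I would construct the exhaustion: using the ordered structure of the cover $\{U_i\}$ together with the compatible inclusions $T_i\hookrightarrow T_j$, and the (harmless, by compactness of the tori) assumption that the local boundary-defining functions may be taken $T_i$-invariant, one patches the local collars into a collar of $\p K$ and a family of compact manifolds-with-corners $K_\varepsilon\subset K^\circ$, $0<\varepsilon<\varepsilon_0$, exhausting $K^\circ$ as $\varepsilon\to 0$, such that in each chart the face $\Sigma_i^\varepsilon:=\p K_\varepsilon\cap U_i$ corresponding to $\p K$ is a $T_i$-invariant hypersurface at distance $\varepsilon$ from $\p K$. I expect this gluing of the local collars and radial functions across the nested overlaps — and the bookkeeping of the corner strata — to be the one genuinely technical point of the proof.

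With this in hand the argument closes quickly. On a neighborhood of $K_\varepsilon$ the form $\omega$ is smooth, so the usual Stokes' theorem gives $\int_{K_\varepsilon}d\omega=\int_{\p K_\varepsilon}\omega$; by (a) and dominated convergence the left side tends to $\int_K d\omega$. For the right side, write $\omega=\eta+\beta$ with $\eta=\sum_i\rho_i(\omega-\alpha_i)$ and $\beta=\sum_i\rho_i\alpha_i$, each term extended by zero off $\mathrm{supp}\,\rho_i$. Then $\eta$ extends smoothly to $K$ with $\eta|_{\p K}=\sum_i\rho_i\Reg(\omega)=\Reg(\omega)$, using the well-definedness of $\Reg(\omega)$ from \cite{ARTW}*{Proposition 1}, so $\int_{\p K_\varepsilon}\eta\to\int_{\p K}\Reg(\omega)$; and $\int_{\p K_\varepsilon}\beta=\sum_i\int_{\Sigma_i^\varepsilon}\rho_i\alpha_i=0$ for every $\varepsilon$ by (b). Hence $\int_{\p K_\varepsilon}\omega\to\int_{\p K}\Reg(\omega)$, and equating the two limits yields $\int_K d\omega=\int_{\p K}\Reg(\omega)$.
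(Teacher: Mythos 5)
First, note that this paper does not prove Theorem \ref{thm:regstokes} at all: it is recalled verbatim from \cite{ARTW}, where the proof is a purely local partition-of-unity computation with no limiting procedure. Your preliminary observations are correct and are in the spirit of that argument: (a) since $\xi_i$ is nowhere vanishing on $U_i$ (free action) and $d\alpha_i$ is a top form with $\iota_{\xi_i}d\alpha_i=0$, indeed $d\alpha_i=0$, so $d\omega=d(\omega-\alpha_i)$ extends and is regular; (b) $\alpha_i$ restricts to zero on any $T_i$-invariant hypersurface. In fact these two observations already finish the proof without any exhaustion, and this is essentially the route of \cite{ARTW}: writing $\omega=\sum_i\rho_i(\omega-\alpha_i)+\sum_i\rho_i\alpha_i$, one has $d(\rho_i\alpha_i)=d\rho_i\wedge\alpha_i+\rho_i\,d\alpha_i$, and $\iota_{\xi_i}(d\rho_i\wedge\alpha_i)=\pm\,d\rho_i\wedge\iota_{\xi_i}\alpha_i=0$ because $\rho_i$ is $T_i$-invariant, so this top-degree form vanishes identically where $\xi_i\neq 0$; hence $d\omega=\sum_i d\bigl(\rho_i(\omega-\alpha_i)\bigr)$, each summand is a smooth form compactly supported in $U_i$ and regular up to $\p K$, and the ordinary Stokes theorem for manifolds with corners gives $\int_K d\omega=\sum_i\int_{\p K}\rho_i(\omega-\alpha_i)=\int_{\p K}\Reg(\omega)$, the last step using the well-definedness of the regularization.

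By contrast, your exhaustion argument, while analytically plausible in outline, defers its entire weight onto the step you yourself flag: the construction of the family $K_\varepsilon$ whose boundary is, near every point of $\mathrm{supp}\,\rho_i$, invariant under $T_i$. This is where a genuine gap sits. On an overlap $U_i\cap U_j$ the pushed-in hypersurface must be invariant under the \emph{larger} of the two tori (so that (b) applies to every $\alpha_i$ whose $\rho_i$ is nonzero there), whereas the natural patching $r=\sum_i\rho_i r_i$ of $T_i$-invariant local defining functions is only invariant under the \emph{smaller} torus at such points: a term $\rho_l r_l$ coming from a chart with a strictly smaller torus need not be invariant under the bigger one, and there is no global group to average over. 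So the boundary terms $\int_{\p K_\varepsilon}\rho_i\alpha_i$ are not visibly zero for the exhaustion you can actually build this way, and making the collar compatible with all the nested torus actions (and with the corner strata) is not a routine gluing but the real content of the theorem in your approach. Since your own lemmas (a) and (b) already contain the contraction identities needed for the local proof sketched above, I would recommend abandoning the exhaustion and closing the argument that way.
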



\subsection{Configuration spaces}\label{ss-3-2}
Let $\mathbb H^+$ denote the complex upper half-plane, $[n]$ the set $\{1,\dots,n\}$, let $A$ be a finite set and $B$ a finite set endowed with a total order.
The configuration space
\[
\mathrm{Conf}_A = \left\{p\in\mathbb C^A\,|\,p(a)\neq p(a')\textrm{ if }a\neq a'\right\}/G_3,
\]
is the space of configurations of $|A|$ points in $\bbC$. The group $G_3=\mathbb R_+\ltimes \mathbb C$ acts (diagonally) on $\mathbb C^A$ by rescalings and complex translations.
We denote the quotient by
\[
C_A=\mathrm{Conf}_A/G_3.
\]
Since $G_3$ is a real Lie group of dimension $3$ acting freely on the smooth, oriented manifold $\mathrm{Conf}_A$, whenever $2|A|-3\geq 0$, $C_A$ is a smooth oriented manifold of dimension $2|A|-3$.

For $A$ and $B$ as above, we similarly define 
\[
\mathrm{Conf}_{A,B}^+=\left\{(p,q)\in (\mathbb H^+)^A\times \mathbb R^B\,|\,p(a)\neq p(a')\textrm{ if }a\neq a'\,,\,
q(b)< q(b')\textrm{ if }b<b'\right\}
\]
to be the space of configurations of $|A|$ points in the upper halfplane and $|B|$ points on the real axis, respecting the order on $B$. The group $G_2=\mathbb R_+\ltimes \mathbb R$ acts diagonally on $(\mathbb H^+)^A\times \mathbb R^B$ {\em via} rescalings and real translations.
We denote the quotient space by
\[
C_{A,B}^+=\mathrm{Conf}_{A,B}^+/G_2.
\]
The action of the $2$-dimensional Lie group $G_2$ is free whenever $2|A|+|B|-2\geq 0$, in which case $C_{A,B}^+$ is a smooth real oriented manifold of dimension $2|A|+|B|-2$. 
For $A=[n]$ and $B=[m]$ with its natural total order, we use the simpler notation $C_n$ and $C_{n,m}^+$.

It is easy to verify by direct computations that $\mathrm{Conf}_A$ and $\mathrm{Conf}_{A,B}^+$ are trivial principal bundles over $C_A$ and $C_{A,B}^+$.
Let us illustrate this fact by an example which will be quite useful in subsequent computations. A global section of $C_n$ is defined by
\[
C_n\ni [(z_1,\dots,z_n)]\mapsto \left(0,\frac{z_2-z_1}{|z_2-z_1|},\frac{z_3-z_1}{|z_2-z_1|},\dots,\frac{z_n-z_1}{|z_2-z_1|}\right)\in \mathrm{Conf}_n.
\]
We hence get the identification
\[
C_n\cong S^1\times\mathrm{Conf}_{n-2}(\mathbb C\smallsetminus \{0,1\}).
\]

Observe that the standard $S^1$-action on $\mathbb C$ yields a diagonal action of $S^1$ on $\mathrm{Conf}_n$, which commutes with the $G_3$-action, thus it descends to an action on $C_n$, with respect to which the projection $\mathrm{Conf}_n(\mathbb C)\to C_n$ is equivariant.
The previous computations imply that $C_n$ is a trivial $S^1$-bundle over $\mathrm{Conf}_{n-2}(\mathbb C\smallsetminus\{0,1\})$.
This fact will also be used throughout the whole paper.


Given a subset $A'\subset A$ and a subset $B'\subset B$ with the induced total order, there are naturally defined projections $\mathrm{Conf}_A\to \mathrm{Conf}_{A'}$ and $\mathrm{Conf}_{A,B}^+\to \mathrm{Conf}_{A',B'}^+$, which obviously descend to projections $C_A\to C_{A'}$ and $C_{A,B}^+\to C_{A',B'}^+$.

\subsection{The compactified configuration spaces \texorpdfstring{$\overline C_A$ and $\overline C_{A,B}^+$}{CA and CAB}}\label{ss-3-3}
Kontsevich has introduced in~\cite{K}*{Section 5} compactifications {\em \`a la} Fulton--MacPherson of $C_A$ and $C_{A,B}^+$, which are endowed with the structure of manifold with corners.
We do not present here the explicit construction of the compactified configuration spaces $\overline C_A$ and $\overline C^+_{A,B}$, for which we refer instead to~\cite{BCKT}*{Appendice A}.

We concentrate mainly on the boundary stratification of $\overline C_A$ and $\overline C_{A,B}^+$ and on local coordinates near a given boundary stratum. The material presented here is a {\em r\'esum\'e} of the discussions in~\cite{ARTW}, to which we refer for more details.

The boundary strata of codimension $1\leq p\leq |A|-2$ of $\overline C_A$ are in one-to-one correspondence with nested families $\{A_1,\dots,A_p\}$ of subsets of $A$ of cardinality $2\leq |A_i|\leq |A|-1$, $i=1,\dots,p$, {\em i.~e.} either $A_i\cap A_j=\emptyset$ or $A_i\subset A_j$ or $A_i\supset A_j$. 
We set $A_0:=A$ for notational convenience.
A nested family determines a rooted tree with vertices corresponding to $\{A_0,A_1,\dots,A_p\}\cup \bigcup_{a\in A}\{a\}$ and with the descendants of vertex $A'$ in the tree being the vertices $A''$ such that $A''\subset A'$.
The star of an element $A_i$ of a nested family as above, denoted  $\mathrm{star}(A_i)$, is the set of direct children of $A_i$ in the tree.


The stratum $\partial_{A_1,\dots,A_p}\overline C_A$ associated with the nested family $\{A_1,\dots,A_p\}$ is isomorphic to the product of configuration spaces
\begin{equation}\label{eq-strat-1}
\partial_{A_1,\dots,A_p}\overline C_A\cong \prod_{j=0}^p \overline C_{\mathrm{star}(A_i)}.
\end{equation}

Similarly, we consider families $\{A_1,\dots, A_p, C_1,\dots,C_q\}$ with $1\leq p+q\leq |A|+|B|-1$ and with each $A_j\subset A$ and $C_j\subset A\sqcup B$. 
We call such a family nested if the following conditions hold.
\begin{itemize}
\item[$(i)$] $2\leq |A_j|\leq |A|$ for all $j$.
\item[$(ii)$] If $C_i=A_i'\sqcup B_i'\subset A\sqcup B$, then $B_i'$ consists of consecutive elements of $B$. Either $A_i'$ or $B_i'$ may be empty. If $A_i'$ is empty we require that $2\leq |B_i'|\leq |B|$, and if $B_i'$ is empty we require $1\leq |A_i'|\leq |A|$.
\item[$(iii)$] If $C_i\cap C_j\neq \emptyset$ then $C_i\subset C_j$ or $C_j\subset C_i$. If $A_i\cap A_j\neq \emptyset$ then $A_i\subset A_j$ or $A_j\subset A_i$. If $A_i\cap C_j\neq \emptyset$, then $A_i\subset A_j$.
\end{itemize}
We will call the $A_j$'s \emph{the type I elements} and the $C_j$'s the \emph{type II elements} of the nested family.
We set $C_0=A\sqcup B$.
Again, the nested family determines a rooted tree with vertices corresponding to $\{C_0,A_1,\dots,C_p\}\sqcup \bigcup_{c\in A\sqcup B}\{c\}$.
We call the vertices corresponding to the $A_j$ and to the one-element sets $\{a\}$ the \emph{type I vertices} and the remainder the \emph{type II vertices}.
The edges in the tree are defined such that the induced partial order agrees with that of inclusion, where we impose that by definition a type I subset does not include any type II subset, though the converse is possible.
The star of an element $D$ of a nested family as above, denoted  $\mathrm{star}(D)$, is the set of direct children of $D$ in the tree.
Moreover, for a type II vertex $C$ we define $\mathrm{star}_I(C)$ to be the set of type I children and $\mathrm{star}_{II}(C)$ the set of type II children.

To a nested family as above one may associate a boundary stratum $\partial_{A_1,\dots,A_p,C_1,\dots,C_q}\overline C_{A,B}^+$ of codimension $p+q$.  It is isomorphic to a product of configuration spaces 
\begin{equation}\label{eq-strat-2}
\partial_{A_1,\dots,A_p,C_1,\dots,C_q} \overline C_{A,B}^+ \cong \prod_{i=1}^p  \overline C_{\mathrm{star}(A_j)} \prod_{j=0}^q \overline C^+_{\mathrm{star}_I(C_j), \mathrm{star}_{II}(C_j)}.
\end{equation}


\subsection{Coordinates and local torus actions}\label{sec:3-4}
It has been shown in \cite{ARTW} that the compactified configuration spaces $\overline C(A)$ and $\overline C_{A,B}^+$ fit into the framework of the regularized Stokes Theorem from section \ref{sec:regstokes}. Concretely, one can construct the following data.
There is a covering by charts $U_i$ with a free action of a torus $T_i$.
Here $i$ runs over the set of nested families of subsets of $A$ (respectively, of $A\sqcup B$) as considered in the previous subsection.
Note that the set of such $i$ is naturally partially ordered.

For $\overline C(A)$ the torus action is defined as follows. For each element $A'$ of the nested family $i$ we define the center of mass
\[
\zeta_{A'} = \frac 1 {|A'|} \sum_{a\in A'} z_a.
\]
Then one defines an $S^1$ action by rotating the points in $A'$ around their center of mass.
The $S^1$ actions assigned to different $A'$ commute and assemble into an action of a torus $T_i$ of dimension $|i|$.
We denote the vector field generating the $S^1$ action by $v_{A'}$ and define the multivector field
\[
 \xi_i := \bigwedge_{A'\in i} v_{A'}.
\]

For $\overline C_{A,B}^+$ the construction is similar, except that one assigns $S^1$ actions only to the type I subsets in the nested family.

On each $U_i$ one may furthermore define local coordinates. We consider the case of $\overline C_A$.
One assigns to each $A_j\in i$ a parameter $r_{A_j}\geq 0$, and a configuration $\{z_{A'}^{(j)} \mid A'\in \mathrm{star}(A_j) \}$, normalized such that 
\begin{align*}
\sum_{A'} |A'| z_{A'}^{(j)} &= 0
&
\sum_{A'} |A'| |z_{A'}^{(j)}|^2 &= 1.
\end{align*} 

Suppose that $a\in A$ and that we have a maximal chain of subsets $a\in A_k \subset A_{k-1} \subset \cdots A_1 \subset A$, where all $A_j$ are members of the nested family $i$. 
Then locally one has a parameterization
\[
z_a = \zeta_{B_1} + r_{B_1}( z_{B_2}^{(1)} + r_{B_2}( \cdots ( z_{B_k}^{k-1} + r_{B_k} z_a^{(k)}   ) \cdots  ).
\]

For more details we refer the reader to \cite{ARTW}*{section 3}.

\begin{Rem}
 Following loc. cit. we will denote nested families by the letter $i$, slightly sub-optimally for the risk of confusion with a natural number. Furthermore, we will often think of $i$ as the tree determined by the nested family, and identify the the elements of $i$ with vertices of the tree. 
\end{Rem}

\section{Propagators and weight forms}

\subsection{A family of singular propagators}\label{s-4}
Let us first discuss in detail the compactified configuration space $\overline C_{2,0}^+$, the ``Eye''.
It has three boundary strata of codimension $1$, one of them is a copy of $\overline C_2=S^1$, the other two are isomorphic to closed intervals, glued at their boundary points, which in turn are the two boundary strata of codimension $2$.
See Figure~\ref{fig-eye} for a pictorial representation of $\overline C_{2,0}^+$ and its boundary stratification.
\begin{figure}
\centering
\begin{tikzpicture}[>=latex]
[scale=0.8]
\coordinate (v1) at (0,0);
\draw[fill=gray!20!white] (v1) arc [start angle=130, end angle=50,radius=5];
\draw[fill=gray!20!white] (v1) arc [start angle=230, end angle=310,radius=5];
\draw[fill=white] (3.17,0) circle [radius=0.58];
\node (p) at (75:2) {$\overline C_2$};
\node (1) at (-1,0) {$\overline C_{0,2}^+$};
\node (2) at (7.4,0) {$\overline C_{0,2}^+$};
\node (3) at (3.17,2) {$\overline C_{1,1}^+$};
\node (4) at (3.17,-2) {$\overline C_{1,1}^+$};
\node (q) at (2.61,0) {};
\draw[->] (p) to (q);
\node (5) at (0,0) {};
\draw[->] (1) to (5);
\node (6) at (6.4,0) {};
\draw[->] (2) to (6);
\node (7) at (3.17,1.1) {};
\draw[->] (3) to (7);
\node (8) at (3.17,-1.1) {};
\draw[->] (4) to (8);
\end{tikzpicture}
\caption{\label{fig-eye} The Eye $\overline C_{2,0}^+$ and its boundary stratification.}
\end{figure}
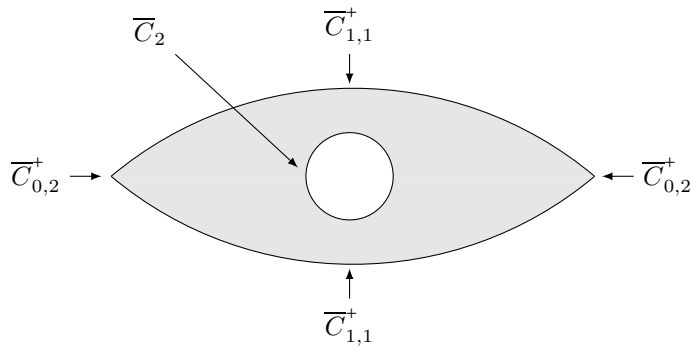

We now define on $\mathrm{Conf}_{2,0}^+$ the polynomial family over $\mathbb R\ni t$ of multi-valued functions
\[
\mathrm{Conf}_{2,0}^+\ni (z_1,z_2)\mapsto \phi^t(z_1,z_2)=\frac{1-t}{2\pi i}\log\!\left(\frac{z_1-z_2}{\overline z_1-z_2}\right)-\frac{t}{2\pi i}\log\!\left(\frac{\overline z_1-\overline z_2}{z_1-\overline z_2}\right)\in\mathbb C,
\]
for some choice of a complex logarithm $\log(\bullet)$.

Observe that the terms $\log(\overline z_1-z_2)$ and $\log(z_1-\overline z_2)$ are smooth and well-defined on $\mathrm{Conf}_{2,0}^+$, while $\log(z_1-z_2)$ and $\log(\overline z_1-\overline z_2)$ are multi-valued and have a logarithmic singularity at the  $z_1=z_2$ of $\mathbb H^+\times\mathbb H^+$.

It is clear that $\phi^t$ descends to a multi-valued function on $C_{2,0}^+$, and its exterior derivative yields a polynomial family $\omega^t$ over $\mathbb R$ of complex-valued, real analytic closed $1$-form on $C_{2,0}^+$.

By using local coordinates for $\overline C_{2,0}^+$, when the first, respectively the second, argument in $C_{2,0}^+$ approaches $\mathbb R$, $\omega^t$ restricts to $0$, respectively to the smooth, exact $1$-form
\[
\omega^t(z_1,z_2)=\frac{1}{\pi}d\mathrm{arg}(z_1-z_2),
\]
if we choose {\em e.g.} the principal branch of the complex logarithm, for which the argument function $\mathrm{arg}(\bullet)$ on $\mathbb H^+$ takes its values in $(0,\pi)$ and $\mathrm{arg}(i)=\pi/2$.

On the other hand, let us consider local coordinates of $\overline C_{2,0}^+$ near the stratum $C_{1,0}^+\times C_2$ such that $z_1=i$ and $z_2=i+w$, $w=\rho e^{i\varphi}$, and let us compute $\omega^t$ near the said stratum:
\[
\omega^t(i,i+w)=\frac{1-t}{2\pi i}\frac{dw}w-\frac{t}{2\pi i}\frac{d\overline w}{\overline w}-\frac{1-t}{2\pi i}\frac{dw}{w+2i}+\frac{t}{2\pi i}\frac{d\overline w}{\overline w-2i}=\frac{1-2t}{2\pi i}\frac{d\rho}\rho+\frac{d\varphi}{2\pi}+\cdots,
\]
where $\cdots$ denotes a complex-valued, real analytic form on a small punctured disk around $0$.

Thus, unless $t=1/2$, $\omega^t$ does not extend to a neighborhood of the boundary stratum $C_{1,0}^+\times C_2$ of $\overline C_{2,0}^+$: in fact, the previous computations show that $\omega^t$ defines a polynomial family over $\mathbb R$ of closed elements of $\Omega^1_1(\overline C_{2,0}^+)$, thus yielding a polynomial family of singular propagators.

The family $\omega^t$ can be re-written as 
\[
\omega^t(z_1,z_2)=\frac{1}{2\pi i}d\log\!\left(\frac{z_1-z_2}{\overline z_1-z_2}\right)-\frac{t}{\pi i}d\log\!\left(\left|\frac{z_1-z_2}{\overline z_1-z_2}\right|\right)=\frac{1}{2\pi}d \mathrm{arg}\!\left(\frac{z_1-z_2}{\overline z_1-z_2}\right)+\frac{1-2t}{2\pi i}d\log\!\left(\left|\frac{z_1-z_2}{\overline z_1-z_2}\right|\right).
\]
Observe that the first term in the middle, resp.\ rightmost expression is the logarithmic propagator, resp.\ Kontsevich's argument propagator; the other term in both expressions, on the other hand, is an exact $1$-form on $C_{2,0}^+$ associated with a function proportional to
\[
\log\!\left(\left|\frac{z_1-z_2}{\overline z_1-z_2}\right|\right).
\]
By using the above local coordinates near the boundary strata of $\overline C_{2,0}^+$, it is clear that the previous function belongs to the algebra $\mathcal O_{\log}(\overline C_{2,0}^+)$ of complex-valued, real analytic functions on $\overline C_{2,0}^+$ with logarithmic singularities along the boundary.

Finally, $\omega^0$, $\omega^{\frac{1}2}$ and $\omega^1$ coincide with the logarithmic propagator, Kontsevich's argument propagator and the anti-logarithmic propagator respectively: observe that $\omega^{\frac{1}2}$ is the only element of the family which actually belongs to $\Omega^1(\overline C_{2,0}^+)$.

\subsection{Singular weight forms}
Given a Kontsevich graph $\Gamma\in \kgra_{n,m,k}$, we will consider the differential $k$-form 
\begin{align}\label{eq-t-weight-omega}
\omega^t_\Gamma&=\prod_{e\in E(\Gamma)}\omega_e^t.
\end{align}
Here the notation is as follows. With an edge $e$ of $\Gamma$ we associate the natural projection $\pi_e$ from $C_{n,m}^+$ onto $C_{2,0}^+$ or $C_{1,1}^+$ and set $\omega^t_e=\pi_e^*(\omega^t)$.
The product of the $1$-forms is well-defined in virtue of the total order on the edge set $E(\Gamma)= [k]$ of $\Gamma$.


The derivative with respect to $t$ satisfies 
\[
\p_t \omega^t_\Gamma = d \tilde \omega^t_\Gamma
\]
with
\[
\tilde \omega^t_\Gamma := 
\sum_{e\in E(\Gamma)}(-1)^{e-1} \beta_e \left(\prod_{e'\neq e}\omega^t_{e'}\right),
\]
where
\[
\beta_e=\beta(z_{s(e)}, z_{t(e)}) := \frac{i}\pi \log\!\left(\left|\frac{z_{s(e)}-z_{t(e)}}{\overline z_{s(e)}-z_{t(e)}}\right|\right).
\]

\subsection{Regularizability}
In this section we will show that the top minus one degree forms of the type $\omega^t_\Gamma$ or $\tilde \omega^t_\Gamma$ are regularizable. The proof will closely follow the proof of the analogous statements in \cite{ARTW}. As in loc. cit. the only possible singularities appear at the type I boundary strata, i.~e., the strata corresponding to multiple type I vertices collapsing in the interior of the upper halfplane, away from the real axis.

\begin{Prop}\label{prop:splitting}
Let $\Gamma\in \kgra_{n,m,k}$ be an admissible graph, and let $U_i$ be a chart, where $i$ is a nested family. Let $B$ be a type I subset in the family $i$. Then the forms $\iota_{v_B} \omega_\Gamma^t$ and $\iota_{v_B} \omega_\Gamma^t$ are regular in $r_B$ and furthermore 
\begin{align*}
 \omega_\Gamma^t &= \frac{dr_B}{r_B}\wedge \alpha^t + (\text{terms regular in $r_B$}) \\
 \tilde \omega_\Gamma^t &= \frac{dr_B}{r_B}\wedge \tilde\alpha^t + \log(r_B) \hat\alpha^t + (\text{terms regular in $r_B$}) 
\end{align*}
where $\alpha^t$, $\tilde \alpha^t$ and $\hat \alpha^t$ are differential forms independent of $r_B$, $\iota_{v_B}\alpha^t=\iota_{v_B}\tilde \alpha^t=\iota_{v_B}\hat \alpha^t=0$ and $d\alpha^t=\iota_{v_B}d\tilde \alpha^t=d\hat \alpha^t=0$.
\end{Prop}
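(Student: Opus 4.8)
The plan is to localize in the chart $U_i$ near the boundary face $r_B=0$, at which the points in the subtree below $B$ collapse to the center of mass $\zeta_B$ in the \emph{interior} of the upper half-plane, and to analyze the pulled-back propagators $\omega^t_e=\pi_e^*\omega^t$ one edge at a time. Split $E(\Gamma)$ into the edges \emph{internal} to $B$ (both endpoints below $B$ in the tree $i$), \emph{crossing} $B$ (exactly one endpoint below $B$), and \emph{external} to $B$. In the local coordinates of Subsection~\ref{sec:3-4} a point $a$ below $B$ is parameterized as $z_a=\zeta_B+r_B(\cdots)$ with $\zeta_B$ independent of $r_B$, and I analyze the two pieces of $\omega^t=\frac1{2\pi}d\,\mathrm{arg}\!\left(\frac{z_1-z_2}{\bar z_1-z_2}\right)+\frac{1-2t}{2\pi i}d\log\!\left(\left|\frac{z_1-z_2}{\bar z_1-z_2}\right|\right)$. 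For an internal edge $e=(s,t)$ one has $z_s-z_t=r_B(\cdots)$, so the whole $\mathrm{arg}$-piece is regular in $r_B$, whereas $\log|z_s-z_t|=\log r_B+(\text{regular})$; and because $B$ is of \emph{type I}, $\bar z_s-z_t\to\bar\zeta_B-\zeta_B=-2i\,\mathrm{Im}(\zeta_B)\neq0$, so the conjugated denominators stay regular and nonzero. Hence $\omega^t_e=\frac{1-2t}{2\pi i}\frac{dr_B}{r_B}+\eta^t_e$ with $\eta^t_e$ regular in $r_B$, \emph{the coefficient being the same for every internal edge}. For crossing and external edges all differences occurring stay bounded away from $0$ as $r_B\to0$ (using also $\mathrm{Im}(\bar\zeta_B)<0$), so $\omega^t_e$ is itself regular in $r_B$. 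Multiplying over $E(\Gamma)$ and using $\frac{dr_B}{r_B}\wedge\frac{dr_B}{r_B}=0$ gives $\omega^t_\Gamma=\frac{dr_B}{r_B}\wedge\alpha^t+(\text{regular in }r_B)$, where $\alpha^t=\frac{1-2t}{2\pi i}\sum_{e\text{ internal}}\pm\prod_{e'\neq e}\eta^t_{e'}$; after a harmless redefinition absorbing $r_B$-dependent corrections into the regular remainder, $\alpha^t$ is independent of $r_B$. Since $\omega^t_\Gamma$ is a product of closed $1$-forms it is closed, and comparing $\frac{dr_B}{r_B}$-singular parts in $d\omega^t_\Gamma=0$ forces $d\alpha^t=0$.

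For $\iota_{v_B}\alpha^t=0$ I compute $\iota_{v_B}\omega^t_e$ at $r_B=0$. The generator $v_B$ of the $S^1$-rotation about $\zeta_B$ multiplies $z_s-z_t=r_B(\cdots)$ by a phase, so $\iota_{v_B}d\,\mathrm{arg}(z_s-z_t)\equiv1$ and $\iota_{v_B}d\log|z_s-z_t|\equiv0$ for an internal edge, while $v_B$ fixes the $r_B$-independent $\zeta_B$; hence $\iota_{v_B}\eta^t_e\big|_{r_B=0}=\frac1{2\pi}$ for every internal edge, whereas $\iota_{v_B}\eta^t_e\big|_{r_B=0}=0$ for crossing and external edges ($v_B$ acting trivially there at $r_B=0$, and identically so for external edges). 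Feeding this into the formula for $\alpha^t$ and pairing the summand indexed by an internal edge $e$ with that indexed by a second internal edge $f\neq e$, the two cancel by antisymmetry of the reordering signs; hence $\iota_{v_B}\alpha^t=0$. Since $\iota_{v_B}\omega^t_\Gamma=-\frac{dr_B}{r_B}\wedge\iota_{v_B}\alpha^t+(\text{regular})$ (because $\iota_{v_B}dr_B=0$), this is precisely the regularity of $\iota_{v_B}\omega^t_\Gamma$.

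The treatment of $\tilde\omega^t_\Gamma=\sum_e(-1)^{e-1}\beta_e\prod_{e'\neq e}\omega^t_{e'}$ is parallel. From the explicit formula for $\beta_e$ one reads $\beta_e=\frac{i}{\pi}\log r_B+(\text{regular in }r_B)$ for an internal edge, again with the same leading coefficient for all internal edges, and $\beta_e$ regular for crossing and external edges; the compatibility of these leading coefficients with those of $\omega^t_e$ reflects the relation $\partial_t\omega^t_e=d\beta_e$. Substituting the edge-wise expansions and using $\frac{dr_B}{r_B}\wedge\frac{dr_B}{r_B}=0$ produces a priori terms in $\frac{dr_B}{r_B}$, in $\log r_B$, in $(\log r_B)\frac{dr_B}{r_B}$, and regular terms; but summing the $(\log r_B)\frac{dr_B}{r_B}$-contributions over the choice of the internal edge carrying $\log r_B$ and of the (other) internal edge carrying $\frac{dr_B}{r_B}$ gives an alternating sum that telescopes to $0$ (already visible when $\Gamma$ has two internal edges $e_1<e_2$, where the two contributions $\beta_{e_1}\omega^t_{e_2}$ and $-\beta_{e_2}\omega^t_{e_1}$ produce opposite $(\log r_B)\frac{dr_B}{r_B}$-terms). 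This yields $\tilde\omega^t_\Gamma=\frac{dr_B}{r_B}\wedge\tilde\alpha^t+\log(r_B)\hat\alpha^t+(\text{regular in }r_B)$ with $\tilde\alpha^t,\hat\alpha^t$ independent of $r_B$. Both $\hat\alpha^t$ and $\tilde\alpha^t$ are again alternating sums of products of the $\eta^t_{e'}$ and of the regular parts of the $\beta_e$, so the pairing argument of the previous paragraph gives $\iota_{v_B}\hat\alpha^t=\iota_{v_B}\tilde\alpha^t=0$, which in turn yields the regularity of $\iota_{v_B}\tilde\omega^t_\Gamma$. Finally, comparing $\log r_B$- and $\frac{dr_B}{r_B}$-singular parts of $d\tilde\omega^t_\Gamma=\partial_t\omega^t_\Gamma$ (whose right side has only a $\frac{dr_B}{r_B}$-singularity) gives $d\hat\alpha^t=0$ and $d\tilde\alpha^t=\hat\alpha^t-\partial_t\alpha^t$; contracting with $v_B$ and using $\iota_{v_B}\hat\alpha^t=\iota_{v_B}\alpha^t=0$ then gives $\iota_{v_B}d\tilde\alpha^t=0$.

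The main obstacle is not the local analysis of the propagator, which is a short computation with the explicit formulas for $\omega^t$ and $\beta_e$, but the sign bookkeeping imposed by the total order on $E(\Gamma)$: one must verify that the Koszul and reordering signs are arranged so that the spurious $(\log r_B)\frac{dr_B}{r_B}$-terms in $\tilde\omega^t_\Gamma$ cancel in pairs and that the contractions $\iota_{v_B}\alpha^t$, $\iota_{v_B}\hat\alpha^t$, $\iota_{v_B}\tilde\alpha^t$ vanish. Both reductions rest on the uniformity highlighted above — every edge internal to $B$ contributes \emph{the same} scalar ($\frac{1-2t}{2\pi i}$ to the $\frac{dr_B}{r_B}$-coefficient of $\omega^t_e$, $\frac{i}{\pi}$ to the $\log r_B$-coefficient of $\beta_e$, and $\frac1{2\pi}$ to $\iota_{v_B}\eta^t_e$ at $r_B=0$) — so that the result is insensitive to which internal edge is singled out as the carrier of the singularity.
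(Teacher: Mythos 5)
Your proof is correct and its engine is the same as the paper's: split $E(\Gamma)$ according to position relative to $B$, observe that only edges internal to $B$ produce $\frac{dr_B}{r_B}$ (from $\omega^t_e$) or $\log r_B$ (from $\beta_e$) with a coefficient that is the \emph{same} constant for every such edge, and then kill both the spurious $\log(r_B)\frac{dr_B}{r_B}$ terms and the contractions $\iota_{v_B}\alpha^t$, $\iota_{v_B}\tilde\alpha^t$, $\iota_{v_B}\hat\alpha^t$ by antisymmetry of the edge-ordering signs in a double sum over distinct internal edges — exactly the cancellations the paper exploits. You differ in two places. First, you treat $\omega^t_\Gamma$ by the same direct edge-by-edge expansion, whereas the paper deduces the $\omega^t_\Gamma$ statement from the $\tilde\omega^t_\Gamma$ statement via $\omega^t_\Gamma=\omega^{\frac12}_\Gamma+\int_{1/2}^t d\tilde\omega^\tau_\Gamma\,d\tau$ together with regularity of the Kontsevich weight form; your version is more self-contained, the paper's is shorter. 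Second, you obtain $d\alpha^t=0$, $d\hat\alpha^t=0$ and $\iota_{v_B}d\tilde\alpha^t=0$ by matching singular parts in $d\omega^t_\Gamma=0$ and $d\tilde\omega^t_\Gamma=\partial_t\omega^t_\Gamma$; this implicitly uses that the exterior derivative of the ``regular in $r_B$'' remainder cannot itself produce a term of the form $\frac{dr_B}{r_B}\wedge(\text{$r_B$-independent})$ or $\log r_B\cdot(\text{$r_B$-independent, no $dr_B$})$. That is true here, because all coefficients are smooth functions times polynomials in logarithms (cf. the Remark following the Proposition), but you should say so; the paper avoids the issue entirely by noting that $\hat\alpha^t$ (and likewise your $\alpha^t$) is a sum of products of \emph{closed} limit one-forms with constant coefficients, hence closed, and by proving $\iota_{v_B}d\tilde\alpha^t=0$ with one more antisymmetric double-sum computation. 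Either repair is one line, so this is a presentational soft spot rather than a gap.
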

\begin{proof}
It is sufficient to show the statement for $\tilde \omega_\Gamma^t$. Then the statement for $\omega_\Gamma^t$ follows since
\begin{align*}
\omega_\Gamma^t &= \omega_\Gamma^{\frac 1 2} + \int_{\frac 1 2}^t \p_\tau \omega_\Gamma^\tau  d\tau 
= \omega_\Gamma^{\frac 1 2} + \int_{\frac 1 2}^t  d\tilde  \omega_\Gamma^\tau  d\tau \\
&= \omega_\Gamma^{\frac 1 2} +  \int_{\frac 1 2}^t \frac{dr_B}{r_B} \wedge \left( d\tilde  \alpha^\tau +  \hat\alpha^\tau    \right)d\tau + (\text{terms regular in $r_B$})
\end{align*}
and since the Kontsevich weight form $\omega_\Gamma^{\frac 1 2}$ is regular.

Consider $\tilde \omega_\Gamma^t$. There are three sorts of edges in $\Gamma$ contributing to the differential form:
\begin{itemize}
\item Edges $e$ both of whose endpoints $z$, $w$ belong to the complement of $B$. The corresponding 1-form $\omega_e^t$ and the function $\beta_e^t$ do not depend on $r_B$. Hence these edges play a minor role in the discussion.
\item Edges $e$ one of whose endpoints $z$ does not belong to $B$, and one of whose endpoints $w$ belongs to $B$ contribute terms of the form
\[
\omega_e^t = \omega^t(z, \zeta_B) + r_B(\cdots)
\] 
or 
\[
\beta_e = \beta(z, \zeta_B) + r_B(\cdots).
\] 
or the analogous terms with $z$ and $\zeta_B$ interchanged.
\item Edges $e$ both of whose endpoints $z,w$ belong to $B$ contribute singularities
\[
\omega_e^t = \frac{i}{\pi} \frac{dr_B}{r_B} + (\text{terms regular in $r_B$})
\]
or 
\[
\beta_e = \frac{i}{\pi} \log r_B  + (\text{terms regular in $r_B$})
\]
\end{itemize}
We will denote the edges of the first type by $E_1(\Gamma)$, of the second type by $E_2(\Gamma)$ and $E_3(\Gamma)$, depending on whether the edge starts in $B$ or not,  and the edges of the last type by $E_4(\Gamma)$.
Collecting the terms singular in $r_B$ we hence find a splitting
\begin{align*}
\tilde \omega_\Gamma^t
&= \frac{dr_B}{r_B}\left(
\sum_{e'\in E_1(\Gamma)}\sum_{e''\in E_4(\Gamma)}\psgn{e',e''} \beta_{e'}
\prod_{e\in E_1(\Gamma)\setminus\{e'\}}\omega^t_e 
\prod_{(z,w)\in E_2(\Gamma)} \omega^t(z,\zeta_B)
\prod_{(w,z)\in E_3(\Gamma)} \omega^t(\zeta_B,z)
\prod_{e\in E_4(\Gamma)\setminus\{e''\}} \omega^t_e \right.
\\ &\phantom{=} +
\sum_{e'=(z',w')\in E_2(\Gamma)}\sum_{e''\in E_4(\Gamma)}\psgn{e',e''} \beta(z',\zeta_B)
\prod_{e\in E_1(\Gamma)}\omega^t_e 
\prod_{(z,w)\in E_2(\Gamma)\setminus\{e'\}} \omega^t(z,\zeta_B)
\prod_{(w,z)\in E_3(\Gamma)} \omega^t(\zeta_B,z)
\prod_{e\in E_4(\Gamma)\setminus\{e''\}} \omega^t_e
\\ &\phantom{=} +
\sum_{e'=(z',w')\in E_3(\Gamma)}\sum_{e''\in E_4(\Gamma)}\psgn{e',e''}  \beta(z',\zeta_B)
\prod_{e\in E_1(\Gamma)}\omega^t_e 
\prod_{(z,w)\in E_2(\Gamma)} \omega^t(z,\zeta_B)
\prod_{(w,z)\in E_3(\Gamma)\setminus\{e'\}} \omega^t(\zeta_B,z)
\prod_{e\in E_4(\Gamma)\setminus\{e''\}} \omega^t_e
\\ &\phantom{=}+ \left.
\sum_{\substack{e',e''\in E_4(\Gamma) \\ e'\neq e''}}\psgn{e',e''}\beta(z',\zeta_B)
\prod_{e\in E_1(\Gamma)}\omega^t_e 
\prod_{(z,w)\in E_2(\Gamma)} \omega^t(z,\zeta_B)
\prod_{(w,z)\in E_3(\Gamma)} \omega^t(\zeta_B,z)
\prod_{e\in E_4(\Gamma)\setminus\{e',e''\}} \omega^t_e
\right)
\\ &\phantom{=} +
\log(r_B) \left(
\sum_{e'\in E_4(\Gamma)}(-1)^{e'-1}
\prod_{e\in E_1(\Gamma)}\omega^t_e 
\prod_{(z,w)\in E_2(\Gamma)} \omega^t(z,\zeta_B)
\prod_{(w,z)\in E_3(\Gamma)} \omega^t(\zeta_B,z)
\prod_{e\in E_4(\Gamma)\setminus\{e'\}} \omega^t_e
\right)
\\ &\quad\quad\quad + (\text{terms regular in $r_B$}),
\end{align*}
where we used the notation \eqref{equ:odef}.
Note that there are no terms proportional to $\log(r_B)\frac{dr_B}{r_B}$ in the sum since the corresponding terms cancel in pairs. The terms in the first pair of parentheses are $\tilde \alpha^t$, while the terms in the second pair are $\hat \alpha^t$. Clearly, both of these terms are independent of $r_B$. One checks that 
\begin{align*}
\iota_{v_B}\hat \alpha^t
&=
\iota_{v_B}\left(
\sum_{e'\in E_4(\Gamma)} (-1)^{e'-1}
\prod_{e\in E_1(\Gamma)}\omega^t_e 
\prod_{(z,w)\in E_2(\Gamma)} \omega^t(z,\zeta_B)
\prod_{(w,z)\in E_3(\Gamma)} \omega^t(\zeta_B,z)
\prod_{e\in E_4(\Gamma)\setminus\{e'\}} \omega^t_e \right)
\\&=
\sum_{\substack{e', e''\in E_4(\Gamma) \\ e'\neq e''}} \psgn{e',e''}
\prod_{e\in E_1(\Gamma)}\omega^t_e 
\prod_{(z,w)\in E_2(\Gamma)} \omega^t(z,\zeta_B)
\prod_{(w,z)\in E_3(\Gamma)} \omega^t(\zeta_B,z)
\prod_{e\in E_4(\Gamma)\setminus\{e',e''\}} \omega^t_e
\\&=0
\end{align*}
since the summand is antisymmetric under interchange of $e'$ and $e''$. Similarly one checks that $\iota_{v_B}\tilde \alpha^t=0$. It is immediate that $d \hat \alpha^t=0$ since only closed forms appear in the definition. The fact that $\iota_{v_B}d\tilde \alpha^t=0$ follows by a similar argument, using again the antisymmetry of the summand in a double sum.
\end{proof}

\begin{Rem}
 Note that the differential form we consider are all sums of products of the forms $\omega_e^t$ or $\beta_e$. It follows that in the coordinates from section \ref{sec:3-4} the only potentially singular terms are of the form $\frac{dr_B}{r_B}$ or $\log r_B$. To check whether some form with only these singularities is regular it suffices to check that it is regular in each variable $r_B$ separately. This observation is used in the proof of the following result.
\end{Rem}

\begin{Prop}\label{prop:iotaregular}
Let $\Gamma$ be an admissible graph. Then in every chart $U_i$ the forms $\iota_{\xi_i}\omega^t_\Gamma$ and $\iota_{\xi_i}\tilde \omega^t_\Gamma$ are regular.
\end{Prop}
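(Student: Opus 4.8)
The plan is to deduce this from Proposition~\ref{prop:splitting} by peeling off one torus vector field at a time, reducing the problem to a one-variable question. Fix a chart $U_i$ corresponding to a nested family $i$, and recall that for $\overline C_{A,B}^+$ the multivector field is $\xi_i=\bigwedge_{B\in i}v_B$, the wedge running over the type I subsets $B$ occurring in $i$. By the Remark preceding the Proposition, in the local coordinates of Section~\ref{sec:3-4} the forms $\omega^t_\Gamma$ and $\tilde\omega^t_\Gamma$, and hence also their contractions $\iota_{\xi_i}\omega^t_\Gamma$ and $\iota_{\xi_i}\tilde\omega^t_\Gamma$, are sums of products of the $\omega^t_e$, the functions $\beta_e$ and smooth bounded factors; consequently their only possible singularities are of the type $\frac{dr_B}{r_B}$ or $\log r_B$, and it suffices to establish regularity in each variable $r_B$ separately, $B$ ranging over the type I elements of $i$.

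So fix one type I subset $B\in i$. Since the vector fields $v_{B'}$, $B'\in i$, pairwise commute and are smooth on $U_i$ (the torus $T_i$ acting freely, by \cite{ARTW}), we may reorder the iterated contraction and write $\iota_{\xi_i}=\pm\,\iota_{\xi_i^B}\circ\iota_{v_B}$, where $\xi_i^B=\bigwedge_{B'\in i,\ B'\neq B}v_{B'}$ and the sign records the permutation. Proposition~\ref{prop:splitting} asserts precisely that $\iota_{v_B}\omega^t_\Gamma$ and $\iota_{v_B}\tilde\omega^t_\Gamma$ are regular in $r_B$: in the splittings of that Proposition the singular pieces $\frac{dr_B}{r_B}\wedge\alpha^t$, $\frac{dr_B}{r_B}\wedge\tilde\alpha^t$ and $\log(r_B)\,\hat\alpha^t$ are all annihilated by $\iota_{v_B}$, because $\iota_{v_B}dr_B=0$ (rotating the points of $B$ about their center of mass does not change the radial scale $r_B$) and $\iota_{v_B}\alpha^t=\iota_{v_B}\tilde\alpha^t=\iota_{v_B}\hat\alpha^t=0$.

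It then remains to observe that contracting further with $\xi_i^B$ cannot reintroduce any $r_B$-singularity: $\xi_i^B$ is built only from the smooth vector fields $v_{B'}$ with $B'\neq B$, and contracting a form that extends smoothly across $\{r_B=0\}$ with a smooth vector field again yields a form extending smoothly across $\{r_B=0\}$; iterating, $\iota_{\xi_i}\omega^t_\Gamma=\pm\,\iota_{\xi_i^B}\iota_{v_B}\omega^t_\Gamma$ and $\iota_{\xi_i}\tilde\omega^t_\Gamma=\pm\,\iota_{\xi_i^B}\iota_{v_B}\tilde\omega^t_\Gamma$ are regular in $r_B$. Since $B$ was an arbitrary type I element of $i$, the reduction of the first paragraph yields regularity of both forms on all of $U_i$.

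The substance of the argument is carried entirely by Proposition~\ref{prop:splitting}; what remains is essentially bookkeeping. The only point requiring care is the claim that contracting with the remaining torus generators $v_{B'}$ does not spoil regularity in $r_B$ — for this one genuinely uses that the whole torus $T_i$, and not just the circle generated by $v_B$, acts freely and smoothly in the charts of \cite{ARTW}, together with the observation (from the preceding Remark) that no singularity other than $\frac{dr_B}{r_B}$ and $\log r_B$ can occur in the first place.
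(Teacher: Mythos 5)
Your proposal is correct and follows essentially the same route as the paper: reduce, via the remark that only $\tfrac{dr_B}{r_B}$- and $\log r_B$-type singularities can occur, to regularity in each $r_B$ separately, and then kill the singular part in $r_B$ by contracting with $v_B$ using the splitting and the relations $\iota_{v_B}dr_B=0$, $\iota_{v_B}\alpha^t=\iota_{v_B}\tilde\alpha^t=\iota_{v_B}\hat\alpha^t=0$ from Proposition~\ref{prop:splitting}. The only difference is that you spell out the bookkeeping (reordering $\iota_{\xi_i}=\pm\,\iota_{\xi_i^B}\iota_{v_B}$ and noting that contraction with the remaining smooth generators $v_{B'}$ cannot reintroduce an $r_B$-singularity), which the paper leaves implicit by referring to \cite{ARTW}*{Proposition 4}.
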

\begin{proof}
The proof is the same as the proof of \cite{ARTW}*{Proposition 4}: By Proposition \ref{prop:splitting} the form $\iota_{\xi_i}\tilde \omega^t_\Gamma$ is regular in each of the $r_B$ and hence regular.
\end{proof}

\begin{Thm}\label{thm:omegareg}
Let $\Gamma$ be an admissible graph such that $|E(\Gamma)|=2n+m-2$, i.~e., such that the form $\omega_\Gamma^t$ is of top degree. Then the form $\omega_\Gamma^t$ is regular.
Let $\Gamma$ be an admissible graph such that $|E(\Gamma)|=2n+m-3$, i.~e., such that the form $\tilde \omega_\Gamma^t$ is of top degree. Then the form $\tilde \omega_\Gamma^t$ is regular.
\end{Thm}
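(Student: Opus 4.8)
The plan is to deduce regularity from Propositions~\ref{prop:splitting} and~\ref{prop:iotaregular} together with a codimension/degree count, following the pattern of the analogous top-degree regularity statement in \cite{ARTW}. First I would treat the $\tilde\omega_\Gamma^t$ case, since by the Remark after Proposition~\ref{prop:iotaregular} it suffices to check regularity in each coordinate $r_B$ separately, the only possible singular terms being $\frac{dr_B}{r_B}$ and $\log(r_B)$. Fix such a type I subset $B$ in a nested family $i$. By Proposition~\ref{prop:splitting} we have a splitting
\[
\tilde\omega_\Gamma^t = \frac{dr_B}{r_B}\wedge\tilde\alpha^t + \log(r_B)\,\hat\alpha^t + (\text{terms regular in }r_B),
\]
with $\iota_{v_B}\tilde\alpha^t = \iota_{v_B}\hat\alpha^t = 0$ and $d\hat\alpha^t = 0 = \iota_{v_B}d\tilde\alpha^t$. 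The point will be that, for degree reasons, when $\tilde\omega_\Gamma^t$ is of top degree the forms $\tilde\alpha^t$ and $\hat\alpha^t$ must vanish identically, so no singularity in $r_B$ is present.

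The key step is the degree count. The boundary stratum associated with collapsing the subset $B$ is of the form $\overline C_{|B|}\times(\text{configuration space of the quotient})$, and $\overline C_{|B|}$ has dimension $2|B|-3$. The form $\tilde\alpha^t$ lives, after contracting out $v_B$ and the $r_B$-direction, on a space of dimension $(2n+m-2) - 1 - 1$ in the relevant directions of the collapsing cluster, while the number of edges of $\Gamma$ internal to $B$ is at most $2|B|-3$ (otherwise the collapsing-$B$ factor $\overline C_{|B|}$ cannot support the corresponding weight form, exactly as in Kontsevich's and ARTW's vanishing arguments — more precisely one compares $|E(\Gamma|_B)|$ with $\dim \overline C_{|B|} = 2|B|-3$). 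Combined with the fact that $\tilde\omega_\Gamma^t$ has top degree $2n+m-3$, i.e. one less than for $\omega_\Gamma^t$, a direct count shows that $\frac{dr_B}{r_B}\wedge\tilde\alpha^t$ would force a form of degree exceeding the available dimension on either the $B$-cluster or its complement, hence $\tilde\alpha^t=0$; and similarly $\hat\alpha^t$, which carries an extra factor $\log(r_B)$ but no $dr_B$, is killed by the same count applied to $\iota_{v_B}$ (using $\iota_{v_B}\hat\alpha^t=0$, so that $\hat\alpha^t$ is determined by a form on the stratum of strictly smaller dimension). Once $\tilde\alpha^t=\hat\alpha^t=0$ for every $B$, the form $\tilde\omega_\Gamma^t$ is regular in each $r_B$, hence regular. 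For the $\omega_\Gamma^t$ statement (top degree $2n+m-2$), I would run the same argument, now with the splitting $\omega_\Gamma^t = \frac{dr_B}{r_B}\wedge\alpha^t + (\text{regular})$ from Proposition~\ref{prop:splitting}, showing $\alpha^t = 0$ by the analogous count; alternatively one can invoke that $\omega_\Gamma^{1/2}$ is Kontsevich's regular weight form and integrate $\p_\tau\omega_\Gamma^\tau = d\tilde\omega_\Gamma^\tau$ in $\tau$, as is already done inside the proof of Proposition~\ref{prop:splitting}.

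The main obstacle I anticipate is making the degree/dimension bookkeeping airtight: one must carefully account for the two dimensions lost (the $r_B$-direction and the $v_B$-direction) against the dimension $2|B|-3$ of the collapsing factor and the dimension of the complementary configuration space, and handle the possibility that $B$ itself contains sub-clusters of the nested family (so that the local picture is iterated). The cleanest route is to reduce, as in \cite{ARTW}*{Proposition 5 and its proof}, to the case where $B$ is a minimal element of the nested family containing the relevant collapsing vertices, where the factor is simply $\overline C_{|B|}$ with no further nesting, and then the count is the classical Kontsevich dimension argument: $\alpha^t$ (resp.\ $\tilde\alpha^t$, $\hat\alpha^t$) would have to be a top-or-higher degree form on a product one of whose factors carries too many one-forms $\omega_e^t$ relative to its dimension.
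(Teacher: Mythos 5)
Your overall strategy -- deduce regularity from the splitting of Proposition~\ref{prop:splitting} by showing that in top degree the singular coefficients $\alpha^t$, $\tilde\alpha^t$, $\hat\alpha^t$ must vanish -- is viable, but it is not the paper's argument, and the mechanism you give for the vanishing is the wrong one. The paper's proof is much more direct: on each chart $U_i$ one picks a connection $\theta_1,\dots,\theta_{\dim T_i}$ for the free torus action, and since the form is of top degree one has the frame identity $\omega_\Gamma^t = vol_i\wedge\iota_{\xi_i}\omega_\Gamma^t$ (and likewise for $\tilde\omega_\Gamma^t$), where $vol_i=\theta_{\dim T_i}\wedge\cdots\wedge\theta_1$ is smooth; regularity is then immediate from Proposition~\ref{prop:iotaregular}, with no need to prove that any singular coefficient vanishes. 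Your proposed count -- comparing $|E(\Gamma|_B)|$ with $\dim\overline C_{|B|}=2|B|-3$ ``as in Kontsevich's vanishing arguments'' -- does not do the job: Kontsevich-type dimension counts control the vanishing of \emph{integrals over boundary strata}, not the pointwise vanishing of the coefficient forms $\tilde\alpha^t$, $\hat\alpha^t$ on the chart, and nothing in the hypotheses forces the cluster $B$ to carry too many edge forms. The count that does work is the one you only gesture at in your ``obstacle'' paragraph: $\tilde\alpha^t$ and $\hat\alpha^t$ contain no $dr_B$ and satisfy $\iota_{v_B}\tilde\alpha^t=\iota_{v_B}\hat\alpha^t=0$, so pointwise they lie in the exterior algebra of a corank-two subbundle of the cotangent space (of rank $D-2$, $D=2n+m-2$), while their degrees are $D-1$ and $D$ respectively in the $\tilde\omega$ case (and $\alpha^t$ has degree $D-1$ in the $\omega$ case); hence they vanish. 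With that substitution your argument closes, and one $B$ at a time suffices by the Remark you cite.

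Two further cautions. First, your ``alternative'' route for $\omega_\Gamma^t$ -- integrating $\p_\tau\omega_\Gamma^\tau=d\tilde\omega_\Gamma^\tau$ from $\tau=\tfrac12$ -- does not give regularity directly: when $\omega_\Gamma^t$ is of top degree the corresponding $\tilde\omega_\Gamma^\tau$ is only of top-minus-one degree, so it is merely regularizable (Proposition~\ref{prop:regularizable}), not regular, and $d\tilde\omega_\Gamma^\tau$ is exactly the quantity whose regularity is in question; this manipulation is used in the paper only to transfer the \emph{splitting} from $\tilde\omega$ to $\omega$ inside the proof of Proposition~\ref{prop:splitting}. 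Second, note that the edge count ``$|E(\Gamma)|=2n+m-3$'' in the statement for $\tilde\omega_\Gamma^t$ is inconsistent with $\deg\tilde\omega_\Gamma^t=|E(\Gamma)|-1$ and $\dim C^+_{n,m}=2n+m-2$; the operative hypothesis is the clause ``$\tilde\omega_\Gamma^t$ is of top degree'', and your degree bookkeeping should be anchored to that rather than to the printed edge count.
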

\begin{proof}
The proof is the same as the proof of \cite{ARTW}*{Theorem 2}. Fix some nested family $i$ and pick a connection $\theta$ for the action of torus $T_i$ on $U_i$, and $\theta_1,\dots, \theta_{\mathrm{dim} T_i}$ for the parts corresponding to the various $S^1$ actions. Write
\[
 vol_i = \theta_{\mathrm{dim} T_i} \wedge \cdots \wedge \theta_2 \wedge \theta_1.
\]
Since $\omega_\Gamma^t$ (respectively $\tilde \omega_\Gamma^t$) is a top form we can write on $U_i$
\begin{align*}
\omega_\Gamma^t &= vol_i\wedge \iota_{\xi_i}\omega_\Gamma^t 
&
\tilde \omega_\Gamma^t &= vol_i\wedge \iota_{\xi_i}\tilde  \omega_\Gamma^t. 
\end{align*}
The right hand terms are regular by Proposition \ref{prop:iotaregular}, and hence so are $\omega_\Gamma^t$ and $\tilde \omega_\Gamma^t$.
\end{proof}

\begin{Prop}\label{prop:regularizable}
\hfill
\begin{itemize}
\item
Let $\Gamma$ be an admissible graph such that $|E(\Gamma)|=2n+m-3$, i.~e., such that the form $\omega_\Gamma^t$ is of top minus one degree. Then in every chart $U_i$ the form $\omega_\Gamma^t$ admits a decomposition
\[
\omega_\Gamma^t = \sum_{B=1}^k \frac{dr_B}{r_B} \wedge \alpha_B +(\text{regular terms})
\]
where $\iota_{\xi_i}\alpha_B=0$ and $\iota_{\xi_i}d\alpha_B=0$ for all $B$.
\item Let $\Gamma$ be an admissible graph such that $|E(\Gamma)|=2n+m-2$, {\em i.e.} such that the form $\tilde \omega_\Gamma^t$ is of top minus one degree. Then in every chart $U_i$ the form $\tilde\omega_\Gamma^t$ admits a decomposition
\[
\tilde\omega_\Gamma^t = \sum_{B=1}^k \frac{dr_B}{r_B} \wedge \tilde\alpha_B +\log (r_B) \hat \alpha_B+(\text{regular terms})
\]
where $\iota_{\xi_i}\tilde \alpha_B=\iota_{\xi_i}\hat \alpha_B=0$ and $\iota_{\xi_i}d\tilde \alpha_B=\iota_{\xi_i}d\hat \alpha_B=0$ for all $B$.
\end{itemize}
\end{Prop}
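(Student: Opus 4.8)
The plan is to deduce the statement from Propositions~\ref{prop:splitting} and~\ref{prop:iotaregular}, using that — as noted in the Remark above and established in Subsection~\ref{sec:3-4} — on a chart $U_i$, in the chosen coordinates, both $\omega_\Gamma^t$ and $\tilde\omega_\Gamma^t$ are polynomials in the finitely many expressions $\frac{dr_B}{r_B}$ (for $\omega_\Gamma^t$), respectively the $\frac{dr_B}{r_B}$ together with the $\log(r_B)$ (for $\tilde\omega_\Gamma^t$), $B$ ranging over the type I subsets of the nested family $i$, with coefficients that are regular differential forms; the integer $k$ in the statement is then the number of these type I subsets. First I would write out such an expansion. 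By the computation in the proof of Proposition~\ref{prop:splitting} no monomial containing a product $\log(r_B)\frac{dr_B}{r_B}$ with a common $B$ survives, the relevant contributions cancelling in antisymmetric pairs; hence every surviving singular monomial is a product of $\frac{dr_B}{r_B}$'s and of $\log(r_C)$'s with pairwise disjoint index sets, and — using $(\frac{dr_B}{r_B})^2=0$ to reabsorb products of several $\frac{dr_{B'}}{r_{B'}}$ into a single $\frac{dr_B}{r_B}\wedge(\cdots)$, and similarly for the $\log$'s — a decomposition of the asserted shape exists for any choice of a "leading index" attached to each monomial.

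The two sets of conditions are then checked coefficient-wise. For the $\iota_{\xi_i}$-conditions: since each rotation vector field generating $T_i$ preserves every radial parameter, it annihilates $\frac{dr_B}{r_B}$, and $\iota_{\xi_i}$ moreover passes through the $0$-forms $\log(r_B)$; hence $\iota_{\xi_i}\omega_\Gamma^t$ (resp.\ $\iota_{\xi_i}\tilde\omega_\Gamma^t$) is again a polynomial in the same singular expressions, with coefficients obtained by applying $\iota_{\xi_i}$ to the old ones, and these are still regular. By Proposition~\ref{prop:iotaregular} the contracted form is regular, so — a polynomial in the $\frac{dr_B}{r_B}$ and $\log(r_B)$ with regular coefficients being regular only if every coefficient of a genuinely singular monomial vanishes — we get $\iota_{\xi_i}(\text{coefficient})=0$ for each nontrivial monomial. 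For the $d$-conditions: in the $\omega_\Gamma^t$ case each $\omega_e^t$ is closed, hence so is $\omega_\Gamma^t$ and so is every coefficient in its expansion (a product of closed forms $\omega_e^t$ and $\frac{dr_B}{r_B}$); thus $d\alpha_B=0$ and a fortiori $\iota_{\xi_i}d\alpha_B=0$. In the $\tilde\omega_\Gamma^t$ case one uses instead the relation $\p_t\omega_\Gamma^t=d\tilde\omega_\Gamma^t$ together with the identities $d\hat\alpha^t=0$, $\iota_{v_B}d\tilde\alpha^t=0$ of Proposition~\ref{prop:splitting}, again matching coefficients of singular monomials, to conclude $\iota_{\xi_i}d\tilde\alpha_B=\iota_{\xi_i}d\hat\alpha_B=0$.

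Assembling: grouping the singular monomials as above produces the forms $\alpha_B$ (resp.\ $\tilde\alpha_B$, $\hat\alpha_B$), each a $\bbC$-linear combination of coefficients of genuinely singular monomials, hence annihilated by $\iota_{\xi_i}$ and by $\iota_{\xi_i}d$; the remaining, monomial-free coefficient is the regular term. I expect the main obstacle to be precisely this last bookkeeping step: the $\alpha_B$ so obtained are \emph{not} regular — they still carry $\frac{dr_{B'}}{r_{B'}}$- and $\log(r_{B'})$-factors for $B'\neq B$ — so one cannot invoke Proposition~\ref{prop:splitting} on them directly; the point is that the two conditions $\iota_{\xi_i}\alpha_B=0$ and $\iota_{\xi_i}d\alpha_B=0$ are \emph{linear} in the expansion coefficients and insensitive to the grouping, while Propositions~\ref{prop:splitting} and~\ref{prop:iotaregular} already kill every coefficient of a singular monomial. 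This is the structure of the corresponding argument in \cite{ARTW}.
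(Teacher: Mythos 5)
Your overall architecture --- expand in each chart into monomials in the singular expressions $\frac{dr_B}{r_B}$ and $\log(r_B)$, group the monomials by a chosen leading index, and check the two conditions coefficientwise --- is the route the paper intends (its proof is a one-line reference to \cite{ARTW}*{Proposition 5}, which performs exactly this multi-variable extraction). The gap is in the step on which your verification of $\iota_{\xi_i}\alpha_B=0$ (and of $\iota_{\xi_i}\tilde\alpha_B=\iota_{\xi_i}\hat\alpha_B=0$) rests: it is \emph{not} true that a polynomial in $\frac{dr_B}{r_B}$, $\log(r_B)$ with merely \emph{regular} coefficients can be regular only if the coefficients of all genuinely singular monomials vanish. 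For instance $\frac{dr_B}{r_B}\wedge\bigl(r_B\,\gamma\bigr)$ is regular with a nonzero regular coefficient; worse, with only ``regular coefficients'' the expansion is highly non-unique, so ``the coefficient of a monomial'' is not even well defined, and the conclusion $\iota_{\xi_i}(\text{coefficient})=0$ does not follow from Proposition \ref{prop:iotaregular}. The implication you want holds only after normalizing the expansion so that the coefficient of each monomial is independent of the variables $r_B$ occurring in that monomial and contains no $dr_B$ for those $B$ --- which is what iterating the explicit splitting of Proposition \ref{prop:splitting} delivers --- and this normalization must be stated, since your whole coefficientwise argument (including the closedness claim ``every coefficient is a product of closed forms'' and the ``matching coefficients'' step for $\tilde\omega^t_\Gamma$) silently relies on it.

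There is also a cleaner repair, which is the one implicit in the paper/ARTW and avoids Proposition \ref{prop:iotaregular} altogether (note that in the paper's logic that proposition is itself deduced from Proposition \ref{prop:splitting}, so invoking it here runs the logic backwards): the antisymmetry computations in the proof of Proposition \ref{prop:splitting} show \emph{directly} that the extracted coefficients satisfy $\iota_{v_B}(\cdot)=0$ together with the corresponding conditions on their differentials; since $\iota_{\xi_i}=\pm\,\iota_{v_{B}}\iota_{v_{B'}}\cdots$ and every $v_{B'}$ annihilates all $dr_B$ and all $\log(r_B)$-factors, the stated $\xi_i$-conditions then pass to each grouped $\alpha_B$, $\tilde\alpha_B$, $\hat\alpha_B$ --- in particular $\iota_{\xi_i}d\tilde\alpha_B=\iota_{\xi_i}d\hat\alpha_B=0$ even though $d$ of the $\log(r_{B'})$-factors creates new $\frac{dr_{B'}}{r_{B'}}$-terms, because $\iota_{\xi_i}$ of the underlying coefficients already vanishes. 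With either fix your bookkeeping of the grouping goes through; as written, the central inference is unsupported.
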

\begin{proof}
The proof is a copy of the proof of \cite{ARTW}*{Proposition 5}.
\end{proof}

\begin{Prop}\label{prop:bdry1}
Let $\Gamma$ be an admissible graph, $U_i$ be a chart and let $B$ be a vertex of the tree defining $i$. Denote by $\p_B U_i$ the codimension one boundary stratum of $U_i$ corresponding to $B$, and denote $\p_B\Gamma=\Gamma'\cup \Gamma''$. Then 
\begin{align*}
\iota_{v_B}\omega^t_\Gamma \mid_{\p_B U_i} &= \iota_{v_B} \omega^t_{\Gamma'}\mid_{\p_B U_i} \wedge \omega^t_{\Gamma''}\mid_{\p_B U_i} \\
\iota_{v_B}\tilde \omega^t_\Gamma \mid_{\p_B U_i} &= 
\iota_{v_B} \tilde\omega^t_{\Gamma'}\mid_{\p_B U_i} \wedge \omega^t_{\Gamma''}\mid_{\p_B U_i} +
\iota_{v_B} \omega^t_{\Gamma'}\mid_{\p_B U_i} \wedge \tilde \omega^t_{\Gamma''}\mid_{\p_B U_i} .
\end{align*}
\end{Prop}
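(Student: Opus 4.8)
The plan is to work in the local coordinates near $\p_B U_i$ recalled in Section~\ref{sec:3-4}: $r_B\ge 0$ is the coordinate transverse to the stratum, the limit $r_B\to 0$ realises the splitting $\p_B\Gamma=\Gamma'\cup\Gamma''$ (with $\Gamma'$ the subgraph of $\Gamma$ living in $B$, on the factor $\overline C_{\mathrm{star}(B)}$ of \eqref{eq-strat-2}, and $\Gamma''$ the graph obtained from $\Gamma$ by contracting $B$ to a single type~I vertex, living on the collapsed configuration space). I would organise everything around two structural facts: that $v_B$ is tangent to the $\overline C_{\mathrm{star}(B)}$-factor and acts trivially on the collapsed configuration space (so $\iota_{v_B}$ annihilates any form pulled back from the latter, and moreover $\iota_{v_B}dr_B=0$ since $r_B$ is radial), and that $\iota_{v_B}\omega^t_\Gamma$ and $\iota_{v_B}\tilde\omega^t_\Gamma$ are regular in $r_B$ by Proposition~\ref{prop:splitting}, so that their restrictions to $\p_B U_i=\{r_B=0\}$ make sense.

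First I would record the $r_B\to 0$ behaviour of the individual edge data, which is already contained in the proof of Proposition~\ref{prop:splitting}: an edge internal to $B$ contributes $\omega^t_e=(\text{const})\,\frac{dr_B}{r_B}+(\text{a form on }\overline C_{\mathrm{star}(B)})+(\text{a form pulled back from the collapsed space})+O(r_B)$ and $\beta_e=\frac{i}{\pi}\log r_B+(\text{regular})$; an edge with exactly one endpoint in $B$ contributes $\omega^t(\cdot,\zeta_B)+(\cdots)\,dr_B+O(r_B)$, resp.\ $\beta(\cdot,\zeta_B)+O(r_B)$, i.e.\ data pulled back from the collapsed space up to lower order; and an edge disjoint from $B$ contributes $r_B$-independent data. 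Multiplying out $\omega^t_\Gamma=\prod_e\omega^t_e$ and using that every wedge factor $\frac{dr_B}{r_B}$ or $dr_B$ either is killed by $\iota_{v_B}$ or vanishes upon restriction to $\{r_B=0\}$, together with $\iota_{v_B}\alpha^t=0$ from Proposition~\ref{prop:splitting}, one obtains that $\iota_{v_B}\omega^t_\Gamma\mid_{\p_B U_i}$ equals $\iota_{v_B}$ applied to $\omega^t_{\Gamma'}\wedge\omega^t_{\Gamma''}$, the edges having been reordered as ``internal then external'' with the Koszul sign absorbed into the convention defining $\p_B\Gamma$ and the orientation of \eqref{eq-strat-2}. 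Since $\iota_{v_B}\omega^t_{\Gamma''}=0$, the Leibniz rule for $\iota_{v_B}$ gives the first identity.

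For the second identity I would split the defining sum $\tilde\omega^t_\Gamma=\sum_e(-1)^{e-1}\beta_e\prod_{e'\neq e}\omega^t_{e'}$ according to whether the distinguished edge $e$ lies inside $B$ or not. When $e$ is internal, the $\log r_B$-part of $\beta_e$ feeds into the $\log(r_B)\hat\alpha^t$-terms of Proposition~\ref{prop:splitting} (annihilated by $\iota_{v_B}$, and without any surviving $\log(r_B)\frac{dr_B}{r_B}$-cross-terms), while the regular part of $\beta_e$ together with the internal factors among the $\omega^t_{e'}$ assembles — on restriction and modulo terms killed by $\iota_{v_B}$ — into $\tilde\omega^t_{\Gamma'}$, wedged with the product $\omega^t_{\Gamma''}$ of the external edge forms. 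When $e$ is external the situation is symmetric: $\beta_e\to\beta(\cdot,\zeta_B)$ together with the remaining external factors builds $\tilde\omega^t_{\Gamma''}$, wedged with $\omega^t_{\Gamma'}$. Summing the two families shows that on $\p_B U_i$ one has $\tilde\omega^t_\Gamma=\tilde\omega^t_{\Gamma'}\wedge\omega^t_{\Gamma''}+\omega^t_{\Gamma'}\wedge\tilde\omega^t_{\Gamma''}$ up to $\iota_{v_B}$-closed junk, and applying $\iota_{v_B}$, using again $\iota_{v_B}\omega^t_{\Gamma''}=\iota_{v_B}\tilde\omega^t_{\Gamma''}=0$ and the signed Leibniz rule, yields the claimed formula.

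The hard part will be the careful bookkeeping of the singular terms $\frac{dr_B}{r_B}$ and $\log r_B$ produced by edges internal to $B$, and of the ``collapsed-space'' remainders of the internal edge forms: one must be sure that, after contracting with $v_B$ and setting $r_B=0$, only the clean factorised form survives. This is, however, precisely the content that Proposition~\ref{prop:splitting} (and the pairwise cancellation of $\log(r_B)\frac{dr_B}{r_B}$-terms noted in its proof, together with $\iota_{v_B}\alpha^t=\iota_{v_B}\tilde\alpha^t=\iota_{v_B}\hat\alpha^t=0$) was set up to deliver, so beyond re-invoking it the residual work is the routine tracking of Koszul signs, pinned down by compatibility with the orientations in \eqref{eq-strat-2}. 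Overall the argument is parallel to the corresponding boundary-factorisation statement in~\cite{ARTW}, the only genuinely new ingredient being that here $v_B$ is tangent to the small factor $\overline C_{\mathrm{star}(B)}$ alone.
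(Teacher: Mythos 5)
Your argument is correct and is essentially the argument the paper intends: the paper's proof is just the citation ``Same as the proof of \cite{ARTW}*{Proposition 6}'', and that proof is exactly the local-coordinate factorization you reconstruct — expand each edge form near $r_B=0$ as in Proposition~\ref{prop:splitting}, use $\iota_{v_B}dr_B=0$ and $\iota_{v_B}\alpha^t=\iota_{v_B}\tilde\alpha^t=\iota_{v_B}\hat\alpha^t=0$ to discard the singular terms, observe that the surviving external-edge data is pulled back from the collapsed configuration space and hence killed by $\iota_{v_B}$, and conclude by the signed Leibniz rule. The only caveat is the sign/ordering bookkeeping you already flag, which is fixed by the edge-ordering convention in the definition of $\p_B\Gamma$.
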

\begin{proof}
Same as the proof of \cite{ARTW}*{Proposition 6}.
\end{proof}

\subsection{The boundary terms}\label{sec:bdryterms}
Let $\Gamma\in \dgra_{n,k}$ be a directed graph, cf. section \ref{ss-2-1}. We can associate to it the following differential form on the configuration space of points $\Conf_A$, for $A=[n]$.
\begin{equation}\label{equ:tildebetadef}
\tilde \beta_\Gamma^t
=
\sum_{\substack{e,e'\in E(\Gamma)\\e\neq e'}} \psgn{e',e} 
\frac 1 {\pi i} \log|z_{s(e')}-z_{t(e')}|
\prod_{e''\in E(\Gamma)\setminus \{e,e'\}}
\frac 1 {2\pi i} \left( (1-t) d\log(z_{s(e'')}-z_{t(e'')}) + t \, d\log(\bar z_{s(e'')}-\bar z_{t(e'')})\right).
\end{equation}
As before, the product over edges in \eqref{equ:tildebetadef}  shall be taken in the order dictated by the positions of the vertices.

\begin{Lem}
The differential form $\tilde \beta_\Gamma^t$ is $\bbC^\times \ltimes \bbC$-basic, and hence descends to a differential form on the quotient $C_A/S^1 = \Conf_A / \bbC^\times \ltimes \bbC$ for all $t\in \R$. We denote the differential form on the quotient by $\tilde \beta_\Gamma^t$ as well, abusing notation.
\end{Lem}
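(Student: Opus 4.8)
The plan is to check the two defining properties of a basic differential form directly: that $\tilde\beta_\Gamma^t$ is invariant under the $G$-action, $G:=\bbC^\times\ltimes\bbC$, and that it is horizontal, i.e.\ annihilated by contraction with every fundamental vector field of this action. Since $G$ acts freely on $\Conf_A$ as soon as $|A|\geq 2$ (and $\tilde\beta_\Gamma^t=0$ when $\Gamma$ has no edge joining two distinct vertices, in which case there is nothing to prove), and since by the discussion of Subsection~\ref{ss-3-2} the projection $\Conf_A\to\Conf_A/G=C_A/S^1$ is a principal $G$-bundle, a $G$-basic form on $\Conf_A$ is the pullback of a unique differential form on the quotient, which is exactly the assertion.

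First I would fix the fundamental vector fields. The $G$-action is generated by the real translation fields and the real rescaling and rotation fields, which together are $\bbR$-linear combinations of the complex vector fields $T=\sum_{a\in A}\p_{z_a}$, $\bar T$, $Z=\sum_{a\in A}z_a\p_{z_a}$, $\bar Z$; since contraction and Lie derivative are $\bbR$-linear in the vector field, it then suffices to verify $\iota_X\tilde\beta_\Gamma^t=0$ and $L_X\tilde\beta_\Gamma^t=0$ for $X\in\{T,\bar T,Z,\bar Z\}$. Everything reduces to the behaviour of the two building blocks of $\tilde\beta_\Gamma^t$: the functions $g_e:=\log|z_{s(e)}-z_{t(e)}|$ and the closed one-forms $\eta_e:=\tfrac 1{2\pi i}\bigl((1-t)\,d\log(z_{s(e)}-z_{t(e)})+t\,d\log(\bar z_{s(e)}-\bar z_{t(e)})\bigr)$, in terms of which $\tilde\beta_\Gamma^t=\tfrac 1{\pi i}\sum_{e\neq e'}\psgn{e',e}\,g_{e'}\prod_{e''\in E(\Gamma)\setminus\{e,e'\}}\eta_{e''}$. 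Because $g_e$ and $\eta_e$ depend only on the difference $z_{s(e)}-z_{t(e)}$, one immediately gets $Tg_e=\bar Tg_e=0$ and $\iota_T\eta_e=\iota_{\bar T}\eta_e=0$, while a one-line computation gives $Zg_e=\bar Zg_e=\tfrac12$, $\iota_Z\eta_e=\tfrac{1-t}{2\pi i}$ and $\iota_{\bar Z}\eta_e=\tfrac t{2\pi i}$. In every case the outcome is a \emph{constant}, and since $d\eta_e=0$, Cartan's formula $L_X=d\iota_X+\iota_X d$ then forces $L_X\eta_e=0$ for all four $X$.

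Granting these formulas, invariance under $T,\bar T$ is immediate, and for $X=Z$ (symmetrically $\bar Z$) the only surviving terms of $L_X\tilde\beta_\Gamma^t$ are those in which the derivation hits $g_{e'}$, so that $L_Z\tilde\beta_\Gamma^t$ is proportional to $\sum_{e\neq e'}\psgn{e',e}\prod_{e''\in E(\Gamma)\setminus\{e,e'\}}\eta_{e''}$; this vanishes because for each unordered pair $\{e,e'\}$ of edges the two orderings contribute the same product of $\eta$'s but opposite signs $\psgn{e',e}=-\psgn{e,e'}$. Thus $\tilde\beta_\Gamma^t$ is $G$-invariant. For horizontality, $\iota_T\tilde\beta_\Gamma^t=\iota_{\bar T}\tilde\beta_\Gamma^t=0$ holds term by term, whereas $\iota_Z\tilde\beta_\Gamma^t$ (and likewise $\iota_{\bar Z}\tilde\beta_\Gamma^t$) unfolds, using $\iota_Z\eta_f=\tfrac{1-t}{2\pi i}$, into a double sum $\sum_{e\neq e'}\sum_{f\in E(\Gamma)\setminus\{e,e'\}}(\pm)\,g_{e'}\prod_{e''\in E(\Gamma)\setminus\{e,e',f\}}\eta_{e''}$ in which the factor $g_{e'}$ and the remaining product are symmetric under the exchange $e\leftrightarrow f$; a bookkeeping of the sign $\psgn{e',e}$ together with the Koszul sign produced when $\eta_f$ is extracted from $\prod_{e''\in E(\Gamma)\setminus\{e,e'\}}\eta_{e''}$ (both governed by the recursion~\eqref{equ:odef}) shows that the overall coefficient is antisymmetric in $e$ and $f$, so the double sum is zero.

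The one genuinely fiddly step is this last sign computation entering the horizontality check for $\iota_Z$ and $\iota_{\bar Z}$; the remainder is dictated by the single observation that $g_e$ and $\eta_e$ are functions of differences and that pairing $\eta_e$ with the Euler fields, or differentiating $g_e$ along them, yields constants. Combining the two halves, $\tilde\beta_\Gamma^t$ is $G$-basic for every $t\in\R$ and hence descends to a real-analytic differential form on $C_A/S^1=\Conf_A/(\bbC^\times\ltimes\bbC)$, as claimed.
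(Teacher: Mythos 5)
Your proof is correct and follows essentially the same route as the paper: translations are handled by the difference-dependence of the building blocks, and both the horizontality and the invariance under the scaling/rotation generators reduce to sums whose summands carry the sign $\psgn{\cdot,\cdot}$ (respectively $\psgn{\cdot,\cdot,\cdot}$) and are therefore antisymmetric under interchange of two edge labels, hence vanish. Your use of the complexified generators $T,\bar T,Z,\bar Z$ together with Cartan's formula is only a cosmetic repackaging of the paper's computation with the real rotation and scaling fields (note the rotation field is $i(Z-\bar Z)$, a $\bbC$-linear combination, which is harmless since $\iota$ and $L$ are $\bbC$-linear in the vector field for constant coefficients).
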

\begin{proof}
It is clear that the form is basic under translations.
Let $v$ be the rotation generating vector field. Then 
\begin{align*}
\iota_{v} \tilde \beta_\Gamma^t &=
\sum_{\substack{e, e',e''\in E(\Gamma) \\ e\neq e' \neq e'' \neq e} }\psgn{e',e,e''} 
\frac 1 {\pi i} \log|z_{s(e')}-z_{t(e')}|
\\&\quad\quad\quad\quad\quad\quad\quad
\prod_{e'''\in E(\Gamma)\setminus \{e,e',e''\}}
\frac 1 {2\pi i} \left( (1-t) d\log(z_{s(e''')}-z_{t(e''')}) + t d\log(\bar z_{s(e''')}-\bar z_{t(e''')})\right)\\
&=0
\end{align*}

since the summand is antisymmetric under interchange of $e$ and $e''$. For the vector field $v'$ generating the scaling transformation an almost identical calculation shows that $\iota_{v} \tilde \beta_\Gamma^t=0$.

All terms appearing in the definition of $\tilde \beta_\Gamma^t$ are rotation invariant, so the Lie derivative with respect to $v$ vanishes, $L_v\tilde \beta_\Gamma^t=0$. Furthermore, compute the Lie derivative
\[
L_{v'} \tilde \beta_\Gamma^t = 
\sum_{\substack{e,e'\in E(\Gamma)\\ e\neq e'} }\psgn{e',e}
\frac 1 {\pi i} 
\prod_{e''\in E(\Gamma)\setminus \{e,e'\}}
\frac 1 {2\pi i} \left( (1-t) d\log(z_{s(e'')}-z_{t(e'')}) + t d\log(\bar z_{s(e'')}-\bar z_{t(e'')})\right)
=0
\]
by antisymmetry of the summand under interchange of $e$ and $e'$.
\end{proof}

\begin{Prop}\label{prop:tildeomega}
If $\Gamma$ is a graph such that $|E(\Gamma)|=2|V(\Gamma)|-4$, then the top degree form $\tilde \beta_\Gamma^t$ extends to a regular form on $\overline C_A/S^1$ and furthermore
\[
\tilde \beta_\Gamma^t = (4t(1-t))^{|V(\Gamma)|-2}\tilde \beta_\Gamma^{\frac 1 2}.
\]
\end{Prop}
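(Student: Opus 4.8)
The plan is to analyze the differential form $\tilde\beta_\Gamma^t$ as a polynomial in $t$ and combine two ingredients: a dimension/degree count that forces this polynomial to be a constant multiple of a single homogeneous piece, and a scaling argument on the individual $1$-forms that pins down the coefficient to be $(4t(1-t))^{|V(\Gamma)|-2}$. First I would note that each factor appearing in $\tilde\beta_\Gamma^t$ is of the shape $(1-t)\,d\log(z_{s}-z_{t}) + t\,d\log(\bar z_s - \bar z_t)$, which we may rewrite, using the identity $d\log(z-w) = \frac12 d\log|z-w|^2 + \frac{i}{2}d\arg(z-w)$ and its conjugate, as
\[
\frac12\, d\log|z_s - z_t|^2 + \frac{i}{2}(1-2t)\, d\arg(z_s - z_t).
\]
Thus each such $1$-form is a linear combination of a ``$t$-independent'' closed form $\omega^{1/2}_e$-type piece and a $(1-2t)$-multiple of the exact logarithmic piece $d\beta_e$ (up to normalization). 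The extra $\log|z_{s(e')}-z_{t(e')}|$ prefactor is $t$-independent.

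The key structural observation is that $\tilde\beta_\Gamma^t$ is invariant under the substitution $t\mapsto 1-t$: indeed $(1-t)d\log(z-w)+t\,d\log(\bar z-\bar w)$ becomes $t\,d\log(z-w)+(1-t)d\log(\bar z-\bar w)$, which is the \emph{complex conjugate} of the original factor, and the overall form, being real (it is built from $\frac1{\pi i}\log|\cdot|$ and such products), is unchanged under complex conjugation — one must check this carefully, tracking the $i$'s and the fact that the number of edges makes the product real in top degree. Hence $\tilde\beta_\Gamma^t$ is a polynomial in $t(1-t)$. Next, a degree count: $\tilde\beta_\Gamma^t$ is a sum over products of $|E(\Gamma)|-2 = 2|V(\Gamma)|-6$ of the $1$-forms above, living on $\overline C_A/S^1$, which has dimension $2|V(\Gamma)|-4$... and then one reduces further by the basic property; careful bookkeeping shows that the ``top degree'' constraint forces each surviving monomial in the expansion (after conjugate-pairing) to contain exactly $|V(\Gamma)|-2$ factors of the $(1-2t)$-type and the remaining factors of the $t$-independent type, because otherwise the form is not of maximal degree along the $\arg$-directions needed to be non-vanishing on the quotient. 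This is the step that yields the exponent $|V(\Gamma)|-2$, and expanding $\bigl(\frac{(1-2t)}{\cdot}\bigr)$ versus its value $1$ at $t=\frac12$ gives exactly the factor $(4t(1-t))^{|V(\Gamma)|-2}$ since $1-(1-2t)^2 = 4t(1-t)$; more precisely, each of the relevant $|V(\Gamma)|-2$ independent ``$\arg$-slots'' contributes a factor $(1-2t)^2 \cdot (\text{something}) $ relative to $t=\tfrac12$... I would make this precise by writing $\tilde\beta_\Gamma^t$ in a basis adapted to the torus fibration and comparing coefficient-wise with $\tilde\beta_\Gamma^{1/2}$.

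For the regularity/extension claim, I would invoke Theorem~\ref{thm:omegareg} (or rather the directed-graph analogue, which is proven by the same argument as in \cite{ARTW} — the form $\tilde\beta_\Gamma^t$ only has $\frac{dr_B}{r_B}$ and $\log r_B$ type singularities, and the antisymmetry arguments already used in the preceding Lemma and in Proposition~\ref{prop:splitting} show that $\iota_{\xi_i}\tilde\beta_\Gamma^t$ is regular, whence the top form itself is regular as in the proof of Theorem~\ref{thm:omegareg}). So the extension statement is essentially a restatement of the already-established regularity machinery applied to $\overline C_A/S^1$ in place of $\overline C_{A,B}^+$.

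The main obstacle I anticipate is the precise degree/slot-counting argument: one has to argue that in the expansion into $\arg$-forms and $\log|\cdot|^2$-forms, only terms with precisely $|V(\Gamma)|-2$ of the $\arg$-differentials survive on the quotient $\overline C_A/S^1$ of dimension $2|V(\Gamma)|-4$, and that for those terms the $(1-2t)$-dependence assembles exactly into $(4t(1-t))^{|V(\Gamma)|-2}$ — this requires understanding that the $d\log|\cdot|^2$ forms, together with one $\log|\cdot|$ prefactor, span a complement of the correct dimension and that the $S^1$-basic condition kills everything with too few $\arg$-slots. An alternative, perhaps cleaner, route to the coefficient is to directly use the scaling $z\mapsto \lambda z$ on $\bbC$ combined with the $S^1$-action: the $\arg$-forms transform trivially, the $\log|\cdot|^2$-forms pick up $d\log|\lambda|^2$ which must drop out, and tracking how $\tilde\beta_\Gamma^t$ versus $\tilde\beta_\Gamma^{1/2}$ behave under the full $\bbC^\times$-action forces the stated identity; I would pursue whichever of these two bookkeeping schemes turns out to be less error-prone.
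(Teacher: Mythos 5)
Your regularity half is fine and matches the paper's strategy: since the form is of top degree it suffices to check that the contractions with the locally defined torus vector fields are regular, and the potentially singular $\log r_B$ (resp.\ $\frac{dr_B}{r_B}$) contributions cancel in pairs by the same antisymmetry-in-a-double-sum trick as in Proposition~\ref{prop:splitting}; the paper does exactly this, after first reducing to $t=\frac12$ so that only the logarithm prefactor can be singular.

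The genuine gap is in the scaling identity, which is the heart of the statement. Your decomposition of each edge factor into a $t$-independent piece $d\log|z-w|$ plus $i(1-2t)\,d\arg(z-w)$ (note: the coefficient is $i(1-2t)$, not $\tfrac{i}{2}(1-2t)$) cannot be combined with the ``slot-counting'' you propose: if, as you claim, only monomials with exactly $|V(\Gamma)|-2$ factors of the $(1-2t)$-type survived, then $\tilde\beta^t_\Gamma$ would be proportional to $(1-2t)^{|V(\Gamma)|-2}$ and would \emph{vanish} at $t=\tfrac12$, contradicting the identity you are trying to prove (whose right-hand side is a nonzero multiple of $\tilde\beta^{1/2}_\Gamma$). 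What you actually need in that basis is the polynomial identity ``$=\bigl(1-(1-2t)^2\bigr)^{|V(\Gamma)|-2}\cdot(\text{value at }t=\tfrac12)$'', and this does not follow from any counting of $\arg$-slots; indeed $d\log|\cdot|$ and $d\arg(\cdot)$ are not of pure Hodge type, so the complex structure gives you no direct selection rule for them. (Your fallback via the $\bbC^\times$-action also cannot see $t$ at all, and your reality claim is off: because of the odd number of $\frac1{\pi i}$-type prefactors one gets $\overline{\tilde\beta^t_\Gamma}=-\tilde\beta^{1-t}_\Gamma$, so the form is purely imaginary in top degree rather than real.) The paper's argument avoids all of this by expanding instead in the pure-type pieces $d\log(z_{s}-z_{t})$ and $d\log(\bar z_{s}-\bar z_{t})$ of \eqref{equ:tildebetadef}: since $C_A/S^1$ is a complex manifold, a top-degree term survives only if it contains equally many holomorphic and antiholomorphic factors; the former are each scaled by $(1-t)$ and the latter by $t$, so every surviving term is rescaled by $\bigl(4t(1-t)\bigr)^{|V(\Gamma)|-2}$ relative to its value at $t=\tfrac12$, giving the identity in one line and then allowing the regularity analysis to be carried out at $t=\tfrac12$ only, where each edge factor is regular and the sole potential singularity is the logarithm prefactor.
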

\begin{proof}
Since $C_A/S^1$ is a complex manifold, the only terms contributing upon expanding \eqref{equ:tildebetadef} have an equal number of terms $dz_\cdot$ and $d\bar z_\cdot$. But the former such terms are always rescaled by $(1-t)$, while the latter terms are rescaled by $t$. Hence
\[
\tilde \beta_\Gamma^t = (4t(1-t))^{|V(\Gamma)|-2}\tilde \beta_\Gamma^{\frac 1 2}
\]
and it is sufficient to consider the $t=\frac 1 2$ case. This is advantageous since the forms $d\arg (z-w)$ are regular on $\overline C_A$, and hence the only singularity can potentially be contributed by the logarithm term.

Fix a chart $U_i$ and some subset $B$ in the nested family $i$. 
Our goal is to show that $\tilde \beta_\Gamma^{\frac 1 2}$ is regular in $r_B$. 
It is sufficient to show that $\iota_{v_B}\tilde \beta_\Gamma^t$ is regular since $\tilde \beta_\Gamma^t$ is a top degree form.
As in the proof of Proposition~\eqref{prop:splitting} there are three sorts of edges in $\Gamma$.
\begin{itemize}
\item If both endpoints of an edge $e$ are in the complement of $B$, then the edge cannot contribute a singularity in $r_B$.
\item If one endpoint, say $z$, is in the complement of $B$, and the other, say $w$, is in $B$ then the edge can contribute either a factor $d\arg(z-\zeta_B)+r_B(\cdots)$ or $\log|z-\zeta_B|+r_B(\cdots)$.
\item If both endpoints $z$, $w$ are in $B$, then there can be a divergent contribution $\log r_B+(\text{terms regular in $r_B$})$. More precisely, let us use coordinates 
\[
z=\zeta_B+r_B z',\quad w=\zeta_B+r_B w',
\]
for some edge $e=(z,w)$ in $E_3(\Gamma)$.
\end{itemize} 
Let us call these three sets of edges $E_j(\Gamma)$, $j=1,2,3$.
Collecting all potentially singular terms we can hence write
\begin{align*}
\iota_{v_B} \tilde \beta_\Gamma^{\frac 1 2} 
&\propto  
\iota_{v_B}\left(\sum_{e\in E(\Gamma)} \sum_{e'\neq e \in E_3(\Gamma)}
\psgn{e',e}\log(r_B)\prod_{e''\in E_1(\Gamma)\smallsetminus\{ e\}}d\arg(z_{s(e'')}-z_{t(e'')})\prod_{e'''=(z,w)\in E_2(\Gamma)\smallsetminus\{ e\}}d\arg(\zeta_B-z)\right. \\
&\phantom{=}\quad\quad\quad\quad\quad\quad\quad\quad\left.\prod_{\hat e=(z',w') \in E_3(\Gamma)\smallsetminus\{e,e'\}}d\arg(z'-w')\right)+\left(\text{terms regular in $r_B$}\right)\\
&=
\sum_{e\in E(\Gamma)}
\sum_{e'\neq \hat e \in E_3(\Gamma)}\psgn{e',e, \hat e}\log(r_B)
\prod_{e''\in E_1(\Gamma)\smallsetminus\{ e\}}d\arg(z_{s(e'')}-z_{t(e'')})
\prod_{e'''=(z,w)\in E_2(\Gamma)\smallsetminus\{ e\}}d\arg(\zeta_B-z) \\
&\phantom{=}\quad\quad\quad\quad\quad\quad\quad\quad 
\prod_{\bar e=(z',w') \in E_3(\Gamma)\smallsetminus\{e,e',\hat e\}}d\arg(z'-w')+\left(\text{terms regular in $r_B$}\right)\\
\\&=0+(\text{terms regular in $r_B$}).
\end{align*}
Here we used again that the summand is antisymmetric under interchange of $e'$ and $\hat e$, whence the singular terms may be dropped.
\end{proof}

Using the form $\tilde \beta^t_\Gamma$, the boundary terms in Proposition \ref{prop:bdry1} may be written down more explicitly. 
\begin{Thm}\label{thm:factoring}
Let $\Gamma\in kgra_{n,m}$ be a graph such that $|E(\Gamma)|=2n+m-3$. Let $U_i$ be a chart and $B$ be a vertex of the tree defined by $i$. 
If $\p_B U_i$ is a type I boundary stratum then
\[
\iota_{v_B} \mathrm{Reg}_B\!\left(\omega^t_{\Gamma}\right) 
=
\begin{cases}
(4t(1-t))^{|V(\Gamma')|-2} \iota_{v_B}\omega^{\frac 1 2}_{\Gamma'} \mid_{\p_B U_i} \wedge \omega^t_{\Gamma''}\mid_{\p_B U_i}
& \text{if } |E(\Gamma')|=2|V(\Gamma')|-3 \\
0 & \text{otherwise}
\end{cases}
\]
If $\p_B U_i$ is a type II boundary stratum then
\[
 \mathrm{Reg}_B\!\left(\omega^t_{\Gamma}\right)=\omega^t_{\Gamma'}\mid_{\p_B U_i}\wedge \omega^t_{\Gamma''}\mid_{\p_B U_i}.
\]

Similarly, let $\Gamma\in kgra_{n,m}$ be a graph such that $|E(\Gamma)|=2n+m-2$. If $\p_B U_i$ is a type I boundary stratum then
\[
\iota_{v_B}\Reg_B\!\left(\tilde \omega^t_{\Gamma}\right) 
=
\begin{cases}
(4t(1-t))^{|V(\Gamma')|-2} \tilde \beta^{\frac 1 2}_{\Gamma'} \mid_{\p_B U_i} \wedge \omega^t_{\Gamma''}\mid_{\p_B U_i}
& \text{if } |E(\Gamma')|=2|V(\Gamma')|-2 \\
(4t(1-t))^{|V(\Gamma')|-2} \iota_{v_B}\omega^{\frac 1 2}_{\Gamma'}\mid_{\p_B U_i} \wedge \tilde \omega^t_{\Gamma''}\mid_{\p_B U_i}
& \text{if } |E(\Gamma')|=2|V(\Gamma')|-3 \\
0 & \text{otherwise}.
\end{cases}
\]
If $\p_B U_i$ is a type II boundary stratum then
\[
 \Reg_B\!\left(\tilde \omega^t_{\Gamma}\right) 
 =
 \tilde \omega^t_{\Gamma'}\mid_{\p_B U_i} \wedge \omega^t_{\Gamma''}\mid_{\p_B U_i}
 +
 \omega^t_{\Gamma'}\mid_{\p_B U_i} \wedge \tilde \omega^t_{\Gamma''}\mid_{\p_B U_i}
\]
\end{Thm}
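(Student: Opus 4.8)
The plan is to compute the regularization of the top-minus-one forms $\omega^t_\Gamma$ and $\tilde\omega^t_\Gamma$ directly from the local normal-form decompositions already established. Recall from Proposition~\ref{prop:regularizable} that near the boundary stratum $\p_B U_i$ one has, for $|E(\Gamma)|=2n+m-3$,
\[
\omega^t_\Gamma=\frac{dr_B}{r_B}\wedge\alpha_B+(\text{regular terms}),\qquad \iota_{\xi_i}\alpha_B=\iota_{\xi_i}d\alpha_B=0,
\]
so that by the very definition of $\Reg$ (Section~\ref{sec:regstokes}) the counterterm absorbs the $\frac{dr_B}{r_B}$ piece and $\Reg_B(\omega^t_\Gamma)$ is the restriction of the regular remainder to $r_B=0$. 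The key is to read off that remainder from the explicit edge-by-edge expansion already written in the proof of Proposition~\ref{prop:splitting}. For a type I collapsing subset $B$, the edges split into $E_1,\dots,E_4$ as there; edges in $E_4$ (both endpoints in $B$) each contribute $\frac{i}{\pi}\frac{dr_B}{r_B}+(\text{regular})$, edges in $E_2\cup E_3$ degenerate to $\omega^t(z,\zeta_B)$ or $\omega^t(\zeta_B,z)$ plus $O(r_B)$, and $E_1$-edges are untouched. At $r_B=0$ the factor $\prod_{E_4}$ of the remainder produces exactly the rescaled internal form $\tilde\beta^{1/2}_{\Gamma'}$-type object on $\overline C_{\mathrm{star}(B)}/S^1$ and the complementary $\prod_{E_1\cup E_2\cup E_3}$ factor produces $\omega^t_{\Gamma''}$ on the other factor; the $S^1$-contraction $\iota_{v_B}$ then converts this into the stated product. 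The dimension/degree count $|E(\Gamma')|=2|V(\Gamma')|-3$ vs. $2|V(\Gamma')|-2$ dictates whether the internal factor is $\iota_{v_B}\omega^{1/2}_{\Gamma'}$ (top form on $\overline C_{\mathrm{star}(B)}$, after the $S^1$-quotient one degree less) or $\tilde\beta^{1/2}_{\Gamma'}$ (top form on $\overline C_{\mathrm{star}(B)}/S^1$); any other edge count gives a form of too high degree on the internal factor, hence $0$. The rescaling constant $(4t(1-t))^{|V(\Gamma')|-2}$ is precisely the one extracted in Proposition~\ref{prop:tildeomega}, coming from the fact that on a complex manifold only balanced $(dz,d\bar z)$-monomials survive and each holomorphic (resp. antiholomorphic) factor carries a $(1-t)$ (resp. $t$).

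For $\tilde\omega^t_\Gamma$ with $|E(\Gamma)|=2n+m-2$ the argument is the same but now there are two sources of singularity near a type I stratum, $\frac{dr_B}{r_B}$ and $\log r_B$, as in the second bullet of Proposition~\ref{prop:regularizable}; both are basic-exact along the fibres in the sense required for the counterterm, so $\Reg_B(\tilde\omega^t_\Gamma)$ is again the restriction of the regular remainder. Expanding via Proposition~\ref{prop:splitting}, the regular remainder restricted to $r_B=0$ splits into two types of terms: those in which the single extra $\beta$-factor (the ``$\log|\cdot|$'') sits on an $E_4$-edge — these collect, together with the $\prod_{E_4}$ factor, into the internal form $\tilde\beta^{1/2}_{\Gamma'}$, nonzero exactly when $|E(\Gamma')|=2|V(\Gamma')|-2$ — and those in which the $\beta$-factor sits on an $E_1$-, $E_2$- or $E_3$-edge, which collect into $(\iota_{v_B}\omega^{1/2}_{\Gamma'})\wedge\tilde\omega^t_{\Gamma''}$, nonzero exactly when $|E(\Gamma')|=2|V(\Gamma')|-3$. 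All remaining edge counts kill the internal factor by degree. The constants are again those of Proposition~\ref{prop:tildeomega}, applied once to the pure-$\omega$ internal factor and once to the $\tilde\beta$ internal factor.

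For type II strata (several type I and/or type II vertices collapsing onto the real axis) there is no singularity at all — this is the content of the remark opening Section~3.3 ``Regularizability'' and of Proposition~\ref{prop:bdry1}, whose type II case is just the ordinary Kontsevich factorization of a product of propagator forms over the product decomposition \eqref{eq-strat-2}. So $\Reg_B$ is the identity there and the formula $\Reg_B(\omega^t_\Gamma)=\omega^t_{\Gamma'}\wedge\omega^t_{\Gamma''}$ (resp. the Leibniz form for $\tilde\omega^t_\Gamma$) is immediate from Proposition~\ref{prop:bdry1}.

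The main obstacle is the bookkeeping of signs, edge orderings, and which vertex of $\mathrm{star}(B)$ plays the role of ``$\zeta_B$'' when reassembling the internal and external factors — exactly the issue that the notation $\psgn{\cdot}$ from \eqref{equ:odef} was set up to manage. Concretely, one must verify that, after contracting with $\iota_{v_B}$ and restricting to $r_B=0$, the double sums over $E_4$-edges in Proposition~\ref{prop:splitting} reorganize — using the antisymmetry of the summands already exploited there — into precisely the combination defining $\tilde\beta^{1/2}_{\Gamma'}$ in \eqref{equ:tildebetadef}, with the correct power of $4t(1-t)$ pulled out via Proposition~\ref{prop:tildeomega}. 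Once the normal forms of Propositions~\ref{prop:splitting}--\ref{prop:regularizable} are in hand this is a finite, essentially mechanical identification, paralleling \cite{ARTW}, and I would carry it out edge-type by edge-type exactly in the order $E_1,E_2,E_3,E_4$ used above.
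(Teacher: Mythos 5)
Your proposal is correct and stays inside the paper's own framework, but you organize the type I computation differently from the paper. The paper evaluates $\iota_{v_B}\Reg_B$ in one stroke: by definition $\Reg_B(\omega^t_\Gamma)=\theta_1\cdots\theta_k\,\iota_{\xi_k}\cdots\iota_{\xi_1}\omega^t_\Gamma\mid_{\p_B U_i}$ for a $T_i$-connection $\theta$; contracting with $v_B$ and using that $\theta_2\cdots\theta_k\,\iota_{\xi_k}\cdots\iota_{\xi_2}$ acts as the identity on top-degree forms reduces everything to $\iota_{v_B}\omega^t_\Gamma\mid_{\p_B U_i}$ (resp.\ $\iota_{v_B}\tilde\omega^t_\Gamma\mid_{\p_B U_i}$), which is already factorized by Proposition~\ref{prop:bdry1}, and the $t$-rescaling then comes from the argument of Proposition~\ref{prop:tildeomega}, exactly as you invoke it. You instead identify $\Reg_B$ with the restriction of the regular remainder in the splitting of Propositions~\ref{prop:splitting}--\ref{prop:regularizable} and redo the edge-by-edge factorization by hand; this is legitimate (the singular part is a $\xi_i$-basic counterterm, and counterterm ambiguities vanish in top degree on the face), but it amounts to re-deriving Proposition~\ref{prop:bdry1}, which you could simply cite and thereby dispense with the bookkeeping you defer at the end. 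Two small corrections: in your first paragraph the internal factor for the pure $\omega^t_\Gamma$ case is (the restriction of) $\iota_{v_B}\omega^t_{\Gamma'}$, not a ``$\tilde \beta^{\frac 1 2}_{\Gamma'}$-type'' object---no logarithm factor is present there, and the edge count $|E(\Gamma')|=2|V(\Gamma')|-2$ gives zero for $\omega^t_\Gamma$ rather than a $\tilde\beta$ term; and Proposition~\ref{prop:bdry1} concerns $\iota_{v_B}$-contractions at type I vertices, so for type II strata the factorization is, as in the paper, just the restriction of the product of propagator forms to the product stratum rather than literally an instance of that proposition.
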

\begin{proof}
For a type II boundary stratum the differential forms involved are regular in $r_B$, hence
\begin{align*}
 \mathrm{Reg}_B\!\left(\omega^t_{\Gamma}\right) &= \omega^t_{\Gamma}\mid_{\p_B U_i}
 =
 \omega^t_{\Gamma'}\mid_{\p_B U_i}\wedge \omega^t_{\Gamma''}\mid_{\p_B U_i}
 \\
 \Reg_B\!\left(\tilde \omega^t_{\Gamma} \right)&= \tilde \omega^t_{\Gamma} \mid_{\p_B U_i}
 =
 \tilde \omega^t_{\Gamma'}\mid_{\p_B U_i} \wedge \omega^t_{\Gamma''}\mid_{\p_B U_i}
 +
 \omega^t_{\Gamma'}\mid_{\p_B U_i} \wedge \tilde \omega^t_{\Gamma''}\mid_{\p_B U_i}.
\end{align*}

For a type I boundary stratum, and for a given $T_i$-connection $\theta$ with components $\theta_1, \dots \theta_k$, $k=\mathrm{dim} T_i$ we can write by definition
\[
  \mathrm{Reg}_B\!\left( \omega^t_{\Gamma}\right) = \theta_1\cdots \theta_k \iota_{\xi_k}\cdots \iota_{\xi_1} \omega^t_{\Gamma} \mid_{\p_B U_i}
\]
and hence (say $B\sim 1$ w.l.o.g.)
\begin{align*}
\iota_{v_B} \mathrm{Reg}_B\!\left( \omega^t_{\Gamma}\right)
&=
 \theta_2\cdots \theta_k \iota_{\xi_k}\cdots \iota_{\xi_1} \omega^t_{\Gamma} \mid_{\p_B U_i}
\\&=
 (\theta_2\cdots \theta_k \iota_{\xi_k}\cdots \iota_{\xi_2}) \iota_{\xi_1}\omega^t_{\Gamma} \mid_{\p_B U_i}
\\& =\iota_{v_B} \omega^t_{\Gamma'}\mid_{\p_B U_i} \wedge \omega^t_{\Gamma''}\mid_{\p_B U_i}.
\end{align*}
Here we used Proposition \ref{prop:bdry1} and the fact that the operator $(\theta_2\cdots \theta_k \iota_{\xi_k}\cdots \iota_{\xi_2})$ acts as the identity on top degree differential forms.
By copying the argument in the proof of Proposition \ref{prop:tildeomega} one can see that
\[
\iota_{v_B} \omega^t_{\Gamma'}\mid_{\p_B U_i} =  \left(4t(1-t)\right)^{|V(\Gamma')|-2} \iota_{v_B} \omega^{\frac 1 2}_{\Gamma'}\mid_{\p_B U_i}
\]
and we hence obtain the first equation.

Similarly, using again Proposition \ref{prop:bdry1}
\begin{align*}
\iota_{v_B} \mathrm{Reg}_B\!\left( \tilde \omega^t_{\Gamma}\right)
&=
\iota_{v_B} \tilde\omega^t_{\Gamma'}\mid_{\p_B U_i} \wedge \omega^t_{\Gamma''}\mid_{\p_B U_i} +
\iota_{v_B} \omega^t_{\Gamma'}\mid_{\p_B U_i} \wedge \tilde \omega^t_{\Gamma''}\mid_{\p_B U_i}
\end{align*}
and using Proposition \ref{prop:tildeomega} finishes the proof.
\end{proof}

\section{A family of stable formality morphisms}\label{s-5}
With the help of the family $\omega^t$, we construct a family of stable formality morphisms in the sense of Definition~\ref{d-stable}.

The family $\mathcal U^t$ is defined by a sum over graphs formula,
\begin{equation}\label{eq-stable}
\mathcal U^t(\mathsf t_{n,m}^\mathfrak o)=\sum_{\Gamma}\varpi_\Gamma^t\ \Gamma,\ n\geq 2,
\end{equation}
where we sum over a set of graphs in $\cup_k \kgra_{n,m,k}$ forming a basis of $\KGra(m,n)^\mathfrak o$.
Note that the weights $\varpi^t_\Gamma$ depend polynomially on the variable $t$ and are explicitly defined {\em via}
\begin{align}\label{eq-t-weight-varpi}
\varpi_\Gamma^t&=\int_{C_{n,m}^+}\omega_\Gamma^t 
\end{align}

The goal of this section is to show the following result.
\begin{Prop}\label{p-sec5-main}
 The integrals \eqref{eq-t-weight-varpi} exist, and $\mathcal U^t$ is a stable formality morphism for all $t$.
\end{Prop}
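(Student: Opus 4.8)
The plan is to establish the two assertions separately. First I would show convergence of the integrals $\varpi_\Gamma^t = \int_{C_{n,m}^+} \omega_\Gamma^t$. The weight form $\omega_\Gamma^t$ is a product over edges of pullbacks of the singular $1$-form $\omega^t$ along the edge projections $\pi_e$, so it is a top-degree form on $C_{n,m}^+$ precisely when $|E(\Gamma)| = 2n+m-2$, which is forced by the degree constraint on $\mathcal U^t(\mathsf t_{n,m}^{\mathfrak o})$. By Theorem~\ref{thm:omegareg}, $\omega_\Gamma^t$ is regular on the compact manifold with corners $\overline C_{n,m}^+$, hence bounded, and the integral over the compact space converges. (More precisely, one integrates $\omega_\Gamma^t$ over $\overline C_{n,m}^+$, identifying the latter's interior with $C_{n,m}^+$; regularity in the sense of Section~\ref{sec:regstokes} guarantees the integrand extends continuously to the boundary.) The polynomial dependence of $\varpi_\Gamma^t$ on $t$ is clear since $\omega^t$ depends polynomially (in fact affinely) on $t$.

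Next I would verify that $\mathcal U^t$ is a morphism of $2$-colored dg operads $\OC \to \KGra$. As recalled after Definition~\ref{d-stable}, this is equivalent to the statement that the collection $(\varpi_\Gamma^t)$, together with the boundary values $\mathcal U^t(\mathsf t_2^{\mathfrak o}) = \grcup$, $\mathcal U^t(\mathsf t_{1,m}^{\mathfrak o}) = \grHKR^m$, defines a Maurer--Cartan element in the convolution dg Lie algebra $\mathrm{Conv}(\mathfrak{oc}^\vee, \KGra)$; equivalently, the infinite family of quadratic weight relations of \cite{K}*{Subsection 6.4} holds. Following Kontsevich's original argument, each such relation is obtained by applying Stokes' theorem to the integral over $\overline C_{n,m}^+$ of a suitable top-minus-one-degree weight form (of the type $\omega_\Gamma^t$ with $|E(\Gamma)| = 2n+m-3$) and collecting the contributions of the codimension-$1$ boundary strata described in \eqref{eq-strat-2}. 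Since our propagator $\omega^t$ is singular for $t \neq \tfrac12$, I would invoke the regularized Stokes' theorem (Theorem~\ref{thm:regstokes}) in place of the ordinary one: by Proposition~\ref{prop:regularizable} the relevant forms are regularizable, so \eqref{equ:stokes} applies.

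The key point is then to compute $\int_{\p \overline C_{n,m}^+} \mathrm{Reg}(\omega_\Gamma^t)$ stratum by stratum using Theorem~\ref{thm:factoring}. For type II boundary strata (type II vertices colliding, or a cluster hitting the real line) the regularization is just the naive restriction, and one obtains exactly the same factorized contributions as in Kontsevich's classical computation, reproducing the terms of the Maurer--Cartan/quadratic relation. For type I boundary strata (type I vertices colliding in the open upper half-plane) Theorem~\ref{thm:factoring} shows that the regularized restriction $\iota_{v_B}\mathrm{Reg}_B(\omega_\Gamma^t)$ vanishes unless the collapsing subgraph $\Gamma'$ has exactly $2|V(\Gamma')|-3$ edges, in which case it factors as $(4t(1-t))^{|V(\Gamma')|-2}\,\iota_{v_B}\omega_{\Gamma'}^{1/2}\mid_{\p_B U_i} \wedge\, \omega_{\Gamma''}^t\mid_{\p_B U_i}$. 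The two-vertex collapse ($|V(\Gamma')| = 2$) contributes the familiar $\SNbr$-term — with coefficient $(4t(1-t))^0 = 1$, exactly as in Kontsevich's case — while all higher type I collapses are multiplied by the factor $\iota_{v_B}\omega_{\Gamma'}^{1/2}\mid_{\p_B U_i}$ which is the Kontsevich-propagator contribution of a collapsing subgraph of the ``wrong'' edge count; these are precisely the terms Kontsevich proved vanish (the classical ``hidden faces'' / Kontsevich vanishing lemma). Hence the type I strata contribute nothing new beyond the $\SNbr$-term, and the resulting identity is verbatim the one proving that $\mathcal U^{1/2}$ (Kontsevich's morphism) is an $\OC$-morphism, now with $\varpi^t$ in place of the Kontsevich weights. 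The ``boundary conditions'' $\mathcal U^t(\mathsf t_2^{\mathfrak o}) = \grcup$ and $\mathcal U^t(\mathsf t_{1,m}^{\mathfrak o}) = \grHKR^m$ hold because the relevant graphs have too few (type I) vertices for the propagator singularity to matter, and are computed exactly as by Kontsevich.

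I expect the main obstacle to be the careful bookkeeping in the type I stratum analysis: one must check that the $t$-dependent prefactors $(4t(1-t))^{|V(\Gamma')|-2}$ coming from Theorem~\ref{thm:factoring} do not spoil the cancellations, i.e. that the only surviving type I contribution is the two-vertex one (prefactor $1$) and that all $|V(\Gamma')| \geq 3$ type I faces are ``hidden faces'' in Kontsevich's sense and vanish for degree/symmetry reasons independent of $t$ — this is where one leans most heavily on \cite{ARTW} and on reducing everything to the $t = \tfrac12$ (Kontsevich) situation. A secondary point requiring care is checking that $\omega_\Gamma^t$ is genuinely integrable near the type I strata where it is not a priori regular as a differential form but only ``regularizable''; however since $\omega_\Gamma^t$ is top-degree here, Theorem~\ref{thm:omegareg} already gives honest regularity, so this is not an issue for the weights themselves, only for the top-minus-one forms entering Stokes.
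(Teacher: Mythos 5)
Your proposal is correct and follows essentially the same route as the paper: convergence from Theorem~\ref{thm:omegareg} plus compactness, and the morphism property by running Kontsevich's Stokes argument with the regularized Stokes' Theorem~\ref{thm:regstokes} (using Proposition~\ref{prop:regularizable} and the factorization of Theorem~\ref{thm:factoring}), with the type I contributions reduced to the $t=\tfrac12$ case via the rescaling factor $(4t(1-t))^{|V(\Gamma')|-2}$ so that Kontsevich's vanishing property applies verbatim.
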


Note that $\mathcal U^t$ is a morphism of degree $0$: namely, $\varpi^t_\Gamma$ is non-trivial, only if the degree of $\Gamma$ in $\KGra(n,m)^\mathfrak o$ equals $2-2n-m$, which is precisely the degree of the generator $\mathsf t_{n,m}^\mathfrak o$. 

The first statement of Proposition \ref{p-sec5-main} follows immediately from Theorem \ref{thm:omegareg} and the compactness of the configuration spaces. The second statement will be shown along the lines of Kontsevich's original proof of his formality Theorem, but using the regularized Stokes' Theorem (Theorem \ref{thm:regstokes}) instead of the ordinary Stokes' Theorem.

\subsection{The Maurer--Cartan equation for \texorpdfstring{$\mathcal U^t$}{Ut}}\label{ss-5-2}
The Maurer--Cartan equation for $\mathcal U^t$ is equivalent, by the very definition of stable formality morphism, to the condition that $\mathcal U^t$ intertwines the dg structures on the $2$-colored operads $\OC$ and $\KGra$: it translates into an infinite family of quadratic equations for the integral weights~\eqref{eq-t-weight-varpi}.

In fact, a stable formality morphism $\mathcal F$ as in Definition~\ref{d-stable} satisfies 
\[
\mathcal F\circ d_\OC=0 
\]
as $\KGra$ has trivial differential. More precisely, the boundary conditions on $\mathcal F$ imply that we only have to verify the identity $F(d_\OC(\mathsf t_{n,m}^\mathfrak o))=0$, for $n\geq 2$ and $m\geq 0$.
Again the boundary conditions for $\mathcal F$ and the compatibility of $\mathcal F$ with the operadic structures on $\OC$ and $\KGra$ imply that the previous identity can be re-written as
\[
\begin{aligned}
0&=\sum_{\Gamma\in \KGra(n-1,m)^\mathfrak o}\alpha_\Gamma\left(\sum_{i=1}^{n-1}\Gamma\circ_i\SNbr\right)+\sum_{n_1=0}^n\sum_{m_1=0}^m\frac{1}2\left[\sum_{\Gamma_1\in\KGra(n_1,m_1)^\mathfrak o}\alpha_{\Gamma_1}\Gamma_1,\sum_{\Gamma_2\in\KGra(n-n_1,m+1-m_1)^\mathfrak o}\alpha_{\Gamma_2}\Gamma_2\right]=\\
&=\delta(\mathcal F(\mathsf t_{n-1,m}^\mathfrak o))+\sum_{n_1=0}^n\sum_{m_1=2}^{m-1}\frac{1}2\left[\mathcal F(\mathsf t_{n_1,m_1}^\mathfrak o),\mathcal F(\mathsf t_{n-n_1,m+1-m_1}^\mathfrak o)\right],
\end{aligned}
\]
where $\delta=[\SNbr,\bullet]$.

The first equation can be re-written as an infinite family of quadratic equations for the weights $\alpha_\Gamma$.
Kontsevich showed that for $\alpha_\Gamma=\varpi_\Gamma^{\frac 1 2}$ these quadratic equations are exactly the  
quadratic equations obtained by applying the Stokes formula to the regular forms $\omega_\Gamma^{\frac 1 2}$, where $|E(\Gamma)]=2n+m-3$,
\[
0=\int d \omega_\Gamma^{\frac 1 2} 
=
\int_{\p} \omega_\Gamma^{\frac 1 2}
=
\sum_B
\int_{\p_B} \omega_\Gamma^{\frac 1 2}
\]
where the sum is over all codimension 1 boundary strata $B$. 
The expression on the right-hand side factorizes:
\[
\int_{\p_B} \omega_\Gamma^{\frac 1 2}
=
\int_{} \omega_{\Gamma'}^{\frac 1 2}
\int_{} \omega_{\Gamma''}^{\frac 1 2}.
\]
Now the quadratic equations for the $\alpha_\Gamma$ are recovered provided that the Kontsevich vanishing property holds:

\vspace{3mm}

{\bf Kontsevich vanishing property:} The contributions from the boundary strata of type I in the above formula vanish, unless the graph $\Gamma'$ consists of exactly 2 vertices connected by an edge.

\vspace{3mm}

Our task is to extend the Kontsevich proof from $t=\frac 1 2$ to all $t$, i.~e., to show that the quadratic equations are satisfied for $\alpha_\Gamma=\varpi_\Gamma^{t}$ for all $t$. 
We can follow the lines of the Kontsevich proof except that we apply the regularized Stokes Theorem \ref{thm:regstokes} to the singular differential form $\omega_\Gamma^{t}$, where $|E(\Gamma)]=2n+m-3$. This form is regularizable by Proposition \ref{prop:regularizable} and hence
\[
0=\int d \omega_\Gamma^{t} 
=
\int_{\p} \mathrm{Reg}\!\left(\omega_\Gamma^{t}\right)
=
\sum_B
\int_{\p_B} \mathrm{Reg}\!\left(\omega_\Gamma^{t}\right).
\]
The expression on the right factorizes according to Theorem \ref{thm:factoring}.
\[
\int_{\p_B} \mathrm{Reg}\!\left(\omega_\Gamma^{t}\right)
=
\int_{} \mathrm{Reg}\!\left(\omega_{\Gamma'}^{t}\right)
\int_{} \omega_{\Gamma''}^{t}
=
\begin{cases}
\int_{} \omega_{\Gamma'}^{t}
\int_{} \omega_{\Gamma''}^{t} 
& \text{if $B$ describes a type II stratum} 
\\
 \left(4t(1-t)\right)^{|V(\Gamma')|-2}
\int_{} \mathrm{Reg}\!\left(\omega_{\Gamma'}^{\frac 1 2}\right)
\int_{} \omega_{\Gamma''}^{t} 
& \text{if $B$ describes a type I stratum}
\end{cases}
\]
To recover the quadratic identities and hence show Proposition \ref{p-sec5-main} it hence suffices to verify the Kontsevich vanishing condition for the type I boundary strata. 
However, since in the type I case the boundary contribution is a rescaling of that present when $t=\frac 1 2$, the Kontsevich vanishing property for general $t$ is equivalent to the Kontsevich vanishing property for the $t=\frac 1 2$ case. The latter has been proven by Kontsevich \cite{K}*{section 6.6.1}. This shows Proposition \ref{p-sec5-main}.
\hfill\qed

\section{A family of cocycles in Kontsevich's graph complex}\label{s-6}
In the preceding section we have constructed a family $\mathcal U^t$ of stable formality morphisms over $\mathbb R$. The weights of graphs in $\mathcal U^t$ containing only a fixed number of vertices depend polynomially on $t$ by construction.
In this section we compute the derivative with respect to $t$ of $\mathcal U^t$. The result will be the following.

\begin{Prop}\label{p-sec6-main}
There is a family of graph cocycles $x^t$ and a family of homotopies (i. e. degree $0$ elements of $\mathrm{Conv}(\mathfrak{oc}^\vee,\KGra)$) $\widetilde{\mathcal U}^t$ such that
\begin{equation}\label{equ:stablehomotopy}
\partial_t\mathcal U^t=x^t\cdot \mathcal U^t+\delta\widetilde{\mathcal U}^t+\left[\mathcal U^t,\widetilde{\mathcal U}^t\right].
\end{equation}
Here $\delta$ and $[,]$ are the differential and Lie bracket on the convolution dg Lie algebra $\mathrm{Conv}(\mathfrak{oc}^\vee,\KGra)$, while $x^t\cdot \mathcal U^t$ denotes the action of the element of $\GC$ on the stable formality morphism $\mathcal U^t$, cf. Subsection \ref{ss-2-3}.
\end{Prop}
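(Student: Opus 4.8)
\emph{Overall strategy.} The plan is to differentiate the weight integrals $\varpi_\Gamma^t$ under the integral sign and apply the regularized Stokes Theorem~\ref{thm:regstokes}, following Kontsevich's original derivation of the quadratic relations but keeping track of the extra boundary contributions produced by $\partial_t$. First I would write down explicit candidates for $x^t$ and $\widetilde{\mathcal U}^t$. Set $\widetilde{\mathcal U}^t(\mathsf t_{n,m}^\mathfrak o):=\sum_\Gamma\big(\int_{C_{n,m}^+}\tilde\omega_\Gamma^t\big)\Gamma$, the sum over graphs with $|E(\Gamma)|=2n+m-1$ forming a basis of the relevant graded piece of $\KGra(n,m)^\mathfrak o$ (so that $\tilde\omega_\Gamma^t$ is of top degree); and set $x^t:=\sum_\gamma\big(\int_{\overline C_{V(\gamma)}/S^1}\tilde\beta_\gamma^t\big)\gamma$, the sum over connected directed graphs with all vertices at least trivalent and $|E(\gamma)|=2|V(\gamma)|-2$, regarded via $\Gra\hookrightarrow\dGra$ and $\fdGC$ as a degree-$0$ element of $\GC$. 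Convergence of all these integrals follows from Theorems~\ref{thm:omegareg} and~\ref{thm:factoring} and Proposition~\ref{prop:tildeomega}, together with compactness of the configuration spaces; and by Proposition~\ref{prop:tildeomega} one has $x^t_\gamma=(4t(1-t))^{|V(\gamma)|-2}x^{1/2}_\gamma$, so $x^t$ is polynomial in $t$.

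\emph{The Stokes computation.} Using $\partial_t\omega_\Gamma^t=d\tilde\omega_\Gamma^t$ and the regularizability of $\tilde\omega_\Gamma^t$ (Proposition~\ref{prop:regularizable}), I would write $\partial_t\varpi_\Gamma^t=\int_{C_{n,m}^+}d\tilde\omega_\Gamma^t=\int_{\partial C_{n,m}^+}\mathrm{Reg}(\tilde\omega_\Gamma^t)=\sum_B\int_{\partial_B}\mathrm{Reg}_B(\tilde\omega_\Gamma^t)$, then substitute the factorization formulas of Theorem~\ref{thm:factoring} and sort the terms. Type I strata whose collapsing cluster $\Gamma'$ has $|E(\Gamma')|=2|V(\Gamma')|-2$ contribute $(4t(1-t))^{|V(\Gamma')|-2}x^{1/2}_{\Gamma'}\varpi_{\Gamma''}^t=x^t_{\Gamma'}\varpi_{\Gamma''}^t$, and summing these over all graphs and strata reassembles precisely $x^t\cdot\mathcal U^t$, by the definition of the action of $\GC\subset\fdGC$ on stable formality morphisms (insertion of the cocycle into type I vertices). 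Type I strata with $|E(\Gamma')|=2|V(\Gamma')|-3$ contribute (restricted Kontsevich weight of $\Gamma'$)$\cdot\int\tilde\omega_{\Gamma''}^t$; the Kontsevich vanishing property kills all of these except those with $\Gamma'=\SNbr$, and the survivors assemble into $\delta\widetilde{\mathcal U}^t=[\SNbr,\widetilde{\mathcal U}^t]$. Finally, type II strata contribute $\big(\int\tilde\omega_{\Gamma'}^t\big)\varpi_{\Gamma''}^t+\varpi_{\Gamma'}^t\big(\int\tilde\omega_{\Gamma''}^t\big)$ by the Leibniz-type formula of Theorem~\ref{thm:factoring} and assemble into $[\mathcal U^t,\widetilde{\mathcal U}^t]$; the pieces involving the $t$-independent graphs $\grcup$ and $\grHKR^m$ drop out, and $\widetilde{\mathcal U}^t$ automatically vanishes on $\mathsf t_2^\mathfrak o$ and the $\mathsf t_{1,m}^\mathfrak o$ by degree and admissibility. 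Adding everything up yields~\eqref{equ:stablehomotopy}.

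\emph{The cocycle condition.} It remains to check $d_\GC x^t=0$. I would deduce this from~\eqref{equ:stablehomotopy}: differentiating the Maurer--Cartan equation $\delta\mathcal U^t+\tfrac12[\mathcal U^t,\mathcal U^t]=0$ (Proposition~\ref{p-sec5-main}) shows that $\partial_t\mathcal U^t$ is closed for the twisted differential $\nabla_t:=\delta+[\mathcal U^t,-]$, which squares to zero. Since $\delta\widetilde{\mathcal U}^t+[\mathcal U^t,\widetilde{\mathcal U}^t]=\nabla_t\widetilde{\mathcal U}^t$ is $\nabla_t$-exact, $x^t\cdot\mathcal U^t$ is $\nabla_t$-closed; because the $\GC$-action intertwines $d_\GC$ with $\nabla_t$, this reads $(d_\GC x^t)\cdot\mathcal U^t=0$, and comparing the leading graphs (those in which the vertices of $d_\GC x^t$ only acquire new edges to bivalent/HKR-type and type II configurations) then forces $d_\GC x^t=0$. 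Alternatively, since $x^t_\gamma=(4t(1-t))^{|V(\gamma)|-2}x^{1/2}_\gamma$ the cocycle condition for all $t$ reduces to $t=\tfrac12$, where it can be checked directly by applying the regularized Stokes Theorem on $\overline C_V/S^1$ to the top-minus-one form $\tilde\beta_\gamma^{1/2}$ (with $|E(\gamma)|=2|V(\gamma)|-3$) together with a Kontsevich-type vanishing for the interior collapse strata.

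\emph{Main obstacle.} The principal difficulty is the bookkeeping in the Stokes step, and specifically identifying the first kind of type I boundary contributions with the operadic action $x^t\cdot\mathcal U^t$: one must unwind how $x^t\in\GC\subset\fdGC=\Defo(\LaLie_\infty\to\dGra)$ acts on the Maurer--Cartan element $\mathcal U^t\in\mathrm{Conv}(\mathfrak{oc}^\vee,\KGra)$ and match vertex-collapse combinatorics with vertex-insertion combinatorics, carrying all the $\psgn{-}$ signs through. A secondary subtlety, handled by Propositions~\ref{prop:splitting}--\ref{prop:regularizable}, is making sure that $\mathrm{Reg}_B(\tilde\omega_\Gamma^t)$ is exactly the counterterm-subtracted form appearing in Theorem~\ref{thm:factoring}, with no residual $\log r_B$ contribution surviving on the boundary.
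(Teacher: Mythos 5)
Your proposal is correct and follows essentially the same route as the paper: differentiate the weights under the integral, apply the regularized Stokes' Theorem, use Theorem~\ref{thm:factoring} together with the Kontsevich vanishing property to sort the type I and type II boundary strata into the three terms of \eqref{equ:stablehomotopy}, and obtain closedness of $x^t$ by differentiating the Maurer--Cartan equation for $\mathcal U^t$. Your candidates for $\widetilde{\mathcal U}^t$ and $x^t$ coincide with \eqref{equ:homotopy} and \eqref{equ:xtdef}, and your ``comparing leading graphs'' step is just the paper's appeal to the injectivity of the action map $x\mapsto x\cdot\mathcal U^t$.
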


In fact, the family of homotopies $\tilde \mU^t$ is defined as follows.
\begin{equation}\label{equ:homotopy}
\tilde \mU^t(\mathsf t_{n,m}^\mathfrak o)=\sum_{\Gamma}\tilde \varpi_\Gamma^t\ \Gamma,\ n\geq 2,
\end{equation}
where we sum over a set of graphs in $\cup_k \kgra_{n,m,k}$ forming a basis of $\KGra(m,n)^\mathfrak o$.
The the weights $\tilde \varpi^t_\Gamma$ depend polynomially on the variable $t$ and are explicitly defined {\em via}
\begin{align}\label{eq-t-weight-tildevarpi}
\tilde \varpi_\Gamma^t&=\int_{C_{n,m}^+} \tilde\omega_\Gamma^t 
\end{align}
Note that by Theorem \ref{thm:omegareg} the above integral converges.

Similarly we define the family $x^t$ as 
\be{equ:xtdef}
x^t = \sum_\Gamma c_\Gamma^t \Gamma  \in \GC\subset \fdGC
\ee
with the weights being 
\be{equ:xtweightdef}
c_\Gamma^t = \int_{C_{n}/S^1 } \tilde \beta_\Gamma^t = (4t(1-t))^{|V(\Gamma)|-2} a_\Gamma^t
\ee
where $n=|V(\Gamma)|$ and $\tilde \beta_\Gamma^t$ is as in \eqref{equ:tildebetadef}.
Our goal for the remainder of this section is to show Proposition \ref{p-sec6-main}, i.~e., to show \eqref{equ:stablehomotopy} and that the $x^t$ are indeed cocycles. 

\subsection{The derivative of \texorpdfstring{$\mathcal U^t$ with respect to $t$}{Ut with respect to t}}\label{ss-6-1}
Let us now consider the family $\mathcal U^t$ and let us compute the derivative of $\mathcal U^t(\mathsf t_{n,m}^\mathfrak o)$, $n\geq 2$, with respect to $t$. 
Let $\Gamma$ be a graph in $\KGra(n,m)^\mathfrak o$ of degree $2-2n-m$.
The dependence of 
\[
\varpi^t_\Gamma=\int_{\overline C_{n,m}^+}\omega^t_\Gamma,
\]
is clearly polynomial in $t$. Hence one may interchange the derivative with the integral
\[
\p_t \varpi^t_\Gamma=\int_{\overline C_{n,m}^+}\omega^t_\Gamma
=
\int_{\overline C_{n,m}^+}\p_t  \omega^t_\Gamma
=
\int_{\overline C_{n,m}^+}d  \tilde \omega^t_\Gamma.
\]
By Proposition \ref{prop:regularizable} the form $\tilde \omega^t_\Gamma$ is regularizable. Hence we may apply the regularized Stokes' Theorem (Theorem \ref{thm:regstokes}) and compute
\[
\int_{\overline C_{n,m}^+}d  \tilde \omega^t_\Gamma
=
\int_{\p \overline C_{n,m}^+} \mathrm{Reg}\!\left(\tilde \omega^t_\Gamma\right)
=
\sum_B
\int_{\p_B \overline C_{n,m}^+} \mathrm{Reg}\!\left(\tilde \omega^t_\Gamma\right).
\]
By Theorem \ref{thm:factoring} the right hand side may be evaluated using the formula
\[
\int_{\p_B \overline C_{n,m}^+} \mathrm{Reg}\!\left(\tilde \omega^t_\Gamma\right)
=
\int_{}\mathrm{Reg}\!\left(\tilde \omega^t_{\Gamma'}\right) \int_{} \omega^t_{\Gamma''} 
+
\int_{}\mathrm{Reg}\!\left(\omega^t_{\Gamma'}\right) \int_{}\tilde \omega^t_{\Gamma''}.
\]

By the Kontsevich vanishing property the second term vanishes for type I strata $B$ unless the graph $\Gamma'$ contains exactly two vertices and one edge. The total contribution of such terms produces the term $\delta\tilde \mU^t$ in \eqref{equ:stablehomotopy}.
For type I strata, the total contribution of the first terms yield the term $x^t\cdot \tilde \mU^t$ in \eqref{equ:stablehomotopy}. Similarly, the contribution of the type II strata is $[\mU^t, \tilde\mU^t]$ and hence the equality \eqref{equ:stablehomotopy} is shown.

\subsection{The family of graph cocycles}\label{ss-6-2} 

Let us also remark that from equation \eqref{equ:stablehomotopy} and the Maurer--Cartan equation 
\[
 \delta \mathcal U^t + \frac 1 2\left[\mathcal U^t, \mathcal U^t\right]=0
\]
for $\mathcal U^t$ it follows that $x^t$ is a family of graph cocycles. 
Indeed, taking the derivative of the Maurer--Cartan equation we obtain
\[
0 = \delta\left(\partial_t \mathcal U^t\right) + \left[\partial_t \mathcal U^t, \mathcal U^t\right].
\]
Inserting \eqref{equ:stablehomotopy} we obtain 
\begin{align*}
0 &=\delta(x^t\cdot \mathcal U^t)
+\delta\left(\left[\mathcal U^t,\widetilde{\mathcal U}^t\right]\right)
+\left[x^t\cdot \mathcal U^t+\delta\widetilde{\mathcal U}^t+\left[\mathcal U^t,\widetilde{\mathcal U}^t\right],\mathcal U^t\right]=\\
&=(\delta x^t)\cdot \mathcal U^t
+x^t \cdot \left( \delta \mathcal U^t + \frac 1 2 \left[\mathcal U^t,\mathcal U^t\right]\right)
+\left[\delta  \mathcal U^t + \frac 1 2 \left[\mathcal U^t,\mathcal U^t\right],
\widetilde{\mathcal U}^t
\right]=\\
&=(\delta x^t)\cdot \mathcal U^t.
\end{align*}
For the last equality we again used the Maurer--Cartan equation for $\mathcal U^t$.
Finally note that the action 
\begin{align*}
\dfGC &\to \mathrm{Conv}(\mathfrak{oc}^\vee,\KGra) \\
x &\mapsto x\cdot \mathcal U^t
\end{align*}
is an injective map. Hence it follows that $\delta x^t=0$, {\em i.e.} $x^t$ is a graph cocycle for all $t$.

\section{Several operads of Lie algebras and \texorpdfstring{$\grt_1$}{grt1}}\label{s-7}
In this section we review the operads of Lie algebras $\tder$, $\sder$ and $\mathfrak t$. We recall their algebraic definition, and their combinatorial-graphical interpretation.

We furthermore introduce the Grothendieck--Teichm\"uller Lie algebra $\grt_1$ defined by V.~Drinfel{\cprime}d~\cite{Dr}, and we explore in detail the connection between graph cocycles in $\GC$ of degree $0$ and $\grt_1$. We construct a suitable alternative to the construction in~\cite{Will}*{Section 6} of the map from $H^0(\GC)$ to $\grt_1$, through which we compute the image $\tau^t$ in $\grt_1$ of the graph cocycle $x^t$ from Section~\ref{s-6}.

\subsection{The operads of Lie algebras \texorpdfstring{$\tder$, $\sder$ and $\mathfrak t$}{tder, sder and t}}\label{ss-7-1}
First of all, we denote by $\mathfrak{Lie}_k$, for $k\geq 1$, the degree completion of the free Lie algebra over $\mathbb K$ with $k$ generators, which we typically denote by $\{X_1,\dots,X_k\}$.
There is a natural grading on $\mathfrak{Lie}_k$ by the number of Lie brackets appearing in Lie monomials. For example, $X_i$ has degree $0$, $[X_i,X_j]$ has degree $1$ {\em etc.}.

Following~\cite{AT-2}*{Section 3} we consider the vector space $\tder_k$ of ``tangential'' derivations, i. e. $\mathbb K$-linear derivations $u$ of $\Lie_k$ of the form
\[
u(X_i)=[X_i,u_i]
\]
for some $u_i\in\Lie_k$, $i=1,\dots,k$.
The standard Lie bracket on the $\mathbb K$-linear derivations of $\Lie_k$ restricts to $\tder_k$, making it into a Lie algebra.
More precisely, an element $u$ of $\tder_k$ is uniquely represented by a $k$-tuple $(u_1,\dots,u_k)$ of elements of $\Lie_k$ with the property that the term of order $1$ with respect to $x_i$ in $u_i$ is $0$. The bracket of two elements $u=(u_1,\dots,u_k), v=(v_1,\dots,v_k)$ of $\tder_k$ is then explicitly given by
\[
[u,v]=\left(u(v_1)-v(u_1)+[u_1,v_1],\dots,u(v_k)-v(u_k)+[u_k,v_k]\right).
\]
We may consider further the subspace $\sder_k\subset \tder_k$, consisting of all tangential derivations satisfying the additional property
\[
u\!\left(\sum_{i=1}^k X_i\right)=\sum_{i=1}^k[X_i,u_i]=0.
\]
It is pretty obvious that $\sder_k$ defines a Lie subalgebra of $\tder_k$.

The Kohno--Drinfel{\cprime}d Lie algebra $\mathfrak t_k$, for $k\geq 2$, is the free Lie algebra spanned by generators $t_{ij}$, $1\leq i\neq j \leq k$, {\em modulo} the following relations:
\begin{align*}
t_{ij}&=t_{ji} & [t_{ij},t_{kl}]&=0,\ \{i,j\}\cap\{k,l\}=\emptyset & [t_{ij},t_{ik}+t_{jk}]&=0,\ k\neq i,j.
\end{align*}
Observe that $\mathfrak t_2\cong \Lie_1$ is $1$-dimensional. For $k\geq 3$, $\mathfrak t_k$ can be written as a semidirect product of Lie algebras 
\[
\mathfrak t_k=\mathfrak t_{k-1} \ltimes \Lie(t_{1k},\dots,t_{k-1,k}),
\]
where $\mathfrak t_{k-1}$ is generated by $t_{ij}$, $1\leq i<j\leq k-1$; the free Lie algebra $\Lie(t_{1k},\dots,t_{k-1,k})=\Lie_{k-1}$ is an ideal with respect to the action of $\mathfrak t_{k-1}$.
Observe that $c=\sum_{1\leq i<j\leq k}t_{ij}$ belongs to the center of $\mathfrak t_k$.
Furthermore, there is an injective map from $\mathfrak t_k$ to $\tder_k$ given by the assignment
\[
\mathfrak t_k\ni t_{ij}\mapsto t_{ij}=\left(0,\dots,\underset{\text{$i$-th}}{\underbrace{X_j}},\dots,\underset{\text{$j$-th}}{\underbrace{X_i}},\dots,0\right),\ 1\leq i<j\leq k.
\]

Elements of $\tder_k$ and $\sder_k$ admit combinatorial representations, which we now discuss.


In particular, Lie monomials in $\Lie_k$ of degree $n\geq 1$ are naturally associated with directed rooted trees with $k$ external vertices and $n$ internal vertices, with the additional properties that every internal vertex has exactly one incoming and two outgoing edges, except the root (which has only two outgoing edges), and there is no edge outgoing from any one of the internal vertices.
Such a directed rooted tree is called a Lie tree with $k$ external and $n$ internal vertices.
Moreover, one has to quotient the graded vector space spanned by Lie trees with respect to the anti-symmetry relation and the IHX relation: the anti-symmetry relation encodes the skew-symmetry of the Lie bracket; the IHX relation, on the other hand, encodes the Jacobi identity.
An example of a Lie tree and the corresponding Lie monomial in $\Lie_3$, the anti-symmetry and the IHX relations are depicted in Figure~\ref{fig-IHX}.
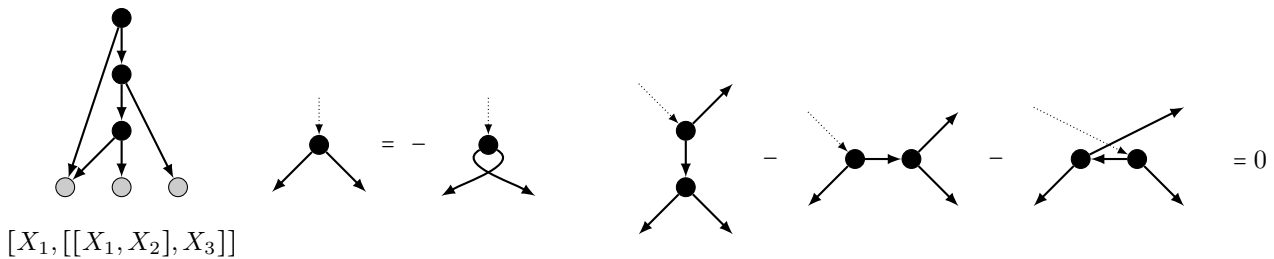
\begin{figure}
\centering
\begin{tikzpicture}[>=latex]
\tikzstyle{k-int}=[draw,fill=gray!40,circle,inner sep=0pt,minimum size=2.5mm]
\tikzstyle{n-int}=[draw,fill=black,circle,inner sep=0pt,minimum size=2.5mm]
\tikzstyle{ext}=[draw,fill=white,circle,inner sep=0pt,minimum size=2.5mm]
\tikzstyle{ext0}=[draw,fill=white,circle,inner sep=0pt,minimum size=4mm]
\tikzstyle{spec}=[draw,rectangle,inner sep=0pt,minimum size=4mm]

\begin{scope}[scale=0.75]
\node[k-int] (1) at (-1,0) {};
\node[k-int] (2) at (0,0) {};
\node[k-int] (3) at (1,0) {};
\node[n-int] (1') at (0,1) {};
\node[n-int] (2') at (0,2) {};
\node[n-int] (3') at (0,3) {};
\draw[thick,->] (1') to (1);
\draw[thick,->] (1') to (2);
\draw[thick,->] (2') to (1');
\draw[thick,->] (2') to (3);
\draw[thick,->] (3') to (1);
\draw[thick,->] (3') to (2');
\node at (0,-1) {$[X_1,[[X_1,X_2],X_3]]$};
\end{scope}

\begin{scope}[scale=0.75,shift={(2.5,-0.25)}]
\begin{scope}
\node[n-int] (1) at (1,1) {};
\node (1') at (0,0) {};
\node (2') at (2,0) {};
\node (out) at (1,2) {};
\draw[densely dotted,->] (out) to (1);
\draw[thick,->] (1) to (1');
\draw[thick,->] (1) to (2');
\end{scope}
\node at (2.5,1) {$=\ -$};
\begin{scope}[shift={(3,0)}]
\node[n-int] (1) at (1,1) {};
\node (1') at (0,0) {};
\node (2') at (2,0) {};
\node (out) at (1,2) {};
\draw[densely dotted,->] (out) to (1);
\draw[thick,->] (1) [out=-30,in=30] to (1');
\draw[thick,->] (1) [out=210,in=150] to (2');
\end{scope}
\end{scope}

\begin{scope}[scale=0.75,shift={(10,0)}]
\begin{scope}
\node (out) at (-1,2) {};
\node (1) at (-1,-1) {};
\node[n-int] (1') at (0,0) {};
\node[n-int] (2') at (0,1) {};
\node (2) at (1,-1) {};
\node (3) at (1,2) {};
\draw[densely dotted,->] (out) to (2');
\draw[thick,->] (2') to (1');
\draw[thick,->] (2') to (3);
\draw[thick,->] (1') to (1);
\draw[thick,->] (1') to (2);
\end{scope}
\node at (1.5,0.5) {$-$};
\begin{scope}[shift={(3,0)}]
\node (out) at (-1,1.5) {};
\node (1) at (-1,-.5) {};
\node[n-int] (1') at (0,.5) {};
\node[n-int] (2') at (1,.5) {};
\node (3) at (2,1.5) {};
\node (2) at (2,-.5) {};
\draw[densely dotted,->] (out) to (1');
\draw[thick,->] (1') to (2');
\draw[thick,->] (1') to (1);
\draw[thick,->] (2') to (2);
\draw[thick,->] (2') to (3);
\end{scope}
\node at (5.5,0.5) {$-$};
\begin{scope}[shift={(7,0)}]
\node (out) at (-1,1.5) {};
\node (1) at (-1,-.5) {};
\node[n-int] (1') at (0,.5) {};
\node[n-int] (2') at (1,.5) {};
\node (3) at (2,1.5) {};
\node (2) at (2,-.5) {};
\draw[densely dotted,->] (out) to (2');
\draw[thick,->] (2') to (1');
\draw[thick,->] (2') to (2);
\draw[thick,->] (1') to (1);
\draw[thick,->] (1') to (3);
\end{scope}
\node at (10,0.5) {$=0$};
\end{scope}

\end{tikzpicture}
\caption{\label{fig-IHX} The first picture illustrates the rooted tree representing the Lie monomial $[X_1,[[X_1,X_2],X_3]]$; the shaded gray vertices are external, while the black vertices are internal.
The second picture illustrates the anti-symmetry relation and the third one depicts the IHX relation.
The dotted incoming edge means that the corresponding directed edge may or may not be actually be present; if not, the bivalent edge represents the root of a Lie tree.}
\end{figure}
In fact, the anti-symmetry relation may be discarded by choosing a total order on the set of edges of a Lie tree, which is what we always do.

From the previous discussion, elements of $\tder_k$ are in one-to-one correspondence with $k$-tuples of Lie trees with $k$ external vertices {\em modulo} the IHX relation.
There is a more convenient way to encode such $k$-tuples into a linear combination of directed graphs with $k$ external vertices and an arbitrary number of internal, trivalent vertices.

Namely, let us consider an element $u=(u_1,\dots,u_k)$ of $\tder_k$: for $1\leq i\leq k$, let us consider the (possibly infinite) linear combination of Lie trees corresponding to $u_i$.
For each Lie tree corresponding to a summand of $u_i$, we draw an additional directed edge from the external $i$-th vertex to the root of the Lie tree: this way, out of a Lie tree is produced a directed graph with $k$ external vertices and all internal trivalent vertices.


This induces an identification between elements of $\sder_k$ and the (graded) vector space spanned by internally connected, internally trivalent un-directed graphs with $k$ external vertices and an arbitrary number of internal trivalent vertices {\em modulo} the IHX relation.
Roughly, given an un-directed graph $\Gamma$ with $k$ external vertices and an arbitrary number of internal trivalent vertices, we may construct, for $i=1,\dots,k$, $m_i$ directed graphs $\Gamma_{i,l}$, $l=1,\dots,m_i$, with $k$ external vertices and the same number of internal trivalent vertices as $\Gamma$, where $m_i$ is the number of edges connected to the $i$-th external vertex.
Namely, for $l=1,\dots,m_i$, we choose the direction of the $l$-th edge in such a way that the edge departs from the $i$-th external vertex: then, the directions of the remaining edges is automatically determined by the fact that all internal vertices of $\Gamma$ are trivalent and that every internal vertex has one ingoing and two outgoing edges.
The Jacobi identity and an induction argument in $\Lie_k$ imply that the unique element of $\tder_k$ obtained this way out of an un-directed graph as before belongs to $\sder_k$.

Let us discuss the graphical interpretation of the Lie bracket on $\tder_k$ (and also on $\sder_k$).

The prescription for the Lie bracket on the combinatorial version of $\tder_k$ can be deduced quite easily from the expression $u(v_i)-v(u_i)-[u_i,v_i]$, $i=1,\dots,k$, for the $i$-th component of $[u,v]$.
Namely, the expression $u(v_i)$ can be easily computed by recalling that $u$ is a tangential derivation, hence it acts on the Lie monomial $v_i$ by Leibniz' rule with respect to the Lie bracket, and in each summand, where $u$ acts on $X_j$, $j=1,\dots,d$, it replaces $X_j$ by $[u_j,X_j]$.
Similar arguments apply for $v(u_i)$ by interchanging the {\em r\^oles} of $u$ and $v$.

Therefore, let us consider two elements $\Gamma_i$, $i=1,2$, of the combinatorial version of $\tder_k$: then, the bracket $\left[\Gamma_1,\Gamma_2\right]$ is defined by the following prescriptions:
\begin{itemize}
\item[$i)$] $\Gamma_1$ and $\Gamma_2$ are glued together at their external vertices, so as to obtain an internally non-connected graph $\Gamma_1\cdot\Gamma_2$ with two internally connected, internally trivalent components and exactly two directed edges from two external vertices to the roots of $\Gamma_1$, $\Gamma_2$ (observe that these two external vertices may coincide); 
\item[$ii)$] we sum over all possible ways of splitting the external vertices of $\Gamma_1\cdot\Gamma_2$ into an external and an internal vertex by inserting a directed edge between them and reconnecting the remaining directed edges in all possible ways;
\item[$iii)$] from the previously obtained linear combinations of graphs, we discard all graphs which are not internally connected, internally trivalent, and the result is $\left[\Gamma_1,\Gamma_2\right]$.
\end{itemize}
Because of the previous prescriptions, it is clear that the only external vertices, whose splitting according to $ii)$ produces possibly non-trivial internally connected, internally trivalent graphs, are the external vertices from which departs a directed edge to the roots of $\Gamma_i$, $i=1,2$.
Furthermore, assume that the unique external vertex of $\Gamma_1$, from which departs the directed edge to its root, differs from the unique external vertex of $\Gamma_2$, from which departs the directed edge to its root, and assume none of these two external vertices is the endpoint of a directed edge from the other Lie tree, then $\left[\Gamma_1,\Gamma_2\right]=0$.

We illustrate in Figure~\ref{fig-tder-bracket} the Lie bracket $[\Gamma_1,\Gamma_2]$ of two elements $\Gamma_i$, $i=1,2$, of the combinatorial version of $\tder_4$: it is not difficult to write down the corresponding tangential derivations of $\Lie_4$, compute explicitly their Lie bracket and identify it with the directed tree on the right-hand side.
Later on, we will encounter a cohomological interpretation of the Lie bracket on the combinatorial version of $\tder_k$.
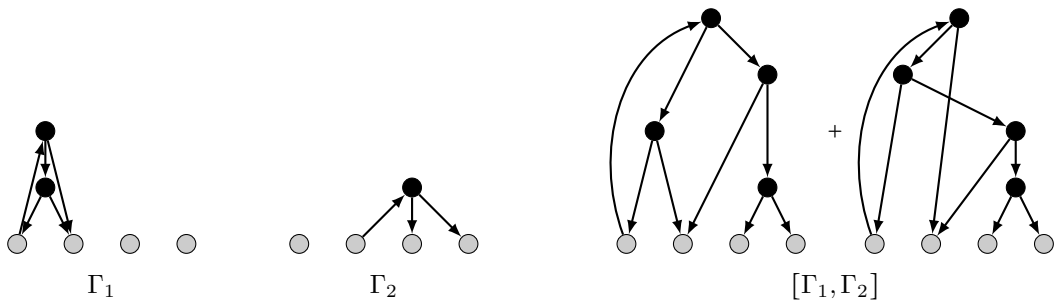
\begin{figure}
\centering
\begin{tikzpicture}[>=latex]

\tikzstyle{k-int}=[draw,fill=gray!40,circle,inner sep=0pt,minimum size=2.5mm]
\tikzstyle{n-int}=[draw,fill=black,circle,inner sep=0pt,minimum size=2.5mm]
\tikzstyle{ext}=[draw,fill=white,circle,inner sep=0pt,minimum size=2.5mm]
\tikzstyle{ext0}=[draw,fill=white,circle,inner sep=0pt,minimum size=4mm]
\tikzstyle{spec}=[draw,rectangle,inner sep=0pt,minimum size=4mm]

\begin{scope}[scale=0.75]
\node[k-int] (1) at (-1.5,0) {};
\node[k-int] (2) at (-.5,0) {};
\node[k-int] (3) at (.5,0) {};
\node[k-int] (4) at (1.5,0) {};
\node[n-int] (v1) at (-1,1) {};
\node[n-int] (v2) at (-1,2) {};
\draw[thick,->] (v2) to (v1) {};
\draw[thick,->] (v2) to (2) {};
\draw[thick,->] (v1) to (1) {};
\draw[thick,->] (v1) to (2) {};
\draw[thick,->] (1) to (v2) {};
\node at (0,-.75) {$\Gamma_1$};
\end{scope}

\begin{scope}[scale=0.75,shift={(5,0)}]
\node[k-int] (1) at (-1.5,0) {};
\node[k-int] (2) at (-.5,0) {};
\node[k-int] (3) at (.5,0) {};
\node[k-int] (4) at (1.5,0) {};
\node[n-int] (v1) at (.5,1) {};
\draw[thick,->] (v1) to (3) {};
\draw[thick,->] (v1) to (4) {};
\draw[thick,->] (2) to (v1) {};
\node at (0,-.75) {$\Gamma_2$};
\end{scope}

\begin{scope}[scale=0.75,shift={(13,0)}]
\begin{scope}[shift={(-2.2,0)}]
\node[k-int] (1) at (-1.5,0) {};
\node[k-int] (2) at (-.5,0) {};
\node[k-int] (3) at (.5,0) {};
\node[k-int] (4) at (1.5,0) {};
\node[n-int] (v1) at (1,1) {};
\node[n-int] (v2) at (-1,2) {};
\node[n-int] (v3) at (1,3) {};
\node[n-int] (v4) at (0,4) {};
\draw[thick,->] (v4) to (v2) {};
\draw[thick,->] (v4) to (v3) {};
\draw[thick,->] (v3) to (2) {};
\draw[thick,->] (v3) to (v1) {};
\draw[thick,->] (v2) to (1) {};
\draw[thick,->] (v2) to (2) {};
\draw[thick,->] (v1) to (3) {};
\draw[thick,->] (v1) to (4) {};
\draw[thick,->] (1) [out=110,in=200] to (v4) {};
\end{scope}
\node at (0,2) {$+$};
\node at (0,-.75) {$\displaystyle \left[\Gamma_1,\Gamma_2\right]$};
\begin{scope}[shift={(2.2,0)}]
\node[k-int] (1) at (-1.5,0) {};
\node[k-int] (2) at (-.5,0) {};
\node[k-int] (3) at (.5,0) {};
\node[k-int] (4) at (1.5,0) {};
\node[n-int] (v1) at (1,1) {};
\node[n-int] (v2) at (1,2) {};
\node[n-int] (v3) at (-1,3) {};
\node[n-int] (v4) at (0,4) {};
\draw[thick,->] (v4) to (v3);
\draw[thick,->] (v4) to (2);
\draw[thick,->] (v3) to (1);
\draw[thick,->] (v3) to (v2);
\draw[thick,->] (v2) to (2);
\draw[thick,->] (v2) to (v1);
\draw[thick,->] (v1) to (3);
\draw[thick,->] (v1) to (4);
\draw[thick,->] (1) [out=110,in=200] to (v4);
\end{scope}
\end{scope}

\end{tikzpicture}
\caption{\label{fig-tder-bracket} The Lie bracket of two elements $\Gamma_i$, $i=1,2$, of the combinatorial version of $\tder_4$.}
\end{figure}

Finally, let us recall the definition of simplicial and coproduct maps on $\tder$ and $\sder$; we follow closely the description in~\cite{AT-2}*{Subsection 3.2}, though we do not treat here the topic in all its generality as in {\em loc.~cit.}.
Let us consider a general element $u$ of $\tder_k$, which we write in a unique way as a $k$-tuple $(u_1,\dots,u_k)$ in $\Lie_k$.
We define $u^{1,\dots,k}$, resp.\ $u^{2,\dots,k+1}$, in $\tder_{k+1}$ {\em via}
\begin{equation}\label{eq-cob-tder-1}
\begin{aligned}
u^{1,\dots,k}&=\left(u_1(X_1,\dots,X_k),\dots,u_k(X_1,\dots,X_k),0\right),\ & u^{2,\dots,k+1}&=\left(0,u_1(X_2,\dots,X_{k+1}),\dots,u_k(X_2,\dots,X_{k+1})\right).
\end{aligned}
\end{equation}
It is clear that, if $u$ belongs to $\sder_k$, both $u^{1,\dots,k}$ and $u^{2,\dots,k+1}$ belong to $\sder_{k+1}$.

On the other hand, for $i=1,\dots,k$, we define $u^{1,\dots,ii+1,\dots,k+1}$ in $\tder_{k+1}$ {\em via}
\begin{equation}\label{eq-cob-tder-2}
\begin{aligned}
u^{1,\dots,i i+1,\dots,k+1}=&\left(u_1(X_1,\dots,X_i+X_{i+1},\dots,X_{k+1}),\dots,u_i(X_1,\dots,X_i+X_{i+1},\dots,X_{k+1}),\right.\\
&\phantom{=}\left.u_i(X_1,\dots,X_i+X_{i+1},\dots,X_{k+1}),\dots,u_k(X_1,\dots,X_i+X_{i+1},\dots,X_{k+1})\right).
\end{aligned}
\end{equation}
Again, an easy computation shows that, if $u$ belongs to $\sder_k$, $u^{1,\dots,ii+1,\dots,k+1}$ belongs to $\sder_{k+1}$.
A bit more involved is the proof that~\eqref{eq-cob-tder-1} and~\eqref{eq-cob-tder-2} preserve the Lie algebra structure on $\tder$ (thus also on $\sder$), see~\cite{AT-2}*{Subsection 3.2}.

Since we have preferred to consider a combinatorial-graphical description of $\tder$ and $\sder$, let us characterize the simplicial and coproduct maps specified by~\eqref{eq-cob-tder-1} and~\eqref{eq-cob-tder-2} in graphical terms.
The simplicial maps in~\eqref{eq-cob-tder-1} are described graphically by simply adding a $0$-valent external vertex on the left and on the right respectively of a graph either in $\tder_k$ or $\sder_k$: more conceptually, 
\[
u^{1,\dots,k}=\grcup\circ_1 u,\ \text{resp.}\ u^{2,\dots,k+1}=\grcup\circ_2 u,\ u\in \tder_k\ \text{or}\ \sder_k.
\]
The coproduct maps in~\eqref{eq-cob-tder-1} are also similarly described graphically
\[
u^{1,\dots,ii+1,\dots,k+1}=u\circ_i \grcup,\ i=1,\dots,k,\ u\in \tder_k\ \text{or}\ \sder_k.
\]
Observe that the differential $d$ in~\cite{AT-2}*{Subsection 3.3} admits the graphical description $d=\left[\grcup,\bullet\right]$. 

The group $\mathfrak S_k$ of permutations of $k$ elements acts from the right in a natural way on $\Lie_k$ {\em via} 
\[
a^\sigma(X_1,\dots,X_k)=a(X_{\sigma(1)},\dots,X_{\sigma(n)}),\ a\in\Lie_k.
\]
As a consequence, there is a right $\mathfrak S_k$-action on $\tder_k$, which descends to $\sder_k$ and to $\mathfrak t_k$, explicitly given by 
\[
u^\sigma(a)=(u(a^\sigma))^{\sigma^{-1}},\ u\in\tder_k,\ a\in Lie_k.
\] 
In this way, $\mathfrak S_k$ acts on $\tder_k$, $\sder_k$ and $\mathfrak t_k$ by Lie algebra automorphisms, thus also on $\TAut_k$, $\SAut_k$ and $\mathsf T_k$ by group automorphisms.

As was done in the preceding Sections, we will adopt the graphical interpretation of the Lie algebras $\tder_k$, $\sder_k$ and $\mathfrak t_k$: this allows to interpret the collections $\{\tder_k\}_k$, $\{\sder_k\}_k$ and $\{\mathfrak t_k\}_k$ as operads of Lie algebras, which we denote simply by $\tder$, $\sder$ and $\mathfrak t$ respectively.

Finally, for a given $k\geq 1$, we denote by $\mathsf{TAut}_k$, $\mathsf{SAut}_k$ and $\mathsf T_k$ respectively the pro-unipotent groups which integrate the pro-nilpotent Lie algebras $\tder_k$, $\sder_k$ and $\mathfrak t_k$ respectively.

\subsection{The Grothendieck--Teichm\"uller Lie algebra \texorpdfstring{$\grt_1$}{grt1}}\label{ss-7-2}
Let us introduce the Grothendieck--Teichm\"uller Lie algebra $\grt_1$ over $\mathbb K$. We follow closely~\cite{AT-2}*{Subsection 4.2}.

The vector space underlying $\grt_1$ is the space of elements $\psi$ in $\Lie_2$, which obey the following three properties:
\begin{align}
\label{eq-anti}\psi(X_1,X_2)&=-\psi(X_2,X_1),\\
\label{eq-hexa}\psi(x,y)+\psi(y,z)+\psi(z,x)&=0,\quad \quad \text{where } x+y+z=0,\\
\label{eq-penta}\psi(t_{12},t_{23}+t_{34})+\psi(t_{13}+t_{23},t_{34})&=\psi(t_{23},t_{34})+\psi(t_{12}+t_{13},t_{24}+t_{34})+\psi(t_{12},t_{23}).
\end{align}
To define the Lie bracket, one understands $\psi$ as the element $(0,\psi)\in \tder_2$.
The Lie bracket on $\tder_2$ then induces the Lie bracket on $\grt_1$. 
The explicit expression for the Lie bracket on $\grt_1$ (the Ihara bracket) is given by
\[
[\psi_1,\psi_2]_\mathrm{Ih}=(0,\psi_1)(\psi_2)-(0,\psi_2)(\psi_1)+[\psi_1,\psi_2],
\]
where $(0,\psi_i)$, $i=1,2$, is the tangential derivative associated with $\psi_i$, and the last term in the right-hand side denotes the Lie bracket in $\Lie_2$.

Identity~\eqref{eq-anti} is the (infinitesimal version of the) antisymmetry relation; Identity~\eqref{eq-hexa} is the (infinitesimal version of the) hexagon relation, and finally Identity~\eqref{eq-penta} is the (infinitesimal version of the) pentagon relation.
It has been proved recently in~\cite{Fur} that in fact Identity~\eqref{eq-penta} together with the assumption that $\psi(x_1,x_2)$ does not contain a term of the form $[x_1,x_2]$ implies Identities~\eqref{eq-hexa} and~\eqref{eq-anti}; furthermore, Identity~\eqref{eq-penta} implies that elements of $\grt_1$ start with Lie monomials of degree at least $2$. 
We also observe that, actually, $\grt_1$ is a Lie subalgebra of $\sder_2$, for more detail we refer to~\cite{AT-2}*{Theorem 4.1}: in particular, elements of $\grt_1$ can be represented as (possibly infinite) linear combinations of un-directed graphs with two external vertices and an arbitrary number of internal trivalent vertices.

We have already mentioned, without proof, that the cohomology of Kontsevich's graph complex $\GC$ is concentrated in non-negative degrees, and that its $0$-th cohomology coincides with $\grt_1$: we are going to describe now explicitly the construction of a natural map from $0$-th degree cocycles of $\GC$ to $\sder_2$, whose image turns out to be precisely $\grt_1$.
In fact, for later computational reasons, we will describe a slight variant of the map between $\GC$ and $\sder_2$ presented in explicit terms in~\cite{Will}*{Section 6}.

\subsection{A map between \texorpdfstring{$\GC$ and $\sder_2$}{GC and sder2}}\label{ss-7-3}
In \cite{Will}*{Section 6}, the second author introduced a map $\phi:\GC_\mathrm{cl}\to \grt \subset \sder_2$ from the space $\GC_\mathrm{cl}$ of closed elements of degree $0$ of the graph complex $\GC$ into $\grt_1$, hence into $\sder_2$.

Let us recall the explicit construction of the map $\phi$, at least on the $1$-vertex irreducible subspace of $\GC_\mathrm{cl}$.
A graph $\Gamma$ is said to be $1$-vertex irreducible, if there is no vertex $v$ of $\Gamma$ such that $\Gamma\smallsetminus\{v\}$ splits into $k\geq 2$ connected components.

Let $\gamma$ in $\GC$ be a cocycle of degree $0$, which we assume to be given by $1$-vertex irreducible graphs.
Further, let $\gamma_1\in \ICG(1)$ be the element obtained by marking the vertex $1$ as external.
$\ICG(k)$, for $k\geq 1$, denotes the space of internally connected graphs with $k$ external vertices, see~\cite{Will}*{Subsection 2.1}.
Then, the element 
\[
Y = \gamma_1 \circ \grcup - \grcup\circ \gamma_1
\]
is closed, hence exact in $\ICG(2)$ by the results of~\cite{Will}*{Sections 3,5}: thus, we may write $Y=\delta(\widetilde Y)$ for some $\widetilde Y$ in $\ICG(2)$. Here, $\widetilde Y$ is defined up to closed and hence exact elements.
 
There is a projection $\pi$ from $\ICG(2)$ to $\sder_2$, defined by forgetting all non-internally trivalent graphs and modding out by the IHX relations. Note in particular that $\pi\circ \delta=0$. 

We define 
\[
\phi(\gamma) = \pi(\tilde Y).
\]
This is well defined because $\pi\circ \delta=0$.
We quote without proof the following proposition from \cite{Will}*{Sections 5,6}.
\begin{Prop}
\label{p-imphiingrt}
The image of the map $\phi$ above is $\grt_1\subset \sder_2\subset \tder_2$.
\end{Prop}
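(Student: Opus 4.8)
The plan is to verify first that $\phi$ is well-defined and descends to a map $\overline{\phi}\colon H^0(\GC)\to \sder_2$, and then to establish the two inclusions $\mathrm{Im}(\phi)\subseteq \grt_1$ and $\grt_1\subseteq \mathrm{Im}(\phi)$ separately. Well-definedness is essentially forced by the construction: if $\gamma$ is a degree-$0$ cocycle built from $1$-vertex irreducible graphs then $\gamma_1$ is closed in $\ICG(1)$, so $Y=\gamma_1\circ\grcup-\grcup\circ\gamma_1$ is closed in $\ICG(2)$; by the acyclicity results of \cite{Will}*{Sections 3, 5} a primitive $\widetilde Y$ exists in the relevant degree, and its ambiguity lies among closed---hence, in that degree, exact---elements, which the projection $\pi$ kills because $\pi\circ\delta=0$. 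The same bookkeeping applied to $\gamma+\delta\beta$ shows that $\phi$ in fact factors through $H^0(\GC)$.

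For the inclusion $\mathrm{Im}(\phi)\subseteq\grt_1$ I would invoke Furusho's theorem \cite{Fur}, which reduces the task to checking that $\psi:=\phi(\gamma)$ contains no term proportional to $[X_1,X_2]$ and satisfies the pentagon relation \eqref{eq-penta}; the first point is a short combinatorial check, using that every graph occurring in $\gamma$, hence in $\widetilde Y$, is connected with all vertices of valence $\geq 3$. The pentagon relation is the substantive part: one forms the two $4$-external-leg elements obtained by inserting $\grcup$ into $\gamma_1$ according to the patterns appearing on the two sides of \eqref{eq-penta}, uses that $\gamma$ is a cocycle together with the identity $\delta=[\grcup,-]$ and the operadic relations of Subsection~\ref{ss-7-1} to see that their difference is closed in $\ICG(4)$, chooses a primitive, and projects it via the $\sder_4$-analogue of $\pi$; unwinding the cosimplicial maps \eqref{eq-cob-tder-1}--\eqref{eq-cob-tder-2} then produces exactly \eqref{eq-penta} for $\psi$, the choices of primitives contributing only terms in the kernel of the projection. (If one prefers not to appeal to \cite{Fur}, the hexagon relation \eqref{eq-hexa} and the antisymmetry \eqref{eq-anti} come out of the analogous $3$-leg and $2$-leg computations.)

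The reverse inclusion $\grt_1\subseteq\mathrm{Im}(\phi)$---equivalently, surjectivity of $\overline{\phi}$---is the main obstacle, and I do not expect it to admit an elementary proof: no explicit graph cocycle mapping to a prescribed $\psi\in\grt_1$ is known. The route I would take is to import the main theorem of \cite{Will}, that $H^0(\GC)\cong\grt_1$, whose proof passes through the action of $\GC$ (more precisely of its extension by the loop class) on the operad $\Graphs$---a model for the chains on the little disks operad---the formality of the latter, and the identification of its homotopy automorphisms with $\GRT_1$; the explicit incarnation of the resulting isomorphism inside $\sder_2$ comes from the Drinfel{\cprime}d--Kohno relation between $\Graphs$, the operad $\mathfrak{t}$ of infinitesimal braids, and $\sder$. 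One then verifies that the present variant $\phi$, built from insertion of $\grcup$ and the projection $\pi$ to $\sder_2$, induces on $H^0$ the same map as the one of \cite{Will}*{Section 6}---the two differ only by homotopic choices of primitives and of the marking conventions---so that its image, too, is all of $\grt_1$. Combining the two inclusions yields the Proposition.
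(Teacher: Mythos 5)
The paper does not actually prove this Proposition: it is stated with the words ``We quote without proof the following proposition from \cite{Will}*{Sections 5,6}'', so there is no in-paper argument to compare yours against. Measured against that, your proposal is consistent with how the result is really established, since everything deep in your sketch is ultimately deferred to \cite{Will} as well: the surjectivity onto $\grt_1$ you obtain from the isomorphism $H^0(\GC)\cong\grt_1$ (the main theorem of \cite{Will}, via the action on $\Graphs$ and formality of the little disks operad), and the well-definedness uses the acyclicity statements of \cite{Will}*{Sections 3,5}, exactly as the paper's construction does.

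Two remarks on the parts you do try to argue directly. First, the inclusion $\mathrm{Im}(\phi)\subseteq\grt_1$ via Furusho's theorem \cite{Fur} plus a graphical verification of the pentagon \eqref{eq-penta} is a plausible shortcut, but as written it is only an assertion: the pentagon check in the $\ICG$-formalism (including the claim that the ambiguity in the choice of primitive in $\ICG(4)$ only contributes terms killed by the projection, which again needs the cohomology computations of \cite{Will}) is precisely the substantive content of \cite{Will}*{Section 6}, so this step is a genuine gap if your text is meant to be self-contained rather than a pointer to that reference. Second, your final step---checking that ``the present variant $\phi$'' induces the same map on $H^0$ as the one of \cite{Will}*{Section 6}---is not needed for this Proposition: the $\phi$ appearing in it \emph{is} the map recalled from \cite{Will} (via $Y=\gamma_1\circ\grcup-\grcup\circ\gamma_1$, a primitive $\widetilde Y$, and $\pi$); the simplified description through $\psi$ only enters in the subsequent Lemma~\ref{l-altphi}, where the paper does supply the comparison argument.
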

The goal of this subsection is to simplify this map a bit. 

Let us define a map (of graded vector spaces)
\begin{align*}
\psi:\GC &\to \Graphs(2),\\
\gamma &\mapsto \gamma_{12},
\end{align*}
where the element $\gamma_{12}$ is zero if the vertices $1$ and $2$ are not connected by an edge, and is the graph obtained by marking the vertices $1$ and $2$ as external and then deleting the edge between these two vertices otherwise.
To fix signs, we assume that the edge between vertices $1$ and $2$ is the first one with respect to the total ordering on edges.
Here, $\Graphs(k)$, $k\geq 1$, denotes the graded vector space of graphs with $k$ external vertices.
\begin{Lem}\label{l-psiprop}
\begin{equation}
\label{eq-psiprop}
\delta(\psi(\gamma)) - \psi(\delta(\gamma))=\gamma_1 \circ \grcup - \grcup\circ \gamma_1.
\end{equation}
\end{Lem}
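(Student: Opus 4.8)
The plan is to prove the identity for an arbitrary $\gamma\in\GC$ by a direct term‑by‑term comparison of the two sides, each written as a signed sum over graphs. Recall that $\delta$ on $\GC$ acts by vertex splitting: a vertex $v$ is replaced by two vertices joined by a new edge $f$, the half‑edges formerly at $v$ are distributed over the two new vertices in all admissible ways, and one sums over $v$. On $\Graphs(2)$ the differential does the same to the internal vertices and, in addition, ``expands'' each of the two external vertices (pulls off a new internal vertex joined to the external vertex by an edge, with a subset of the incident half‑edges moved onto it). The map $\psi$ picks out, in a graph with external vertices $1$ and $2$, the part in which these are joined by an edge, declares that edge first in the edge order, and deletes it; applied to $\gamma\in\GC$ it amounts to summing over choices of a distinguished edge $\{v,w\}$, marking $v,w$ external and deleting $\{v,w\}$. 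I will write $\gamma_{vw}$ for the corresponding summand of $\psi(\gamma)$.

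First I would dispose of the ``bulk'' terms. A generic summand of $\psi(\delta(\gamma))$ arises by splitting a vertex $u$ of $\gamma$ (creating the edge $f$) and then, inside $\psi$, deleting some edge $e'$ of the resulting graph. If $e'$ is an old edge of $\gamma$ not incident to $u$, the summand equals ``delete $e'$ in $\gamma$, mark its endpoints external, then split the internal vertex $u$'', so that after summing over $(u,e')$ these reproduce exactly the internal‑vertex‑splitting part of $\delta(\psi(\gamma))$ and cancel. If $e'$ is an old edge incident to $u$, say $e'=\{u,z\}$, the summand equals ``delete $\{u,z\}$ in $\gamma$, mark $u,z$ external, then expand the external vertex $u$ in $\gamma_{uz}$'', and these match the external‑vertex‑expansion part of $\delta(\psi(\gamma))$ and again cancel, with the sole exception of the \emph{degenerate} expansions in which the newly created vertex (or the external vertex it is pulled off from) ends up incident only to $f$. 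Finally, if $e'$ is the new edge $f$ itself, the summand is ``replace $u$ by two external vertices carrying the redistributed half‑edges'', which is by definition a summand of $\gamma_1\circ\grcup$. It then remains to collect the uncancelled terms: one uses that $\delta$ on $\GC$ produces no univalent vertices, so the degenerate ``delete $f$'' configurations — in which one of the two new external vertices ends up isolated — are \emph{not} among the terms of $\psi(\delta(\gamma))$, whereas they are precisely the summands of $\grcup\circ\gamma_1$; combining these with the degenerate external‑vertex expansions one obtains that the total uncancelled contribution is $\gamma_1\circ\grcup-\grcup\circ\gamma_1$, which is the assertion.

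I expect the only genuine difficulty to be the signs. The edge orderings, the Koszul signs carried by the $\bbC[1]$ factor of each edge, and the convention that the distinguished/new edge is listed first must all be propagated through every move above. As in the proofs of Propositions~\ref{prop:splitting} and~\ref{prop:tildeomega}, the cancellation of the bulk terms rests on the antisymmetry of these expressions under transposing two edges (equivalently, on the antisymmetry of the symbol $\psgn{\cdot}$), so that part is routine; verifying that the degenerate terms assemble with the correct signs into $-\grcup\circ\gamma_1$ — and in particular matching the $\mathfrak S_2$‑labelings of the two external vertices on the two sides — is the delicate point. Note that the lemma is asserted for all $\gamma$, not only cocycles; the cocycle case, in which it yields the explicit primitive $\widetilde Y=\psi(\gamma)$ of $Y=\gamma_1\circ\grcup-\grcup\circ\gamma_1$, is the special case $\delta(\gamma)=0$.
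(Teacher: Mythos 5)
Your overall strategy coincides with the paper's: both arguments sort the summands of $\psi(\delta(\gamma))$ according to whether the edge removed by $\psi$ is the newly created edge or an old edge of $\gamma$, and both identify the former with the right-hand side. In particular, your observation that the non-degenerate ``delete $f$'' terms are exactly the summands of $\gamma_1\circ\grcup$ other than the trivial distributions, and that the missing trivial distributions are precisely $\grcup\circ\gamma_1$, is literally the paper's step ``the terms of type $i)$ are exactly the ones in the right-hand side''; your cancellation of the remaining terms against $\delta(\psi(\gamma))$ is the paper's identification of the type $ii)$ terms.

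The place where your write-up goes wrong is the final assembly of the uncancelled terms, and it conceals the one point that actually needs an argument. The ``degenerate external-vertex expansions'' (terms of $\delta(\psi(\gamma))$ in which the newly created internal vertex is incident only to the new edge $f$) are graphs with a univalent internal vertex hanging off an external vertex; no summand of $\gamma_1\circ\grcup-\grcup\circ\gamma_1$ has that shape (its internal vertices are vertices of $\gamma$, hence at least trivalent), so these terms cannot be ``combined'' with the delete-$f$ terms to produce the right-hand side — if they were genuinely present the identity would fail. What you must say instead is that such terms do not occur, by the conventions on $\Graphs(2)$, resp.\ $\ICG(2)$ (no univalent internal vertices / no such term in $\delta$ there). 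Relatedly, your other claimed exception — the expansion in which the \emph{external} vertex is left incident only to $f$ — is not an exception at all: it is matched in $\psi(\delta(\gamma))$ by the splitting of $v$ in which only the edge towards $w$ stays on the part that becomes external, so listing it as uncancelled double counts. Finally, note that your cancellation of the ``$e'$ incident to $u$'' terms presupposes that $\delta$ on $\Graphs(2)$ contains the external-vertex expansion terms, whereas the paper's proof asserts that $\delta$ acts only on internal vertices; you should pin down the convention of \cite{Will} here, since with a purely internal differential those summands of $\psi(\delta(\gamma))$ would have nothing to cancel against. The sign bookkeeping you defer is also left implicit in the paper, so I do not count that against you, but the two items above need to be repaired for the argument to close.
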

\begin{proof}
To prove the statement, it suffices to unravel the definition of the differential $\delta$ on $\Graphs(k)$ and $\GC$: observe that $\delta$ acts only on internal vertices of elements of $\Graphs(k)$. 

First, $\delta(\gamma)$ has the form 
\[
\delta(\gamma)= \SNbr\circ\gamma-(-1)^{|\gamma|}\gamma\circ \SNbr.
\]
On the right-hand side, the newly inserted edge may either $i)$ become the edge $(1,2)$ or $ii)$ not: accordingly, one obtains two types of terms when applying $\psi$.
One checks that the terms of type $ii)$ are precisely those appearing in $\delta(\psi(\gamma))$, as $\delta$ does not act on the two external vertices, while the terms of type $i)$ are exactly the ones in the right-hand side of~\eqref{eq-psiprop}. 
\end{proof}
Let $Y$ be as above, for $\gamma$ in $\GC_\mathrm{cl}$ as above. 
Then, by means of Lemma~\ref{l-psiprop} and if we assume that $\psi(\gamma)\in\ICG(2)[1]$, we may take 
\[
\widetilde Y = \psi(\gamma)
\]
and clearly
\begin{align*}
\delta(\widetilde Y) 
&= \gamma_1 \circ \grcup - \grcup\circ \gamma_1 + \psi(\delta(\gamma))=Y+0=Y.
\end{align*} 
Let us summarize this discussion.
\begin{Lem}\label{l-altphi}
Suppose that $\gamma\in\GC_\mathrm{cl}^{1\mathrm{vi}}$ is such that $\psi(\gamma)$ contains only graphs with one internally connected component. Then $\phi(\gamma)=\pi(\psi(\gamma))\in \sder_2$
\end{Lem}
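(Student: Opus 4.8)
The plan is to recognise that the internal-connectivity hypothesis is precisely what allows one to take $\widetilde Y=\psi(\gamma)$ as the solution of $\delta(\widetilde Y)=Y$ in the construction of $\phi$ recalled above, so that the lemma becomes essentially a one-line consequence of Lemma~\ref{l-psiprop}.

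First I would recall from the discussion preceding the lemma that $\phi(\gamma)$ may be computed from \emph{any} element $\widetilde Y\in\ICG(2)$ satisfying $\delta(\widetilde Y)=Y$, where $Y=\gamma_1\circ\grcup-\grcup\circ\gamma_1$, via the formula $\phi(\gamma)=\pi(\widetilde Y)$; the result is independent of the choice of $\widetilde Y$ because two such choices differ by a $\delta$-closed element of $\ICG(2)$, which is $\delta$-exact by \cite{Will}*{Sections 3,5} and hence killed by $\pi$, since $\pi\circ\delta=0$.

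Next I would apply Lemma~\ref{l-psiprop} to $\gamma$. Since $\gamma$ is a degree-$0$ cocycle, $\delta(\gamma)=0$, and so \eqref{eq-psiprop} reduces to
\[
\delta(\psi(\gamma))=\gamma_1\circ\grcup-\grcup\circ\gamma_1=Y.
\]
The assumption that $\psi(\gamma)$ is a linear combination of graphs each having a single internally connected component says exactly that $\psi(\gamma)\in\ICG(2)[1]$. Therefore $\psi(\gamma)$ is an admissible choice for $\widetilde Y$, and by the previous paragraph $\phi(\gamma)=\pi(\psi(\gamma))$. Since $\pi$ takes values in $\sder_2$ by its very definition (discard the non-internally-trivalent graphs and mod out by the IHX relations), this gives $\phi(\gamma)=\pi(\psi(\gamma))\in\sder_2$, as claimed.

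The only point requiring care is confirming that $\psi(\gamma)$ genuinely lies in the space in which $\widetilde Y$ is sought: one must check that deleting the edge between the vertices $1$ and $2$ and declaring these vertices external does not disconnect the internal part — which is exactly the hypothesis — and that the degree and the sign induced by placing the edge $(1,2)$ first in the chosen ordering are compatible with the conventions fixed in Subsection~\ref{ss-7-3} and with the statement of Lemma~\ref{l-psiprop}. Both are immediate from the definitions, so I expect no real obstacle here; the substance of the lemma is entirely contained in Lemma~\ref{l-psiprop} together with the well-definedness of $\phi$.
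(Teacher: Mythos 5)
Your proposal is correct and follows essentially the same route as the paper: the paper also invokes Lemma~\ref{l-psiprop} with $\delta(\gamma)=0$ to get $\delta(\psi(\gamma))=Y$, observes that the internal-connectivity hypothesis puts $\psi(\gamma)$ in $\ICG(2)[1]$ so it can serve as $\widetilde Y$, and concludes $\phi(\gamma)=\pi(\psi(\gamma))$ by the well-definedness of $\phi$ (i.e.\ $\pi\circ\delta=0$). No gaps.
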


\subsection{The image \texorpdfstring{$\tau^t$ of $x^t$ in $\sder_2$}{taut of xt in sder2 }}\label{ss-7-4}
Let us again consider the family of graph cocycles $x^t$ discussed in Section~\ref{s-6}.
The goal of the present Subsection is to produce an explicit integral formula for its image $\phi(x^t)$ under the map $\phi$ from Subsection~\ref{ss-6-2}.

First recall from~\eqref{equ:xtdef} that 
\[
x^t=\sum_{\Gamma} c_\Gamma^t\ \Gamma
\]
with the weights defined in~\eqref{equ:xtweightdef} as
\[
c_\Gamma^t
=
\int_{C_n/S^1}
\tilde \beta^t_\Gamma
=
\int_{\Conf_{n-2}(\mathbb C\smallsetminus\{0,1\})}
\tilde \beta^t_\Gamma.
\]
For the last equality we used the isomorphism $\Conf_{n-2}(\mathbb C\smallsetminus\{0,1\}) \to C_n/S^1$ given by fixing the first point of the configuration at $z_1=0$ and the second at $z_2=1$.
Suppose there is an edge between vertices $1$ and $2$. Then note that of the terms in \eqref{equ:tildebetadef} only those will contribute for which the $e$ in the first sum is the edge $(1,2)$.

\begin{Lem}\label{l-1vi}
If $\Gamma$ is not $1$-vertex irreducible, then $c^t_\Gamma=0$. 
In particular, $x^t\in \GC_{cl}^{1vi}$ for all $t$.
\end{Lem}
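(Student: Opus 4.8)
The plan is to reduce to the case $t=\tfrac12$ and then exhibit a nowhere-vanishing vector field that annihilates the (top-degree) weight form $\tilde\beta^{1/2}_\Gamma$. First I would fix the combinatorial data: assume $\Gamma$ is not $1$-vertex irreducible and choose a vertex $v$ whose deletion disconnects $\Gamma$; grouping the connected components of $\Gamma\smallsetminus\{v\}$ into two non-empty families yields a decomposition $V(\Gamma)=V_1\cup V_2$ with $V_1\cap V_2=\{v\}$, with both $V_1\smallsetminus\{v\}$ and $V_2\smallsetminus\{v\}$ non-empty, and with $E(\Gamma)=E_1\sqcup E_2$, where every edge of $E_i$ joins two vertices of $V_i$. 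One may choose the grouping so that the two distinguished vertices $1$ and $2$ both lie in $V_2$. Since $\Gamma$ has degree $0$ in $\GC$ we have $|E(\Gamma)|=2|V(\Gamma)|-2$, so $\tilde\beta^t_\Gamma$ is of top degree on $C_n/S^1$ (with $n=|V(\Gamma)|$); and by \eqref{equ:xtweightdef} together with Proposition~\ref{prop:tildeomega} one has $c^t_\Gamma=(4t(1-t))^{\,n-2}\int_{C_n/S^1}\tilde\beta^{1/2}_\Gamma$, so it suffices to prove that $\tilde\beta^{1/2}_\Gamma$ vanishes identically.

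Working in the model $C_n/S^1\cong\Conf_{n-2}(\mathbb C\smallsetminus\{0,1\})$ recalled just above the statement (so $z_1=0$, $z_2=1$), I would take $X$ to be the vector field that infinitesimally rotates the points $z_w$, $w\in V_1\smallsetminus\{v\}$, about $z_v$, while fixing $z_v$ and every $z_w$ with $w\in V_2$; explicitly $X=\sum_{w\in V_1\smallsetminus\{v\}}\bigl(i(z_w-z_v)\,\partial_{z_w}-i(\bar z_w-\bar z_v)\,\partial_{\bar z_w}\bigr)$. By the choice of grouping this does not involve the frozen coordinates $z_1,z_2$, so $X$ is a genuine vector field on $\Conf_{n-2}(\mathbb C\smallsetminus\{0,1\})$, and it is nowhere zero since in a configuration $z_w\neq z_v$. (Equivalently one could work on $C_n/S^1$ intrinsically, checking that the corresponding field on $\Conf_n$ commutes with the $\mathbb C\rtimes\mathbb R_{>0}\rtimes S^1$-action and hence descends, staying nowhere tangent to the orbits since $V_1\smallsetminus\{v\}$ is a proper non-empty subset of $V(\Gamma)$ with complement of at least two elements.)

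The heart of the argument is the identity $\iota_X\tilde\beta^{1/2}_\Gamma=0$. By \eqref{equ:tildebetadef}, $\tilde\beta^{1/2}_\Gamma$ is a sum of terms, each a scalar function times a wedge of the one-forms $\alpha^{1/2}_e:=\tfrac1{4\pi i}\bigl(d\log(z_{s(e)}-z_{t(e)})+d\log(\bar z_{s(e)}-\bar z_{t(e)})\bigr)$ over a subset of the edges; since contraction is a derivation that passes through the scalar factor, it is enough to show $\iota_X\alpha^{1/2}_e=0$ for every edge $e$ of $\Gamma$. If $e\in E_2$, both endpoints are fixed by $X$, so $\iota_X\alpha^{1/2}_e=0$. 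If $e\in E_1$, then — whether or not $v$ is an endpoint of $e$ — one computes $X\bigl(\log(z_{s(e)}-z_{t(e)})\bigr)=i$ and $X\bigl(\log(\bar z_{s(e)}-\bar z_{t(e)})\bigr)=-i$, whence $\iota_X\alpha^{1/2}_e=\tfrac1{4\pi i}(i-i)=0$. Therefore $\iota_X\tilde\beta^{1/2}_\Gamma=0$; a top-degree differential form annihilated by contraction with a nowhere-vanishing vector field is identically zero, so $\tilde\beta^{1/2}_\Gamma\equiv0$ on $C_n/S^1$. Hence $c^t_\Gamma=0$ for all $t$, and since $x^t$ is already known to be a cocycle of degree $0$ (Section~\ref{s-6}) it follows that $x^t\in\GC^{1vi}_{cl}$.

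I expect the one genuinely delicate point to be the exact cancellation in the third step: it is special to $t=\tfrac12$ (for $t\neq\tfrac12$ the two logarithmic contributions do not cancel, being weighted by $1-t$ and $t$), and for general $t$ the vanishing is recovered only through the global scalar factor $(4t(1-t))^{n-2}$ supplied by Proposition~\ref{prop:tildeomega}. The remaining steps — the choice of grouping placing $1$ and $2$ in $V_2$, and the verification that $X$ is a well-defined nowhere-vanishing field on the reduced configuration space — are routine bookkeeping.
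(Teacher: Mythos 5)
The decisive step of your argument --- that $\iota_X$ annihilates every edge factor at $t=\tfrac12$ --- is an artifact of the printed sign in \eqref{equ:tildebetadef} and fails for the form the weights are actually built from. Throughout the paper the $t=\tfrac12$ edge factors of $\tilde\beta^{\frac12}_\Gamma$ are multiples of $d\arg(z_{s(e)}-z_{t(e)})$, not of $d\log|z_{s(e)}-z_{t(e)}|$: the anti-holomorphic term carries a relative minus sign, as one sees from the local expansion of $\omega^t$ at the type~I strata, from the proof of Proposition~\ref{prop:tildeomega} (the regularity on $\overline C_A/S^1$ which you invoke for the $t$-rescaling holds precisely because ``the forms $d\arg(z-w)$ are regular on $\overline C_A$'', whereas $d\log|z-w|$ does not extend), from Theorem~\ref{thm:factoring} and \eqref{eq-ctgammasimpl}, and explicitly in Appendices~\ref{app:thetaextendsproof} and~\ref{app-coc}, where the same expression is evaluated at $t=\tfrac12$ as $\log|\cdot|$ times a product of $\tfrac1{2\pi}d\arg$'s. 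For the argument forms your field $X$ (rigid rotation of the points of $V_1\smallsetminus\{v\}$ about $z_v$) gives $\iota_X\, d\arg(z_a-z_b)=1$ for every edge of $E_1$, so the contraction of each such factor is the $t$-independent nonzero constant $\tfrac1{2\pi}$ and the factor-by-factor cancellation $\tfrac1{4\pi i}(i-i)=0$ disappears; moreover $\iota_X\tilde\beta^{\frac12}_\Gamma\neq0$ in general, since the antisymmetry cancellation only pairs terms in which both the omitted edge $e$ and the contracted edge lie in $E_1$, leaving the terms with $e\in E_2$ unmatched. So the pointwise statement you aim for, $\tilde\beta^{\frac12}_\Gamma\equiv0$ on $C_n/S^1$, is false; only the integral vanishes. (Your own closing remark that the cancellation is ``special to $t=\tfrac12$'' is the symptom: with the correct relative sign no value of $t$ helps.)

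What is needed, and what the paper does, is an argument at the level of the integral: relabel so that the cut vertex $v$ is the vertex $1$ frozen at $0$ and one of its neighbours is the vertex $2$ frozen at $1$ (legitimate since $\Gamma\mapsto c^t_\Gamma$ is equivariant under relabelling; note that your alternative of grouping the components so that both $1$ and $2$ lie in $V_2$ is impossible when $\Gamma\smallsetminus\{v\}$ has exactly two components containing $1$ and $2$ respectively, and your intrinsic variant on $C_n/S^1$, while well defined, does not repair the main problem). Then on $\Conf_{n-2}(\bbC\smallsetminus\{0,1\})$ the integrand splits as a product over the connected components of $\Gamma\smallsetminus\{v\}$, Fubini factorizes $c^{\frac12}_\Gamma$ term by term, and in each summand of \eqref{equ:tildebetadef} at least one component does not contain the logarithm-bearing edge $e'$; that component's factor is an integral of a pure product of $d\arg$-forms and vanishes by the Kontsevich vanishing lemma (the involution $z\mapsto\bar z$). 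The vanishing mechanism is this killing of the $\log$-free factors after factorization, not a pointwise vanishing of the weight form.
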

\begin{proof}
Recall that, by means of the second identity of \eqref{equ:xtweightdef}, it will be mostly sufficient to restrict the analysis to $x^{\frac{1}2}$.

Let $v$ be a vertex of $\Gamma$ such that $\Gamma\smallsetminus\{v\}$ splits into $k\geq 2$ connected components. 
%
Without loss of generality we may assume that $v=1$, that the vertex 2 lies in the first connected component and that there is an edge connecting $1$ and $2$.

Then, the integrand in 
\[
 \int_{\Conf_{n-2}(\mathbb C\smallsetminus\{0,1\})}
\tilde \beta^{\frac{1}2}_\Gamma
\]
can be written into a sum of products of forms according to the connected components of $\Gamma\smallsetminus\{v\}$ and to the edge, different from the one connecting $1$ and $2$, with which is associated a function $\log|z-w|$.

Fubini's Theorem implies that the corresponding integral $c_\Gamma^t$ can be written as a sum of products of integrals over $\mathrm{Conf}_{n_i}(\mathbb C\smallsetminus\{0,1\})$, $i=1,\dots,k$, $k$ being the number of connected components of $\Gamma\smallsetminus\{v\}$ and $n_i$ the number of vertices of the $i$-th connected component other than 1 and 2.
In fact, for each term in the sum \eqref{equ:tildebetadef} there is at least one connected component such that $e'$ does not lie in this component. But that term produces a factor zero upon integration by the Kontsevich Vanishing Lemma~\cite{K}*{Lemma 6.6}.
\end{proof}

\begin{Lem}\label{lem:intconnected}
 If a graph $\Gamma$ is such that $\psi(\Gamma)$ contains more than one internally connected component, then  $c^t_\Gamma=0$ for all $t$. 
\end{Lem}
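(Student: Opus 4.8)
The plan is to run the argument of Lemma~\ref{l-1vi}, but with the single vertex $1$ replaced by the pair $\{1,2\}$. First I would reduce to $t=\frac12$: by Proposition~\ref{prop:tildeomega} (equivalently, by the second identity in~\eqref{equ:xtweightdef}) one has $c_\Gamma^t=(4t(1-t))^{|V(\Gamma)|-2}c_\Gamma^{1/2}$, and at $t=\frac12$ every $1$-form in~\eqref{equ:tildebetadef} becomes $\frac{1}{2\pi i}\,d\log|z_{s(e'')}-z_{t(e'')}|$, the differential of a single-valued harmonic function. If $\Gamma$ has no edge joining $1$ and $2$ then $\psi(\Gamma)=0$ and the hypothesis of the lemma is not satisfied, so I may assume the edge $(1,2)$ is present. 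Using the identification $\mathrm{Conf}_{n-2}(\mathbb C\smallsetminus\{0,1\})\cong C_n/S^1$ of Subsection~\ref{ss-7-4} I would fix $z_1=0$, $z_2=1$; since then $\log|z_1-z_2|=0$ and $d\log|z_1-z_2|=0$, the only surviving terms of~\eqref{equ:tildebetadef} are those with $e=(1,2)$, so that
\[
\tilde\beta_\Gamma^{1/2}\big|_{z_1=0,\,z_2=1}=\sum_{e'\neq(1,2)}\pm\,\frac{1}{\pi i}\log|z_{s(e')}-z_{t(e')}|\prod_{e''\in E(\Gamma)\smallsetminus\{(1,2),\,e'\}}\frac{1}{2\pi i}\,d\log|z_{s(e'')}-z_{t(e'')}|.
\]

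By hypothesis, deleting the vertices $1$ and $2$ from $\Gamma$ leaves the remaining vertices in connected components $V_1,\dots,V_k$ with $k\geq2$, and no edge of $\Gamma$ joins two distinct $V_j$. Hence, for each $e'$, the integrand above is a product over $j=1,\dots,k$ of forms in the variables $\{z_a:a\in V_j\}$ together with the fixed points $0,1$, exactly one of these factors (the one indexed by the component $V_{i_0}$ containing $e'$) additionally carrying the function $\log|z_{s(e')}-z_{t(e')}|$, every other factor being a pure product of the exact $1$-forms $d\log|z_a-z_b|$. Applying Fubini's Theorem exactly as in the proof of Lemma~\ref{l-1vi}, $c_\Gamma^{1/2}$ becomes a sum over $e'$ of products of integrals over the spaces $\mathrm{Conf}_{|V_j|}(\mathbb C\smallsetminus\{0,1\})$. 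Since $k\geq2$, every such summand contains a factor
\[
\int_{\mathrm{Conf}_{|V_j|}(\mathbb C\smallsetminus\{0,1\})}\ \prod_{e''\in E_j}\frac{1}{2\pi i}\,d\log|z_{s(e'')}-z_{t(e'')}|\qquad(j\neq i_0),
\]
where $E_j$ is the set of edges of $\Gamma$ within $V_j$ or joining $V_j$ to $\{1,2\}$; if its degree does not match $\dim\mathrm{Conf}_{|V_j|}$ this factor is zero for trivial reasons, so it remains to handle the case in which it is a top-degree integral of a product of forms $d\log|z_a-z_b|$.

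The crux is that this last integral vanishes — the analogue, for the midpoint propagator, of the Kontsevich Vanishing Lemma~\cite{K}*{Lemma~6.6} invoked in Lemma~\ref{l-1vi}. Since each factor $d\log|z_a-z_b|$ is closed, the top-degree integrand is exact, so by the regularized Stokes Theorem~\ref{thm:regstokes} the integral reduces to a sum of boundary contributions over the strata of $\overline{\mathrm{Conf}}_{|V_j|}(\mathbb C\smallsetminus\{0,1\})$; I would check, by the local analysis near type~I strata already carried out in Section~\ref{s-4} (Propositions~\ref{prop:splitting}--\ref{prop:bdry1}), that the regularization of the relevant form vanishes on each of these strata, the decisive inputs being that $d\log|z_a-z_b|$ restricts trivially to the sphere along which $z_a$ and $z_b$ collapse, and that the remaining strata are annihilated by the antisymmetry exploited in Proposition~\ref{prop:tildeomega}. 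It then follows that every summand is zero, whence $c_\Gamma^{1/2}=0$ and therefore $c_\Gamma^t=0$ for all $t$. I expect this boundary analysis to be the only genuine obstacle; everything else runs parallel to Lemma~\ref{l-1vi}.
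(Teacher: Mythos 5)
Your argument has the same skeleton as the paper's proof: restrict to the section $z_1=0$, $z_2=1$, so that of the terms in \eqref{equ:tildebetadef} only those survive in which the edge $(1,2)$ is the omitted edge; factorize the integral by Fubini over the internally connected components; and kill every component not carrying the logarithm factor either by a degree count or by a vanishing statement for top-degree products of propagator one-forms on $\Conf_{m}(\bbC\smallsetminus\{0,1\})$. (The paper organizes the counting slightly differently, via $\sum_j(e_j-2n_j)=1$, but that is the same bookkeeping.)

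The one point of divergence is your treatment of that last vanishing, and it is also the only weak spot. The paper simply invokes M.~Kontsevich's vanishing Lemma \cite{K}*{Lemma 6.6} for the top-degree component integrals, exactly as in the proof of Lemma~\ref{l-1vi}, and you may do the same: since only the $(m,m)$-component of a top-degree form on the complex manifold $\Conf_m(\bbC\smallsetminus\{0,1\})$ contributes, a product of $2m$ forms $\tfrac{1}{2\pi i}\,d\log|z_a-z_b|$ integrates to the same number (up to a nonzero constant) as the corresponding product of forms $d\arg(z_a-z_b)$, so what you call the ``analogue for the midpoint propagator'' is literally the integral already dispatched in Lemma~\ref{l-1vi}, not a new statement. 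By contrast, the regularized-Stokes re-derivation you sketch does not go through as written: your primitive $\log|z_{s(e_1)}-z_{t(e_1)}|\prod_{e\neq e_1}d\log|\cdot|$ is a single term, so the pairwise antisymmetry cancellation of Proposition~\ref{prop:tildeomega} is not available to you, and the strata where three or more points collapse, where points collide with $0$ or $1$, or where points escape to infinity are not disposed of by the remark that $d\log|z_a-z_b|$ has no angular component along a two-point collapse. Replace that sketch by the citation (as the paper does) and your proof coincides with the paper's.
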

\begin{proof}
 Suppose $\Gamma$ has $n$ vertices.
Recall that the weight $c^t_\Gamma$ is defined by an integral over $\Conf_{n-2}(\bbC\setminus\{0,1\})$ of the form \eqref{equ:tildebetadef}. Here the singled out points $0$ and $1$ correspond to vertices 1 and 2 in the contributing graphs. If vertices $1$ and $2$ are not connected by an edge, there is no contribution to $\psi(\Gamma)$. Next note that a term in the sum \eqref{equ:tildebetadef} can only contribute non-trivially if $e'$ is the edge between vertices $1$ and $2$, for otherwise the form is zero when restricted to $z_1=0$, $z_2=1$. Suppose the graph decomposes into $k$ connected components after deleting vertices 1, 2 and the edge between them. We want to show for $k\neq 1$ the integral vanishes. 
Suppose further the $k$ components have $n_1,\dots, n_k$ vertices and $e_1,\dots, e_k$ edges. If for some $j$ we have $v_j-2n_j\notin \{0,1\}$ the integral vanishes by degree reasons. If $v_j-2n_j=0$ for some $j$ the integral also vanishes by M. Kontsevich's vanishing Lemma. But since $\sum_{j=1}^k (v_j-2n_j)=1$, the only case for which $v_j-2n_j=1$ for all $j$ is that $k=1$. Hence the Lemma follows.
\end{proof}

Let us next compute $\phi(x^t)$, using the alternative description of $\phi$ from Lemma~\ref{l-altphi}, which is applicable due to Lemma~\ref{lem:intconnected}.

Since the projection $\pi$ sends to zero all graphs with non-trivalent internal vertices, a graph $\Gamma$ appearing in $x^t$ can only contribute if it has at most two vertices of valence $\geq 4$, which correspond to the vertices $1$ and $2$. 
Fix such a graph $\Gamma$. 
Thus, the vertices in $\Gamma$ may be numbered w.l.o.g.\ in such a way that all vertices except possibly vertices $1$ and $2$ have valence exactly $3$.
Furthermore we may assume that the vertices $1$ and $2$ are connected by an edge, otherwise the graph would not contribute.

The contribution of $\Gamma$ to $\phi(x^t)$ is then an element $\Gamma_{12}$, obtained by making the vertices $1$ and $2$ external and deleting the edge between them.
Hence we obtain
\[
\tau^t=\phi(x^t) 
=\sum_{\Gamma'} c_{\Gamma'}^t\ \pi(\Gamma_{12}')
\]
where now the sum is only over graphs $\Gamma'$ such that $i)$ all vertices except possibly $1$ and $2$ are trivalent and $ii)$ the vertices $1$ and $2$ are connected by an edge.

For such a graph $\Gamma'$, let us examine more closely the integral $c_{\Gamma'}^t$.
Using the assumptions on $\Gamma'$, many terms of the integrand \eqref{equ:tildebetadef} will not contribute.
Concretely, if one of the endpoints of the edge $e$ (as in \eqref{equ:tildebetadef}) is not among the vertices 1 and 2, then $\Gamma'\smallsetminus \{e\}$ contains at least one vertex of valence two, whence the integral vanishes. 
Therefore, the only contributing term in the sum over $e$ in \eqref{equ:tildebetadef} is the one for which $e$ is the edge connecting the vertices $1$ and $2$.

Summarizing, we obtain the following identity for $c^t_{\Gamma'}$:
\begin{equation}\label{eq-ctgammasimpl}
c^t_{\Gamma'}=-[4t(1-t)]^{n-2}
\sum_{e\neq (1,2)}
(-1)^{e-1}
\int_{\mathrm{Conf}_{n-2}(\mathbb C\smallsetminus\{0,1\})}
 \beta_e \omega^{\frac 1 2}_{\Gamma' \smallsetminus \{e,(1,2)\}}.
\end{equation}

\section{A family of AT connections and \texorpdfstring{$\tder$}{tder}-associators}\label{s-8}
The present Section is devoted to the construction of a family of Drinfel{\cprime}d associators which will be central in settling P.~Etingof's conjecture.

The main idea of the construction of the aforementioned family of Drinfel{\cprime}d associators is to construct a family of flat connections $\nabla^t_k$ 
on the trivial principal $\SAut_k$-bundle over $\underline C_k=\mathrm{Conf}_k/\bbC$; moreover, we prove that these connections (for all $t$) are gauge equivalent. 
We further establish an explicit connection between the family of gauge transformations for $\nabla^t$ on $\underline C_k$ and the family of graph cocycles $x^t$.

The family of associators we are interested in is defined as (a suitable regularization of) the parallel transport with respect to $\nabla^t_3$ on $\underline C_3$ along a path which connects two boundary configurations in a suitable compactification. 

\subsection{More configuration spaces and compactifications}\label{ss-8-0}
In section \ref{s-3} we reviewed the spaces $\overline C_n$ obtained by a Fulton-MacPherson-Axelrod-Singer type compactification of the configuration spaces $\Conf_n(\bbC) / \bbR^+ \ltimes \bbC$. In this section we will mostly use the configuration spaces
$\underline C_n=\underline \Conf_n(\bbC) / \bbC$, i.~e., we do not take a quotient by rescalings. 

There is a natural projection $\pi_{k,n}:\underline C_{k+n}\to \underline C_k$ by forgetting the last $n$ points.
It will be useful to consider a fiberwise compactification $\overline C_{k,n}^f\to \underline C_k$ of the bundle $\underline C_{k+n}\to \underline C_k$ obtained as a pull-back
\[
 \begin{tikzpicture}
  \matrix(m)[diagram]{\overline C_{k,n}^f & \overline C_{k+n} \\ \underline C_k & \overline C_k\\ };
  \draw[-latex] (m-1-1) edge (m-1-2) edge (m-2-1) (m-2-1) edge (m-2-2) (m-1-2) edge (m-2-2);
 \end{tikzpicture}
\]
The fibers of $\overline C_{k+n}^f\to \underline C_k$ are compact smooth manifolds with corners. 
Furthermore $\overline C_{k+n}^f$ fits into the framework of the regularized Stokes' Theorem of \cite{ARTW}, i.~e., Theorem \ref{thm:regstokes}, as is explained in more detail in Appendix \ref{app:stokesbundle}.

\subsection{A family of Alekseev-Torossian connections}\label{ss-8-1}

Let us consider a graph $\Gamma$ in $\sder_k$ with $n$ internal vertices and hence $2n+1$ edges. Assuming for now that the following integrals exists, we associate to $\Gamma$ a differential one-form $\vartheta_\Gamma^t$ and a function $\widetilde\vartheta^t_\Gamma$ on $\underline C_k$ {\em via}
\begin{align}
\label{eq-weight-AT-1}\vartheta_{\Gamma}^t&=\int_{\underline C_{k+n}/\underline C_k}\theta^t_\Gamma,\\
\label{eq-weight-AT-0}\widetilde\vartheta_{\Gamma}^t&=\int_{\underline C_{k+n}/\underline C_k}\widetilde\theta^t_\Gamma
\end{align}
where the notation on the right-hand side means integration along the fiber of the projection $\pi_{k,n}:\underline C_{k+n}\to \underline C_k$ and the integrands are defined as
\begin{align}
\label{eq-weight-AT-1-integrand}\theta_{\Gamma}^t&= \prod_{e\in E(\Gamma)} \frac 1 {2\pi i} \left( (1-t) d\log(z_{s(e)}-z_{t(e)}) + t\, d\log(\bar z_{s(e)}-\bar z_{t(e)})\right),\\
\label{eq-weight-AT-0-integrand}\widetilde\theta_{\Gamma}^t&=\sum_{e\in E(\Gamma)} (-1)^{e-1} \frac i \pi 
\log|z_{s(e)}- z_{t(e)}|
\prod_{e'\neq e\in E(\Gamma)} \frac 1 {2\pi i} \left( (1-t) d\log(z_{s(e')}-z_{t(e')}) + t\, d\log(\bar z_{s(e')}-\bar z_{t(e')})\right).
\end{align}
We furthermore set
\begin{align}
\label{eq-AT-bound-1}\omega^t_{\mathrm{AT},k}&=\sum_{\Gamma
}\vartheta^t_\Gamma\ \Gamma,\\
\label{eq-AT-bound-0}a^t_k&=\sum_{\Gamma
}\widetilde\vartheta^t_\Gamma\ \Gamma.
\end{align}
Hence $\nabla^t_k=d-\omega^t_{\mathrm{AT},k}$ defines a connection on the trivial principal $\SAut_k$-bundle over $\underline C_k$, while $a^t_k$ defines an $\sder_k$-valued function on $\underline C_k$. 
We will see below that the connections $\nabla^t_k$ (for varying $t$) are all gauge equivalent, and are transformed into each other by (infinitesimal) gauge transformations determined by $a^t_k$.
Let us however begin by showing that the integrals \eqref{eq-weight-AT-1} and \eqref{eq-weight-AT-0} indeed converge.

\begin{Lem}\label{l-conv-fib}
For $\Gamma$ a graph in $\sder_k$ the $1$-form $\vartheta^t_\Gamma$ and the $0$-form $\widetilde\vartheta^t_\Gamma$ are well-defined ({\em i.e.} the integral converges) and smooth on $\underline C_k$, for all $t$.
\end{Lem}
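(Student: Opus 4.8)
The plan is to reduce the statement to the regularizability analysis of Subsection~\ref{s-4} and of~\cite{ARTW}, carried out \emph{fiberwise} over the bundle $\overline C_{k,n}^f\to \underline C_k$ of Subsection~\ref{ss-8-0}, whose fibers are compact manifolds with corners. First I would record the elementary bookkeeping: a graph $\Gamma\in\sder_k$ with $n$ internal (trivalent) vertices has $2n+1$ edges, so $\theta^t_\Gamma$ is a form of degree $2n+1$ and $\widetilde\theta^t_\Gamma$ one of degree $2n$ on $\underline C_{k+n}$; since the fibers of $\pi_{k,n}$ have real dimension $2n$, the fiber integrals \eqref{eq-weight-AT-1} and \eqref{eq-weight-AT-0} produce a $1$-form and a function on $\underline C_k$ respectively, as claimed, \emph{provided} the integrands are integrable along the compact fibers. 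Convergence is therefore a fiberwise question, with the points $z_1,\dots,z_k$ of the base configuration playing the role of parameters.

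Next I would transport, essentially verbatim, the local analysis of Propositions~\ref{prop:splitting} and~\ref{prop:iotaregular} and Theorem~\ref{thm:omegareg} to the present setting. The only boundary strata of a fiber of $\overline C_{k,n}^f$ along which $\theta^t_\Gamma$ or $\widetilde\theta^t_\Gamma$ can fail to be integrable are, exactly as in~\cite{ARTW}, the ``type~I'' strata, where a subset $B$ of the forgotten points $z_{k+1},\dots,z_{k+n}$ collapses in the interior of $\bbC$ away from $z_1,\dots,z_k$; the propagators $\frac{dz}{z-w}$ attached to an edge joining a forgotten point to a fixed point, or to $\infty$, extend to the relevant Fulton--MacPherson blow-up and cause no trouble. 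Near a type~I stratum one has the free local $S^1$-action rotating $B$ about its center of mass, with generating field $v_B$ and radial coordinate $r_B$, and one obtains splittings
\[
\theta^t_\Gamma = \frac{dr_B}{r_B}\wedge\alpha^t + (\text{regular}), \qquad
\widetilde\theta^t_\Gamma = \frac{dr_B}{r_B}\wedge\widetilde\alpha^t + \log(r_B)\,\widehat\alpha^t + (\text{regular}),
\]
with $\iota_{v_B}\alpha^t = \iota_{v_B}\widetilde\alpha^t = \iota_{v_B}\widehat\alpha^t = 0$. Because the component of $\theta^t_\Gamma$, resp.\ $\widetilde\theta^t_\Gamma$, of vertical degree $2n$ is a top form along the $2n$-dimensional fibers, contracting with the product of the local rotation vector fields and invoking the antisymmetry argument of Theorem~\ref{thm:omegareg} shows that the $\frac{dr_B}{r_B}$- and $\log r_B$-singular terms cancel pairwise, so that this vertical component extends to a form regular (in the sense of~\cite{ARTW}) along the compactified fibers. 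Hence it is integrable and the fiber integrals converge for every $t\in\bbR$.

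Finally, for smoothness I would note that the integrands depend polynomially on $t$ and real-analytically on the remaining coordinates on the interior of $\overline C_{k,n}^f$, that the regularity just obtained is locally uniform in the base point, and that $\underline C_k$ carries no boundary of its own; one may then differentiate under the integral sign and conclude, as in Subsection~\ref{ss-6-1}, that $\vartheta^t_\Gamma$ and $\widetilde\vartheta^t_\Gamma$ are smooth (in fact real-analytic) on $\underline C_k$. The main obstacle, and the point that requires genuine though routine care, is checking that the torus-action and counterterm apparatus of Subsection~\ref{s-4} really does carry over to the \emph{bundle} $\overline C_{k,n}^f\to\underline C_k$ --- that is, that the extra fixed points $z_1,\dots,z_k$ introduce no new divergences and that fiber integration of a fiberwise-regularizable family yields a smooth form downstairs; this is precisely what Appendix~\ref{app:stokesbundle} is set up to handle, and I would refer to it for the technical details.
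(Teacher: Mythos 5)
Your route is genuinely different from the paper's, and as written it has a gap at the point you dismiss in one clause: the claim that the only potentially singular fiber strata of $\overline C_{k,n}^f$ are those where forgotten points collapse among themselves away from $z_1,\dots,z_k$, and that edges joining a forgotten point to a fixed point, or the behaviour at infinity, ``cause no trouble''. For $t\neq\frac 1 2$ the edge form $\frac{1}{2\pi i}\bigl((1-t)\,d\log(z-w)\pm t\,d\log(\bar z-\bar w)\bigr)$ has a genuine $\frac{dr}{r}$-type singularity at \emph{every} collision (writing $z-w=re^{i\phi}$, only at $t=\frac 1 2$ does it reduce to the regular $\frac 1{2\pi}d\arg(z-w)$), so it does not extend to the Fulton--MacPherson blow-up when a forgotten point hits a fixed point; likewise, near the ``infinite'' strata of the fiber every edge between points staying at finite distance contributes a $\frac{dr_R}{r_R}$-type term in the coordinates of Appendix~\ref{app:stokesbundle}. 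The statement ``only type I strata are dangerous'' that you import from Propositions~\ref{prop:splitting}--\ref{thm:omegareg} rests on the boundary behaviour of the upper half-plane propagator along $\bbR$ and does not transfer to the planar $\sder_k$ setting: here the strata where a cluster contains an external point, and the strata at infinity, require the same contraction/antisymmetry cancellation (this is exactly what the proof of Lemma~\ref{lem:a12exists} in Appendix~\ref{app:thetaextendsproof} does for the $t=\frac 1 2$ log-factor, treating both the one-external-point clusters and the infinite vertices, and using that $\Gamma\in\sder_k$ is internally connected). So your argument, as it stands, does not establish convergence for general $t$; it would have to be completed by carrying out the splitting and cancellation analysis at \emph{all} fiber boundary strata for the $t$-dependent propagator, and the smoothness step (``locally uniform regularity'' plus differentiation under the integral) would also need justification.

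The paper avoids all of this by a complex-bidegree reduction: the fibers of $\underline C_{k+n}\to\underline C_k$ are complex of dimension $n$, so the fiber integral only sees the $(n,n)$-component of the integrand along the fiber; since every holomorphic form factor carries $(1-t)$ and every antiholomorphic one carries $t$, that component is an explicit constant multiple of its value at $t=\frac 1 2$, giving \eqref{equ:conrescaling} and $\widetilde\vartheta^t_\Gamma=\bigl(4t(1-t)\bigr)^n\widetilde\vartheta^{\frac 1 2}_\Gamma$. Hence only the $t=\frac 1 2$ case needs analytic input --- for $\theta^{\frac 1 2}_\Gamma$ the $d\arg$-propagators extend to $\overline C_{k,n}^f$ (this is the Alekseev--Torossian connection), and for $\widetilde\theta^{\frac 1 2}_\Gamma$ one invokes Lemma~\ref{lem:a12exists} --- and convergence, smoothness and the scaling relations (needed later, e.g.\ in Lemma~\ref{l-ascaling} and in the proof that $\nabla^t_k$ is a $\mathfrak t_k$-connection) all come out at once. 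If you prefer your direct route, it can in principle be made to work, but you lose this shortcut and must in effect reprove the appendix-type cancellations for all $t$ and all strata, and then still derive the rescaling formul\ae\ separately.
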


We will use the following Lemma. 
\begin{Lem}\label{lem:a12exists}
 Let $\Gamma$ be a graph in $\sder_k$. Then the fiberwise top degree part of the form $\widetilde\theta^{\frac 1 2}_\Gamma$ on $\underline C_{k+n}$ extends to the compactification $\overline C_{k,n}^f$.
\end{Lem}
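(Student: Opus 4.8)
\emph{The approach.} Lemma~\ref{lem:a12exists} is the fiberwise analogue, over the base $\underline C_k$, of Proposition~\ref{prop:tildeomega}, and the plan is to prove it by transcribing the singular-forms analysis of Section~\ref{s-4} (Propositions~\ref{prop:splitting}--\ref{prop:regularizable} and Proposition~\ref{prop:tildeomega}) to the bundle $\overline C_{k,n}^f\to\underline C_k$, which by Appendix~\ref{app:stokesbundle} fits the framework of Section~\ref{sec:regstokes}. The first observation, exactly as in the proof of Proposition~\ref{prop:tildeomega}, is that the fibers of $\overline C_{k,n}^f\to\underline C_k$ are (compactifications of) complex manifolds, so the fiberwise top degree part is of Hodge type $(n,n)$ on the fiber; since in $\widetilde\theta^{t}_\Gamma$ each one-form factor $\tfrac1{2\pi i}((1-t)d\log(z_{s(e)}-z_{t(e)})+t\,d\log(\bar z_{s(e)}-\bar z_{t(e)}))$ contributes $(1-t)$ with its holomorphic part and $t$ with its antiholomorphic part, the fiberwise top degree part of $\widetilde\theta^{t}_\Gamma$ equals $(4t(1-t))^{n}$ times that of $\widetilde\theta^{1/2}_\Gamma$. (This last remark is not needed for the present statement, but it is what will afterwards reduce Lemma~\ref{l-conv-fib} to the case $t=\tfrac12$ treated here.) Write $\eta$ for the fiberwise top degree part of $\widetilde\theta^{1/2}_\Gamma$, a vertical $2n$-form on $\overline C_{k,n}^f$; it remains to show that $\eta$ is regular along each codimension one boundary stratum of $\overline C_{k,n}^f$ lying over the interior of $\underline C_k$.

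\emph{Main steps.} By Appendix~\ref{app:stokesbundle}, $\overline C_{k,n}^f$ is covered by charts $U_i$ with free torus actions, and near the boundary face attached to a vertex $B$ of a nested family $i$ --- corresponding to a cluster $S$ of fiber-points collapsing either in the interior, or onto a frozen point $p_a\in\underline C_k$, or towards infinity --- one has a radial parameter $r_B=r_S\ge0$ and a free $S^1$-action generated by a vector field $v_S$. Exactly as in the proof of Proposition~\ref{prop:splitting}, the only potentially singular contributions to $\eta$ come from the factor $\tfrac i\pi\log|z_{s(e)}-z_{t(e)}|$ when its edge $e$ is internal to $S$ (giving $\tfrac i\pi\log r_S+(\text{regular})$) and from the one-form factors of edges internal to $S$ (each giving $\tfrac1{2\pi i}\tfrac{dr_S}{r_S}+(\text{regular})$), so that
\[
\eta=\frac{dr_S}{r_S}\wedge\alpha_S+\log(r_S)\,\hat\alpha_S+(\text{terms regular in }r_S),
\]
with $\alpha_S,\hat\alpha_S$ independent of $r_S$ (and, as usual, no term proportional to $\log(r_S)\tfrac{dr_S}{r_S}$, its contributions cancelling in pairs). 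Since $\eta$ is of fiberwise top degree one may write $\eta=\theta_S\wedge\iota_{v_S}\eta+(\text{lower order in the }v_S\text{-direction})$ for a smooth connection one-form $\theta_S$, so it suffices to prove $\iota_{v_S}\alpha_S=\iota_{v_S}\hat\alpha_S=0$ for every such $S$; this follows from the antisymmetry of the relevant summand under interchange of the two distinguished edges (the one carrying $\log|z-w|$ and the one internal to $S$ producing the $\tfrac{dr_S}{r_S}$), precisely as in the proof of Proposition~\ref{prop:tildeomega} and of the Lemma preceding it. Hence $\eta$ extends to $\partial_B U_i$, and running over all $B$ gives the claim.

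\emph{The main obstacle.} The genuinely new feature relative to Proposition~\ref{prop:tildeomega}, which lives on a single compact space, is that $\overline C_{k,n}^f$ has boundary strata of kinds that do not occur there: clusters of fiber-points colliding onto one of the frozen base points $p_a$, and clusters of fiber-points escaping to infinity (equivalently, the complementary cluster, which contains all the $p_a$, collapsing from the vantage point at infinity; there $\log|z-w|\to+\infty$ and the analysis above applies verbatim with $r_S$ the corresponding scale). The substantive part of the write-up is therefore the verification that the chart, torus-action and local-coordinate description of Section~\ref{sec:3-4} does extend to these strata, which is exactly what Appendix~\ref{app:stokesbundle} provides. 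Once this is granted, a frozen point behaves in the local coordinates like an ordinary marked point, the zoomed-in configuration at either new kind of stratum still carries a free $S^1$-rotation, and the antisymmetry argument of the second step applies without change; so the proof is, modulo the compactification bookkeeping of the appendix, a line-by-line transcription of the arguments of Section~\ref{s-4}.
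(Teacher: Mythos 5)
Your proposal is correct and follows essentially the same route as the paper's proof in Appendix~\ref{app:thetaextendsproof}: work in the charts and local circle actions of Appendix~\ref{app:stokesbundle}, reduce regularity of the fiberwise top degree part to regularity of its contraction with the rotation generators, classify edges relative to the collapsing (or escaping/``infinite'') cluster, and cancel the singular coefficients by antisymmetry of a double sum over edges, the only genuinely new strata being those at infinity and at the frozen base points. The one cosmetic discrepancy is that with the paper's intended $t=\tfrac12$ propagator $\tfrac1{2\pi}d\arg(\cdot)$ the one-form factors are already regular, so the $\tfrac{dr_S}{r_S}\wedge\alpha_S$ terms you carry are in fact absent and the logarithm is the sole singular source (and the edges interchanged in the cancellation are the singular-factor edge and the edge contracted by $\iota_{v_S}$); since your more general splitting subsumes this, the argument is unaffected.
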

The proof is similar to that of Proposition \ref{prop:tildeomega} and will be given in Appendix \ref{app:thetaextendsproof}.

\begin{proof}[Proof of Lemma \ref{l-conv-fib}]
The forms $\theta_\Gamma^{\frac 1 2}$ extend to the compactification $\overline C_{k,n}^f$. Hence the integral clearly exists. The resulting connection $\omega^{\frac 1 2}_{\mathrm{AT},k}$ has been defined by Alekseev and Torossian \cite{AT-1}. It is smooth since it is an integral over a compact set of a smooth family of functions. 

We know that $\underline C_{k}$ is a complex manifold and $\vartheta^{\frac 1 2}_\Gamma$ is real, hence we may write
\[
\vartheta^{\frac 1 2}_\Gamma = A_\Gamma + \overline A_\Gamma
\]
where $A_\Gamma$ in $\Omega^{1,0}(\underline C_{k})$ and $\overline A_\Gamma$ in $\Omega^{0,1}(\underline C_{k})$ is its complex conjugate.
Smoothness of $\vartheta^{\frac 1 2}_\Gamma$ is equivalent to smoothness of $A_\Gamma$ (and hence also $\bar A_\Gamma$).
Observe that in the definition of $\theta^t_\Gamma$, whenever a form $dz_j$ occurs, it is scaled by a factor $(1-t)$, while, whenever a form $d\bar z_j$ occurs, it is scaled by a factor $t$.
Since, as already observed, the fiber integral~\eqref{eq-weight-AT-1} requires only the component of $\theta^t_\Gamma$ of top degree along the fiber, the relevant terms of $\theta^t_\Gamma$ read
\[
\theta^t_\Gamma
=\sum_{\alpha=1}^k
\left(
(1-t)^{n+1}t^n dz_\alpha f_\alpha(z_1,\dots z_{n+k-2})+
(1-t)^{n}t^{n+1} d\bar z_\alpha \bar f_\alpha(z_1,\dots z_{n+k-2})
\right) dz_{k+1}d\bar z_{k+1}\cdots dz_{k+n-2}d\bar z_{k+n-2}
+\cdots
\]
where $f_\alpha$ are some functions independent of $t$ and $\cdots$ denotes irrelevant terms for integration along the fiber. 
Now, since the integral for $t=1/2$ exists and is smooth, it must exist and be smooth for all $t$ and furthermore 
\begin{equation}\label{equ:conrescaling}
\vartheta^{t}_\Gamma = 2^{2n+1} (1-t)^{n+1}t^n A_\Gamma + 2^{2n+1}(1-t)^{n}t^{n+1} \bar A_\Gamma.
\end{equation}


Similarly, by Lemma \ref{lem:a12exists} it is clear that integrals defining $\widetilde \theta_\Gamma^{\frac 1 2}$ exist. Then, since the fibers of $\underline C_{k+n}$ and $\underline C_k$ are complex manifolds, the only contributing piece of the integrand has to contain an equal number of holomorphic and antiholomorphic $1$-form factors. This number equals the complex dimension $n$ of the fiber over which we integrate. As the holomorphic, resp.\ antiholomorphic part of the propagator is scaled by $1-t$, resp.\ $t$, we see that the integrand defining $\widetilde \vartheta_\Gamma^{t}$ is the same as that in the $t=\frac 1 2$ case, except for a rescaling factor $\left(4 t(1-t)\right)^n$. In particular, the integral converges and furthermore
\[
 \widetilde \vartheta_\Gamma^{t} = \left(4 t(1-t)\right)^n \widetilde \vartheta_\Gamma^{\frac 1 2}.
\]
\end{proof}

Let us extract two simple corollaries of the above proof.

\begin{Lem}\label{l-ascaling}
The functions $a^t_k$ obey
\[
a^t_k=(4t(1-t))^N a^{\frac 1 2}_k,
\]
where $N$ is the operator acting on a graph by multiplication by the number of internal vertices.
\end{Lem}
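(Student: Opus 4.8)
The plan is to obtain this as an immediate consequence of the rescaling identity for individual graph weights established in the course of proving Lemma~\ref{l-conv-fib}. Recall that there, for a single graph $\Gamma$ in $\sder_k$ with $n$ internal vertices, one checks that the fiber integral \eqref{eq-weight-AT-0} obeys $\widetilde\vartheta^t_\Gamma=(4t(1-t))^n\,\widetilde\vartheta^{\frac12}_\Gamma$: the fiberwise top-degree part of the integrand \eqref{eq-weight-AT-0-integrand} must contain exactly $n$ holomorphic and $n$ antiholomorphic differentials, this being the complex dimension of the fiber of $\underline C_{k+n}\to\underline C_k$, so that, each holomorphic factor carrying a weight $1-t$ and each antiholomorphic factor a weight $t$, the integrand differs from its value at $t=\frac12$ precisely by the scalar $(4t(1-t))^n$. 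This already gives convergence and smoothness for all $t$, again by Lemma~\ref{l-conv-fib}.

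The only remaining step is formal bookkeeping. The sum \eqref{eq-AT-bound-0} defining $a^t_k$ runs over a fixed basis of $\sder_k$ made up of graphs, and each such graph $\Gamma$ has a well-defined number $n(\Gamma)$ of internal vertices, hence is an eigenvector of the operator $N$ with $N\Gamma=n(\Gamma)\,\Gamma$. Applying the per-graph identity term by term then gives
\[
a^t_k=\sum_\Gamma\widetilde\vartheta^t_\Gamma\,\Gamma=\sum_\Gamma(4t(1-t))^{n(\Gamma)}\,\widetilde\vartheta^{\frac12}_\Gamma\,\Gamma=(4t(1-t))^N\sum_\Gamma\widetilde\vartheta^{\frac12}_\Gamma\,\Gamma=(4t(1-t))^N a^{\frac12}_k,
\]
which is the assertion.

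I do not expect any genuine obstacle: the substance of the statement is already contained in the proof of Lemma~\ref{l-conv-fib}, and the text itself flags the lemma as one of the ``two simple corollaries'' of that argument. The only point worth a sentence of care is the interpretation of $(4t(1-t))^N$ as the operator acting by the scalar $(4t(1-t))^n$ on the span of those basis graphs with $n$ internal vertices, together with the compatibility of the chosen basis of $\sder_k$ with this grading --- both immediate from the construction in Subsection~\ref{ss-8-1}.
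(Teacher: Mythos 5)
Your proposal is correct and is essentially identical to the paper's own argument: the paper establishes the per-graph identity $\widetilde\vartheta^t_\Gamma=(4t(1-t))^{n}\,\widetilde\vartheta^{\frac12}_\Gamma$ at the end of the proof of Lemma~\ref{l-conv-fib} by exactly the same count of $n$ holomorphic and $n$ antiholomorphic fiber differentials scaled by $1-t$ and $t$, and then states Lemma~\ref{l-ascaling} as an immediate corollary obtained by summing over the graph basis, which is your bookkeeping step with the operator $N$. There is no gap.
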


\begin{Cor}
The connections $\nabla^t_k$ are $\alg t_k\subset \sder_k$ connections for all $t$.
\end{Cor}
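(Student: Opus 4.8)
The plan is to derive the statement directly from the rescaling identity \eqref{equ:conrescaling} established in the proof of Lemma~\ref{l-conv-fib}, using as the only external input the classical fact that the connection $\omega^{\frac12}_{\mathrm{AT},k}$ — which is, up to the normalization appearing in \eqref{equ:conrescaling}, the Alekseev--Torossian connection \cite{AT-1} — already takes values in $\mathfrak t_k\subset\sder_k$. Granted this, the remaining argument is an elementary bookkeeping using two linear-algebra properties of the subspace $\mathfrak t_k$ of $\sder_k$.

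First I would recall from the graphical picture of Subsection~\ref{ss-7-1} that under the embedding $\mathfrak t_k\hookrightarrow\sder_k$ a length-$\ell$ Lie word in the generators $t_{ij}$ corresponds to a graph with $\ell-1$ internal trivalent vertices; hence $\mathfrak t_k$ is a \emph{graded} subspace of $\sder_k$ for the grading by number of internal vertices (shifted by one). Consequently, if a one-form $\eta$ on $\underline C_k$ is $\mathfrak t_k$-valued then so is each homogeneous component $\eta^{(n)}$ obtained by keeping only graphs with $n$ internal vertices. Second, since we work over $\bbC$ and $\mathfrak t_k$ is a $\bbC$-linear subspace, the $(1,0)$- and $(0,1)$-parts of a $\mathfrak t_k$-valued one-form are again $\mathfrak t_k$-valued, each being a $\bbC$-linear combination of values of $\eta$. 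Applying both observations to $\omega^{\frac12}_{\mathrm{AT},k}=\sum_\Gamma(A_\Gamma+\bar A_\Gamma)\,\Gamma$, with $A_\Gamma$ and $\bar A_\Gamma$ the $(1,0)$- and $(0,1)$-parts of $\vartheta^{\frac12}_\Gamma$ as in the proof of Lemma~\ref{l-conv-fib}, I conclude that for every $n\ge0$ the one-forms $\alpha^{(n)}:=\sum_\Gamma A_\Gamma\,\Gamma$ and $\beta^{(n)}:=\sum_\Gamma\bar A_\Gamma\,\Gamma$, the sums running over graphs with exactly $n$ internal vertices, are $\mathfrak t_k$-valued.

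Finally, regrouping \eqref{eq-AT-bound-1} by number of internal vertices and inserting \eqref{equ:conrescaling} gives
\[
\omega^t_{\mathrm{AT},k}=\sum_{n\ge0}2^{2n+1}\Bigl((1-t)^{n+1}t^{n}\,\alpha^{(n)}+(1-t)^{n}t^{n+1}\,\beta^{(n)}\Bigr),
\]
which for real $t$ is a sum, finite in each internal-vertex degree and hence convergent in the degree-completed $\sder_k$, of real scalar multiples of $\mathfrak t_k$-valued one-forms; therefore $\omega^t_{\mathrm{AT},k}$ is $\mathfrak t_k$-valued and $\nabla^t_k=d-\omega^t_{\mathrm{AT},k}$ is a $\mathfrak t_k$-connection for all $t$, which is the claim. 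The only non-formal ingredient, and hence the main obstacle, is the input that the $t=\tfrac12$ connection is $\mathfrak t_k$-valued: this is not reproved here but imported from \cite{AT-1}, and it is genuinely needed, since for $t=0$ or $t=1$ only the $n=0$ term of the above sum survives and the boundary cases therefore give no control over the graphs with internal vertices.
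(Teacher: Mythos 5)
Your argument is correct and follows essentially the same route as the paper: the claim is imported for $t=\tfrac12$ (the paper cites \cite{SW}*{Section 4} rather than \cite{AT-1}, a cosmetic difference), and the general case is deduced from the rescaling identity \eqref{equ:conrescaling} together with the fact that $\alg t_k$ is a graded subspace of $\sder_k$. Your only addition is to spell out explicitly that the $(1,0)$- and $(0,1)$-parts of a $\alg t_k$-valued form, degree by degree, remain $\alg t_k$-valued — a point the paper leaves implicit in the phrase ``the rescaling factor depends only on the degree''.
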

\begin{proof}
For $t=1/2$, the claim has been proved in \cite{SW}*{Section 4}. 
By the arguments in the second part of the proof of Lemma~\ref{l-conv-fib}, the connection forms for all other $t$ are obtained by rescaling the holomorphic and anti-holomorphic parts of the connection form of $\nabla^{\frac 1 2}_k$ as in \eqref{equ:conrescaling}. 
The rescaling factor there depends only on the degree (of $\Gamma$) and hence (since $\alg t_k$ is graded) the resulting connection is indeed a $\alg t_k$-connection.
\end{proof}

Later will also need the following auxiliary result.

\begin{Lem}\label{l-triv-int-bas}
For $k\geq 2$ and $\Gamma\in\sder_k$ a graph with at least one internal vertex, the $1$-form $\vartheta^t_\Gamma$ and the $0$-form $\widetilde\vartheta^t_\Gamma$ descend to $\underline C_n/\mathbb C^\times$. 
\end{Lem}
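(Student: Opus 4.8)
The plan is to exhibit $\vartheta^t_\Gamma$ and $\widetilde\vartheta^t_\Gamma$ as basic objects for the action of $\mathbb C^\times$ on $\underline C_k$ that simultaneously rescales and rotates all $k$ points. For $k\geq 2$ this action is free, so $\underline C_k/\mathbb C^\times$ is a smooth manifold and the projection is a submersion; hence a form descends to the quotient iff it is $\mathbb C^\times$-invariant and horizontal (has vanishing contraction with the two generating vector fields $v_1$, the generator of rescaling, and $v_2$, of rotation), and a function descends iff it is invariant. The action lifts to $\underline C_{k+n}$, acting on all $k+n$ points and commuting with the forgetful projection $\pi_{k,n}\colon\underline C_{k+n}\to\underline C_k$; write $\hat v_1,\hat v_2$ for the lifted generators. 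Since integration over the (compact, with corners) fibers of $\overline C_{k,n}^f\to\underline C_k$ commutes with pullback by diffeomorphisms and with contraction and Lie derivative along lifted vector fields, everything is controlled by $\iota_{\hat v_j}$ and $L_{\hat v_j}$ applied to the integrands $\theta^t_\Gamma,\widetilde\theta^t_\Gamma$ of \eqref{eq-weight-AT-1-integrand}--\eqref{eq-weight-AT-0-integrand}. A direct computation shows that $\iota_{\hat v_1}$ and $\iota_{\hat v_2}$ applied to each factor of $\theta^t_\Gamma$ yield constants, and that $\hat v_1$ acts on $\log|z_s-z_t|$ by the constant $1$ while $\hat v_2$ annihilates it; since each such factor is closed, Cartan's formula gives $L_{\hat v_j}\theta^t_\Gamma=0$ and $L_{\hat v_2}\widetilde\theta^t_\Gamma=0$, together with the pointwise identities $L_{\hat v_1}\widetilde\theta^t_\Gamma=-2\,\iota_{\hat v_1}\theta^t_\Gamma$ and $\iota_{\hat v_2}\theta^t_\Gamma=(1-2t)\,i\,\iota_{\hat v_1}\theta^t_\Gamma$. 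Integrating over the fiber, $\vartheta^t_\Gamma$ and $\widetilde\vartheta^t_\Gamma$ are automatically $\mathbb C^\times$-invariant, $\iota_{v_2}\vartheta^t_\Gamma=(1-2t)\,i\,\iota_{v_1}\vartheta^t_\Gamma$ and $L_{v_1}\widetilde\vartheta^t_\Gamma=-2\,\iota_{v_1}\vartheta^t_\Gamma$; in particular $\iota_{v_2}\vartheta^{1/2}_\Gamma=0$ for free, and all the remaining conditions reduce to the single statement $\iota_{v_1}\vartheta^t_\Gamma=0$.

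To prove $\iota_{v_1}\vartheta^t_\Gamma=0$ I would reduce to $t=\tfrac12$ by the rescaling identity \eqref{equ:conrescaling} from the proof of Lemma \ref{l-conv-fib}: one has $\vartheta^t_\Gamma=c_1(t)\,A_\Gamma+c_2(t)\,\bar A_\Gamma$ with $A_\Gamma\in\Omega^{1,0}(\underline C_k)$ and $\bar A_\Gamma\in\Omega^{0,1}(\underline C_k)$ the $t$-independent holomorphic and antiholomorphic parts of $\vartheta^{1/2}_\Gamma$, and with scalars $c_1(t),c_2(t)$ that are nonzero for $t\neq 0,1$ and satisfy $c_1(\tfrac12)=c_2(\tfrac12)$. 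As $\mathbb C^\times$ acts holomorphically, $v_1=w+\bar w$ with $w$ the holomorphic generating field and $v_2=i\,w-i\,\bar w$, so contracting the $(1,0)$- and $(0,1)$-parts separately, $\iota_{v_1}\vartheta^t_\Gamma=c_1(t)\,\iota_wA_\Gamma+c_2(t)\,\iota_{\bar w}\bar A_\Gamma$ and $\iota_{v_2}\vartheta^t_\Gamma=i\,c_1(t)\,\iota_wA_\Gamma-i\,c_2(t)\,\iota_{\bar w}\bar A_\Gamma$. At $t=\tfrac12$ the relation $\iota_{v_2}\vartheta^{1/2}_\Gamma=0$ from the first paragraph gives $\iota_wA_\Gamma=\iota_{\bar w}\bar A_\Gamma$, so that $\iota_{v_1}\vartheta^{1/2}_\Gamma=0$ would force $\iota_wA_\Gamma=\iota_{\bar w}\bar A_\Gamma=0$, whence $\iota_{v_1}\vartheta^t_\Gamma=0$ for all $t\neq 0,1$; the cases $t\in\{0,1\}$ are trivial since, $\Gamma$ having $n\geq 1$ internal vertices, $c_i(0)=c_i(1)=0$ and $\widetilde\vartheta^0_\Gamma=\widetilde\vartheta^1_\Gamma=0$. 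It thus remains to see $\iota_{v_1}\vartheta^{1/2}_\Gamma=0$, i.e.\ that the Alekseev--Torossian connection $\omega^{1/2}_{\mathrm{AT},k}=\sum_\Gamma\vartheta^{1/2}_\Gamma\,\Gamma$ is, away from its (no internal vertex) leading term, horizontal for the rescaling action; this is part of the Alekseev--Torossian construction, see \cites{AT-1, SW}.

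I expect this last point to be the genuine obstacle: unlike invariance, which is already visible on the integrand $\theta^t_\Gamma$, the form $\iota_{\hat v_1}\theta^t_\Gamma=\frac1{2\pi i}\sum_e(-1)^{e-1}\prod_{e'\neq e}(\text{factors})$ is of top fiber degree, so its fiber integral is not killed by a degree count and one must use the specific combinatorics of $\sder_k$-graphs. A self-contained alternative to citing \cites{AT-1, SW} is to argue directly: $\iota_{\hat v_1}\theta^t_\Gamma$ is closed on $\underline C_{k+n}$ (Cartan's formula, since $L_{\hat v_1}\theta^t_\Gamma=0=d\theta^t_\Gamma$), so by the regularized Stokes' Theorem \ref{thm:regstokes} along the fibers of $\overline C_{k,n}^f$ the function $\iota_{v_1}\vartheta^t_\Gamma$ has exterior derivative equal to a sum over the codimension-one fiber boundary strata, each of which vanishes by the Kontsevich-type vanishing arguments of Proposition \ref{prop:tildeomega} and Theorem \ref{thm:factoring} precisely because $\Gamma$ is internally connected with at least one internal vertex; $\iota_{v_1}\vartheta^t_\Gamma$ is then constant on the connected space $\underline C_k$, and the constant is $0$ by letting the configuration degenerate. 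The hypothesis that $\Gamma$ have an internal vertex is essential everywhere: for a graph without one, $\vartheta^t_\Gamma$ is a multiple of a propagator one-form $\frac1{2\pi i}d\log(z_i-z_j)$, whose $v_1$-contraction is the nonzero (central) constant $\frac1{2\pi i}$, and such a form does not descend to $\underline C_k/\mathbb C^\times$.
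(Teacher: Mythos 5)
Your reduction is sound and in fact mirrors the paper's own strategy: you correctly observe that a $1$-form descends to $\underline C_k/\bbC^\times$ iff it is invariant and horizontal, that fiber integration over $\overline C^f_{k,n}$ commutes with pullback, contraction and Lie derivative along the lifted generators, that $L_{\hat v}\theta^t_\Gamma=0$ gives invariance of $\vartheta^t_\Gamma$ for free, and that the scaling anomaly of the logarithm factor ties the non-invariance of $\widetilde\vartheta^t_\Gamma$ to the contraction of $\vartheta^t_\Gamma$ (this is exactly how the paper treats $\widetilde\vartheta^t_\Gamma$). So, as in the paper, everything collapses to one statement: the fiber integral of the once-contracted integrand, $\sum_{e}\pm(\mathrm{const})\prod_{e'\neq e}\theta^t_{e'}$, vanishes. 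It is precisely this statement that your proposal does not actually prove.

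Neither of your two ways of discharging it works. The citation to \cites{AT-1,SW} does not suffice: what is needed is $\bbC^\times$-horizontality of all coefficient forms (equivalently, vanishing of the contracted fiber integrals), and this is not a statement one can simply quote from those references — they construct $\omega_{\rm AT}$ on $C_k$, not on $C_k/S^1$ — which is presumably why the paper proves it rather than cites it; your own reduction to $t=\tfrac12$ via \eqref{equ:conrescaling}, while correct, therefore does not close the argument. Your ``self-contained'' Stokes alternative has a genuine gap: the claim that every regularized codimension-one boundary contribution vanishes ``because $\Gamma$ is internally connected with at least one internal vertex'' is unsupported — Proposition \ref{prop:tildeomega} and Theorem \ref{thm:factoring} concern different forms on different spaces, and at a stratum where the two endpoints of an edge $f$ collapse the regularized contribution is, up to a constant, exactly the same kind of contracted fiber integral for the contracted graph $\Gamma/f$ (which is no longer internally trivalent), so the argument is circular rather than a reduction; moreover ``the constant is $0$ by letting the configuration degenerate'' is not available for free, since the $\vartheta^t_\Gamma$ are singular at collisions (cf.\ Subsection \ref{sss-8-1-1}) and evaluating such a limit requires precisely the asymptotic analysis you are trying to avoid. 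The missing idea is short and is the paper's proof: every edge of an $\sder_k$-graph with at least one internal vertex has an internal endpoint, so in each summand $\prod_{e'\neq e}\theta^t_{e'}$ some integrated vertex has become bivalent, and the integral over the position of that vertex vanishes by Kontsevich's involution $u\mapsto z_a+z_b-u$ (\cite{K3}, Lemma 2.2), which applies to $\theta^t$ for every $t$ since $\theta^t(-u)=\theta^t(u)$. With this, the vanishing holds for all $t$ simultaneously, and even the reduction to $t=\tfrac12$ becomes unnecessary.
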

\begin{proof}
We consider $\bbC^\times=\mathbb R_+\times S^1$. We further borrow previous notation for the vector fields $u$, $v$ generating the infinitesimal action of $S^1$ and $\mathbb R_+$ on $\Conf_n$ from Subsubsection~\ref{ss-5-2}.

Observe that the $\bbC^\times$-action on $\Conf_n$ is orientation-preserving.
Furthermore, the projection $\pi_{k+n}$ from $\underline C_{k+n}$ onto $\underline C_k$ is obviously $\bbC^\times$-equivariant.

For $\Gamma$ a general element of $\sder_k$ with at least an internal vertex, let us consider the $1$-form $\vartheta^t_\Gamma$ and the $0$-form $\widetilde\vartheta_\Gamma^t$.

By the very definition of integration along the fiber, we get 
\[
\lambda^*(\vartheta^t_\Gamma)=\lambda^*\!\left(\int_{\underline C_{k+n}/\underline C_k}\theta^t_\Gamma\right)=\int_{\underline C_{k+n}/\underline C_k}\lambda^*\!\left(\theta^t_\Gamma\right)=\vartheta^t_\Gamma,
\]
for $\lambda$ in $\bbC^\times$, because $\theta^t$ is $\bbC^\times$-invariant; the second equality follows from the $\bbC^\times$-equivariance of $\pi_{k+n}$.

Furthermore, 
\[
\iota_\xi\!\left(\vartheta^t_\Gamma\right)=\iota_\xi\!\left(\int_{\underline C_{k+n}/\underline C_k}\theta^t_\Gamma\right)=\int_{\underline C_{k+n}/\underline C_k}\iota_\xi\!\left(\theta^t_\Gamma\right)=0,
\]
where $\xi$ is either $u$ or $v$.
The second equality follows again from the $\bbC^\times$-equivariance of $\pi_{k+n}$.
On the other hand, the third equality follows from the fact that contraction by $\xi$ of $\theta^t_\Gamma$ produces a sum of forms indexed by the edges of $\Gamma$: the summand corresponding to $e$ is obtained by associating with every edge $e'\neq e$ of $\Gamma$ the $1$-form $\theta_{e'}^t$ and to $e$ a constant depending on $u$ or $v$.
Since $\Gamma$ is trivalent, it is as if such a summand were associated with a graph $\Gamma_e$ with exactly one bivalent internal vertex: the involution argument from~\cite{K3}*{Lemma 2.2} yields the third equality.

If we consider the $0$-form $\widetilde\vartheta^t_\Gamma$, it suffices to prove $\bbC^\times$-invariance.
Previous computations imply
\[
\lambda^*(\widetilde\vartheta^t_\Gamma)=\int_{\underline C_{k+n}/\underline C_k}\lambda^*\!\left(\widetilde\theta^t_\Gamma\right)=\widetilde\vartheta^t_\Gamma+\log(\lambda)\int_{\underline C_{k+n}/\underline C_k}\alpha^t_\Gamma,
\]
where $\alpha^t_\Gamma$ is defined as $\widetilde\theta^t_\Gamma$ but replacing $\eta$ by $i/\pi$: this follows from 
\[
\lambda^*(\eta)=\eta+\frac{1}{\pi i}\log(\lambda).
\]
In particular, the second summand in the rightmost expression in the previous chain of equalities is proportional to the integration along the fiber of $\iota_\xi(\theta^t_\Gamma)$, hence it vanishes.
\end{proof}


Observe now that $\underline C_2/\bbC^\times=\{pt\}$ is $0$-dimensional: therefore, 
\[
\omega^t_{\mathrm{AT},2}=\theta^t\ t_{12}.
\]
The $1$-form $\theta^t$, on the other hand, by previous computations, is obviously not $\bbC^\times$-basic on $\underline C_2$, yielding an obviously flat connection over the trivial principal $\mathsf T_2$-bundle over $\bbC^\times$.

Similar arguments imply the following corollary.
\begin{Cor}\label{c-a2}
The functions $a^t_2$ have the form
\[
a^t_2=-\frac{1}{\pi i} \log|z_1-z_2|\ t_{12} 
+\nu^t_2,
\]
for some elements $\nu^t_2$ of $\sder_2$, which we understand as constant functions on $\underline C_2$.
\end{Cor}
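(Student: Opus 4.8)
The plan is to read off the two pieces of $a^t_2$ directly from the sum-over-graphs formula \eqref{eq-AT-bound-0}, separating the unique graph with no internal vertices from all the others. For $k=2$ a graph in $\sder_2$ with $n$ internal vertices has $2n+1$ edges, so there is exactly one graph with $n=0$, namely the single edge joining the two external vertices; it represents $t_{12}$ (which, one notes, spans the degree-zero part of $\sder_2$). All other graphs contributing to $a^t_2$ have at least one internal vertex.

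First I would compute the $n=0$ contribution by hand. Here the projection $\pi_{2,0}\colon\underline C_2\to\underline C_2$ is the identity, so the fibrewise integral in \eqref{eq-weight-AT-0} is trivial and $\widetilde\vartheta^t_{\text{edge}}$ is just the integrand \eqref{eq-weight-AT-0-integrand} itself, which for a one-edge graph reduces to $\frac{i}{\pi}\log|z_1-z_2|=-\frac1{\pi i}\log|z_1-z_2|$: the product over the remaining edges is empty, and the sign $(-1)^{e-1}$ is $+1$ since the lone edge is the first one. This produces exactly the term $-\frac1{\pi i}\log|z_1-z_2|\,t_{12}$.

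Next I would show that every graph $\Gamma\in\sder_2$ with at least one internal vertex contributes a \emph{constant} function on $\underline C_2$, so that the remaining part of $a^t_2$ is a constant $\sder_2$-valued function, which we name $\nu^t_2$. This is immediate from Lemma \ref{l-triv-int-bas}: for such $\Gamma$ the function $\widetilde\vartheta^t_\Gamma$ descends to $\underline C_2/\bbC^\times$, which is a single point (the scaling action of $\bbC^\times$ on $\underline C_2\cong\bbC^\times$ being free and transitive), hence $\widetilde\vartheta^t_\Gamma$ is a constant; the convergence of the defining integral is already guaranteed by Lemma \ref{l-conv-fib}. Collecting these constants, weighted by the corresponding graphs, defines $\nu^t_2\in\sder_2$ (viewed as a constant function on $\underline C_2$), and together with the previous step this yields $a^t_2=-\frac1{\pi i}\log|z_1-z_2|\,t_{12}+\nu^t_2$.

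The only non-formal input is Lemma \ref{l-triv-int-bas}, i.e.\ the $\bbC^\times$-invariance of the fibrewise integrals $\widetilde\vartheta^t_\Gamma$ for graphs with an internal vertex; this is where all the analytic content sits (it relies on the fact that fibrewise integration commutes with the $\bbC^\times$-equivariant projection $\pi_{2,n}$, together with the standard involution argument showing that the contraction of $\widetilde\theta^t_\Gamma$ along the rotation and scaling vector fields integrates to zero along the fibre). Granting that Lemma, the Corollary is purely a matter of bookkeeping, and I do not expect any serious obstacle.
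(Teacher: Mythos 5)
Your argument is correct and is essentially the paper's own proof: the single-edge graph contributes $-\frac{1}{\pi i}\log|z_1-z_2|\,t_{12}$, and all graphs with at least one internal vertex give functions that descend to the point $\underline C_2/\bbC^\times$ by Lemma~\ref{l-triv-int-bas}, hence are constants collected into $\nu^t_2$. The only difference is that you spell out the $n=0$ weight computation explicitly, which the paper leaves implicit.
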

\begin{proof}
The claim follows immediately from the fact that 
\[
a^t_2=-\frac{1}{\pi i} \log|z_1-z_2|\ t_{12}+\cdots,
\]
where now $\cdots$ is a function on $\underline C_2$ which descends to $\underline C_2/\bbC^\times=\{0,1\}$ in virtue of Lemma~\ref{l-triv-int-bas}, which we denote by $\nu^t_2$. 
\end{proof}
Finally, the next proposition puts into relationship $a^t_2$ and the family $x^t$ in $\GC_\mathrm{cl}$ of degree $0$ from Subsection~\ref{ss-6-2}.
\begin{Prop}\label{p-nuingrt}
The following equality holds true:
\[
\nu^t_2=\phi(x^t)=:\tau^t\in \grt_1\subset \sder_2,
\]
for all $t$, where the family of graph cocycles $x^t$ in $\GC$ has been defined in~\eqref{equ:xtdef}.
\end{Prop}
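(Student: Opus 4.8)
The plan is to compare the two explicitly given integral formulas: the one for $\nu^t_2$ extracted from $a^t_2$ in Corollary~\ref{c-a2}, and the one for $\phi(x^t)$ worked out in Subsection~\ref{ss-7-4}, culminating in~\eqref{eq-ctgammasimpl}. Both sides live in $\sder_2$ and both are built from the same kind of graphs (trivalent internal vertices, two external vertices joined by an edge). First I would unwind the definition of $\nu^t_2$: by Corollary~\ref{c-a2} one isolates from $a^t_2=\sum_\Gamma \widetilde\vartheta^t_\Gamma\,\Gamma$ the part of the fiber integral $\widetilde\vartheta^t_\Gamma=\int_{\underline C_{2+n}/\underline C_2}\widetilde\theta^t_\Gamma$ that does \emph{not} carry the universal factor $\log|z_1-z_2|\,t_{12}$. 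Concretely, $\widetilde\theta^t_\Gamma$ is a sum over edges $e$ of a term $\frac{i}{\pi}\log|z_{s(e)}-z_{t(e)}|$ times a product of propagator $1$-forms over the remaining edges; the terms with $e$ an edge between an internal vertex and an external vertex, or between two internal vertices, contribute (after fiber integration) the $\bbC^\times$-basic remainder, which is the constant $\nu^t_2$, whereas the term with $e=(1,2)$ produces the $\log|z_1-z_2|$ singularity. Using Lemma~\ref{l-triv-int-bas}, the remainder descends to $\underline C_2/\bbC^\times$, i.e. becomes a genuine constant, and that constant is computed by fixing $z_1=0$, $z_2=1$, i.e. integrating over $\Conf_n(\bbC\setminus\{0,1\})$.

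Second I would match this against~\eqref{eq-ctgammasimpl}. The graph $\Gamma$ appearing in $a^t_2$ with external vertices $1,2$ and $n$ internal trivalent vertices is exactly (up to the adjoined edge $(1,2)$) one of the graphs $\Gamma'$ contributing to $\tau^t=\phi(x^t)$; the map $\phi=\pi\circ\psi$ (Lemma~\ref{l-altphi}, applicable by Lemma~\ref{lem:intconnected}) precisely adjoins and then removes this edge $(1,2)$, so the bookkeeping of which graph appears on each side coincides. It then remains to compare the scalar coefficients. On the $\phi(x^t)$ side, $c^t_{\Gamma'}$ is given by~\eqref{eq-ctgammasimpl}: the factor $[4t(1-t)]^{n-2}$ from~\eqref{equ:xtweightdef}, times a sum over edges $e\neq(1,2)$ of $(-1)^{e-1}\int_{\Conf_{n-2}(\bbC\setminus\{0,1\})}\beta_e\,\omega^{1/2}_{\Gamma'\setminus\{e,(1,2)\}}$ — here vertices $1,2$ of $\Gamma'$ are the points $0,1$. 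On the $\nu^t_2$ side the analogous object is $\widetilde\vartheta^t_\Gamma$ restricted to its constant part: by Lemma~\ref{l-conv-fib} (the scaling $\widetilde\vartheta^t_\Gamma=(4t(1-t))^n\widetilde\vartheta^{1/2}_\Gamma$) and the explicit form of $\widetilde\theta^t_\Gamma$, this is $(4t(1-t))^n$ times a sum over internal-or-mixed edges $e$ of $(-1)^{e-1}\int \beta_e\,\omega^{1/2}_{\text{rest}}$, the fiber integral over the $n$-point fiber with $z_1,z_2$ now \emph{both} free. The key computation is to see that integrating out the two "external" points of $\Gamma'$ (equivalently, reducing the $\underline C_{2+n}/\underline C_2$ fiber integral at a point of $\underline C_2/\bbC^\times$ to a $\Conf_{n-2}(\bbC\setminus\{0,1\})$ integral by fixing $z_1=0$, $z_2=1$ and dividing out the residual $S^1$), combined with the propagator identity $\omega^t=\omega^{1/2}+\text{(exact, proportional to $d\beta$)}$ from Subsection~\ref{s-4} and the power-counting already used in Proposition~\ref{prop:tildeomega}, turns the $(4t(1-t))^n$ prefactor into $(4t(1-t))^{n-2}$ after the two external-point integrations absorb a factor $4t(1-t)$ for each (this is precisely the content of Proposition~\ref{prop:tildeomega} and the $|V|-2$ exponents appearing throughout Section~\ref{s-6}). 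I would present this as: the external pair contributes an overall rescaling matching the change of exponent, and once $z_1,z_2$ are fixed the remaining integrand is literally~\eqref{equ:tildebetadef} restricted to $\Conf_{n-2}(\bbC\setminus\{0,1\})$, which by the reduction in Subsection~\ref{ss-7-4} equals the integrand of~\eqref{eq-ctgammasimpl}.

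Third, the term $\widetilde\vartheta^t_\Gamma$ contributions for which $e=(1,2)$ have to be accounted for: on the $a^t_2$ side these are exactly what was separated off as $-\frac{1}{\pi i}\log|z_1-z_2|\,t_{12}$ in Corollary~\ref{c-a2}, so they do not enter $\nu^t_2$; on the $\phi(x^t)$ side the map $\psi$ deletes the edge $(1,2)$ before projecting, so they correspond to the term that is removed, not to a coefficient. Hence the two sides agree term by term, graph by graph, giving $\nu^t_2=\phi(x^t)$. Finally, $\phi(x^t)\in\grt_1$ is immediate from Proposition~\ref{p-imphiingrt} together with $x^t\in\GC_{cl}^{1vi}$ (Lemma~\ref{l-1vi}) and the applicability of the simplified description $\phi=\pi\circ\psi$ (Lemmas~\ref{lem:intconnected},~\ref{l-altphi}). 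I expect the main obstacle to be the precise sign and normalization bookkeeping in the "integrating out two external points" step — in particular verifying that the orientation conventions on the fiber $\underline C_{2+n}/\underline C_2$, the identification $C_n/S^1\cong \Conf_{n-2}(\bbC\setminus\{0,1\})$, and the edge-ordering signs $(-1)^{e-1}$ all line up so that the factor of $4t(1-t)$ per external point (and no extra sign) is what survives; the topological content is already fully supplied by Propositions~\ref{prop:tildeomega} and~\ref{thm:factoring}, so this is bookkeeping rather than a new idea, but it is where the argument is most delicate.
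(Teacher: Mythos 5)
Your overall strategy---comparing the integral formula for $\nu^t_2$ coming from \eqref{eq-weight-AT-0} and Corollary~\ref{c-a2} with the formula \eqref{eq-ctgammasimpl} for the coefficients of $\phi(x^t)$, graph by graph---is exactly the paper's route, and your first and third steps (isolating the $\log|z_1-z_2|\,t_{12}$ part, matching graphs under $\phi=\pi\circ\psi$ via Lemmas~\ref{lem:intconnected} and~\ref{l-altphi}, and getting membership in $\grt_1$ from Proposition~\ref{p-imphiingrt}) are fine. The genuine problem is the step you yourself designate as the ``key computation'': there is no factor of $(4t(1-t))^2$ to absorb and there are no external points to integrate out, so the mechanism you propose would fail if you tried to carry it out. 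Concretely, if $\Gamma'$ is a contributing graph of $x^t$ with $n=|V(\Gamma')|$ vertices, the associated graph $\Gamma=\Gamma'_{12}\in\sder_2$ has $n-2$ \emph{internal} vertices, so Lemma~\ref{l-ascaling} (equivalently the proof of Lemma~\ref{l-conv-fib}, where the exponent is the complex dimension of the fiber) gives $\widetilde\vartheta^t_\Gamma=(4t(1-t))^{\,n-2}\,\widetilde\vartheta^{1/2}_\Gamma$, the same exponent as in \eqref{eq-ctgammasimpl}, not $(4t(1-t))^{n}$. Likewise the domains already agree: $\nu^t_2$ is constant (Lemma~\ref{l-triv-int-bas}), so its coefficient on $\Gamma$ is the fiber integral \eqref{eq-weight-AT-0} evaluated at a single base configuration, say $(z_1,z_2)=(0,1)$; the external points are fixed base coordinates, never integration variables, and the fiber is $\Conf_{n-2}(\bbC\smallsetminus\{0,1\})$---the very space over which \eqref{eq-ctgammasimpl} integrates. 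Your apparent ``$\Conf_n$ versus $\Conf_{n-2}$'' mismatch is an index clash between ``$n$ = total vertices of the graph in $\GC$'' and ``$n$ = internal vertices of the graph in $\sder_2$'', and the claim that ``each external-point integration absorbs a factor $4t(1-t)$'' is not something Proposition~\ref{prop:tildeomega} (or anything else in the paper) supports.

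Once this is straightened out, the proof collapses to the paper's one-line observation, which your parenthetical alternative already contains: using $\Conf_n/\bbC\cong\bbC^\times\times\Conf_{n-2}(\bbC\smallsetminus\{0,1\})$ and constancy of $\nu^t_2$ along $\bbC^\times$, the restriction of $\widetilde\theta^t_{\Gamma}$ to the fiber over $(0,1)$ is literally the integrand of \eqref{eq-ctgammasimpl} (only the term with $e=(1,2)$ survives in \eqref{equ:tildebetadef} after fixing $z_1=0$, $z_2=1$, and $E(\Gamma)=E(\Gamma')\smallsetminus\{(1,2)\}$, up to the edge-ordering signs already built into \eqref{eq-ctgammasimpl}). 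Hence the coefficients coincide, including their polynomial $t$-dependence, with no rescaling or half-plane propagator identity needed. So the skeleton of your argument is right, but the step you flag as the main obstacle is not delicate bookkeeping to be completed---it is a discrepancy that does not exist, and building the proof around resolving it would lead you to try to verify a false intermediate statement.
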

\begin{proof}
The claim follows immediately from Identity~\eqref{eq-ctgammasimpl}, Subsection~\ref{ss-7-4}, and the defining formula for $\nu^t_2$, by taking into account that $\nu_2^t$ is defined by the very same integrals: in fact, $\mathrm{Conf}_n/\mathbb C=\mathbb C^\times\times\mathrm{Conf}_{n-2}(\mathbb C\smallsetminus\{0,1\})$, for $n\geq 3$, and since $\nu_2^t$ is a constant function on $\mathbb C^\times$, thus it can be evaluated at $1$, and the fiber identifies with $\mathrm{Conf}_{n-2}(\mathbb C\smallsetminus\{0,1\})$.
\end{proof}

\subsection{Flatness and gauge transformations}

The following Proposition describes the two main properties of $\nabla^t_k$.
\begin{Prop}\label{p-flat-gauge-AT-bound}
For $k\geq 2$, the family $\nabla^t_k$ of connections on the trivial principal $\SAut_k$-bundle over $\underline C_k$ is flat, and the connections in the family are transformed into each other by the family of gauge transformation corresponding to $a^t_k$.
\begin{align}
\label{eq-flat-gauge-AT-bound}
d\omega^t_{\mathrm{AT},k}-\frac{1}2\left[\omega^t_{\mathrm{AT},k},\omega^t_{\mathrm{AT},k}\right]&=0 
&
\p_t \nabla^t_k&=\nabla^t_k a^t_k.
\end{align}
\end{Prop}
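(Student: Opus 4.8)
The plan is to derive both identities in~\eqref{eq-flat-gauge-AT-bound} from the regularized Stokes' theorem (Theorem~\ref{thm:regstokes}), applied fibrewise on the compactification $\overline C_{k,n}^f\to\underline C_k$ to the singular forms $\theta^t_\Gamma$ and $\widetilde\theta^t_\Gamma$, exactly along the lines of M.~Kontsevich's proof of his formality theorem and of the Alekseev--Torossian flatness argument, and in complete parallel with the proofs of Propositions~\ref{p-sec5-main} and~\ref{p-sec6-main}. That $\overline C_{k,n}^f$ fits into the framework of Theorem~\ref{thm:regstokes}, and that the forms $\theta^t_\Gamma$, $\widetilde\theta^t_\Gamma$ are regularizable in the corresponding charts, are the fibrewise analogues of the statements of Section~\ref{s-4} and are established in Appendix~\ref{app:stokesbundle}; moreover the boundary contributions factorize, exactly as in Theorem~\ref{thm:factoring}, with the appropriate rescaling factors $(4t(1-t))^{\bullet}$ appearing on the type I (bulk collapse) strata.

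For flatness, fix a graph $\Gamma\in\sder_k$ with $n$ internal vertices. Since $\theta^t_\Gamma$ is a product of the closed $1$-forms $d\log(\cdot)$, it is closed, and the regularized Stokes' theorem gives $d\vartheta^t_\Gamma=\sum_B\int_{\partial_B}\Reg(\theta^t_\Gamma)$, the sum being over the vertical codimension one boundary strata of $\overline C_{k,n}^f/\underline C_k$. Summing against $\Gamma$, the strata at which a sub-cluster of at least two internal points collapses (possibly together with some external points) contribute precisely the terms assembling into $\tfrac12[\omega^t_{\mathrm{AT},k},\omega^t_{\mathrm{AT},k}]$, provided the Kontsevich vanishing property holds: the integral over the collapsing cluster must vanish unless the corresponding subgraph realizes the operadic Lie bracket of $\sder_k$ in the sense of Subsection~\ref{ss-7-1} (two vertices joined by an edge, with the reconnection pattern there). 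As in Section~\ref{s-5}, this is a scale invariant vanishing statement, so --- up to the $(4t(1-t))^{\bullet}$ rescalings, which cannot affect vanishing --- it is equivalent to the corresponding property at $t=\tfrac12$, which is part of the Alekseev--Torossian argument~\cite{AT-1} (see also~\cite{SW}). All remaining boundary strata do not contribute, and we obtain the first identity in~\eqref{eq-flat-gauge-AT-bound}.

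For the gauge identity, one uses that $\p_t\theta^t_\Gamma=d\widetilde\theta^t_\Gamma$ (Leibniz rule: $\p_t$ of the edge product is the sum of the edgewise primitives). Hence $\p_t\vartheta^t_\Gamma=\int_{\overline C_{k,n}^f/\underline C_k}d\widetilde\theta^t_\Gamma$, and the regularized Stokes' theorem gives $\p_t\vartheta^t_\Gamma=d\widetilde\vartheta^t_\Gamma\pm\sum_B\int_{\partial_B}\Reg(\widetilde\theta^t_\Gamma)$. Summing against $\Gamma$ yields $\p_t\omega^t_{\mathrm{AT},k}=d a^t_k\pm(\text{boundary terms})$. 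The boundary terms factorize as in the $\widetilde\omega$ part of Theorem~\ref{thm:factoring}, the primitive $\widetilde{(\cdot)}$ sitting on exactly one of the two factors: the strata where a single internal point meets an external vertex reassemble into the bracket $[\omega^t_{\mathrm{AT},k},a^t_k]$, while the bulk collapses of $\geq 2$ internal points either vanish by the same Kontsevich vanishing property or, in the degenerate case, complete this bracket. Keeping track of the conventions built into $\nabla^t_k$, this is exactly $\p_t\nabla^t_k=\nabla^t_k a^t_k$. Alternatively, once the gauge identity is in hand, flatness for all $t$ can be re-derived from flatness at $t=\tfrac12$: differentiating the curvature $F^t:=d\omega^t_{\mathrm{AT},k}-\tfrac12[\omega^t_{\mathrm{AT},k},\omega^t_{\mathrm{AT},k}]$ and using the gauge identity together with the Jacobi identity gives $\p_tF^t=-[F^t,a^t_k]$, linear in $F^t$, so that $F^{\frac 1 2}=0$ forces $F^t\equiv 0$.

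The main obstacle is the combinatorial bookkeeping: one must verify that the regularizability and factorization arguments of Section~\ref{s-4} carry over to the fibrewise (bundle) setting --- this is the purpose of Appendix~\ref{app:stokesbundle} --- and then identify, with the correct signs, each factorized boundary contribution with the corresponding operadic Lie bracket in $\sder_k$, using the gluing-and-reconnecting description of that bracket. Since every required vanishing is scale invariant, and the surviving boundary contributions are rescalings (by powers of $4t(1-t)$, tracked by Lemmas~\ref{l-conv-fib} and~\ref{l-ascaling}) of those present at $t=\tfrac12$ --- where flatness is the Alekseev--Torossian theorem, re-proven in~\cite{SW} --- the substance of the argument reduces to the $t=\tfrac12$ case.
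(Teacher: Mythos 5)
Your overall architecture (regularized Stokes' theorem on the fibrewise compactification $\overline C^f_{k,n}$, factorization at the boundary, reduction to $t=\tfrac 12$) is the right one, and your ``alternative'' derivation of flatness is in fact the paper's route: the paper never proves the first equation of \eqref{eq-flat-gauge-AT-bound} by Stokes, but notes that $\nabla^{\frac 12}_k$ is the flat Alekseev--Torossian connection and that a gauge transform of a flat connection is flat, so only the second identity needs an argument. The genuine gap is in your boundary analysis. In the $\sder_k$ setting the statement replacing the ``Kontsevich vanishing property'' is Proposition \ref{prop:8-10}: since the graphs are internally trivalent, any collapsing subgraph is a trivalent forest and all strata where three or more points collapse die by a pure degree count --- this is not the Kontsevich vanishing lemma. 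More importantly, the stratum where exactly two \emph{internal} vertices collapse does \emph{not} vanish graph by graph (the integral over the collapsing pair is an integral of the propagator over $\overline C_2\cong S^1$, which is nonzero), and it does not ``complete the bracket'' either: these terms cancel only after summing over graphs, by the IHX relations in $\sder_k$. Your proposal never invokes IHX, so as written both of your Stokes arguments leave exactly these contributions unaccounted for. Relatedly, the bracket terms ($\tfrac 12[\omega^t_{\mathrm{AT},k},\omega^t_{\mathrm{AT},k}]$, resp.\ $[\omega^t_{\mathrm{AT},k},a^t_k]$) arise from the collapse of a \emph{single internal vertex onto an external vertex} (matching the combinatorial description of the $\sder_k$ bracket by splitting an external vertex), not from clusters of ``at least two internal points''.

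A second, more technical issue: the identity $\p_t\theta^t_\Gamma=d\widetilde\theta^t_\Gamma$ is false as an identity of differential forms --- $\p_t$ of an edge factor is proportional to $d\arg(z_{s(e)}-z_{t(e)})$, while the differential of the logarithm factor is proportional to $d\log|z_{s(e)}-z_{t(e)}|$; the needed equality holds only for the components surviving integration. This, together with the fact that Theorem \ref{thm:regstokes} is stated for a compact manifold with corners rather than fibrewise, is why the paper proves the gauge identity in weak form: it pairs with a compactly supported test form $\psi$ on $\underline C_k$, applies the regularized Stokes' theorem to $\psi\,\widetilde\theta^t_\Gamma$ on the restriction of $\overline C^f_{k,n}$ over a compact set containing the support of $\psi$ (regularizability and the boundary regularization being exactly Proposition \ref{prop:8-10}, proved in Appendix \ref{app:8-10proof}), and then sorts the surviving two-point strata into the IHX-killed ones and those assembling $[\omega^t_{\mathrm{AT},k},a^t_k]$. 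With the degree count, the IHX cancellation and the weak formulation supplied, your argument becomes the paper's; without them it does not close.
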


For $t=1/2$, the connection $\nabla^t_k$ reduces to the Alekseev-Torossian connection, which is known to be flat, see also~\cites{AT-1,SW}.
Hence, the first equality of \eqref{eq-flat-gauge-AT-bound} holds for $t=1/2$. 
In order to show the proposition, it suffices therefore to prove the second equality, since any gauge transformation of a flat connection is again flat.

Let us prove the second identity in~\eqref{eq-flat-gauge-AT-bound}. 
It is sufficient to show the weak version of that equality, i.~e., that for each compactly supported $2k-3$-test-form $\psi$ on $\underline C_k$ 
\begin{equation}\label{equ:weakform}
\int_{\underline C_k} \psi \p_t \omega^t_{\mathrm{AT},k} = \int_{\underline C_k} (d\psi) a^t_k  - \int_{\underline C_k} \psi [\omega^t_{\mathrm{AT},k}, a^t_k].
\end{equation}
The first term on the right-hand side may be rewritten as 
\[
 \int_{\underline C_k} (d\psi) a^t_k = 
 \sum_\Gamma \Gamma \int_{\underline C_k} (d\psi)\! \left(\int_{\underline C_{n+k}/\underline C_k} \widetilde \theta^t_\Gamma \right)= 
 \sum_\Gamma \Gamma \int_{\underline C_{n+k}} (d\psi) \widetilde \theta^t_\Gamma
\]
where we used the definition \eqref{eq-AT-bound-0} of $a^t_k$ and in the sum $n$ denotes the number of internal vertices of the graph $\Gamma$.
We next want to apply the regularized Stokes' Theorem to the above expression. 
To this end denote by $\widehat C_{n,k}\to K$ the restriction of the bundle $\overline C^f_{n,k}\to \underline C_k$ to some compact $K\subset C_k$ containing the support of $\psi$.
We need to show that the integrand is regularizable on $\widehat C_{n,k}$, and compute the regularization on the boundary.

\begin{Prop}\label{prop:8-10}
 With the above notation, the form $\psi \widetilde \theta^t_\Gamma$ defined on the interior of $\overline C^f_{n,k}$ is regularizable.
 Furthermore, fix a codimension 1 boundary stratum $B$. Let the corresponding subgraph be $\Gamma'$ and let the graph obtained by contracting $\Gamma'$ be $\Gamma''$.
 Then the regularization satisfies
 \[
  \Reg_B (\psi \widetilde \theta^t_\Gamma )=
  \begin{cases}
   \psi \wedge \frac{d\phi}{2\pi i} \wedge \widetilde \theta^t_{\Gamma''}
   & \text{if $\Gamma'$ consists of 2 vertices connected by an edge}
   \\
   0 & \text{otherwise}
  \end{cases}.
 \]
\end{Prop}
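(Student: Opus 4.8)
The plan is to run the scheme of Sections~\ref{sec:regstokes}--\ref{s-6} fibrewise over the base $\underline C_k$, invoking the bundle version of the regularised Stokes' Theorem set up in Appendix~\ref{app:stokesbundle} as a black box. Write $\widetilde\theta^t_\Gamma=\sum_{e\in E(\Gamma)}(-1)^{e-1}\eta_e\prod_{e'\neq e}\theta^t_{e'}$ with $\eta_e=\frac i\pi\log|z_{s(e)}-z_{t(e)}|$, and recall from Section~\ref{s-4} that, in the coordinates of Subsection~\ref{sec:3-4} near a boundary stratum along which a cluster $B$ of configuration points collapses, every $\theta^t_{e'}$ internal to $B$ contributes $\frac1{2\pi i}\frac{dr_B}{r_B}+(\text{terms regular in }r_B)$, every $\eta_e$ internal to $B$ contributes $\frac i\pi\log r_B+(\text{regular})$, and the form or function attached to an edge leaving $B$ simply has its $B$-endpoint sent to the common limit $\zeta_B$.

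\emph{First: regularisability.} Since $\psi$ is pulled back from $\underline C_k$ and the compact base $K\subset C_k$ has empty boundary, every codimension-one boundary stratum of $\widehat C_{n,k}$ is a fibrewise stratum attached to a collapsing cluster $B$, and the only potentially singular terms in $\widetilde\theta^t_\Gamma$ are as above. Collecting them exactly as in the proof of Proposition~\ref{prop:splitting} gives $\widetilde\theta^t_\Gamma=\frac{dr_B}{r_B}\wedge\tilde\alpha_B+\log(r_B)\,\hat\alpha_B+(\text{terms regular in }r_B)$ with $\tilde\alpha_B,\hat\alpha_B$ independent of $r_B$ and $\iota_{\xi_i}\tilde\alpha_B=\iota_{\xi_i}\hat\alpha_B=0$, $\iota_{\xi_i}d\tilde\alpha_B=\iota_{\xi_i}d\hat\alpha_B=0$; as there, these vanishings come from the antisymmetry of the doubly-indexed summands under exchanging two edges of $B$. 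Wedging with the basic form $\psi$ preserves all of this, so $\psi\widetilde\theta^t_\Gamma$ is regularisable with counterterm $\psi\wedge(\frac{dr_B}{r_B}\wedge\tilde\alpha_B+\log(r_B)\,\hat\alpha_B)$ on $U_B$; in particular $d(\psi\widetilde\theta^t_\Gamma)$ is regular and the regularised Stokes' Theorem~\ref{thm:regstokes} applies.

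\emph{Second: the boundary value.} Fix a codimension-one stratum $B$ with collapsing subgraph $\Gamma'$ and contracted graph $\Gamma''$. By the fibre-bundle analogue of Theorem~\ref{thm:factoring}, $\Reg_B(\psi\widetilde\theta^t_\Gamma)$ factorises along $\partial_B\widehat C_{n,k}\cong\overline C_{V(\Gamma')}\times(\text{residual space})$ as a weight form of $\Gamma'$ on the first factor times a weight form of $\Gamma''$ on the second (wedged with $\psi$, a pullback from the base). If $|V(\Gamma')|\geq 3$ the contribution vanishes by M.~Kontsevich's vanishing Lemma~\cite{K}*{Lemma 6.6}, exactly as in the proof of Proposition~\ref{p-sec5-main}; the contribution vanishes as well, for degree reasons, when $\Gamma'$ has two vertices and no edge (the residual circle $\overline C_2\cong S^1$ then carries no angular form). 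If, finally, $\Gamma'$ is a single edge $e_0$, the only surviving terms of $\widetilde\theta^t_\Gamma$ are those whose marked edge $e$ differs from $e_0$ — the $e=e_0$ term carries the $\log r_B$ singularity removed by the counterterm — and for those terms the counterterm deletes the $\frac1{2\pi i}\frac{dr_B}{r_B}$-part of $\theta^t_{e_0}$, leaving the angular form $\frac{d\phi}{2\pi i}$ on $\overline C_2\cong S^1$, while $\eta_e$ and the remaining $\theta^t_{e'}$ restrict to the factors of $\widetilde\theta^t_{\Gamma''}$; reassembling signs and normalisations as in Section~\ref{s-4} yields $\Reg_B(\psi\widetilde\theta^t_\Gamma)=\psi\wedge\frac{d\phi}{2\pi i}\wedge\widetilde\theta^t_{\Gamma''}$.

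\emph{Main obstacle.} Conceptually the proposition rests entirely on the Kontsevich vanishing mechanism already exploited in Proposition~\ref{p-sec5-main}; the genuine work lies, first, in checking (in Appendix~\ref{app:stokesbundle}) that $\widehat C_{n,k}\to K$ satisfies the hypotheses of Theorem~\ref{thm:regstokes} — the nested-family charts must carry compatible torus actions and $T_i$-invariant partitions of unity in the fibre directions — and, second, in the combinatorial bookkeeping of the second step: among the $O(|E(\Gamma)|^2)$ summands of $\widetilde\theta^t_\Gamma$ one must isolate those surviving each restriction and match their Koszul signs, and the factor $\frac1{2\pi i}$, against those of $\widetilde\theta^t_{\Gamma''}$.
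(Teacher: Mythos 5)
Your outline follows the paper's strategy (splitting as in Proposition~\ref{prop:splitting}, then factorization as in Theorem~\ref{thm:factoring}), but it has two concrete gaps. First, your claim that, since the base has no boundary, ``every codimension-one boundary stratum of $\widehat C_{n,k}$ is a fibrewise stratum attached to a collapsing cluster $B$'' is false: the fibers of $\overline C^f_{k,n}\to \underline C_k$ are compactified configurations modulo translations only (no rescaling), so besides collapsing clusters there are fibrewise strata where a set of internal points escapes to infinity relative to the external points (the ``infinite vertices'', i.e.\ ancestors of the vertex $*$ in Appendix~\ref{app:stokesbundle}). These strata are exactly the new feature compared with Proposition~\ref{prop:splitting} and Theorem~\ref{thm:factoring}, which live on $\overline C^+_{n,m}$, and regularizability there has to be checked separately (the paper does this, with the corresponding torus action rotating the points \emph{outside} $B$ around $\zeta_B$, and with the roles of the edge classes interchanged). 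Your appeal to Appendix~\ref{app:stokesbundle} ``as a black box'' does not cover this, since that appendix only supplies charts and torus actions; the singularity analysis of $\widetilde\theta^t_\Gamma$ at the infinite strata is part of what Proposition~\ref{prop:8-10} asserts.

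Second, for the vanishing when $\Gamma'$ has three or more vertices you invoke Kontsevich's vanishing Lemma~\cite{K}*{Lemma 6.6}. That lemma kills \emph{integrals} over the collapsed factor, whereas the proposition asserts that the \emph{regularized form} $\Reg_B(\psi\,\widetilde\theta^t_\Gamma)$ itself vanishes on the stratum; your argument would only suffice for the downstream application, not for the statement as written. The paper's reason is simpler and works at the level of forms: here $\Gamma\in\sder_k$ is an internally trivalent tree, so the collapsing subgraph $\Gamma'$ is an (at most internally trivalent) forest with at most $|V(\Gamma')|-1$ edges; for $|V(\Gamma')|\geq 3$ this is too few one-form factors to saturate the collapsed factor, and since $\psi$ is pulled back from the base and the total degree is fixed, the complementary factor is forced above top degree, so the regularization vanishes identically. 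You never use the tree structure of $\Gamma$, which is precisely the simplification that makes the ``otherwise $0$'' case immediate; with that degree count in place (and the infinite strata handled), your treatment of the single-edge case is essentially the paper's.
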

The proof will be postponed to Appendix \ref{app:8-10proof}.
%
%

\begin{proof}[Proof of Proposition \ref{p-flat-gauge-AT-bound}]

Compute, using the regularized Stokes' Theorem
\begin{align*}
 \int_{\underline C_{n+k}} (d\psi) \widetilde \theta^t_\Gamma
 &=
 \int_{\underline C_{n+k}} (d(\psi \widetilde \theta^t_\Gamma) + \psi d\widetilde \theta^t_\Gamma )
 \\ &=
 \int_{\p_f \underline C_{n+k}} \Reg ( \psi \widetilde \theta^t_\Gamma)
 + \int_{\underline C_{n+k}} \psi \p_t \theta_\Gamma^t.
\end{align*}
The very last term is equal to $\int_{\underline C_{k}} \psi \p_t \vartheta_\Gamma^t$. This produces the left-hand side of \eqref{equ:weakform}. Next let us turn to the first term on the right-hand side
\begin{align*}
  \int_{\p_f \underline C_{n+k}} \Reg (\psi \widetilde \theta^t_\Gamma)
  &=
  \sum_B \int_{\p_B \overline C_{n+k}} \Reg_B (\psi \widetilde \theta^t_\Gamma).
\end{align*}

By Proposition \ref{prop:8-10} the only contributing boundary strata are those corresponding to a collapse of exactly two points.
There are 2 cases to be considered.
\begin{itemize}
 \item Suppose $B$ describes a boundary stratum on which two internal points collapse, away from the external points. 
 Then the corresponding terms at the end do not contribute to \eqref{equ:weakform}, since the contribution is annihilated by the IHX relations.
 \item Suppose $B$ describes a boundary stratum on which one internal point collapses towards an external point.
 Note that the graph $\Gamma''$ in this case naturally splits into two internally connected trivalent trees, say $\Gamma_1''$ and $\Gamma_2''$. Furthermore
 \[
  \widetilde \theta^t_{\Gamma''} =  \widetilde \theta^t_{\Gamma_1''}  \theta^t_{\Gamma_2''} 
  +  \theta^t_{\Gamma_1''} \widetilde \theta^t_{\Gamma_2''}.
 \]
 The contribution of such a boundary stratum is hence
 \[
  \int_{\p_B \overline C_{n+k}} \Reg_B (\psi \widetilde \theta^t_\Gamma)
  =
  \int_{\underline C_{n_1+n_2+k}}
  \phi(\widetilde \theta^t_{\Gamma_1''}  \theta^t_{\Gamma_2''} 
  +  \theta^t_{\Gamma_1''} \widetilde \theta^t_{\Gamma_2''})
  =
  \int_{\underline C_{n_1+n_2+k}} \phi(\widetilde \vartheta^t_{\Gamma_1''}  \vartheta^t_{\Gamma_2''} 
  +  \vartheta^t_{\Gamma_1''} \widetilde \vartheta^t_{\Gamma_2''})
 \]
 where $n_1$ and $n_2$ (with $n_1+n_2=n-1$) are the numbers of internal vertices in $\Gamma_1''$ and $\Gamma_2''$.
\end{itemize}

The latter boundary terms together produce the last term in \eqref{equ:weakform}, and hence the Proposition is shown.
\end{proof}

\begin{Rem}
 The statement of Proposition \ref{p-flat-gauge-AT-bound} may be recast as a set of properties of the Alekseev-Torossian connection $\omega_{{\rm AT},k}^{\frac 1 2}$. For example, if we split the latter connection into a $(1,0)$- and a $(0,1)$-form part, say, $\omega_{{\rm AT},k}^{\frac 1 2}=\omega+\bar  \omega$ then the first equation of \eqref{eq-flat-gauge-AT-bound} is equivalent to the following four equations for $\omega$.
 \begin{align*}
  \p \omega &= 0 & [\omega,\omega]&=0 \\
  \p \bar \omega &= \bar \p \omega & \p \bar \omega + \bar \p \omega + [\omega, \bar \omega] &= 0
 \end{align*}
Let us derive the first two equations and leave the other two to the reader. The $(2,0)$-part of the first equation of \eqref{eq-flat-gauge-AT-bound} reads 
\[
 2^{2N+1}t^N(1-t)^{N+1} \p \omega - \frac 1 2 [ 2^{2N+1}t^N(1-t)^{N+1} \omega, 2^{2N+1}t^N(1-t)^{N+1} \omega] = 0
\]
where we again used the operator $N$ acting on graphs by the multiplication with the nuber of internal vertices and the scaling behavior \eqref{equ:conrescaling} of $\omega_{{\rm AT},k}^{t}$. Using that the operator $(N+1)$ is a derivation of the Lie algebra $\sder_k$ we may simplify the above equation to 
\begin{align*}
 0 &= 2^{2N+1}t^N(1-t)^{N+1} \p \omega - \frac 1 2 [ 2^{2N+1}t^N(1-t)^{N+1} \omega, 2^{2N+1}t^N(1-t)^{N+1} \omega]
 \\
 &=
 2^{2N+1}t^N(1-t)^{N+1} \p \omega - 2^{2N-1} t^{N-1}(1-t)^{N+1} [  \omega, \omega]
 \\
 &=
 2^{2N-1}t^{N-1}(1-t)^{N+1} \left( 4 t \p \omega - [  \omega, \omega]\right).
\end{align*}
Hence the validity of this equation for all $t$ is equivalent to demanding that $\p \omega= [  \omega, \omega]=0$.
\end{Rem}

\subsection{Example: Explicit computation of a term of \texorpdfstring{$\nabla^t_k$}{nablatk}}\label{sss-8-1-1} 
We have proved that $\nabla^t_k$ is a smooth connection on the trivial principal $\SAut_k$-bundle over $\underline C_k$; however, it does not extend to the compactification $\overline{\underline C}_k$.
Concretely, the forms $\vartheta^t_\Gamma$ may have singularities at configurations where two or more points collapse: this is indeed the case, as the next computations show. 

As already remarked, for $k\geq 2$, the family $\nabla^t_k$ of flat connections on the trivial principal $\SAut_k$-bundle over $\underline C_k$ can be written as follows:
\[
\omega^t_{\mathrm{AT},k}=\sum_{1\leq i<j\leq k}\theta^t_{ij}\ t_{ij}+\cdots,
\]
where $\cdots$ denotes an $\sder_k$-valued $1$-form over $\underline C_k/\bbC^\times=\Conf_{k-2}(\bbC\smallsetminus\{0,1\})$.

We want to compute explicitly the piece of $\omega^t_{\mathrm{AT},k}$, for $k\geq 3$, associated to elements of $\sder_k$ with exactly one internal vertex.
Such elements are uniquely determined by triples $(i_1,i_2,i_3)$ of elements of $[k]$, such that $1\leq i_1<i_2<i_3\leq k$: with the triple $(i_1,i_2,i_3)$ we associate the element of $\sder_k$ depicted in Figure~\ref{fig-1-int-sder}.
\begin{figure}
\centering
\begin{tikzpicture}[>=latex]
\tikzstyle{k-int}=[draw,fill=gray!40,circle,inner sep=0pt,minimum size=2.5mm]
\tikzstyle{n-int}=[draw,fill=black,circle,inner sep=0pt,minimum size=2.5mm]
\tikzstyle{ext}=[draw,fill=white,circle,inner sep=0pt,minimum size=2.5mm]
\tikzstyle{ext0}=[draw,fill=white,circle,inner sep=0pt,minimum size=4mm]
\tikzstyle{spec}=[draw,rectangle,inner sep=0pt,minimum size=4mm]

\begin{scope}[scale=.75]
\node[k-int] (1) at (-2,0) {};
\node[k-int] (i1) at (-1,0) {};
\node[k-int] (i2) at (0,0) {};
\node[k-int] (i3) at (1,0) {};
\node[k-int] (k) at (2,0) {};
\node at (-1.5,0) {$\dots$};
\node at (-.5,0) {$\dots$};
\node at (.5,0) {$\dots$};
\node at (1.5,0) {$\dots$};
\node[n-int] (i) at (0,1) {};
\draw[thick] (i) to (i1);
\draw[thick] (i) to (i2);
\draw[thick] (i) to (i3);
\node at (-1,-.5) {$i_1$};
\node at (0,-.5) {$i_2$};
\node at (1,-.5) {$i_3$};
\end{scope}
\end{tikzpicture}
\caption{\label{fig-1-int-sder} The unique element of $\sder_k$ associated with the ordered triple $\underline i$.}
\end{figure}
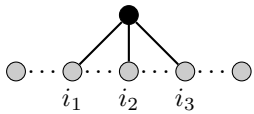

For a triple $\underline i=(i_1,i_2,i_3)$ as before, let us denote by $\Gamma_{\underline i}$ the corresponding element of $\sder_k$: hence, the term of $\nabla^t_k$ associated with elements of $\sder_k$ with exactly one internal vertex has the form
\[
\sum_{\underline i}\vartheta_{\Gamma_{\underline i}}^t\ \Gamma_{\underline i}.
\]
We want to compute explicitly the integral weight $\vartheta_{\Gamma_{\underline i}}^t$.

It suffices to compute it for $k=3$, whence $\underline i=(1,2,3)$, and we introduce for the sake of simplicity the notation $\Gamma_{\underline i}=\Gamma_3$.

%
%

First of all, we identify $\underline C_3$ and $\underline C_4$ with $\bbC^\times\times \Conf_2(\bbC\smallsetminus\{0,1\})$ and $\bbC^\times\times \Conf_1(\bbC\smallsetminus \{0,1\})$ by fixing to $0$ the point corresponding to $1$: more precisely,
\[
\begin{aligned}
\bbC^\times\times \Conf_2(\bbC\smallsetminus\{0,1\})\ni (\lambda;(z,w))&\mapsto \left[\left(0,\lambda,\lambda z,\lambda w\right)\right]\in \underline C_4,\\
\bbC^\times\times \Conf_1(\bbC\smallsetminus\{0,1\})\ni (\lambda;z)&\mapsto \left[\left(0,\lambda,\lambda z\right)\right]\in \underline C_3.
\end{aligned}
\]
Accordingly, the projection $\pi_4$ from $\underline C_4$ onto $\underline C_3$ yields a projection $\pi_4$ from $\bbC^\times \times \Conf_2(\bbC\smallsetminus\{0,1\})$ onto $\bbC^\times \times \Conf_1(\bbC\smallsetminus\{0,1\})$ which simply forgets the fourth coordinate.

By its very definition, the integral weight $\vartheta^t_\Gamma$ is given by the expression
\begin{equation}\label{eq-AT-3}
\vartheta_{\Gamma_3}^t=\int_{w\in \bbC\smallsetminus \{0,1,z\}}\theta^t(\lambda w,0)\theta^t(\lambda w,\lambda)\theta^t(\lambda w,\lambda z).
\end{equation}

The multiplicative property of the complex logarithm yields
\[
\theta^t(\lambda w,0)=\xi(\lambda)+\widehat\theta^t(w,0),\ \theta^t(\lambda w,\lambda)=\xi(\lambda)+\widehat\theta^t(w,1),\ \theta^t(\lambda z,\lambda w)=\xi(\lambda)+\widehat\theta^t(z,w),
\]
borrowing previous notation, whence the integrand in~\eqref{eq-AT-3} may be re-written as
\[
\theta^t(\lambda w,0)\theta^t(\lambda w,\lambda)\theta^t(\lambda z,\lambda w)=\widehat\theta^t(w,0)\widehat\theta^t(w,1)\widehat\theta^t(z,w)+\xi(\lambda)\alpha(z,w).
\]
Observe that the $2$-form $\alpha(z,w)$ is associated with a linear combination of bivalent graphs by its very construction: thus, its integral over $w$ vanishes by means of the involution argument from~\cite{K3}*{Lemma 2.2}.

We are thus reduced to compute the integral 
\[
\int_{w\in \bbC\smallsetminus \{0,1,z\}}\widehat\theta^t(w,0)\widehat\theta^t(w,1)\widehat\theta^t(z,w)=\int_{w\in \bbC\smallsetminus \{0,1,z\}}\widehat\theta^t(z,w)\widehat\theta^t(w,0)\widehat\theta^t(w,1).
\]
In fact, the integration is readily verified to be over the whole complex plane $\bbC$ by cutting out infinitesimal circles around $0$, $1$ and $z$ and the circle at infinity and proving that the integrand extends to the corresponding limits as the radii tend to $0$ or infinity: this is a very simple case of what we considered before in proving the convergence of the integral weights $\vartheta^t_\Gamma$, see Lemma~\ref{l-conv-fib}.

The second and third factor do not depend on $z$, hence they both provide the volume form with respect to which we integrate: this must be in turn a $(1,1)$-form, and standard manipulations using the trick with logarithms imply that the integrand on the right-hand side takes the form
\[
\begin{aligned}
\widehat\theta^t(z,w)\widehat\theta^t(w,0)\widehat\theta^t(w,1)&=-\frac{(1-t)[t(1-t)]}{2\pi^3 i}\frac{dz}{w-z}d\mathrm{arg}(w)d\mathrm{arg}(w-1)+\frac{t[t(1-t)]}{2\pi^3 i}\frac{d\overline z}{\overline w-\overline z}d\mathrm{arg}(w)d\mathrm{arg}(w-1)=\\
&=-\frac{(1-t)[t(1-t)]}{2\pi^3 i}\frac{dz}{w-z}d\mathrm{arg}(w)d\mathrm{arg}(w-1)+\frac{t[t(1-t)]}{2\pi^3 i}\frac{d\overline z}{w-\overline z}d\mathrm{arg}(w)d\mathrm{arg}(w-1),
\end{aligned}
\]
where we have used the orientation-reversing involution $w\mapsto \overline w$ on the second term on the right-hand side of the first row.

Hence, it suffices to compute the integral 
\[
\begin{aligned}
\int_\bbC \frac{1}{w-z}d\mathrm{arg}(w)d\mathrm{arg}(w-1)&=\int_{|w|<1}\frac{1}{w-z}d\mathrm{arg}(w)d\mathrm{arg}(w-1)+\int_{|w|>1}\frac{1}{w-z}d\mathrm{arg}(w)d\mathrm{arg}(w-1)=\\
&=\int_{|w|<1}\frac{1}{w-z}d\mathrm{arg}(w)d\mathrm{arg}(w-1)+\int_{|w|<1}\frac{1}{\frac 1 w-z}d\mathrm{arg}\!\left(\frac 1 w\right)d\mathrm{arg}\!\left(\frac 1 w-1\right)=\\
&=\int_{|w|<1}\frac{1}{w-z}d\mathrm{arg}(w)d\mathrm{arg}(w-1)-\int_{|w|<1}\frac{w}{1-zw}d\mathrm{arg}(w)\left(d\mathrm{arg}(1-w)-d\mathrm{arg}(w)\right)=\\
&=\int_{|w|<1}\frac{1}{w-z}d\mathrm{arg}(w)d\mathrm{arg}(w-1)-\int_{|w|<1}\frac{w}{1-zw}d\mathrm{arg}(w)d\mathrm{arg}(w-1).
\end{aligned}
\]

Let us consider first the case $|z|<1$.
By recalling the convergent geometric series expansion in the complex domain ${|w|<1}$ and by standard manipulations, the last expression in the previous chain of equalities can be re-written as 
\[
\begin{aligned}
&\int_{|w|<1}\frac{1}{w-z}d\mathrm{arg}(w)d\mathrm{arg}(w-1)-\int_{|w|<1}\frac{w}{1-zw}d\mathrm{arg}(w)d\mathrm{arg}(w-1)=-\sum_{m\geq 0} \frac{1}{z^{m+1}}\ \left(\int_{|w|<|z|}w^m d\mathrm{arg}(w)d\mathrm{arg}(w-1)\right)+\\
&\phantom{=}+\sum_{m\geq 0} z^m\ \left(\int_{|z|<|w|<1}\frac{1}{w^{m+1}} d\mathrm{arg}(w)d\mathrm{arg}(w-1)\right)-\sum_{m\geq 0} z^m\ \left(\int_{|w|<|1|}w^{m+1} d\mathrm{arg}(w)d\mathrm{arg}(w-1)\right). 
\end{aligned}
\]
The last three inner integrals have been computed in the proof of Proposition~\ref{p-polylog}, Appendix~\ref{app-coc}, whence we obtain
\[
\int_{|w|<1}\frac{1}{w-z}d\mathrm{arg}(w)d\mathrm{arg}(w-1)-\int_{|w|<1}\frac{w}{1-zw}d\mathrm{arg}(w)d\mathrm{arg}(w-1)=-\pi i\left(\frac{\log(|1-z|)}{z}+\frac{\log(|z|)}{1-z}\right).
\]
Similar computations hold true for $|z|\geq 1$ and yield the same result.
\begin{Prop}\label{p-AT-3}
The piece of the family $\nabla^t_3$ of flat connections on the trivial principal $\SAut_3$-bundle over $\underline C_3$ associated to the unique element $\Gamma_3$ of $\sder_3$ with exactly one internal vertex has the explicit form
\[
\left[\frac{(1-t)[t(1-t)]}{2\pi^2}\left(\frac{\log(|1-z|)}{z}+\frac{\log(|z|)}{1-z}\right)dz+\frac{t[t(1-t)]}{2\pi^2}\left(\frac{\log(|1-z|)}{\overline z}+\frac{\log(|z|)}{1-\overline z}\right)d\overline z\right]\ \Gamma_3.
\]
\end{Prop}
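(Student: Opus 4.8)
The plan is to carry out explicitly the fiber integral defining the weight $\vartheta^t_{\Gamma_3}$. First I would reduce to the case $k=3$, where $\underline i=(1,2,3)$ and $\Gamma_3$ is the unique one-internal-vertex graph in $\sder_3$, since the higher-$k$ case is obtained from this one by the operadic insertion maps. By definition $\vartheta^t_{\Gamma_3}$ is the integration along the fiber of $\pi_4\colon\underline C_4\to\underline C_3$ of the product of the three propagator one-forms attached to the edges of $\Gamma_3$. Using the trivializations $\underline C_3\cong\bbC^\times\times\Conf_1(\bbC\smallsetminus\{0,1\})$ and $\underline C_4\cong\bbC^\times\times\Conf_2(\bbC\smallsetminus\{0,1\})$ obtained by placing the point labelled $1$ at the origin and scaling by $\lambda\in\bbC^\times$, this fiber integral becomes the $w$-integral of $\theta^t(\lambda w,0)\,\theta^t(\lambda w,\lambda)\,\theta^t(\lambda w,\lambda z)$ as in \eqref{eq-AT-3}.

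Next I would use multiplicativity of the complex logarithm to write each propagator as $\xi(\lambda)$ plus a $\lambda$-independent form $\widehat\theta^t$. Expanding the product, every term containing a factor $\xi(\lambda)$ assembles into $\xi(\lambda)\,\alpha(z,w)$, where $\alpha$ is a $2$-form attached to a linear combination of bivalent graphs; its $w$-integral vanishes by the involution argument of \cite{K3}*{Lemma 2.2}. This leaves $\int_w\widehat\theta^t(z,w)\,\widehat\theta^t(w,0)\,\widehat\theta^t(w,1)$. Since the last two factors do not depend on $z$, they furnish a $(1,1)$-volume form proportional to $d\mathrm{arg}(w)\,d\mathrm{arg}(w-1)$, and the logarithm trick rewrites the integrand as a combination of $\frac{dz}{w-z}\,d\mathrm{arg}(w)\,d\mathrm{arg}(w-1)$ and $\frac{d\overline z}{w-\overline z}\,d\mathrm{arg}(w)\,d\mathrm{arg}(w-1)$, with coefficients proportional to $(1-t)[t(1-t)]$ and $t[t(1-t)]$ respectively (the antiholomorphic piece being treated by the orientation-reversing involution $w\mapsto\overline w$). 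That the $w$-integral converges and may be extended over all of $\bbC$ follows exactly as in the convergence argument of Lemma~\ref{l-conv-fib}.

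It then remains to evaluate $\int_\bbC\frac{1}{w-z}\,d\mathrm{arg}(w)\,d\mathrm{arg}(w-1)$. I would split it into the regions $|w|<1$ and $|w|>1$, apply the inversion $w\mapsto 1/w$ to the outer region to return to $|w|<1$, and then (first for $|z|<1$) expand the resulting rational functions in convergent geometric series in $w$, interchanging sum and integral. The inner integrals $\int_{|w|<r}w^m\,d\mathrm{arg}(w)\,d\mathrm{arg}(w-1)$ appearing are precisely those computed in the proof of Proposition~\ref{p-polylog} in Appendix~\ref{app-coc}; substituting them gives $-\pi i\bigl(\tfrac{\log|1-z|}{z}+\tfrac{\log|z|}{1-z}\bigr)$, and the case $|z|\geq 1$ is handled by the analogous series manipulation and yields the same value. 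Multiplying by the $t$-dependent coefficients collected in the previous step and reattaching the graph $\Gamma_3$ produces the claimed formula. I expect the main obstacle to be the bookkeeping in this last step — justifying the term-by-term integration of the geometric series and tracking the three separate contributions — rather than any conceptual difficulty; the only genuinely structural input is the vanishing of the $\xi(\lambda)$-terms via the bivalent-graph involution.
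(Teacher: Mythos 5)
Your proposal is correct and follows essentially the same route as the paper's own computation in Subsubsection~\ref{sss-8-1-1}: the same trivializations of $\underline C_3$ and $\underline C_4$, the same splitting of the propagators via multiplicativity of the logarithm with the $\xi(\lambda)$-terms killed by the bivalent-graph involution of~\cite{K3}*{Lemma 2.2}, and the same evaluation of $\int_\bbC\frac{1}{w-z}\,d\mathrm{arg}(w)\,d\mathrm{arg}(w-1)$ by inversion, geometric series, and the inner integrals from the proof of Proposition~\ref{p-polylog}. No gaps to report.
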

For $k\geq 4$ and an ordered triple $\underline i$ as above, we obtain a similar expression by choosing well-suited global sections of $\underline C_{k+1}$ and $\underline C_k$, where the point labeled by $i_1$ is fixed at $0$, the point labeled by $i_2$ is in $\bbC^\times$: then, the previous computations apply {\em verbatim}.

\subsection{The (anti-)Knizhnik--Zamolodchikov connection}\label{sss-8-1-2}
In the previous Subsection, we have explicitly computed the term of the family $\nabla^t_k$ of flat $\sder_k$-connections corresponding to elements of $\sder_k$ with exactly one internal vertex, and have proved that it is in general non-trivial.

In view of the next Subsection, where we analyze in detail the parallel transport for $\nabla^t_3$ along a path in $\underline C_3$ which connects two points in different boundary strata of $\overline{ C}_3$, and the corresponding regularization, let us briefly discuss the explicit shape of $\nabla^t_k$, $k\geq 3$, at $t=0$, $t=1/2$ and $t=1$.

We already know from previous arguments that
\[
\nabla^0_k=d-\sum_{1\leq i<j\leq k} \theta^0_{ij}\ t_{ij}+\cdots,\ \theta^0_{ij}=\frac{1}{2\pi i}d\log(z_i-z_j)=\frac{1}{2\pi i}\frac{dz_i-dz_j}{z_i-z_j},
\]
and $\cdots$ denotes the sum of contributions associated with elements of $\sder_k$ with at least one internal vertex.

Let us consider such an element $\Gamma$ of $\sder_k$ with $n\geq 1$ internal vertices.
We identify, as before, $\underline C_{k+n}$ and $\underline C_k$ with $\bbC^\times \times\Conf_{k+n-2}(\bbC\smallsetminus\{0,1\})$ and $\bbC^\times \times\Conf_{k-2}(\bbC\smallsetminus\{0,1\})$ respectively, {\em via}
\[
\begin{aligned}
\bbC^\times \times\Conf_{k+n-2}(\bbC\smallsetminus\{0,1\})\ni \left(\lambda,z_3,\dots,z_k,z_{k+1},\dots,z_{k+n}\right)&\mapsto \left[\left(0,\lambda,\lambda z_3,\dots,\lambda z_{k+n}\right)\right]\in \underline C_{k+n},\\
\bbC^\times \times\Conf_{k-2}(\bbC\smallsetminus\{0,1\})\ni \left(\lambda,z_3,\dots,z_k\right)&\mapsto \left[\left(0,\lambda,\lambda z_3,\dots,\lambda z_{k}\right)\right]\in \underline C_{k}.
\end{aligned}
\]
The projection $\pi_{k+n}$ from $\bbC^\times \times\Conf_{k+n-2}(\bbC\smallsetminus\{0,1\})$ onto $\bbC^\times \times\Conf_{k-2}(\bbC\smallsetminus\{0,1\})$ simply forgets the last $n$ complex coordinates.
Previous computations show that $\theta^0_e$, for $e$ a general edge of $\Gamma$, is obviously a form of type $(1,0)$ on $\bbC^\times \times\Conf_{k+n-2}(\bbC\smallsetminus\{0,1\})$: in particular, $\theta^0_\Gamma$ is a form of type $(2n+1,0)$ on $\bbC^\times \times\Conf_{k+n-2}(\bbC\smallsetminus\{0,1\})$.
Since we consider its integral along the fiber of the projection $\pi_{k+n}$, whose general fiber is a complex manifold of complex dimension $n$, $\vartheta^0_\Gamma$ is non-trivial, only if the corresponding integrand has a non-trivial piece of type $(n,n)$ with respect to the fiber coordinates.
As the integrand is of type $(2n+1,0)$ in the complex coordinates of $\bbC^\times \times\Conf_{k+n-2}(\bbC\smallsetminus\{0,1\})$, there cannot be such a piece, whence $\vartheta^0_\Gamma=0$, for $\Gamma$ in $\sder_k$ with $n\geq 1$ internal vertices.

On the other hand, let us consider the case $t=1$:
\[
\nabla^1_k=d+\sum_{1\leq i<j\leq k} \theta^0_{ij}\ t_{ij}+\cdots,\ \theta^1_{ij}=\frac{1}{2\pi i}d\log(\overline z_i-\overline z_j)=\frac{1}{2\pi i}\frac{d\overline z_i-d\overline z_j}{\overline z_i-\overline z_j},
\]
and $\cdots$ as above.

The very same arguments for the case $t=0$ imply that $\vartheta^1_\Gamma=0$, for $\Gamma$ in $\sder_k$ with $n\geq 1$ internal vertices: the only modification to be taken into account is that $\theta^1_e$ is of type $(0,1)$, hence $\theta^1_\Gamma$ is of type $(0,2n+1)$, but the remaining arguments can be repeated {\em verbatim}.

As a consequence, we get
\[
\nabla_k^0=d-\frac{1}{2\pi i}\sum_{1\leq i<j\leq k}\frac{dz_i-dz_j}{z_i-z_j}\ t_{ij}=\nabla_{\mathrm{KZ},k},\quad \nabla_k^1=d+\frac{1}{2\pi i}\sum_{1\leq i<j\leq k}\frac{d\overline z_i-d\overline z_j}{\overline z_i-\overline z_j}\ t_{ij}=\overline \nabla_{\mathrm{KZ},k},
\]
{\em i.~e.} $\nabla^0_k$ and $\nabla^1_k$ identify with the Knizhnik--Zamolodchikov (shortly, from now on, KZ) holomorphic and anti-holomorphic connection on $\underline C_k$.

Let us finally consider $\nabla_k^\frac 1 2$.
We know that the contributions to $\nabla^\frac 1 2_k$ associated with elements of $\sder_k$ with at least one internal vertex are actually defined over $\underline C_k/\bbC^\times=\Conf_{k-2}(\bbC\smallsetminus\{0,1\})$: in particular, they contribute to an $\sder$-valued $S^1$-basic $1$-form on $C_k$.
The remaining contributions, {\em i.e.} 
\[
\sum_{1\leq i<j\leq k} \theta^\frac 1 2_{ij}\ t_{ij},\ \theta^\frac 1 2_{ij}=\frac{1}{2\pi}d\mathrm{arg}(z_i-z_j),
\]
are also obviously $\mathbb R_+$-basic, hence descend to $C_k$.
Therefore, $\nabla^\frac 1 2_k$ identifies with the ``value at the boundary stratum at infinity'' $\nabla_{\mathrm{AT},k}$ of the AT connection~\cite{AT-1}*{Subsection 4.1}.

\subsection{Associators}\label{ss-8-2}
For $\varepsilon>0$ small enough, let $\Phi^t_\varepsilon$ in $\SAut_3$ be the parallel transport with respect to the family $\nabla^t_3$ of flat connections along the path between the configurations $A_\epsilon$ and $B_\epsilon$ in $\underline C_3$ depicted in Figure~\ref{fig-monod}.
Concretely, identify $\underline C_3$ with $\bbC^\times\times \Conf_1(\bbC\smallsetminus\{0,1\})$. Then consider the curve in $\underline C_3$ given by 
\[
[0,1]\in s\mapsto \left[\left(0,(1-s)\varepsilon+s(1-\varepsilon),1\right)\right]\in \underline C_3,
\]
in particular, $A_\varepsilon=[(0,\varepsilon,1)]$ and $B_\varepsilon=[(0,1-\varepsilon,1)]$.
Observe that the parallel transport $\Phi^t_\varepsilon$ does not depend on the specific chosen path because of the flatness of $\nabla_3^t$.

\begin{figure}
\centering
\begin{tikzpicture}[>=latex]
\tikzstyle{k-int}=[draw,fill=gray!40,circle,inner sep=0pt,minimum size=1.5mm]
\tikzstyle{n-int}=[draw,fill=black,circle,inner sep=0pt,minimum size=1.5mm]
\tikzstyle{ext}=[draw,fill=white,circle,inner sep=0pt,minimum size=1.5mm]
\tikzstyle{ext0}=[draw,fill=white,circle,inner sep=0pt,minimum size=4mm]
\tikzstyle{spec}=[draw,rectangle,inner sep=0pt,minimum size=4mm]

\begin{scope}
\draw[dashed] (-2,0) to (2,0);
\node[n-int] (1) at (-1,0) {};
\node[n-int] (2) at (-0.5,0) {};
\node[n-int] (3) at (1,0) {};
\node at (0,-.5) {$\displaystyle A_\varepsilon\subseteq \underline C_3$}; 
\node at (-1,.5) {$0$};
\node at (1,.5) {$1$};
\node at (-.5,.5) {$\varepsilon$};
\end{scope}

\begin{scope}[shift={(6,0)}]
\draw[dashed] (-2,0) to (2,0);
\node[n-int] (1) at (-1,0) {};
\node[n-int] (2) at (0.5,0) {};
\node[n-int] (3) at (1,0) {};
\node at (0,-.5) {$\displaystyle B_\varepsilon\subseteq \underline C_3$};
\node at (-1,.5) {$0$};
\node at (1,.5) {$1$}; 
\node at (.5,.5) {$1-\varepsilon$};
\end{scope}

\end{tikzpicture}
\caption{\label{fig-monod} For $\varepsilon>0$ small enough, the configurations $A_\varepsilon$ and $B_\varepsilon$ in $\underline C_3$.}
\end{figure}
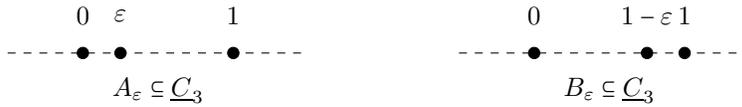
For general $t$, $\Phi^t_\varepsilon$ becomes singular as $\varepsilon$ approaches $0$. 
The goal of the present subsection is to understand the nature of the singularity of $\Phi_\varepsilon^t$ at $\varepsilon=0$ and to construct a suitable regularization.

First of all, we need an explicit expression for the gauge transformation $G^t$ relating $\nabla^t_k$ and $\nabla_k^\frac 1 2$. 
Recalling the second equality in~\eqref{eq-flat-gauge-AT-bound}, $G^t$ is an $\mathsf{SAut}_k$-valued function on $\underline C_k$, whose explicit shape is 
\[
G^t=\mathcal P\exp\!\left(\int_{\frac 1 2}^ta^s_kds\right)=1+\int_{\frac 1 2}^t a^s_kds+\int_{\frac 1 2}^t\left(\int_\frac 1 2 ^{t_1}a^{s_2}_kds_2\right)a^{s_1}_kds_1+\cdots
\]
where the notation $\mathcal P\exp$ refers to the path-ordered exponential, which is given by a sum of iterated integrals as above.
Observe that the iterated integrals exist because of the scaling property of $a^t_k$ with respect of $t$ proved in Lemma~\ref{l-ascaling}.

Consequently, we may write
\begin{equation}\label{eq-Phiepsgauge}
\Phi^t_\varepsilon =G^t(B_\varepsilon)\ \Phi^{\frac 1 2}_\varepsilon\ G^t(A_\varepsilon)^{-1},
\end{equation}
where $G^t(B_\varepsilon)$, $G^t(A_\varepsilon)$ denote the values of $G^t$ at the points $B_\varepsilon$, $A_\varepsilon$ of $C_3$ respectively. 
Since the $\sder_3$-valued flat connection $\nabla^{\frac 1 2}_3=\nabla_\mathrm{AT}$ on $C_3$ has coefficients in PA-forms, the limit of $\Phi^{\frac 1 2}_\varepsilon$ as $\epsilon$ approaches $0$ exists and coincides with the AT associator $\Phi_\mathrm{AT}$. 

We recall the following result from~\cite{AT-1}*{Conjecture 1} and~\cite{SW}*{Proposition 4 and Section 5}.
\begin{Prop}[\cite{SW}]\label{p-PhiATDrinfeld}
The AT associator $\Phi_\mathrm{AT}$ is a Drinfel{\cprime}d associator, i.~e.\ it takes values in $\mathrm T_3$.
\end{Prop}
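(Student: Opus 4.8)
The plan is to realize $\Phi_\mathrm{AT}$ as the regularized monodromy of a flat connection on configuration spaces that is compatible with the operadic insertion of marked points, and then to read off the three defining properties of a Drinfel{\cprime}d associator --- group-likeness together with membership in the pro-unipotent group $\mathsf T_3$, the pentagon relation in $\mathsf T_4$, and the two hexagon relations in $\mathsf T_3$ --- from the topology of the compactified configuration spaces $\overline C_3$ and $\overline C_4$. The two structural inputs are already at hand: by Proposition~\ref{p-flat-gauge-AT-bound} (taken at $t=\frac 1 2$) the connection $\nabla^{\frac 1 2}_k=\nabla_{\mathrm{AT},k}$ is flat for every $k$, and by the Corollary following Lemma~\ref{l-conv-fib} it is an $\alg t_k$-connection, not merely an $\sder_k$-connection.

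The membership $\Phi_\mathrm{AT}\in\mathsf T_3$ is then almost immediate. Parallel transport of a connection valued in the pro-nilpotent Lie algebra $\alg t_3$ produces group-like elements of $\mathsf T_3$, and the regularization in~\eqref{eq-Phiepsgauge} does not leave this group, since by Lemma~\ref{l-ascaling} and Corollary~\ref{c-a2} the counterterm $a^{\frac 1 2}_3$ is again $\alg t_3$-valued. Because $\nabla_{\mathrm{AT},3}$ descends to $C_3$ with coefficients in PA-forms (Subsection~\ref{sss-8-1-2}) and $\overline C_3$ is compact, the limit $\lim_{\varepsilon\to 0}\Phi^{\frac 1 2}_\varepsilon=\Phi_\mathrm{AT}$ exists and lies in $\mathsf T_3$.

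For the pentagon I would work with $\nabla_{\mathrm{AT},4}$ restricted to the locus of real configurations, whose Fulton--MacPherson compactification is Stasheff's associahedron $K_4$, a pentagon. The key technical step is an analogue of Theorem~\ref{thm:factoring} for the AT weight forms: at a codimension-one boundary stratum of $\overline C_4$ where a cluster of points collides, $\omega^{\frac 1 2}_{\mathrm{AT},4}$ restricts to the sum of the AT connection of the collapsed configuration and the AT connection internal to the cluster, compatibly with the operad structure of $\alg t$ recalled in Subsection~\ref{ss-7-1}; this is proved by the same regularized Stokes argument used throughout Section~\ref{s-4}. Granting this, flatness forces the monodromy of $\nabla_{\mathrm{AT},4}$ around the boundary circle $\p K_4$ to be trivial, and expressing the five edge-monodromies through $\Phi_\mathrm{AT}$ via the factorization yields exactly the pentagon identity in $\mathsf T_4$. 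The two hexagons come out the same way on $C_3\cong S^1\times(\bbC\smallsetminus\{0,1\})$: the connection descended to $\bbC\smallsetminus\{0,1\}$ has logarithmic poles at $0,1,\infty$ with residues proportional to $t_{12}$, $t_{23}$, $t_{13}$ --- visible from Proposition~\ref{p-AT-3} together with the degree-zero part $\sum_{i<j}\theta^{\frac 1 2}_{ij}\,t_{ij}$ --- so the braiding exponentials appear as the monodromies around the punctures, and combining them with the boundary factorization at the three collision strata of $\overline C_3$ gives the two hexagon relations. The remaining normalization condition, vanishing of the linear term of $\log\Phi_\mathrm{AT}$, follows from Proposition~\ref{p-AT-3}, since any graph in $\nabla_{\mathrm{AT},3}$ with an internal vertex contributes only in Lie-degrees $\geq 2$ of $\alg t_3$.

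I expect the pentagon to be the main obstacle --- not for algebraic reasons but for analytic ones: one must set up a regularized parallel transport on a real slice of $\overline C_4$, establish the boundary factorization of $\omega^{\frac 1 2}_{\mathrm{AT},4}$ with the correct matching (and signs) to the structure maps of $\alg t$, and check that the regularization contributes nothing anomalous at the codimension-two corners of $K_4$, where two independent collisions compete. Membership in $\mathsf T_3$ and the hexagons should be comparatively routine once the local computations of Subsection~\ref{sss-8-1-2} and Proposition~\ref{p-AT-3} are in place. A posteriori, once Theorem~\ref{thm:etingof} is available, the statement also follows from the gauge-equivalence of $\nabla_{\mathrm{AT},k}$ with $\nabla_{\mathrm{KZ},k}$ established there, but invoking it at this point of the paper would be circular.
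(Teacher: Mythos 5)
First, note that the paper does not prove this Proposition at all: it is recalled from \cite{AT-1} (where $\Phi_\mathrm{AT}$ was constructed and conjectured to be an associator) and \cite{SW} (where the conjecture was settled), so there is no internal argument to compare yours against; any proof here must in effect reproduce the content of those references. Your overall strategy (flatness of $\nabla^{\frac 1 2}_k$, its $\alg t_k$-valuedness, and a boundary factorization of the AT forms compatible with the operadic maps of $\alg t$, fed into a monodromy argument on $\overline C_3$ and $\overline C_4$) is the natural one and is close in spirit to the literature, and your closing remark is right: deducing the statement from Theorem \ref{thm:etingof} would be circular, since Proposition \ref{p-PhiATDrinfeld} enters Corollary \ref{c-drinfeld} and hence that theorem.

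However, as it stands the proposal has genuine gaps, and you have misplaced the main difficulty. The factorization of $\omega^{\frac 1 2}_{\mathrm{AT},4}$ at the boundary strata of $\overline C_4$, together with the corner analysis on the associahedron, is not available from Section \ref{s-4}: Theorem \ref{thm:factoring} concerns the Kontsevich weight forms on $\overline C^+_{n,m}$, not the fiber integrals \eqref{eq-weight-AT-1} on $\overline C^f_{k,n}$, and Propositions \ref{p-flat-gauge-AT-bound} and \ref{prop:8-10} give flatness and gauge equivalence but not the restriction of $\omega_{\mathrm{AT},4}$ to boundary strata; so your ``key technical step'' is precisely what has to be proven (this pentagon part is essentially what \cite{AT-1} does). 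More seriously, the hexagons are not ``comparatively routine'': they were exactly the open part of the Alekseev--Torossian conjecture and are what \cite{SW} establishes, by an argument of operadic/homotopy-theoretic nature rather than the residue computation you sketch. Your identification of the regularized half-monodromies with $e^{t_{ij}/2}$ requires uniform control of the higher-degree terms of the connection near the collision strata (estimates of the type of Theorem \ref{thm:logsing}, which the paper develops for a different purpose and only along a specific real path), an $\mathfrak S_3$-equivariance statement identifying the transports along $(1,\infty)$ and $(-\infty,0)$ with permuted copies of $\Phi_\mathrm{AT}$, and a treatment of the central element $t_{12}+t_{13}+t_{23}$ at the stratum at infinity; none of this is supplied. (Minor point: at $t=\frac 1 2$ no regularization is needed for $\mathsf T_3$-membership, since, as noted in Subsection \ref{ss-8-2}, $\lim_{\varepsilon\to 0}\Phi^{\frac 1 2}_\varepsilon$ exists outright because the connection has PA-form coefficients on $\overline C_3$; group-likeness then follows from the $\alg t_3$-valuedness, as you say.)
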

Therefore, to understand the singular behavior of $\Phi^t_\varepsilon$, it suffices to understand the singular behavior of $G^t(A_\varepsilon)$ and $G^t(B_\epsilon)$. 
This is analyzed in detail in the following Proposition, whose proof is postponed to Subsection~\ref{ss-8-3}.
\begin{Prop}\label{p-asingular}
The limits 
\begin{align*}
a^{\frac 1 2}_A 
&:= 
\lim_{\varepsilon\to 0}\ \left(a^{\frac 1 2}_3(A_\varepsilon) +\frac{1}{\pi i}\log(\varepsilon)\ t_{12}\right)
 & a^{\frac 1 2}_B 
 &:=\lim_{\varepsilon\to 0}\ \left(a^{\frac 1 2}_3(B_\varepsilon) +\frac{1}{\pi i}\log(\varepsilon)\ t_{23}\right)
\end{align*}
exist and are equal to 
\begin{align*}
a^{\frac 1 2}_A 
&= 
\left(\tau^{\frac 1 2}\right)^{1,2} + \left(\tau^{\frac 1 2}\right)^{12,3}
 & a^{\frac 1 2}_B 
 &= 
 \left(\tau^{\frac 1 2}\right)^{2,3} + \left(\tau^{\frac 1 2}\right)^{1,23},
\end{align*}
where $\tau^\frac{1}{2}$ is as in Corollary~\ref{c-a2}, and the simplicial and coproduct maps on $\sder_2$ are described in~\eqref{eq-cob-tder-1} and~\eqref{eq-cob-tder-2}, along with their combinatorial-graphical counterparts.
\end{Prop}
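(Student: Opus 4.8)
The idea is to extract the singular behaviour of $a^{\frac 1 2}_3$ as $A_\varepsilon$ (resp.\ $B_\varepsilon$) degenerates from a Fulton--MacPherson boundary analysis of the fiberwise integrals \eqref{eq-weight-AT-0}, in the spirit of Section~\ref{s-4} and Appendices~\ref{app:stokesbundle}--\ref{app:8-10proof}. Recall that $a^{\frac 1 2}_3=\sum_\Gamma \widetilde\vartheta^{\frac 1 2}_\Gamma\,\Gamma$ with $\widetilde\vartheta^{\frac 1 2}_\Gamma(A_\varepsilon)=\int_{F_\varepsilon}\widetilde\theta^{\frac 1 2}_\Gamma$, where $n$ is the number of internal vertices of $\Gamma$ and $F_\varepsilon$ is the compactified fiber of $\pi_{3,n}$ over $A_\varepsilon$; after fixing $z_1=0$, $z_2=\varepsilon$, $z_3=1$ this is a compactification of $\Conf_n(\bbC\smallsetminus\{0,\varepsilon,1\})$. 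As $\varepsilon\to 0$ the punctures $0$ and $\varepsilon$ collide, so $A_\varepsilon$ tends to a point of the boundary face of $\overline{\underline C}_3$ along which the marked points $1$ and $2$ collapse, the remaining point $z_3=1$ staying at distance $1$ from the collision.

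The key step is a factorization of $\widetilde\vartheta^{\frac 1 2}_\Gamma(A_\varepsilon)$ in this limit. Passing to the regularized forms of \cite{ARTW} and using the stratification~\eqref{eq-strat-2} of the compactified fibers, one shows that for each admissible graph $\Gamma$ the non-vanishing contributions come from splittings of $\Gamma$ into a subgraph $\Gamma'$ supported on $\{1,2\}$ together with a set $S$ of internal vertices that collapse with them, and a quotient graph $\Gamma''$ supported on the merged vertex ``$12$'' and on $3$ together with the remaining internal vertices, the two sharing only the merge vertex; here the Kontsevich vanishing Lemma~\cite{K}*{Lemma 6.6} and the involution argument of~\cite{K3}*{Lemma 2.2} discard all the degenerate terms (excess edges between the collapsing cluster and its complement, or a bivalent internal vertex). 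On such a stratum the integral factorizes into the weight of $\Gamma'$ in $a^{\frac 1 2}_2$ evaluated at $(z_1,z_2)=(0,\varepsilon)$, with vertex $3$ a $0$-valent spectator, times the weight of $\Gamma''$ in $a^{\frac 1 2}_2$ evaluated at $(z_{12},z_3)=(0,1)$, with ``$12$'' one of the two external points.

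Summing these contributions and using Corollary~\ref{c-a2} twice: the cluster factors reproduce $a^{\frac 1 2}_2\big|_{(0,\varepsilon)}=-\frac 1{\pi i}\log\varepsilon\,t_{12}+\nu^{\frac 1 2}_2$, so the total cluster contribution to $a^{\frac 1 2}_3(A_\varepsilon)$ is $-\frac 1{\pi i}\log\varepsilon\,t_{12}+(\nu^{\frac 1 2}_2)^{1,2}$ (vertex $3$ being a spectator), while the quotient factors reproduce $a^{\frac 1 2}_2\big|_{(0,1)}=\nu^{\frac 1 2}_2$ with no divergence, contributing $(\nu^{\frac 1 2}_2)^{12,3}$ (the merge vertex carrying $X_1+X_2$). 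By Proposition~\ref{p-nuingrt} one has $\nu^{\frac 1 2}_2=\phi(x^{\frac 1 2})=\tau^{\frac 1 2}$, and matching the graphical spectator/merging operations with the simplicial and coproduct maps \eqref{eq-cob-tder-1}--\eqref{eq-cob-tder-2} gives $a^{\frac 1 2}_A=(\tau^{\frac 1 2})^{1,2}+(\tau^{\frac 1 2})^{12,3}$. The argument for $B_\varepsilon$ is verbatim the same with the colliding pair $\{2,3\}$ in place of $\{1,2\}$ and $z_1=0$ in the role of the surviving point, yielding $a^{\frac 1 2}_B=(\tau^{\frac 1 2})^{2,3}+(\tau^{\frac 1 2})^{1,23}$.

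The main obstacle is the factorization in the second paragraph: one must control both the logarithmic singularities of the forms $\widetilde\theta^{\frac 1 2}_\Gamma$ and the non-compactness of the fibers (internal points escaping to infinity in the picture rescaled at the collision) uniformly in $\varepsilon$, before the Kontsevich vanishing Lemma can be invoked. This is handled exactly as the analogous boundary estimates in~\cite{ARTW} and Appendices~\ref{app:stokesbundle}--\ref{app:8-10proof}, i.e.\ by passing to the regularized forms and combining the Fulton--MacPherson stratification with the vanishing Lemma; once that is in place, the identification of the two surviving pieces with $(\tau^{\frac 1 2})^{1,2}$ and $(\tau^{\frac 1 2})^{12,3}$ (resp.\ their $B_\varepsilon$-analogues) is routine.
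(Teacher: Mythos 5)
Your overall route is the same as the paper's: degenerate $A_\varepsilon$, analyze the boundary of the compactified fiber, and identify the surviving pieces with $\sder_2$-weights via Corollary~\ref{c-a2} and Proposition~\ref{p-nuingrt}. However, the central step is not correct as stated, and the missing point is exactly where the work lies. The contribution of a graph $\Gamma$ at a boundary stratum is \emph{not} a product of two $a^{\frac 1 2}_2$-weights: the integrand $\widetilde\theta^{\frac 1 2}_\Gamma$ carries a single logarithm insertion, so its restriction to a product stratum splits Leibniz-wise as $\widetilde\theta^{\frac 1 2}_{\Gamma'}\wedge\theta^{\frac 1 2}_{\Gamma''}+\theta^{\frac 1 2}_{\Gamma'}\wedge\widetilde\theta^{\frac 1 2}_{\Gamma''}$ (this is \eqref{equ:termstobeint} in the paper), not as a product of two log-bearing forms. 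More importantly, you never argue why the limit is just the \emph{sum} of two terms: one must show that every stratum on which both the collapsing piece $\Gamma'$ and the quotient piece $\Gamma''$ carry edges contributes zero. In the paper this is a degree count on each factor: the factor carrying the log must be of top degree on its configuration space while the other factor must be edgeless, which forces either $B=[n]$ with the third external vertex of valence $0$ (giving $(\tau^{\frac 1 2})^{1,2}$) or $B=\emptyset$ (giving $(\tau^{\frac 1 2})^{12,3}$). Taken at face value, your product formula with both factors allowed to be nontrivial would generate extra terms and would not reduce to the stated answer; note also that a join of two nontrivial $\sder_2$-graphs has $2n+2$ edges and is not even an $\sder_3$-graph, another indication that the product picture is off. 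Your parenthetical appeal to ``excess edges'' and bivalent vertices gestures at vanishing arguments but does not identify the actual dichotomy.

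A second, smaller issue is the interchange of limit and fiber integration, which you defer to ``ARTW-type estimates''. The relevant subtlety is that $\widetilde\theta^{\frac 1 2}_\Gamma$ genuinely fails to extend to the compactified bundle precisely for the graphs whose third external vertex has valence $0$ --- i.e.\ the graphs producing $(\tau^{\frac 1 2})^{1,2}$, so this is not a corner case. The paper fixes this by subtracting the explicit counterterms \eqref{equ:hattheta1}--\eqref{equ:hattheta2}, whose fiber integrals vanish by the bivalent-vertex/vanishing-lemma argument, so that the corrected forms $\hat\theta_\Gamma$ extend (Lemma~\ref{lem:extendstoX}) and the limit can be computed by evaluating on the boundary fiber. (Alternatively one may observe that for these spectator graphs the weight in $a^{\frac 1 2}_3(A_\varepsilon)$ equals the $a^{\frac 1 2}_2$-weight at $(0,\varepsilon)$ exactly, with no limit needed.) Some such device must be made explicit; a blanket appeal to regularization does not by itself justify evaluating at the boundary.
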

We will also need the following result, stating that the limit is approached sufficiently fast.
\begin{Prop}\label{prop:fastdecay}
Consider any linear map $l : \sder_3\to \bbC$ of degree $n\geq 2$, i.~e., which is 0 on all graphs except possibly those with $n-1$ internal vertices. Then there are constants $C$ and $N$ such that for all $\varepsilon$ sufficiently small
\begin{align}\label{equ:fastdecay}
|l(a^{\frac 1 2}_3(A_\varepsilon) -  a^{\frac 1 2}_A) |
&\leq C \varepsilon |\log\varepsilon|^N
 & 
|l(a^{\frac 1 2}_3(B_\varepsilon) - a^{\frac 1 2}_B)|
 &\leq  C \varepsilon |\log\varepsilon|^N
\end{align}
\end{Prop}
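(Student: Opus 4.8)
The plan is to show that each coefficient function of $a^{\frac 1 2}_3=\sum_\Gamma\widetilde\vartheta^{\frac 1 2}_\Gamma\,\Gamma$ admits, along the path $\varepsilon\mapsto A_\varepsilon=[(0,\varepsilon,1)]$, a polyhomogeneous expansion
\[
\widetilde\vartheta^{\frac 1 2}_\Gamma(A_\varepsilon)=P_\Gamma(\log\varepsilon)+O\bigl(\varepsilon\,|\log\varepsilon|^{N}\bigr)\qquad(\varepsilon\to 0),
\]
where $P_\Gamma$ is a polynomial of degree at most the number of internal vertices of $\Gamma$ and $N$ depends only on that number. Granting this, Proposition~\ref{p-asingular} (the existence of $a^{\frac 1 2}_A=\lim_{\varepsilon\to 0}(a^{\frac 1 2}_3(A_\varepsilon)+\tfrac{1}{\pi i}\log\varepsilon\,t_{12})$) forces the identity of $\sder_3$-valued polynomials $\sum_\Gamma P_\Gamma(\log\varepsilon)\,\Gamma=a^{\frac 1 2}_A-\tfrac{1}{\pi i}\log\varepsilon\,t_{12}$; applying the degree-$n$ functional $l$, which annihilates the edge graph $t_{12}$ (it has no internal vertices, and $n\geq 2$), gives $l(a^{\frac 1 2}_3(A_\varepsilon))=l(a^{\frac 1 2}_A)+O(\varepsilon\,|\log\varepsilon|^{N})$, which is the first estimate in \eqref{equ:fastdecay}.

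To obtain the expansion I would analyse the fiber integral $\widetilde\vartheta^{\frac 1 2}_\Gamma=\int_{\underline C_{3+n}/\underline C_3}\widetilde\theta^{\frac 1 2}_\Gamma$ as the base point runs into the boundary face of the Kontsevich compactification $\overline{\underline C}_3$ of $\underline C_3$ on which $\{z_1,z_2\}$ collapses; the normal coordinate of this face is comparable to $\varepsilon=|z_1-z_2|$ along $A_\varepsilon$. Near this face the fiber, i.e.\ the configuration space of the $n$ internal points, degenerates in the standard way, by subsets of internal points joining the collapsing cluster, which are precisely the Kontsevich boundary faces; on the fiberwise compactification $\overline C^f$ of Subsection~\ref{ss-8-0}, in the local coordinates of Section~\ref{sec:3-4}, the splitting and factorization results for the singular forms $\widetilde\theta^{\frac 1 2}_\Gamma$ (Propositions~\ref{prop:splitting} and~\ref{prop:regularizable}, Theorem~\ref{thm:factoring}, and their fiberwise analogues used in Proposition~\ref{prop:8-10} and its appendix proof, following \cite{ARTW}) show that the integrand has, near every face, only the mild singularities $\tfrac{dr}{r}$ and $\log r$, with $\log r$ occurring to first power. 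Integrating out the fiber directions then produces an expansion of $\widetilde\vartheta^{\frac 1 2}_\Gamma(A_\varepsilon)$ with exponents in $\{0,1,2,\dots\}$ in the normal variable $\simeq\varepsilon$ and boundedly many powers of $\log\varepsilon$ (the nested, $\varepsilon$-cut-off log-divergent fiber integrals are what generate the higher powers, whence the bound by the number $n$ of internal vertices); the coefficient of $\varepsilon^0$ is $P_\Gamma(\log\varepsilon)$ and the remainder is $O(\varepsilon\,|\log\varepsilon|^{N})$. Since all forms $d\arg(z-w)$ are regular and $\log|z-w|$ enters $\widetilde\theta^{\frac 1 2}_\Gamma$ to first order only, one gets the crude bound $N\le n$.

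I expect the main obstacle to be making this expansion, and especially the remainder estimate, uniform in $\varepsilon$: the fibers are non-compact (compactified by manifolds with corners), so the $r$-expansion of the integrand must be integrated term by term over the whole fiber, and one must bound $\int_{\text{fiber}}O(r\,|\log r|^{N})$ independently of $\varepsilon$ while simultaneously controlling the contributions of all the other boundary faces of the fiber. For this I would reuse the machinery that already establishes convergence and smoothness of the weights (Lemmas~\ref{l-conv-fib} and~\ref{lem:a12exists}, Proposition~\ref{prop:iotaregular}, the regularized Stokes' Theorem~\ref{thm:regstokes}, and Kontsevich's vanishing lemma \cite{K}*{Lemma 6.6}): near every face the integrand is regularizable with the same mild singularities, so a Kontsevich-type vanishing argument kills the potentially unbounded boundary contributions and leaves an integral bounded uniformly in $\varepsilon$. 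The estimate for $B_\varepsilon$ is proved identically after exchanging the r\^oles of $\{1,2\}$ and $\{2,3\}$, and of $t_{12}$ and $t_{23}$, by the symmetry of the construction.
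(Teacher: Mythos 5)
Your reduction step is fine: since $l$ is supported in a single degree, only finitely many graphs contribute, and if each coefficient $\widetilde\vartheta^{\frac 1 2}_\Gamma(A_\varepsilon)$ had an expansion $P_\Gamma(\log\varepsilon)+O(\varepsilon|\log\varepsilon|^N)$, then Proposition~\ref{p-asingular} would force $P_\Gamma$ to be constant for every $\Gamma$ with an internal vertex and \eqref{equ:fastdecay} would follow. The genuine gap is that the expansion itself --- specifically the remainder $O(\varepsilon|\log\varepsilon|^N)$ --- is exactly the content of the Proposition, and none of the tools you cite delivers it. Lemmas~\ref{lem:a12exists} and~\ref{lem:extendstoX} and the splitting results (Propositions~\ref{prop:splitting}, \ref{prop:regularizable}, Theorem~\ref{thm:factoring}) show that the relevant integrands extend \emph{continuously} (after removing the $dr/r$ and $\log r$ pieces) to the compactified total space; fiber integration of such a form gives continuity of $\widetilde\vartheta^{\frac 1 2}_\Gamma$ up to the face $\varepsilon=0$, i.e.\ existence of the limit --- which is Proposition~\ref{p-asingular} again --- but only an $o(1)$ remainder, not a rate. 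To get $O(\varepsilon|\log\varepsilon|^N)$ you need first-order information in the base-normal direction: either a genuine polyhomogeneous pushforward argument (control of the integrand and of the degenerating fibration one order beyond continuity, uniformly over all corners of the fiber), or a bound on the $\varepsilon$-derivative of the fiber integral. The appeal to a ``Kontsevich-type vanishing argument'' cannot fill this: Kontsevich's lemma kills specific integrals exactly (bivalent vertices, degree mismatch); it is not a quantitative remainder estimate, and the dangerous contributions here --- internal points at intermediate scales between $\varepsilon$ and $1$ --- do not vanish, they are precisely what produces the $\varepsilon|\log\varepsilon|^N$ corrections.

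For comparison, the paper avoids expanding the fiber integrals altogether. In Section~\ref{sec:prooffastdecay} it restricts the gauge equation $\p_t\nabla^t_3=\nabla^t_3 a^t_3$ to the interval, obtaining an ODE in $z$ for the degree-$n$ part $F^t_n$ of $a^t_3$, with right-hand side built from the connection coefficients $f^t_j$ and lower $F^t_{n-j}$. That right-hand side is $O(|\log z|^N)$ thanks to Theorem~\ref{thm:logsing} (logarithmic singularities of the AT connection coefficients, a substantial result proved in Appendix~\ref{app:ATSing}), an induction on $n$, and the crucial algebraic cancellation $[t_{12},F^t_{n-1}(0)]=0$ read off from Proposition~\ref{p-asingular}, which neutralizes the $1/z$ pole of $f^t_1$; integrating in $z$ then gives $F^t_n(z)-F^t_n(0)=O(z|\log z|^N)$. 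If you want to keep your direct-expansion route, you would essentially have to prove a pushforward/polyhomogeneity theorem for these fiber integrals near the collapsing face, which is considerably more work than the inductive ODE argument and is not supplied by the regularity statements you invoke.
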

Taking into account Lemma~\ref{l-ascaling}, Corollary~\ref{c-a2} and Propositions~\ref{p-nuingrt},~\ref{p-asingular},~\ref{prop:fastdecay} and inserting in~\eqref{eq-Phiepsgauge}, we obtain the following Corollary.
\begin{Cor}\label{c-assoc-asymp}
As $\varepsilon$ approaches $0$, $\Phi_\varepsilon^t$ has an expansion of the form
\begin{multline}\label{eq-assoc-asymp}
\Phi_\varepsilon^t
=
\mathcal P\exp\left(\int_{\frac 1 2}^t\left(-\frac{1}{i\pi}\log(\varepsilon)\ t_{23}+\left(\tau^s\right)^{2,3}+\left(\tau^s\right)^{1,23}\right)ds\right)\ \Phi_{\varepsilon}^{\frac 1 2}\ \mathcal P\exp\left(-\int_{\frac 1 2}^t\left(-\frac{1}{i\pi}\log(\varepsilon)\ t_{12}+\left(\tau^s\right)^{1,2}+\left(\tau^s\right)^{12,3}\right)ds\right)
\\+ O(\varepsilon|\log\varepsilon|^\bullet).
\end{multline}
Here the notation $O(\varepsilon|\log\varepsilon|^\bullet)$ shall indicate terms whose coefficients drop off faster than $(\text{const})\cdot\varepsilon|\log\varepsilon|^N$ in each degree, where $N$ may depend on the degree.
\end{Cor}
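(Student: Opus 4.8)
The plan is to substitute the known boundary behavior of the functions $a^{\frac 1 2}_3$ into the gauge-transformation identity \eqref{eq-Phiepsgauge} and then propagate the resulting error estimate through the path-ordered exponentials and the triple product; essentially all of the genuine analytic content has already been isolated in Propositions~\ref{p-asingular} and~\ref{prop:fastdecay}, so what remains is mostly bookkeeping. First I would use Lemma~\ref{l-ascaling}, which gives $a^s_3=(4s(1-s))^N a^{\frac 1 2}_3$ with $N$ the operator counting internal vertices, so that $G^t(A_\varepsilon)=\mathcal P\exp\bigl(\int_{\frac 1 2}^t (4s(1-s))^N a^{\frac 1 2}_3(A_\varepsilon)\,ds\bigr)$ and likewise at $B_\varepsilon$; everything is then expressed through the single $\varepsilon$-dependent $\sder_3$-valued function $a^{\frac 1 2}_3$.

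Next, combining Corollary~\ref{c-a2}, Proposition~\ref{p-nuingrt} (which identifies the constant $\tau^{\frac 1 2}$ occurring below with $\phi(x^{\frac 1 2})$), and Propositions~\ref{p-asingular} and~\ref{prop:fastdecay}, one has, in each fixed degree of $\sder_3$,
\begin{align*}
a^{\frac 1 2}_3(A_\varepsilon)&=-\tfrac 1 {\pi i}\log(\varepsilon)\,t_{12}+\bigl(\tau^{\frac 1 2}\bigr)^{1,2}+\bigl(\tau^{\frac 1 2}\bigr)^{12,3}+O(\varepsilon|\log\varepsilon|^\bullet),\\
a^{\frac 1 2}_3(B_\varepsilon)&=-\tfrac 1 {\pi i}\log(\varepsilon)\,t_{23}+\bigl(\tau^{\frac 1 2}\bigr)^{2,3}+\bigl(\tau^{\frac 1 2}\bigr)^{1,23}+O(\varepsilon|\log\varepsilon|^\bullet).
\end{align*}
Applying $(4s(1-s))^N$ term by term: the generators $t_{ij}$ carry no internal vertex and are unchanged, while the simplicial and coproduct maps \eqref{eq-cob-tder-1}--\eqref{eq-cob-tder-2} preserve the number of internal vertices, so $(4s(1-s))^N(\tau^{\frac 1 2})^{1,2}=\bigl((4s(1-s))^N\tau^{\frac 1 2}\bigr)^{1,2}=(\tau^s)^{1,2}$, where the last equality is Lemma~\ref{l-ascaling} for $k=2$ in the form $\tau^s=(4s(1-s))^N\tau^{\frac 1 2}$, and similarly for the remaining terms. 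Since in any fixed degree $N$ is a fixed integer and $4s(1-s)$ is a bounded polynomial on the compact interval with endpoints $\tfrac 1 2$ and $t$, the expansion $a^s_3(A_\varepsilon)=-\tfrac 1 {\pi i}\log(\varepsilon)\,t_{12}+(\tau^s)^{1,2}+(\tau^s)^{12,3}+O(\varepsilon|\log\varepsilon|^\bullet)$ holds uniformly in $s$ on that interval, and symmetrically at $B_\varepsilon$.

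It remains to feed these expansions into $G^t(B_\varepsilon)\,\Phi^{\frac 1 2}_\varepsilon\,G^t(A_\varepsilon)^{-1}$. The path-ordered exponentials $G^t(A_\varepsilon),G^t(B_\varepsilon)$ are well-defined --- this is the point recorded after \eqref{eq-Phiepsgauge}, which rests again on the scaling of $a^s_3$ from Lemma~\ref{l-ascaling} --- and in each graded piece of the pro-unipotent group $\SAut_3$ they depend polynomially, hence locally Lipschitz, on the integrand; so replacing $a^s_3(A_\varepsilon)$ and $a^s_3(B_\varepsilon)$ by their leading parts perturbs $G^t(A_\varepsilon)$ and $G^t(B_\varepsilon)$ only by $O(\varepsilon|\log\varepsilon|^\bullet)$. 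Since $\Phi^{\frac 1 2}_\varepsilon$ converges, as $\varepsilon\to0$, to the AT associator $\Phi_\mathrm{AT}$ (a Drinfel{\cprime}d associator by Proposition~\ref{p-PhiATDrinfeld}), it is in particular bounded, so multiplying the three factors and collecting terms yields \eqref{eq-assoc-asymp}. The only point that genuinely requires care --- the ``hard'' part of an otherwise routine argument --- is this last step: one must check that the symbol $O(\varepsilon|\log\varepsilon|^\bullet)$ is stable under the operations in play, i.e.\ that, degree by degree, a term of size $\varepsilon|\log\varepsilon|^N$ stays of that form after being multiplied by uniformly bounded factors, integrated over a fixed compact interval, iterated finitely many times, and inverted inside $\SAut_3$. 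This is straightforward once one fixes a degree and works with the finitely many iterated integrals contributing there.
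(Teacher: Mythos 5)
Your proposal is correct and follows essentially the same route as the paper: rewrite $\Phi^t_\varepsilon$ via \eqref{eq-Phiepsgauge} as a product of path-ordered exponentials around $\Phi^{\frac 1 2}_\varepsilon$, insert the asymptotics of $a^{\frac 1 2}_3(A_\varepsilon)$, $a^{\frac 1 2}_3(B_\varepsilon)$ from Propositions~\ref{p-asingular} and~\ref{prop:fastdecay}, and use Lemma~\ref{l-ascaling} together with the fact that the simplicial and coproduct maps do not affect the $t$-dependence to pass from $\tau^{\frac 1 2}$ to $\tau^s$. Your extra degree-by-degree bookkeeping of how $O(\varepsilon|\log\varepsilon|^\bullet)$ propagates through the iterated integrals and the triple product is exactly the step the paper leaves implicit, and it is handled correctly.
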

\begin{proof}
First of all, let us re-write the right-hand side of~\eqref{eq-Phiepsgauge} in the form
\[
\Phi^t_\varepsilon=\mathcal P\exp\!\left(\int_{\frac 1 2}^t a_3^s(B_\varepsilon)ds\right)\ \Phi^\frac 1 2_\varepsilon\ \mathcal P\exp\!\left(-\int_{\frac 1 2}^t a_3^s(A_\varepsilon)ds\right).
\]

Propositions~\ref{p-asingular} and \ref{prop:fastdecay} imply 
\[
a^\frac 1 2_3(A_\varepsilon)=-\frac{1}{i\pi} \log(\varepsilon)\ t_{12}+\left(\tau^\frac 1 2\right)^{1,2}+\left(\tau^\frac 1 2\right)^{12,3}+\mathcal O(\varepsilon|\log\varepsilon|^\bullet),
\quad 
a^\frac 1 2_3(B_\varepsilon)=-\frac{1}{i\pi} \log(\varepsilon)\ t_{23}+\left(\tau^\frac 1 2\right)^{2,3}+\left(\tau^\frac 1 2\right)^{1,23}+\mathcal O(\varepsilon|\log\varepsilon|^\bullet).
\]

In virtue of Lemma~\ref{l-ascaling}, the $\varepsilon$-dependence in both path-ordered exponentials can be traced back to the $\varepsilon$-dependence in $a_3^\frac 1 2(A_\varepsilon)$ and $a_3^\frac 1 2(B_\varepsilon)$: observe that the simplicial and coproduct maps~\eqref{eq-cob-tder-1} and~\eqref{eq-cob-tder-2} affect only the graph part, hence the $t$-dependence remains unaffected by them, whence
\[
a^s_3(A_\varepsilon)=-\frac{1}{i\pi} \log(\varepsilon)\ t_{12}+\left(\tau^s\right)^{1,2}+\left(\tau^s\right)^{12,3}+\mathcal O(\varepsilon|\log\varepsilon|^\bullet),
\quad
a^s_3(B_\varepsilon)=-\frac{1}{i\pi} \log(\varepsilon)\ t_{23}+\left(\tau^s\right)^{2,3}+\left(\tau^s\right)^{1,23}+\mathcal O(\varepsilon|\log\varepsilon|^\bullet).
\]

Therefore, we get
\[
\begin{aligned}
\mathcal P\exp\!\left(-\int_{\frac 1 2}^t a_3^s(A_\varepsilon)ds\right)&=\mathcal P\exp\!\left(\int_{\frac 1 2}^t \left(\frac{1}{i\pi}\log(\varepsilon)\ t_{12}-\left(\tau^s\right)^{1,2}-\left(\tau^s\right)^{12,3}+\mathcal O(\varepsilon|\log\varepsilon|^\bullet)\right)ds\right),\\
\mathcal P\exp\!\left(\int_{\frac 1 2}^t a_3^s(B_\varepsilon)ds\right)&=\mathcal P\exp\!\left(\int_{\frac 1 2}^t \left(-\frac{1}{i\pi}\log(\varepsilon)\ t_{23}+\left(\tau^s\right)^{2,3}+\left(\tau^s\right)^{1,23}+\mathcal O(\varepsilon|\log\varepsilon|^\bullet)\right)ds\right),
\end{aligned}
\]
whence 
\begin{multline*}
\Phi^t_\varepsilon=\mathcal P\exp\!\left(\int_{\frac 1 2}^t\left(-\frac{1}{i\pi}\log(\varepsilon)\ t_{23}+\left(\tau^s\right)^{2,3}+\left(\tau^s\right)^{1,23}\right) ds\right)\ \Phi^\frac 1 2_\varepsilon\ \mathcal P\exp\!\left(\int_{\frac 1 2}^t \left(\frac{1}{\pi i}\log(\varepsilon)\ t_{12}-\left(\nu^s_2\right)^{1,2}-\left(\tau^s\right)^{12,3}\right)ds\right)
\\
+\mathcal O(\varepsilon|\log\varepsilon|^\bullet).
\end{multline*}

The claim follows.
\end{proof}
In particular, Corollary~\ref{c-assoc-asymp} justifies the following definition.
\begin{Def}\label{d-reg-assoc}
The regularized associator $\Phi^t_\mathrm{reg}$ is defined as the limit
\[
\Phi^t_\mathrm{reg}
:=
 \lim_{\varepsilon\to 0}
\varepsilon^{\frac{2t-1}{2\pi i} t_{23}}
\Phi^t_\varepsilon
\varepsilon^{-\frac{2t-1}{2\pi i}t_{12}}
\]
or equivalently by formally setting $\varepsilon=\log(\varepsilon) =0$ in the expression~\eqref{eq-assoc-asymp} for $\Phi^t_\varepsilon$. 
\end{Def}
\begin{Rem}
The existence of the limit can be seen as follows. Note that $t_{12}$ and $\left(\tau^s\right)^{1,2}+\left(\tau^s\right)^{12,3}$ commute. hence we may write
\[
P\exp\left(-\int_{\frac 1 2}^t\left(-\frac{1}{i\pi}\log(\varepsilon)\ t_{12}+\left(\tau^s\right)^{1,2}+\left(\tau^s\right)^{12,3}\right)ds\right)
=
P\exp\left( -\int_{\frac 1 2}^t \left(  \left(\tau^s\right)^{1,2}+\left(\tau^s\right)^{12,3} \right)ds  \right)
\cdot \exp\left(-\frac{2t-1}{2\pi i}\log(\varepsilon)\ t_{12} \right)
\]
and similarly
\[
 \mathcal P\exp\left(\int_{\frac 1 2}^t\left(-\frac{1}{i\pi}\log(\varepsilon)\ t_{23}+\left(\tau^s\right)^{2,3}+\left(\tau^s\right)^{1,23}\right)ds\right)
 =
 \exp\left(\frac{2t-1}{2\pi i}\log(\varepsilon)\ t_{23} \right)\cdot
  \mathcal P\exp\left(\int_{\frac 1 2}^t\left(\left(\tau^s\right)^{2,3}+\left(\tau^s\right)^{1,23}\right)ds\right).
\]
Hence using the expansion \eqref{eq-assoc-asymp} the divergent terms cancel and we have to calculate
\begin{equation}\label{equ:Phiregcomp}
\begin{aligned}
 \Phi^t_\mathrm{reg}
 &=
 \lim_{\varepsilon\to 0}
 \mathcal P\exp\left(\int_{\frac 1 2}^t\left(\left(\tau^s\right)^{2,3}+\left(\tau^s\right)^{1,23}\right)ds\right)
 \Phi^{\frac 1 2}_\varepsilon
 \mathcal P\exp\left(-\int_{\frac 1 2}^t\left(\left(\tau^s\right)^{1,2}+\left(\tau^s\right)^{12,3}\right)ds\right)
 +O(\varepsilon|\log\varepsilon|^\bullet)
\\&=
 \mathcal P\exp\left(\int_{\frac 1 2}^t\left(\left(\tau^s\right)^{2,3}+\left(\tau^s\right)^{1,23}\right)ds\right)
 \Phi_{\rm AT}\
 \mathcal P\exp\left(-\int_{\frac 1 2}^t\left(\left(\tau^s\right)^{1,2}+\left(\tau^s\right)^{12,3}\right)ds\right)
\end{aligned}
\end{equation}
\end{Rem}

Recall now the computations of Subsubsection~\ref{sss-8-1-2}: for $t=0$, $t=1/2$ and $t=1$, $\nabla^t_3$ equals the KZ connection, the AT connection and the anti-KZ connection respectively.

The parallel transport with respect to the KZ connection on $\underline C_3$ along the path connecting the two configurations in $\underline C_3$ depicted in Figure~\ref{fig-monod} has been first computed (after regularization) in~\cite{Dr}*{Section 2} in order to construct an explicit example of Drinfel{\cprime}d associator, the KZ associator; see also Appendix~\ref{app:KZ} for a brief review of the KZ associator and its construction.
Similarly, the anti-KZ associator is defined as the parallel transport of the anti-KZ connection.

Therefore, we obtain 
\begin{align*}
\Phi^0_\mathrm{reg} 
&= 
\Phi_\mathrm{KZ},\ &\ \Phi^\frac 1 2_\mathrm{reg} &= \Phi_\mathrm{AT},\ &\ \Phi^{1}_\mathrm{reg}&=\Phi_{\overline{\mathrm{KZ}}}.
\end{align*}
Moreover, \eqref{equ:Phiregcomp} may be written more concisely as
\begin{equation}
\label{equ:Phitexpl}
\Phi^t_\mathrm{reg}=\mathcal P\exp\!\left(\int_{\frac 1 2}^t\tau^sds\right)\cdot \Phi_\mathrm{AT}
\end{equation}
where $\cdot$ denotes the $\GRT_1$-action on Drinfel{\cprime}d associators defined in Appendix~\ref{app:assoc}. Combining the previous expression with Propositions~\ref{p-nuingrt} and~\ref{p-PhiATDrinfeld}, we obtain the following result. 
\begin{Cor}\label{c-drinfeld}
For each $t$, $\Phi^t_\mathrm{reg}$ is a Drinfel{\cprime}d associator.
\end{Cor}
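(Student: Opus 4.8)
The plan is to deduce the corollary directly from the explicit formula \eqref{equ:Phitexpl}, which presents $\Phi^t_\mathrm{reg}$ as the image of the Alekseev--Torossian associator under the action of the element
\[
g^t := \mathcal P\exp\!\left(\int_{\frac 1 2}^t \tau^s\, ds\right).
\]
Thus the corollary reduces to two assertions: (i) $g^t$ is a well-defined element of $\GRT_1$ for every $t\in\mathbb R$; and (ii) $\Phi_\mathrm{AT}$ is a Drinfel{\cprime}d associator, on which $\GRT_1$ acts through the action recalled in Appendix~\ref{app:assoc}. Granting these, $\Phi^t_\mathrm{reg}=g^t\cdot\Phi_\mathrm{AT}$ lies in the $\GRT_1$-torsor of Drinfel{\cprime}d associators, which is the claim.

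First I would verify (i). By Proposition~\ref{p-nuingrt} (together with Proposition~\ref{p-imphiingrt}) one has $\tau^s=\phi(x^s)\in\grt_1$ for every $s\in\mathbb R$. Since $\grt_1$ is $\mathbb Z_{\geq 3}$-graded with finite-dimensional homogeneous components and a bracket that strictly raises degree, it is pro-nilpotent and $\GRT_1$ is its pro-unipotent completion; hence the path-ordered exponential $g^t$ can be computed degree by degree, where in each fixed degree it is a finite sum of ordinary iterated integrals of the coefficient functions $s\mapsto\tau^s$. By the scaling behaviour established in Lemma~\ref{l-ascaling} (the same scaling property that is invoked just below \eqref{eq-Phiepsgauge} to guarantee the convergence of the iterated integrals defining $G^t$), the homogeneous components of $\tau^s$ are polynomial in $s$, so the iterated integrals converge and the degreewise data are compatible. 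Therefore $g^t$ is a well-defined element of $\GRT_1$, and in particular \eqref{equ:Phitexpl} is meaningful.

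For (ii), $\Phi_\mathrm{AT}$ is a Drinfel{\cprime}d associator by Proposition~\ref{p-PhiATDrinfeld}, and $\GRT_1$ acts on the torsor of Drinfel{\cprime}d associators as recalled in Appendix~\ref{app:assoc}; hence $\Phi^t_\mathrm{reg}=g^t\cdot\Phi_\mathrm{AT}$ is itself a Drinfel{\cprime}d associator. The only point needing a little care — and the one genuine obstacle in an otherwise short argument — is the bookkeeping of part (i): making precise that $\mathcal P\exp$ of the path $s\mapsto\tau^s$ in $\grt_1$ indeed lands in $\GRT_1$, i.e. continuity of the path and the degreewise convergence and compatibility of the iterated integrals. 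This is routine given the pro-nilpotence of $\grt_1$ and the polynomiality furnished by Lemma~\ref{l-ascaling}; all the substantive input, namely $\tau^s\in\grt_1$ and the fact that $\Phi_\mathrm{AT}$ is an associator, has already been proved.
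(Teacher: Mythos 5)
Your proposal is correct and follows essentially the same route as the paper: the paper likewise combines Proposition~\ref{p-PhiATDrinfeld} (the $t=\tfrac 1 2$ case), Proposition~\ref{p-nuingrt} together with the expansion leading to \eqref{equ:Phitexpl} to exhibit $\Phi^t_\mathrm{reg}$ as the image of $\Phi_\mathrm{AT}$ under an element of $\GRT_1$, and then invokes the $\GRT_1$-torsor property of the set of Drinfel{\cprime}d associators. Your extra bookkeeping on the degreewise convergence of the path-ordered exponential (pro-nilpotence of $\grt_1$ plus the polynomiality from Lemma~\ref{l-ascaling}) is a harmless elaboration of what the paper leaves implicit.
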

\begin{proof}
For $t=\frac 1 2$, this is the statement of Proposition~\ref{p-PhiATDrinfeld}. 
Proposition~\ref{p-nuingrt} and Corollary~\ref{c-assoc-asymp} imply further that $\Phi^t_\mathrm{reg}$ is obtained from $\Phi_{AT}$ {\em via} the action of an element of $\GRT_1$, and the set of Drinfel{\cprime}d associators is a $\GRT_1$-torsor.
\end{proof}

\subsection{Proof of Proposition \ref{p-asingular}}\label{ss-8-3}
Consider the projection
\[
 \overline C_{n+3}\to \overline C_{3}.
\]
We embed the closed interval $[0,1]$ into $C_{3}$ by assigning to $x\in (0,1)$ the configuration $(0,x,1)\in \overline C_{3}$ and extend by continuity. We consider the pullback bundle $X_n\to [0,1]$ defined by
\[
 \begin{tikzpicture}
  \matrix(m)[diagram]{X_n & \overline C_{n+3} \\ {[0,1]} & \overline C_3\\ };
  \draw[-latex] (m-1-1) edge (m-1-2) edge (m-2-1) (m-2-1) edge (m-2-2) (m-1-2) edge (m-2-2);
 \end{tikzpicture}.
\]
For $\Gamma\in \sder_3$ a graph with $n$ internal vertices we defined in $\eqref{eq-weight-AT-0-integrand}$ a differential form $\widetilde\theta_\Gamma^{\frac 1 2}$ on the interior $\underline C_{n+3}$. This form restricts to a differential form $\widetilde\theta_\Gamma^{\frac 1 2}$ on the interior $X^o$ of $X$.
We understand the function $a^{\frac 1 2}$ defined on $\underline C_{3}$ as a function on $(0,1)$ by restriction, and we will write $a^{\frac 1 2}(x)$, $x\in (0,1)$ accordingly.
The coefficient $\widetilde\vartheta_\Gamma^{\frac 1 2}$ of the graph $\Gamma$ in the series defining $a^{\frac 1 2}(x)$ is given by integrating the form $\widetilde\theta_\Gamma^{\frac 1 2}$ along the fiber of $X^o_n$ over $x$.
Let us define a differential form $\hat \theta_\Gamma$ on $X_n^o$ as follows.
\begin{itemize}
\item If both the first and third external vertices of $\Gamma$ have valence $\geq 1$ then $\hat \theta_\Gamma=\widetilde\theta_\Gamma^{\frac 1 2}$.
\item Suppose the third external vertex of $\Gamma$ has valence $0$, i.~e., the graph $\Gamma$ stems in fact from a graph in $\sder_2$, included in $\sder_3$. Then we set 
\be{equ:hattheta1}
\hat \theta_\Gamma=
\widetilde\theta_\Gamma^{\frac 1 2}
- \sum_{e'\in E} \frac{(-1)^{e'-1}}{\pi i} \log(x) \prod_{e\in E\setminus \{e'\}} \frac{1}{2\pi}d\arg\!\left(z_{s(e')}-z_{t(e')}\right).
\ee
\item Suppose the first external vertex of $\Gamma$ has valence $0$. Then we set 
\be{equ:hattheta2}
\hat \theta_\Gamma=
\widetilde\theta_\Gamma^{\frac 1 2}
- \sum_{e'\in E} \frac{(-1)^{e'-1}}{\pi i} \log(1-x) \prod_{e\in E\setminus \{e'\}} \frac{1}{2\pi}d\arg\!\left(z_{s(e')}-z_{t(e')}\right.
\ee
\end{itemize}
Note that the fiber integrals over the additional terms we added are zero since the corresponding graphs effectively contain bivalent vertices, whence a vanishing Theorem (\cite{K}*{section 6.6.1}) applies.
Hence we may define $a^{\frac 1 2}(x)$ equivalently using the fiber integral over the differential forms $\hat \theta_\Gamma^{\frac 1 2}$ instead of the forms $\widetilde\theta_\Gamma^{\frac 1 2}$.

We will also set $\tilde a(x) := a^{\frac 1 2}(x) +\frac 1 {\pi i} \log(x) t_{12} +\frac 1 {\pi i} \log(1-x) t_{23}$.
Its defining series of graphs is the same as that of $a$, except that we omit graphs with no internal vertices.
Our goal is to show that the limits $\lim_{x\to 0,1 }\tilde a(x)$ exist and have the form stated in Proposition \ref{p-PhiATDrinfeld}. 

The following Lemma will be the key argument in the proof. It is shown in Appendix \ref{app:extendstoXproof} by a slight extension of the arguments leading to Lemma \ref{lem:a12exists}.
\begin{Lem}\label{lem:extendstoX}
 Let $\Gamma\in \sder_3$ be a graph with $n\geq 1$ internal vertices. Then the form $\hat\theta_\Gamma$ on the interior $X^o_n$ extends to the compactification $X_n$.
\end{Lem}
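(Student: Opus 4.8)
The plan is to follow closely the strategy of Lemma~\ref{lem:a12exists} and Proposition~\ref{prop:tildeomega}; the only genuinely new feature is the pair of boundary divisors of $X_n$ lying over the endpoints $x=0$ and $x=1$. First I would enumerate the codimension one boundary strata of the compact manifold with corners $X_n$. As in Subsection~\ref{ss-3-3} these fall into two families: (I) fiberwise collapses, over an interior point $x\in(0,1)$, of a subset $S$ of the $n+3$ vertices that contains at most one of the base vertices $1,2,3$; and (II) the divisors over $x=0$ and over $x=1$, where the pairs $\{1,2\}$, resp.\ $\{2,3\}$, have collided (possibly dragging along some internal vertices). By the coordinate description of Subsection~\ref{sec:3-4} and the splitting argument of Proposition~\ref{prop:splitting}, near a stratum of family (I) there is a local scale coordinate $r_S$ for which $\widetilde\theta^{\frac 1 2}_\Gamma = \frac{dr_S}{r_S}\wedge\alpha + \log(r_S)\,\hat\alpha + (\text{regular in } r_S)$, with $\alpha,\hat\alpha$ regular and $r_S$-independent, and near a stratum of family (II) the same holds with $r_S$ replaced by $x$, resp.\ $1-x$. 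Hence it suffices to show that, for the corrected form $\hat\theta_\Gamma$, the coefficients of $\frac{dr}{r}$ and $\log r$ vanish (more precisely, vanish in the fiberwise top degree part, which is all that enters the fiber integral).

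For strata of family (I) this is immediate: the correction terms in \eqref{equ:hattheta1} and \eqref{equ:hattheta2} involve only $\log x$, $\log(1-x)$ and angular one-forms $\frac{1}{2\pi}d\arg(z_{s(e)}-z_{t(e)})$, none of which degenerates when a set $S$ containing at most one base vertex collapses, so $\hat\theta_\Gamma$ and $\widetilde\theta^{\frac 1 2}_\Gamma$ share the same $r_S$-singular part. The vanishing of its coefficients is exactly the statement of Lemma~\ref{lem:a12exists}: writing the fiberwise top degree part as $vol\wedge\iota_{\xi}(\cdot)$ and collecting the potentially singular pieces, each is governed by a double sum over an ordered pair of distinct edges inside $S$ whose summand is antisymmetric under interchanging the two edges (equivalently it factors through a graph with a bivalent vertex, killed by the involution argument \cite{K3}*{Lemma 2.2}, resp.\ by Kontsevich's vanishing lemma \cite{K}*{Lemma 6.6} applied to the subgraph on $S$), hence it is zero.

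For strata of family (II) consider $x\to 0$, the case $x\to 1$ being symmetric. If the third external vertex of $\Gamma$ has positive valence, an edge escapes the $[12]$-cluster towards the fixed vertex $3$ and the argument of the previous paragraph shows that no $\log x$ occurs. If vertex $3$ has valence $0$, so $\Gamma$ lies in $\sder_2 \subset \sder_3$, then rescaling the colliding cluster by $x$ replaces each factor $\log|z_{s(e')}-z_{t(e')}|$ with both endpoints in the cluster by $\log x$ plus a regular term, and produces in the fiberwise top degree part exactly
\[
\sum_{e'\in E}\frac{(-1)^{e'-1}}{\pi i}\log(x)\prod_{e\in E\setminus\{e'\}}\frac{1}{2\pi}d\arg\!\left(z_{s(e)}-z_{t(e)}\right)
\]
plus regular terms (the $\frac{dx}{x}$-pieces produced by the rescaled $\theta^{\frac 1 2}_e$ being irrelevant, since they do not enter the fiberwise top degree part); this is precisely the term subtracted in \eqref{equ:hattheta1}, so $\hat\theta_\Gamma$ extends across this divisor too. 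Running the same dichotomy over the higher codimension strata (nested clusters, with further internal vertices collapsing onto the colliding base points) completes the argument by induction on codimension.

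The step I expect to be the main obstacle is the bookkeeping in family (II): identifying exactly which terms of the sum $\sum_{e'}$ in \eqref{eq-weight-AT-0-integrand} contribute a $\log x$ (only those $e'$ internal to the colliding cluster), tracking the passage from $d\log$-factors to $d\arg$-factors after rescaling together with the discarding of the $\frac{dx}{x}$-terms, matching the resulting sign and edge-ordering with those in \eqref{equ:hattheta1}, and simultaneously verifying that internal vertices dragged onto the colliding pair contribute only regular terms via the vanishing lemma. The remainder is a routine transcription of the arguments of Proposition~\ref{prop:tildeomega} and Lemma~\ref{lem:a12exists}.
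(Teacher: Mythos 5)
Your treatment of the strata over interior points $x\in(0,1)$ (your family (I)) matches the paper's proof: there the correction terms in \eqref{equ:hattheta1}--\eqref{equ:hattheta2} are manifestly regular and one simply re-runs the argument of Lemma~\ref{lem:a12exists}. The gap is in your family (II), which is precisely the only genuinely new case. For a cluster $B$ containing the two external points $0$ and $x$ (plus possibly some, but not necessarily all, internal vertices), the antisymmetry/double-sum mechanism you invoke is not available: that mechanism lives on the contraction $\iota_{v_B}$ with the vector field rotating the cluster around its center of mass, and this rotation is not tangent to $X_n$ (it moves the pinned real base points $0,x$ off the embedded interval), nor is it vertical for $X_n\to[0,1]$, so you can neither write the fiberwise top part as $vol\wedge\iota_{v_B}(\cdot)$ nor conclude anything about the form itself from $\iota_{v_B}(\text{singular part})=0$. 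Without the contraction there is only a single sum (over the edge carrying the logarithm), so there is no antisymmetric pair to cancel, and ``an edge escaping towards vertex~$3$'' by itself kills nothing. The paper's actual argument here is a pointwise dimension count exploiting internal trivalence: the coefficient of $\log r_B$ is a product containing the $|E_1|+|E_2|$ one-forms that depend only on the $k$ internal points outside $B$ and on $\zeta_B$, and trivalence of the internal tree forces $|E_1|+|E_2|\geq 2k+1>2k$, so every summand vanishes for degree reasons --- except in the degenerate situation $k=0$, $E_1=E_2=\emptyset$, which can occur only when the far external vertex has valence $0$.

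Your handling of the valence-$0$ subcase is also too coarse: you claim the singular part of $\widetilde\theta^{\frac 1 2}_\Gamma$ equals the subtracted term of \eqref{equ:hattheta1} on the nose, but this exact cancellation happens only on the stratum where \emph{all} internal vertices collapse onto the pair $\{0,x\}$. On the intermediate strata (some internal vertices remain outside) no term-by-term cancellation takes place; instead both singular parts vanish separately, the one of $\widetilde\theta^{\frac 1 2}_\Gamma$ by the $|E_1|+|E_2|>2k$ count above, and the one of the correction because, in the absence of edges at vertex $3$, trivalence forces some outside internal vertex to send at least two edges into the cluster, producing a repeated factor $d\arg(z-\zeta_B)$ whose square is zero. (Note these are pointwise statements about forms; Kontsevich's vanishing lemma, being a statement about integrals, is not the right tool here.) So the ``bookkeeping in family (II)'' you flag as the main obstacle is exactly where the argument you propose breaks down, and the missing ingredient is the trivalence-based dimension count rather than any refinement of the antisymmetry trick.
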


\begin{proof}[Proof of Proposition \ref{p-asingular}]
We only consider the case of $A_\varepsilon$, because the case of $B_\varepsilon$ is analogous.
We have to show that $\lim_{x\to 0}\tilde a(x) = \left(\tau^{\frac 1 2}\right)^{1,2} + \left(\tau^{\frac 1 2}\right)^{12,3}$.
But by Lemma \ref{lem:extendstoX} we may simply evaluate the fiber integrals defining $\tilde a(x)$ at the fiber over $0\in [0,1]$.
We do this for a fixed graph $\Gamma$ with $n$ internal vertices.
The fiber over $0$ has several top dimensional components, indexed by subsets $B\subset [n]$, indicating which points collapse to $0$. The component corresponding to $B$ has the form $\Conf_{|B|}(\mathbb{C}\setminus\{0,1\})\times \Conf_{n-|B|}(\mathbb C\setminus\{0,1\})$.
Accordingly, the graph $\Gamma$ has a subgraph $\Gamma'$ (with the vertices in $B$) and the remainder is a graph $\Gamma''$.
The differential form on the fiber over $0$ is
\begin{equation}\label{equ:termstobeint}
 \widetilde\theta_{\Gamma'}^{\frac 1 2}\wedge\theta_{\Gamma''}^{\frac 1 2}
 +
  \theta_{\Gamma'}^{\frac 1 2}\wedge\widetilde\theta_{\Gamma''}^{\frac 1 2}.
\end{equation}
Note that $\Gamma''$ is an internally trivalent tree, hence the first term vanishes by degree reasons unless $\Gamma''$ has no edges, i.~e., unless $B=[n]$ and the graph $\Gamma$ had no edge connecting to the third external point from the start.
The contributions of these $B$ and $\Gamma$ when integrating the first term produces $\left(\tau^{\frac 1 2}\right)^{1,2}$.

By similar reasoning, if $\Gamma'$ contains an edge than the second term of \eqref{equ:termstobeint} is zero by degree reasons.
The remaining contributions (i.~e., $B=\emptyset$) assemble to produce the term $\left(\tau^{\frac 1 2}\right)^{12,3}$. This shows Proposition~\ref{p-asingular}. 
\end{proof}

\subsection{Proof of Proposition \ref{prop:fastdecay}}\label{sec:prooffastdecay}
Recall that the connections we discuss have the form $\nabla^{t}_k=d-\omega^{t}_{\mathrm{AT},k}$, with $\omega^{t}_{\mathrm{AT},k}=\sum_{\Gamma}\vartheta^{t}_\Gamma\ \Gamma$, cf. \eqref{eq-AT-bound-0}. In particular we are interested in the case $k=3$, with one of the three points in the configuration space fixed at $0$, one at $1$, and the third point at $z\in (0,1)$ moving between $0$ and $1$ on the real axis. In this case we may write 
\[
\vartheta^{t}_\Gamma
= 
f^t_\Gamma(z) dz,
\]
for some function $f_\Gamma^t$ which are polynomials in $t$, but possibly complicated functions in $z$. It is shown in Theorem \ref{thm:logsing} in Appendix \ref{app:ATSing} that the terms in these connections have at most logarithmic singularties as long as $\Gamma$ has at least one internal vertex, i.~e., that there are constants $C,N$, possibly depending on $\Gamma$, such that for all $t\in (0,1)$ and sufficiently small $z$
\[
 |f^t_\Gamma(z)|\leq C |\log |z||^N.
\]

Recall the ``gauge transformation'' $a^t_k=\sum_{\Gamma}\widetilde\vartheta^t_\Gamma\ \Gamma$, cf. \eqref{eq-AT-bound-0}. In the current situation we may write 
\[
 \widetilde\vartheta^t_\Gamma =: F^t_\Gamma(z)
\]
as a function in only one variable, the positions of two points in the configurations we consider being fixed at $0$ and $1$. Again $F^t_\Gamma$ is a polynomial in $t$. By the second identity of \eqref{eq-flat-gauge-AT-bound} the following equation holds:
\[
- \sum_\Gamma \frac{d f^t_\Gamma}{d t} \Gamma
=
 \sum_\Gamma \frac{d F^t_{\Gamma} }{dz}\Gamma
-
\sum_{\Gamma, \Gamma'}
f^t_\Gamma F^t_{\Gamma}
[\Gamma, \Gamma']
\]
Let us define
\begin{align*}
 f^t_n &= \sum_{\substack{ \Gamma \\ \deg\Gamma=n}} f^t_\Gamma \Gamma
&
F^t_n &= \sum_{\substack{ \Gamma \\ \deg\Gamma=n}} F^t_\Gamma \Gamma
\end{align*}
where the sums run only over graphs with $n-1$ internal vertices. Then the above equation is equivalent to the system of equations
\[
 - \frac{d f^t_n}{d t} 
=
 \frac{d F^t_n }{dz}
-
\sum_{j=1}^{n-1}
[f^t_j, F^t_{n-j}]
.
\]
Our goal is to show that 
\[
 F^t_{n}(z)-F^t_n(0) = O(|z||\log |z||^N)
\]
for $n=2,3,\dots$, where $N$ may change with $n$. We do this by an induction on $n$. The equation above says that 
\begin{equation}\label{equ:Fderi}
 \frac{d F^t_n }{dz}
=
- \frac{d f^t_n}{d t}
+
\sum_{j=1}^{n-1}
[f^t_j, F^t_{n-j}]
=
- \frac{d f^t_n}{d t}
+
\sum_{j=2}^{n-1}
[f^t_j, F^t_{n-j}]
+
[f^t_1, F^t_{n-1}]
.
\end{equation}
Note that $\frac{d f^t_n}{d t}=O(|\log |z||^N)$, since $f^t_n=O(|\log |z||^N)$ as we saw above and $f^t_n$ is a polynomial in $t$. Furthermore, by the induction hypothesis 
\[
\sum_{j=2}^{n-2}
[f^t_j, F^t_{n-j}]= O(|\log |z||^N).
\]
Similarly, $F^t_{1}(z) \propto \log( z)\,  t_{12}$ and hence $[f^t_{n-1}, F^t_{1}]= O(|\log |z||^N)$

Note however that $f^t_1(z)= \frac{t_{12}}{z} + \frac{t_{23}}{1-z}$, so that there is potentially a pole on the right hand side. Fortunately, $[t_{12}, F^t_{n-1}(0)]=0$ using the explicit expression for $F^t_{n-1}(0)$ that was derived in Proposition \ref{p-asingular}. But by the induction hypothesis again $F^t_{n-1}(z)- F^t_{n-1}(0)=O(z|\log z|^N)$, so that 
\[
[f^t_1, F^t_{n-1}] =  O(|\log |z||^N).
\]

Hence the right hand side of \eqref{equ:Fderi} is of order $O(|\log |z||^N)$. Hence, by integration we see that $F^t(z)=O(|\log |z||^N)$.
This shows the first identity of \eqref{equ:fastdecay}. The second is shown by an analogous argument, interchanging the roles of $0$ and $1$.
\hfill\qed

\section{The proofs of the main Theorems}\label{s-9}
In the present section, we re-collect the results from Sections~\ref{s-2} to~\ref{s-8} and cast them together in the form of Theorems~\ref{thm:etingof} and~\ref{thm:stablefamily}.

\subsection{Proof of Theorem \ref{thm:etingof}}\label{ss-9-1}
The family of Drinfel{\cprime}d associators $\Phi^t=\Phi^t_\mathrm{reg}$ is constructed in Definition~\ref{d-reg-assoc}, cf.~also Corollary~\ref{c-drinfeld}.
Similarly, the family of $\grt_1$ elements $\tau^t$ is constructed in Corollary~\ref{c-a2}.
By applying the orientation-reversing involution $z\mapsto \overline z$ componentwise on the defining integrals, it follows that $x^t$ is concentrated in odd degrees.
The $t$-dependence of $x^t$ is described in Lemma \ref{l-ascaling}.
 
Identity~\eqref{equ:Phithor} is the infinitesimal form of Identity~\eqref{equ:Phitexpl}.

It is also clear from the definitions that $\Phi^0$, $\Phi^{\frac 1 2}$ and $\Phi^1$ are the KZ associator, the AT associator and the anti-KZ associator respectively.

It immediately follows that the weak form of P.~Etingof's Conjecture~\ref{conj:etingof} holds true. 
It remains to be shown that, on the other hand, the strong form of the conjecture does not hold true.

Let us consider the gauge transformation $H^t$ which relates $\nabla^0_k$ and $\nabla^t_k$. Similarly to the formula at the beginning of Subsection~\ref{ss-8-2}, we find
\[
H^t=\mathcal P\exp\!\left(\int_0^t a^s_kds\right).
\]
For $\varepsilon>0$ sufficiently small, and borrowing notation from Subsection~\ref{ss-8-2}, we find
\[
\Phi^t_\varepsilon=H^t(B_\varepsilon)\ \Phi^0_\varepsilon\ H^t(A_\varepsilon)^{-1}.
\]

By combining Proposition~\ref{p-asingular} and Lemma~\ref{l-ascaling} as in the proof of Corollary~\ref{c-drinfeld}, we find
\[
\Phi^t_\mathrm{reg}=\mathcal P\exp\!\left(\int_0^t\tau^sds\right)\cdot \Phi_\mathrm{KZ},
\]
and the path-ordered exponential defines a family of elements of $\GRT_1$.
It follows that
\begin{align*}
\Phi_\mathrm{AT}=\mathcal P\exp\!\left(\int_0^{\frac 1 2}\tau^sds\right)\cdot \Phi_\mathrm{KZ},\quad \Phi_{\overline{\mathrm{KZ}}}=\mathcal P\exp\!\left(\int_{\frac 1 2}^1\tau^sds\right)\cdot \Phi_\mathrm{AT}.
\end{align*}
We want to show that $a\neq b$, where $a$ and $b$ are the $\GRT_1$ elements
\begin{align*}
a=\mathcal P\exp\!\left(\int_0^{\frac 1 2}\tau^sds\right),\quad b=\mathcal P\exp\!\left(\int_{\frac 1 2}^1\tau^sds\right).
\end{align*}
In fact, by F.~Brown's result, the Lie algebra generated by all $\sigma_{2j+1}'$ is free, thus both $a$ and $b$ may be understood as elements of the free (completed) associative algebra $\C\langle\sigma_3', \sigma_5',\dots \rangle$.
The coefficients of the product $\sigma_3'\sigma_5'$ in $a$ and $b$ are
\begin{align*}
c_a&=\frac 1 2
\int_0^{\frac 1 2}[s_1(1-s_1)]^4\left(\int_0^{s_1}[s_2(1-s_2)]^2 ds_2\right)ds_1=\frac{1199}{309657600}
\\
c_b 
&=
\frac 1 2
\int_{\frac 1 2}^1[s_1(1-s_1)]^4\left(\int_{\frac 1 2}^{s_1}[s_2(1-s_2)]^2 ds_2\right)ds_1
=
\frac{283}{103219200}
\end{align*}
Since $c_a\neq c_b$ it follows that $a\neq b$, whence the claim.
\hfill\qed

\subsection{Proof of Theorem \ref{thm:stablefamily}}\label{ss-9-2}
In order to prove Theorem~\ref{thm:stablefamily}, we need to show several sub-statements. 

First, the family of stable formality morphisms $\mU^t$ is constructed in section~\ref{s-5}, and it is clear that $\mU^{\frac 1 2}$ and $\mU^0$ are the Kontsevich stable formality morphism and the Kontsevich stable formality morphism constructed by means of the logarithmic propagator respectively.

The family of graph cocycles $x^t$ is defined in~\eqref{equ:xtdef}. By a reflection argument (using the reflection $z\mapsto \overline z$) one can see that components of $x^t$ with an odd number of vertices vanish. Hence we may decompose 
\[
 x^t = \sum_{j\geq 1} x_{2j+1}^t
\]
where $x_{2j+1}^t$ is a ($t$-dependent) linear combination of graphs with $2j+2$ vertices.
The $t$-dependence of $x^t$ is explained in Subsection~\ref{ss-6-2}. It follows that $x_{2j+1}^t=(t(1-t))^{2j}x_{2j+1}$ for some graph cocycles $x_{2j+1}\in \GC$.
It is shown in Proposition \ref{p-nuingrt} that the family $x^t$ indeed corresponds to the family of $\grt_1$ elements $\tau^t$ from Theorem~\ref{thm:etingof}. It follows that $x_{2j+1}\in \GC$ corresponds to $\tau_{2j+1}\in \grt_1$.
 
Recall that the homotopy class of the Kontsevich stable formality morphism corresponds by Definition (or by~\cite{Will-2}) to the Drinfel{\cprime}d associator $\Phi_\mathrm{AT}$ under the identification of the torsor of Drinfel{\cprime}d associators and that of homotopy classes of stable formality morphisms. 
Hence, it follows from equations \eqref{equ:Phithor} and \eqref{equ:stablehomotopy} that the homotopy class of the stable formality morphism $\mU^t$ indeed corresponds to the Drinfel{\cprime}d associator $\Phi^t$ from Theorem \ref{thm:etingof} for all $t$. 
In particular, $\mU^0$ corresponds to $\Phi^0=\Phi_\mathrm{KZ}$.

The only remaining statement of Theorem~\ref{thm:stablefamily} to be proved is the property of being divergence-free of the graph cocycle $x^t$.
Recall from Identity~\eqref{equ:xtdef} that it is given by a sum of graphs formula.
By unraveling the adjoint action of $\Gamma_{\lcirclearrowdown}$ on $\GC$, $x^t$ is divergence-free, if   
\[
\sum_{e\in E(\Gamma)}(-1)^{e-1}c^t_{\Gamma\smallsetminus \{e\}} = 0
\]
where $\Gamma\smallsetminus \{e\}$ is the graph $\Gamma$ with the edge $e$ removed, for $\Gamma$ a graph in $\GC$.

Recall that $c^t_{\Gamma}$ is given by an integral over the configuration space $\underline C_n/S^1$ of the differential form $\widetilde\beta_\Gamma^t$, see~\eqref{equ:tildebetadef}). 
On the other hand, 
\[
\sum_{\hat e\in E(\Gamma)} (-1)^{\hat e-1}\widetilde\beta_{\Gamma\setminus \{\hat e\}}^t = 0
\]
By antisymmetry under exchange of the $\hat e$ above with the $e$ appearing in \eqref{equ:tildebetadef}. 
\hfill \qed

\appendix

\section{Associators in general}\label{app:assoc}
We briefly recall the theory of Drinfel{\cprime}d associators following~\cite{AT-2}{Section 9} (but slightly changing conventions): for this purpose, we recall the simplicial and coproduct maps~\eqref{eq-cob-tder-1} and~\eqref{eq-cob-tder-2}.
Since these are morphisms of Lie algebras, they naturally extend to maps from $\SAut_k$ to $\SAut_{k+1}$, for $k\geq 2$; they also obviously descend to maps from $\mathsf T_k$ to $\mathsf T_{k+1}$.

An element $\Phi$ of $\mathsf T_3$ is called a Drinfel{\cprime}d associator, if it satisfies the anti-symmetry, hexagon and pentagon equations in the following form:
\begin{align}
\label{eq:antisymm}\Phi^{1,2,3}\ \Phi^{3,2,1}&=1\\
\label{eq:hexagon}e^{\frac{t_{13}+t_{23}}2}&=\Phi^{3,1,2}\ e^{\frac{t_{13}}2}\ \left(\Phi^{1,3,2}\right)^{-1}\ e^{\frac{t_{23}}2}\ \Phi\\ 
\label{eq:pentagon}\Phi^{1,2,34}\ \Phi^{12,3,4}&=\Phi^{2,3,4}\ \Phi^{1,23,4}\ \Phi^{1,2,3}.
\end{align}
Identity~\eqref{eq:hexagon} is known as the hexagon identity.
Further, Identity~\eqref{eq:pentagon} is called the pentagon identity.


The (non-empty) set of Drinfel{\cprime}d associators is acted on freely and transitively by the group $\GRT_1$, the pro-unipotent group associated with the pro-nilpotent Lie algebra $\grt_1$: the $\GRT_1$-action has been first defined in~\cite{Dr}*{Section 5}, to which we refer for explicit formul\ae.
The formul\ae\ in~\cite{Dr}*{Section 5} are quite complicated: on the other hand, it has been proved in~\cite{AT-2}*{Proposition 9.5} that the action of $\GRT_1$ on the set of Drinfel{\cprime}d associators simplifies considerably and is reduced to an action by Drinfel{\cprime}d twists, which we now shortly describe.
Namely, let us consider $F$ in $\GRT_1\subseteq \SAut_2$: then, the action of $F$ on the set of Drinfel{\cprime}d associators described in~\cite{Dr}*{Section 5} can be re-written as  
\[
F^{2,3}\ F^{1,23}\ \Phi\ \left(F^{12,3}\right)^{-1}\ \left(F^{1,2}\right)^{-1},
\] 
for a general Drinfel{\cprime}d associator $\Phi$ (in the terminology of~\cite{Dr}*{Section 1}, a Drinfel{\cprime}d twist of $\Phi$ with respect to $F$).

At the infinitesimal level, we get a $\grt_1$-action on the set of Drinfel{\cprime}d associators, which can be easily deduced from the previous formula: namely, for a Drinfel{\cprime}d associator $\Phi$,
\[
u\cdot \Phi=\left(u^{2,3}+u^{1,23}\right)\ \Phi-\Phi\ \left(u^{1,2}+u^{12,3}\right),\ u\in\grt_1.
\]
From the previous $\grt_1$-action on Drinfel{\cprime}d associators, we have deduced in Subsection~\ref{ss-8-2} the expression for $\Phi^t_\mathrm{reg}$ in terms of the AT associator $\Phi_\mathrm{AT}$.

\section{The Knizhnik--Zamolodchikov associator}\label{app:KZ}
As previously remarked, the set of Drinfel{\cprime}d associators is non-empty: this has been proved in~\cite{Dr}*{Section 2}, where the author constructs an explicit Drinfel{\cprime}d associator over $\mathbb C$, the Knizhnik--Zamolodchikov (shortly, from now on, KZ) associator, whose construction we now briefly recall. 

Let $X$, $Y$ be formal variables. 
Consider the ordinary differential equation 
\begin{equation}\label{eq-KZ-3}
\frac{df}{dz}=\frac{1}{2\pi i}\left(\frac{X}{z}f+\frac{Y}{1-z}f\right)
\end{equation}
for a function $f$ on $\Conf_1(\bbC \setminus\{0,1\})=\mathbb C\smallsetminus\{0,1\}$ with values in the completed free algebra $\mathbb C\langle\!\!\langle X,Y\rangle\!\!\rangle$ generated by $X$, $Y$: $\mathbb C\langle\!\!\langle X,Y\rangle\!\!\rangle$ becomes a topological Hopf algebra by declaring the generators $X$, $Y$ to be primitive, {\em i.e.} the coproduct and the antipode on it are uniquely determined by 
\[
\Delta(X)=X\widehat\otimes 1+1\widehat\otimes X,\ \Delta(Y)=Y\widehat\otimes 1+1\widehat\otimes Y,\ S(X)=-X,\ S(Y)=-Y.
\]
It is well-known that~\eqref{eq-KZ-3} corresponds to the KZ equations in three variables.
Here, $\widehat\otimes$ denotes the topological tensor product over the base field $\bbC$.

We denote by $f_0$, resp.\ $f_1$, be the unique solution of~\eqref{eq-KZ-3} such that 
\begin{align*}
f_0(z)&=\widetilde f_0(z)z^X
&
f_1(z)&=\widetilde f_1(z)(1-z)^Y 
\end{align*}
where $\widetilde f_0$, resp. $\widetilde f_1$, may be extended continuously to $z=0$, resp. $z=1$, and both take the value $1$ there. 
Further, $z^X$ is to be understood as $z^X := \exp(X\log z )$ and similarly $(1-z)^Y:=\exp(Y\log(1- z) )$. 
One may easily check that the function
\[
f_1^{-1}(z) f_0(z)
\]
is constant in $z$. 

The KZ associator $\Phi_\mathrm{KZ}(X,Y)$ is defined to be the constant value of this function.
It is a group-like element of $\mathbb C\langle\!\!\langle X,Y\rangle\!\!\rangle$. 
We set $\Phi_\mathrm{KZ}=\Phi_\mathrm{KZ}(t_{12},t_{23})$ in $T_3$.
The latter element satisfies Identities~\eqref{eq:antisymm},~\eqref{eq:hexagon} and~\eqref{eq:pentagon}.

To connect with the other associators we construct in the paper, let us also sketch two alternative descriptions of $\Phi_\mathrm{KZ}$ due to Le and Murakami~\cite{Le-Mu}*{Appendix A}. 

Define the KZ connection 
\[
\nabla_\mathrm{KZ}:=d-\frac{1}{2\pi i} \left( \frac{dz}{z}\ X +\frac{dz}{1-z}\ Y \right)
\]
on $\Conf_1(\bbC\smallsetminus\{0,1\})$ with values in $\Lie(X,Y)$.

Let $\Phi_\varepsilon$ be the monodromy of the connection $\nabla_\mathrm{KZ}$ between the points $z=\varepsilon$ and $z=1-\varepsilon$, for $\varepsilon>0$ small enough. 
Then, $\Phi_\varepsilon$ has an asymptotic expansion in $\log(\varepsilon)$ as $\varepsilon\to 0$, and furthermore
\[
\lim_{\varepsilon \to 0}
\varepsilon^{-Y/2\pi i}\ \Phi_\varepsilon\ \varepsilon^{X/2\pi i} = \Phi_\mathrm{KZ}.
\]
Alternatively, $\Phi_\mathrm{KZ}$ may be obtained from the asymptotic expansion of $\Phi_\varepsilon$ by formally setting $\varepsilon=\log(\varepsilon)=0$.

More precisely, for $k\geq 2$, we define the flat KZ connection on the trivial principal $\mathsf T_k$ -bundle over $\Conf_k$ {\em via}
\[
\nabla_{\mathrm{KZ},k}=d-\frac{1}{2\pi i}\sum_{1\leq i<j\leq k}\frac{dz_i-dz_j}{z_i-z_j}\ t_{ij}.
\]
The KZ connection is obviously basic with respect to the action of $\bbC$ by componentwise complex translations and $\bbC^\times$-invariant: in particular, let us consider the $\mathrm{KZ}_3$-system of PDE
\[
\nabla_{\mathrm{KZ},3}(f)=0,
\]
for $f$ a function on $\Conf_3$ with values in the Universal Enveloping Algebra $\mathrm U(\mathfrak t_3)$ of $\mathfrak t_3$.
By means of the fact that $\nabla_{\mathrm{KZ},k}$ is $\bbC$-basic and $\bbC^\times$-invariant, the above system can be equivalently re-written as 
\[
\frac{df}{dz}=\frac{1}{2\pi i}\left(\frac{dz}z\ t_{12}+\frac{dz}{z-1}\ t_{23}\right)f,
\]
for a function $f=f(z)$ on $\Conf_1(\bbC\smallsetminus\{0,1\})$ (observe that $f$ does not depend on $\overline z$).

The anti-KZ associator $\Phi_{\overline{\mathrm{KZ}}}$ is defined {\em via} 
\[
\Phi_{\overline{\mathrm{KZ}}}=\Phi_{\overline{\mathrm{KZ}}}(X,Y)=\Phi_\mathrm{KZ}(-X,-Y),
\]
for $X$, $Y$ as at the beginning.
In particular, the anti-KZ associator $\Phi_{\overline{\mathrm{KZ}}}$ is obtained from the equation 
\[
\frac{df}{dz}=-\frac{1}{2\pi i}\left(\frac{X}{z}f+\frac{Y}{1-z}f\right),
\]
for $f$ as above.
It is not difficult to verify, by computations similar to the ones above, that the previous equation is the reduction to an ordinary differential equation of the integrable system of three partial differential equations specified by the complex conjugate of the KZ connection $\nabla_{\mathrm{KZ},3}$, by the involution $z\mapsto \overline z$.
This involution does not alter the relevant computations for the associator, because it is constant, though it is defined by suitably multiplying two special asymptotic solutions of an ordinary differential equation of Fuchs type.

\section{The regularized Stokes' Theorem on \texorpdfstring{$\overline C_{k+n}^f$}{C(k+n)f}}
\label{app:stokesbundle}
The goal of this Appendix is to show that the bundle $\overline C_{k+n}^f\to \underline C_k$ from section \ref{ss-8-0} fits into the framework of the regularized Stokes Theorem \ref{thm:regstokes}, i.~e., that one has local charts and locally defined free torus actions.
Furthermore, we will calculate the regularizations of the singular differential forms appearing in the proof of Proposition \ref{p-flat-gauge-AT-bound}.

To distinguish the $k$ points determining the configuration in the base and the other $n$ points, we will call the former points "external" and the latter points "internal".

\subsection{Boundary strata, charts and local coordinates}
Recall that the bundle $\overline C_{k+n}^f\to \underline C_k$ (for $k\geq 2$) is defined as the pullback of the bundle $\overline C_{k+n}\to \overline C_k$. The fiber-wise boundary strata are hence labelled by trees with leaf set $[n+k]$, as are the boundary strata of $\overline C_{k+n}$. Note however that not all such trees will appear, since some trees denote boundary strata which lie in the fibers over the boundary of $\overline C_{k}$.
The trees that appear are such that any pair of external leaves has the same least common ancestor in the tree. This least common ancestor is hence distinguished, and we denote it by $*$.
We will furthermore call the ancestor vertices of $*$ (including $*$ itself) the \emph{infinite vertices}.

One may define local coordinates on $\overline C_{k+n}^f$ for each tree $i$.
Let $B$ be a vertex corresponding to some subset of points. Then we define its center of mass 
\[
\zeta_B = 
\begin{cases}
\frac 1 {|B|} \sum_{j\in B} z_j & \text{if $B^{ext}=\emptyset$} \\
\frac 1 {|B^{ext}|} \sum_{j\in B^{ext}} z_j & \text{otherwise} 
\end{cases}
\]
where $B^{ext}\subset B$ is the set of external points in $B$.
In other words, when computing the center of mass we assign the external points infinite mass.

Next we will introduce local coordinates, given a tree $i$. We will attach a parameter $r_B$ to each (non-leaf-)vertex $B\neq *$ of $i$. 
Suppose first that the tree $i$ has a single vertex $B$ (apart from the root and leaves).
There are three possible cases to consider: (i) $B$ does not contain external points, (ii) $B$ contains exactly one external point or (iii) $B$ contains all external points. Otherwise, not any pair of external points would have the same least common ancestor in the tree.

Suppose that $B$ does not contain any external points, and consider some $b\in B$.
As in \cite{ARTW} we will will write
\[
z_b = \zeta_B + r_B z_b^{(1)}
\]
where the new coordinates $z_b^{(1)}$ are normalized such that 
\begin{align*}
\sum_{b\in B} z_b^{(1)}=0\ 
\text{and}\
\sum_{b\in B} |z_b^{(1)}|^2=1.
\end{align*}
At $r_B=0$ one obtains the interior of the boundary component
\[
\overline C_{k+n-|B|}^f \times \overline C_{|B|}.
\]
Next suppose that $B$ contains a single external point $c$. Then by definition $\zeta_B=z_c$. Let again $b\in B$ be such that $b\neq c$. The we again use a parameterization
\[
z_b = \zeta_B + r_B z_b^{(1)}
\]
as above, but now the coordinates $z_\cdot^{(1)}$ are normalized such that
\begin{align*}
\sum_{b\in B\setminus \{c\}} |z_b^{(1)}|^2&=1
\end{align*}
without a second condition.

Finally assume that $B$ contains all external points, i.~e., $*=B$. Call the root vertex $R$, i.~e., $R=[n+k]$. 
In this case we attach a parameter $r_R$ to $R$. Suppose $c$ is a vertex not in $B$. Then we use coordinates such that
\[
z_c = \frac 1 {r_R} z_c^{(1)}
\]
where the $z_c^{(1)}$ are normalized such that 
\begin{align*}
\sum_{c\in R\setminus B} |z_c^{(1)}|^2&=1
\end{align*}
The boundary is reached at $r_R=0$.

In general, we assign
\begin{itemize}
\item To each non-leaf vertex $B_j\neq *$ a parameter $r_{B_j}$ and coordinates $z_B^{(j)}$ where $B$ ranges over the set of direct children $\mathfrak B_j$ of $B_j$ not containing external points. These coordinates are normalized such that $\sum_{B\in \mathfrak B_j}  |B| |z_B^{(j)}|^2=1$ and, if $B_j$ does not contain external vertices then in addition $\sum_{B\in \mathfrak B_j} |B| z_B^{(j)}=0$. If $B$ is a child of $B_j$ containing external points then we will set $z_{B}^{(j)}=0$ for notational convenience.
\item To the vertex $B_j=*$ we assign coordinates $z_B^{(j)}$ for $B$ ranging over the direct children normalized such that 
\[
\sum_{B\in \mathfrak B_j'} |B| z_B^{(j)}=0.
\]
where $\mathfrak B_j'$ is the set of direct children containing external points.
\end{itemize}

Next consider some point $b$, corresponding to a leaf of the tree. There is a unique path $B_1,B_2,\cdots,B_r$ from the vertex $X$ to $b$. We suppose that $B_k$ is the highest vertex, i.~e.,
\[
B_1\subset B_2\subset \cdots \subset B_k \supset B_{k+1} \supset \cdots \supset B_r.
\]
Then we use coordinates such that
\[
z_b = \frac 1 {r_{B_1} \cdots r_{B_{k}}}  \left( z^{(k)}_{B_{k+1}}  + r_{k+1}\left( z^{(k+1)}_{B_{k+2}}  + r_{B_{k+2}}\left(\cdots  + z_b^{r}\right) \right) \right).
\]

\subsection{The torus actions}
To each tree $i$ as above we will assign a local action of a torus $T_i\cong (S^1)^m$, where $m$ is the number of non-leaf vertices minus one. We will assign commuting $S^1$ actions to each non-leaf vertex $B_j\neq *$.
Concretely, the action is merely by rotating the coordinates $z_{B}^{(j)}$ assigned at that vertex. Clearly the normalization conditions are untouched and the various $S^1$ actions commute since they operate on disjoint sets of variables.

Note also that these actions are bundle actions, i.~e., the configuration of all external points is left unaltered.

\subsection{Charts and a partition of unity}
In \cite{ARTW} the next step is to define subsets $U_i$ satisfying the conditions of section \ref{sec:regstokes}, and an invariant partition of unity.
We leave it to the reader to verify that the construction of these data from \cite{ARTW} goes through in the present slightly modified setting, and hence the regularized Stokes' Theorem can be applied.

\subsection{Proof of Lemma \ref{lem:a12exists}}\label{app:thetaextendsproof}
In this subsection we will show Lemma \ref{lem:a12exists}, asserting that the fiberwise top degree parts of the differential forms 
$$
\widetilde\theta^{\frac 1 2}_\Gamma
=
\sum_{e\in E(\Gamma)} (-1)^{e-1} \frac i \pi 
\log|z_{s(e)}- z_{t(e)}|
\prod_{e'\neq e\in E(\Gamma)} \frac 1 {2\pi} d\arg(z_{s(e')}-z_{t(e')})
$$
extend to the boundary of the fiber of $\overline C_{k,n}^f$.
Note that this is true for all $d\arg(\dots)$-factors, as the only potentially singular term is contributed by the logarithm.
We claim that in fact the singularity is cancelled.
Consider a tree $i$, a chart $U_i$ and a vertex $B$ of the tree. We need to check that there is no singularity in the corresponding coordinate $r_B$ as $r_B\to 0$.
Consider first the case of $B$ being a non-infinite vertex, i.~e., it describes a subset of collapsing points.
Consider three types of edges:
\begin{itemize}
 \item Edges with both endpoints in $B$. They may contribute a singularity
 \[
  \log r_B +(\text{terms regular in $r_B$}).
 \]
 Edges of this type are the only ones that may contribute to the singularity, 
 \item Edges with one endpoint (say $w$) in $B$ and one (say $z$) in the complement contribute a term
 \[
  d\arg(z-w)= d\arg(z-\zeta_B) + r_B(\cdots).
 \]
 \item Edges with both endpoints in the complement of $B$ are not important for the discussion.
\end{itemize}
We call the above subsets of edges (in this order) $E_3(\Gamma)$, $E_2(\Gamma)$, $E_1(\Gamma)$.
We want to show that the fiberwise top degree part of the form $\widetilde\theta^{\frac 1 2}_\Gamma$ is regular, or equivalently that $\prod_{B'}\iota_{v_B'}\widetilde\theta^{\frac 1 2}_\Gamma$ is regular. Collecting potentially singular terms we compute
\begin{align*}
 \iota_{v_B}\widetilde\theta^{\frac 1 2}_\Gamma
 &=
 \iota_{v_B}\left( \sum_{e\in E(\Gamma)} (-1)^{e-1} \frac i \pi 
\log(r_B)
\prod_{e'\in E_1(\Gamma)} \frac 1 {2\pi} d\arg(z_{s(e')}-z_{t(e')})
\prod_{(z,w)=e'\in E_2(\Gamma)} \frac 1 {2\pi} d\arg(z-\zeta_B)
\right.\\&\quad\quad\quad\quad\quad\quad\left.
\prod_{e'\neq e\in E_3(\Gamma)} \frac 1 {2\pi} d\arg(z_{s(e')}-z_{t(e')})\right)
+(\text{terms regular in $r_B$})
\\
&=
 \sum_{\substack{e, \hat e\in E(\Gamma)\\ e\neq \hat e}} \psgn{e,\hat e} \frac i \pi 
\log(r_B)
\prod_{e'\in E_1(\Gamma)} \frac 1 {2\pi} d\arg(z_{s(e')}-z_{t(e')})
\prod_{(z,w)=e'\in E_2(\Gamma)} \frac 1 {2\pi} d\arg(z-\zeta_B)
 \\ & \quad\quad\quad\quad\quad\quad 
\prod_{e'\in E_3(\Gamma)\smallsetminus\{e,\hat{e}\}} \frac 1 {2\pi} d\arg(z_{s(e')}-z_{t(e')})
+(\text{terms regular in $r_B$})
\\
&= 0 + (\text{terms regular in $r_B$})
\end{align*}
where we used the antisymmetry of the summand under interchange of vertices $e$ and $\hat e$.
This shows that there is no singularity in $r_B$ for $B$ non-infinite.

Consider now $B$ infinite. The corresponding boundary stratum is obtained as the (necessarily internal) points in the complement of $B$ tend to infinity, while the points in $B$ stay at ``finite distance'' around their center of mass $\zeta_B$. The $S^1$ action corresponding to $B$ is by rotating the points in the complement of $B$ around $\zeta_B$.

Again, we distinguish three kinds of edges.
\begin{itemize}
 \item Edges with both endpoints in $B$. They cannot contribute a singularity as the locations of the endpoints are independent of $r_B$.
 \item Edges with one endpoint (say $w$) in $B$ and one (say $z=:\zeta_B+\frac 1 r_B Z$) in the complement contribute a term
 \[
  d\arg(z-w)=d\arg(\frac 1 {r_B}(Z  +r_B(\zeta_B-w) )) =d\arg(Z) + r_B(\cdots).
 \]
 \item Edges with both endpoints in the complement of $B$ are the only terms who can contribute a singularity of the formal
  \[
   -\log(r_B) +(\text{terms regular in $r_B$}).
  \]
\end{itemize}
One sees that the contributions are similar to those for non-infinite $B$, except that the roles of $E_1$ and $E_3$ are interchanged, and except for an unimportant minus sign.
Hence we see by the same argument as before that the form $\iota_{v_B}\widetilde\theta^{\frac 1 2}_\Gamma$ is regular in $r_B$.

We conclude that $\prod_{B'}\iota_{v_B'}\widetilde\theta^{\frac 1 2}_\Gamma$ is regular in all variables $r_B'$.
\hfill\qed

\subsection{Proof of Lemma \ref{lem:extendstoX}}\label{app:extendstoXproof}
Our next goal is to show Lemma \ref{lem:extendstoX}, i.~e., that for $\Gamma\in \sder_3$ a graph with $n\geq 1$ internal vertices the form $\hat\theta_\Gamma$ extends to the compactification $X_n$, using the notation of section \ref{ss-8-3}.
In fact, Lemma \ref{lem:extendstoX} is almost a special case of Lemma \ref{lem:a12exists} shown in the previous subsection. The only new feature is that the base space is now compactified, i.~e. the point $x$ may collapse to either 0 or 1.

Since $X_n\subset \overline C_{n+3}$ we may again use the familiar system of charts $U_i$ on $\overline C_{n+3}$ from section \ref{sec:3-4}. Fix some nested family $i$ and a subset $B\in i$. Our goal is to show that the differential form $\widetilde\theta_\Gamma^{\frac 1 2}$ is regular in the coordinate $r_B$. In fact, the only non-regular term in the definition of $\widetilde\theta_\Gamma^{\frac 1 2}$ is the function $\log|z-w|$ associated to some edge. As in the previous subsection we consider separately the cases of $B$ being an ``infinite'' vertex or not.
If $B$ is infinite or if $B$ contains at most one external point, then the argument of the previous subsection shows that there is no singularity in $r_B$. The only new case is that $B$ contains exactly two external vertices, necessarily the vertex at $x$ and the one at $0$ or $1$. Without loss of generality let us assume that $B$ contains the external vertices at $x$ and $0$.
We will distinguish three types of edges.

\begin{itemize}
  \item Edges with no endpoints in $B$. They do not contribute to the singularity.
  
  \item Edges with with exactly one endpoint in $B$ and one endpoint, say $z$ in the complement. Their contribution is 
  \[
   \frac 1 {2\pi} d\arg(z-\zeta_B) +r_B(\cdots).
  \]
  \item Edges with both endpoints in $B$. One of these edges can contribute a factor 
  \[
   \frac 1 {\pi i} \log(r_B) + (\text{terms regular in $r_B$})
  \]
  to the singularity.
 \end{itemize}
We call the corresponding subsets of edges $E_j$, $j=1,2,3$.
Let first collect the potentially singular terms in $\widetilde \theta_\Gamma$, omitting an unimportant prefactor:
\[
\widetilde \theta_\Gamma^{\frac 1 2} = 
 \log(r_B)\sum_{e'\in E_3} (-1)^{e'-1}
 \prod_{e\in E_1} d\arg(z_{s(e)}-z_{t(e)})
 \prod_{e\in E_2} d\arg(z_{s(e)}-\zeta_B)
 \prod_{e\in E_3\smallsetminus\{e'\}}d\arg(z_{s(e)}-z_{t(e)}) + (\text{terms regular in $r_B$}).
\]
Here we understand that for $e\in E_2$ the endpoint $s(e)$ is the one not in $B$.
Note that there are $|E_1|+|E_2|$ one-form factors which depend only on the configuration of the points not in $B$ and $\zeta_B=\frac x 2$.
Note furthermore that the forms associated to these one-form factors are basic under scaling. Hence by degree reasons the singular part vanishes if $|E_1|+|E_2|> 2k$, where $k$ is the number of internal vertices in the complement of $B$.
However, since the graph $\Gamma$ is an internally trivalent tree $|E_1|+|E_2|\geq 2k+1$, unless there are no internal vertices in the complement of $B$ and $E_1=E_2=\emptyset$. But this may happen only if the third external vertex in $\Gamma$ has valence 0.

Next let us analyze the potential singularity of $\hat \theta_\Gamma$ as defined in \eqref{equ:hattheta1}, \eqref{equ:hattheta2}.
If $\Gamma$ is such that both the first and third external vertex have valence $\geq 1$ then $\hat \theta_\Gamma=\widetilde \theta_\Gamma$, and by the above discussion there is no singularity. 

Suppose the first external vertex in $\Gamma$ has valence 0. Then, for $B$ as considered above, the additional term in \eqref{equ:hattheta2} does not contain any divergent factor, hence again $\hat \theta_\Gamma$ does not have a singularity in $r_B$.

Finally suppose that the third external vertex in $\Gamma$ has valence 0. Then the term subtracted in \eqref{equ:hattheta1} contributes to the singularity through the factor 
\[
\log(x)= \log(r_B)+(\text{terms regular in $r_B$}). 
\]
Concretely, the subtracted term has the following form
\begin{multline*}
\sum_{e'\in E} \frac{(-1)^{e'-1}}{\pi i} \log(x) \prod_{e\in E(\Gamma)\smallsetminus \{e'\}} \omega_e
\\=
 \log(r_B) \sum_{e'\in E(\Gamma)} (-1)^{e'-1}
 \prod_{e\in E_1\smallsetminus\{e'\}} d\arg(z_{s(e)}-z_{t(e)})
 \prod_{e\in E_2\smallsetminus\{e'\}} d\arg(z_{s(e)}-\zeta_B)
 \prod_{e\in E_3\smallsetminus\{e'\}}d\arg(z_{s(e)}-z_{t(e)}) 
 \\ + (\text{terms regular in $r_B$}).
\end{multline*}

Note that since $\Gamma$ is an internally trivalent tree with no edge connecting to the third external vertex, the product over $E_2$ must necessarliy contain at least one pair of identical forms, unless $E_2=\emptyset$. Hence the singular term vanishes, unless $E_2=\emptyset$. In that case $E_1$ is necessarily empty as well and $B$ contains all internal vertices.
But then the singular term just kills the singularity in $\widetilde \theta_\Gamma$ so that $\hat \theta_\Gamma$ has no singularity in $r_B$.
\hfill\qed

\subsection{Regularizability of the weight forms}\label{app:8-10proof}
Our final goal in this Appendix is to prove Proposition \ref{prop:8-10} about the regularizability of the weight forms occurring in the proof of Proposition \ref{p-flat-gauge-AT-bound}.
The arguments are mostly copies of those leading to Proposition \ref{prop:regularizable} and Theorem \ref{thm:factoring}. Some steps even simplify since in our case $\Gamma$ is an internally trivalent tree, as opposed to a general graph. First, as in Proposition \ref{prop:splitting} consider a chart corresponding to a tree $i$ and a vertex $B$ thereof. We denote by $v_B$ the vector field generating the $S^1$ action. Suppose first that $B$ is a non-infinite vertex, i.~e., not an ancestor of vertex $*$. Then, by the same arguments as in the proof of Proposition \ref{prop:splitting} the weight form $\tilde \theta_\Gamma^t$ has an expansion
\[
\tilde \theta_\Gamma^t =  \frac{dr_B}{r_B}\wedge \tilde\alpha^t +  \log(r_B) \hat\alpha^t + (\text{terms regular in $r_B$})
\]
for some forms $\tilde\alpha^t$ and  $ \hat\alpha^t $ which are $v_B$-basic. In fact, as in the proof of Proposition \ref{prop:splitting} one may give explicit formulas for $\tilde\alpha^t$ and  $ \hat\alpha^t $.  

If $B$ is an infinite vertex, the argument is similar.
 
Given a splitting formula for the weight forms as above, the rest of the arguments leading to Theorem \ref{thm:factoring}. go through unchanged, if applied to the form $\psi \theta_\Gamma^t$, for $\psi$ a compactly supported test $2k-3$-form on $\underline C_k$ as in the proof of Proposition \ref{p-flat-gauge-AT-bound}.
We find that the form $\psi \theta_\Gamma^t$ is regularizable, and that the regularizations at the boundary strata satisfy a formula analogus to that in Theorem \ref{thm:factoring}.
However, note that the graph $\Gamma$ is an internally trivalent tree.
Hence, a subgraph $\Gamma$ corresponding to some $B$ as above has to be an at most internally trivalent forest.
The associated form has too low degree unless the subgraph consists of exactly two vertices. Hence we have shown Proposition \eqref{prop:8-10}.

\section{Explicit computation of the simplest integral weight for the graph cocycle \texorpdfstring{$x^t$}{xt}}\label{app-coc}
It is well-known that the simplest, non-trivial cocycle of degree $0$ in $\GC$ is the tetrahedron graph depicted in Figure~\ref{fig:tetrahedron} (left).
We want to compute the corresponding integral weight~\eqref{equ:xtweightdef} directly as a consistency check.

By the computations in Subsection~\ref{ss-6-2}, we know that, up to polynomials in $t$, $x^t=x^{\frac 1 2}$. 
The result is the following:
\begin{Prop}\label{p-w-tetra}
The integral weight~\eqref{equ:xtweightdef} of the tetrahedron graph in $x^{\frac 1 2}$ is a non-zero rational multiple of 
\[
\frac{\zeta(3)}{\pi^3}.
\]
(The rational pre-factor can be recovered {\em e.~g.} from the calculation below.)
\end{Prop}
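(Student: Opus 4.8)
The plan is to reduce the weight to a concrete (iterated) integral over a configuration space of points in $\mathbb C\smallsetminus\{0,1\}$ and then evaluate it. By the scaling identity in \eqref{equ:xtweightdef} one has $c^t_\Gamma=(4t(1-t))^{|V(\Gamma)|-2}c^{1/2}_\Gamma$, so it suffices to compute at $t=\tfrac12$ (for the tetrahedron $|V(\Gamma)|=4$, so the prefactor equals $1$ at $t=\tfrac12$). Recall also from Subsection~\ref{ss-7-3} that deleting the edge between two (necessarily adjacent) vertices $1,2$ of the tetrahedron and declaring $1,2$ external yields precisely the graph in $\sder_2$ representing $\sigma_3$; hence, up to the rational bookkeeping of the map $\phi$, the number $c^{1/2}_{\text{tetra}}$ is the leading ($\sigma_3$-)coefficient of $\tau^{1/2}=\phi(x^{1/2})\in\grt_1$. (Equivalently, one could extract this coefficient from the $k=2$, two–internal–vertex part of the Alekseev--Torossian connection, which by the explicit computation of Subsection~\ref{sss-8-1-1} is governed by forms like $\tfrac{\log|1-z|}{z}+\tfrac{\log|z|}{1-z}$; I will use the graph-cohomological route as the primary one.)

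First I would apply the reduced formula \eqref{eq-ctgammasimpl}. Fix a total order on the six edges with the edge $(1,2)$ first, and use the isomorphism $C_4/S^1\cong\mathrm{Conf}_2(\mathbb C\smallsetminus\{0,1\})$ of Subsection~\ref{ss-3-2} (place the marked vertices at $z_1=0$, $z_2=1$, and call the remaining two $w:=z_3$, $z:=z_4$). Then
\[
c^{1/2}_{\text{tetra}}=-\sum_{e\neq(1,2)}(-1)^{e-1}\int_{\mathrm{Conf}_2(\mathbb C\smallsetminus\{0,1\})}\beta_e\,\omega^{1/2}_{\text{tetra}\smallsetminus\{e,(1,2)\}},
\]
a sum of five integrals, each of a single factor $\log|\cdot|$ against a product of four elementary one-forms over the $4$-real-dimensional space $\mathrm{Conf}_2(\mathbb C\smallsetminus\{0,1\})$. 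The residual $\mathfrak S_2\times\mathfrak S_2$ symmetry of the graph $\text{tetra}\smallsetminus\{(1,2)\}$ (interchanging the two external and, independently, the two internal vertices) collapses these five terms into two model integrals: one with $e=(3,4)$ the internal edge, and one with $e$ an external–internal edge, occurring four times, whose signs $(-1)^{e-1}$ must be checked to combine coherently.

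Next I would carry out each model integral by iterated integration, doing the inner integral over $w$ first. After expanding the product of one-forms into $d\mathrm{arg}$-type factors, splitting the $w$-plane into the regions $|w|<1$ and $|w|>1$, and applying the involution $w\mapsto 1/w$ exactly as in Subsection~\ref{sss-8-1-1}, the inner integral reduces to a finite $\mathbb Q$-linear combination of the elementary integrals $\int_{|w|<1}w^m\,d\mathrm{arg}(w)\,d\mathrm{arg}(w-1)$ evaluated in the proof of Proposition~\ref{p-polylog}. The remaining integral over $z$ then assembles these into a (single or double) polylogarithm, which at the relevant boundary value of its argument specializes to $\zeta(3)$; all lower-weight constituents (e.g.\ $\zeta(2)$ times logarithms, or products of lower $\zeta$-values) must cancel between the two model integrals, as they must since the sum is a pure degree-$3$ element of $\grt_1$. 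Tracking the normalizations — four factors $\tfrac1{2\pi i}$ from the one-forms, one factor $\tfrac{i}{\pi}$ from $\beta_e$, and two factors of $\pi i$ recovered from the two elementary $w$- and $z$-integrations — gives a net power $\pi^{-3}$ with a rational remaining constant; thus $c^{1/2}_{\text{tetra}}$ is a rational multiple of $\zeta(3)/\pi^3$.

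For non-vanishing, the explicit rational prefactor produced by the computation above is directly seen to be nonzero; this is also consistent with the classical fact that the tetrahedron spans the one-dimensional degree-$3$ part of $H^0(\GC)$, corresponding under $H^0(\GC)\cong\grt_1$ to the line through $\sigma_3\neq0$. I expect the main obstacle to lie in the second and third steps: the sign-and-symmetry bookkeeping that reduces the five boundary-type terms to two model integrals, and then the honest reduction of each model integral — through the $w\mapsto1/w$ inversion and the elementary $w^m$-integrals — to the $\zeta(3)$ evaluation, while verifying that the spurious lower-weight pieces indeed cancel.
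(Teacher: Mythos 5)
Your reduction is essentially the paper's: you pass to $t=\tfrac12$, use \eqref{eq-ctgammasimpl} with the omitted edge $(1,2)$ fixed at $0,1$, and group the five remaining choices of the $\log$-edge by the $\mathfrak S_2\times\mathfrak S_2$ symmetry into one ``internal-edge'' term and four equal ``external--internal'' terms — exactly the type I/type II split of Figure~\ref{fig:tetrahedron}. Where you diverge is the evaluation: the paper does \emph{not} compute two model integrals. It first proves (Lemma~\ref{l-reduction}, a Stokes/integration-by-parts identity for $\log(|w|)\log(|z-w|)\,d\log|z|\,d\log|z-1|\,d\log|w-1|$, combined with the holomorphic/antiholomorphic degree count that converts $d\log|\cdot|$-products into $d\arg$-products) that the type I contribution equals the type II one, so the weight is just $5$ times a single integral; that integral is then done by your method — inner integration giving $F(w)=\tfrac{2}{\pi^2}\Im(\Li_2(w)+\log|w|\log(1-w))$ (Proposition~\ref{p-polylog}), then the outer integral via $|w|\lessgtr1$, the inversion $w\mapsto1/w$ and the dilogarithm inversion formula, yielding $-\tfrac{1}{4\pi^3}\zeta(3)$. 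Your plan to evaluate the type II integral directly is viable but heavier (there one internal point carries three angle forms, so the inner integration is messier), and two of your supporting remarks are shaky: the inner integral is an \emph{infinite} series in the elementary integrals $\int_{|w|<1}w^m\,d\arg(w)\,d\arg(w-1)$, not a finite $\mathbb Q$-linear combination; and the assertion that lower-weight pieces ``must cancel since the sum is a pure degree-$3$ element of $\grt_1$'' is not an argument available at this point (the weight is a single number, and no weight-purity statement is set up) — though it is also not needed, since each model integral individually turns out to be a rational multiple of $\zeta(3)/\pi^3$. Finally, the paper's route buys a cheap non-vanishing check independent of the exact value: $F(\bar w)=-F(w)$ and $F>0$ on the upper half-plane, while $d\phi\,d\psi$ has a definite sign on each half-plane, so the integral is manifestly nonzero; your appeal to ``the prefactor is directly seen to be nonzero'' presupposes the completed computation.
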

To compute the integral weight~\eqref{equ:xtweightdef} for the tetrahedron graph $\Gamma$, we use the section of $\mathrm{Conf}_4\to C_4/S^1$ which identifies $C_4/S^1$ with $\mathrm{Conf}_2(\mathbb C\smallsetminus\{0,1\})$ as before. Here the first two points in a configuration in $\mathrm{Conf}_4$ are set to be $0$ and $1$, and we denote by $z$ and $w$ the remaining 2 points.

Consider the integrand \eqref{equ:tildebetadef}. In the sum over edges $e$, the only contributing term is $e=(1,2)$, since we fixed the position of the first and second point. In the sum over edges $e'$ multiple terms can contribute.
In fact, there are 2 possible types of terms, which we call type I and II, see Figure \ref{fig:tetrahedron}.

First of all, let us consider the two involutions $(z,w)\mapsto (w,z)$ and $(z,w)\mapsto (1-z,1-w)$ of $\mathrm{Conf}_2(\mathbb C\smallsetminus\{0,1\})$: it is easy to see that the four contributions of type II are related to each other by means of these involution or their composition.

We leave it to the reader to show the following Lemma using Stokes' Theorem (all boundary contributions vanish).
\begin{Lem}\label{l-reduction}
The form 
\[
\alpha=\log(|w|)\log(|z-w|)d\log(|z|)d\log(|z-1|)d\log(|w-1|)
\]
satisfies
\[
\int_{\mathrm{Conf}_2(\mathbb C\smallsetminus\{0,1\})}d\alpha=0.
\]
\end{Lem}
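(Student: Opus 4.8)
The plan is to deduce the vanishing of $\int_{\Conf_2(\mathbb C\smallsetminus\{0,1\})}d\alpha$ from the regularized Stokes' Theorem (Theorem~\ref{thm:regstokes}), applied to the Fulton--MacPherson type compactification of $\Conf_2(\mathbb C\smallsetminus\{0,1\})\cong C_4/S^1$ recalled in Section~\ref{s-3}. Note first that $\alpha$ is a $3$-form and $d\alpha$ a top-degree ($4$-)form with at most logarithmic, hence integrable, singularities along the boundary, so the left-hand integral converges. One checks that $\alpha$ is regularizable exactly as in Propositions~\ref{prop:splitting}--\ref{prop:regularizable}: in the local coordinates of Subsection~\ref{sec:3-4} the only potentially singular terms are of the form $\frac{dr_B}{r_B}$ or a power of $\log r_B$, and regularizability may be verified in each variable $r_B$ separately (cf.\ the Remark following Proposition~\ref{prop:regularizable}). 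The only new feature compared with the forms $\widetilde\omega^t_\Gamma$ treated in the body of the paper is that $\alpha$ carries \emph{two} logarithmic factors rather than one, which forces the counterterms to be allowed to contain $(\log r_B)^2$; this causes no real difficulty. One then obtains
\[
\int_{\Conf_2(\mathbb C\smallsetminus\{0,1\})}d\alpha=\int_{\partial(\overline C_4/S^1)}\Reg(\alpha)=\sum_{B}\int_{\partial_B}\Reg(\alpha),
\]
the sum running over the codimension $1$ boundary faces.

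Next I would enumerate the faces and show that each boundary contribution vanishes. Writing $z,w$ for the two moving points and $0,1,\infty$ for the punctures, the codimension $1$ faces correspond to a proper subset consisting of some of $z,w$ together with at most one puncture collapsing; each such face is a product $S^1\times\Conf_\bullet(\mathbb C\smallsetminus\{0,1\})$, the $S^1$ recording the infinitesimal shape of the collapsing cluster. In every case the restriction of $\Reg(\alpha)$ to the face is zero, by one of the following degree-counting mechanisms, familiar from Kontsevich's vanishing arguments and from the proofs of Proposition~\ref{prop:tildeomega} and Theorem~\ref{thm:factoring}: (a) two of the three one-forms $d\log|z|,\ d\log|z-1|,\ d\log|w-1|$ restrict to proportional forms --- for instance $d\log|z-1|$ and $d\log|w-1|$ agree when $z$ and $w$ collapse onto each other; (b) one of these one-forms restricts to zero on the face --- for instance $d\log|z-1|$ restricts to zero on the faces $z\to 0$, $z\to 1$, $z\to\infty$, since there $\log|z-1|$ becomes constant along the face and $d\log|z-1|$ survives only in the transverse $\frac{dr_B}{r_B}$ direction; or (c) all three one-forms pull back from the $2$-dimensional configuration-space factor of the face, so that their triple wedge --- a $3$-form on a $2$-manifold --- vanishes identically. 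On the faces where the $S^1$-fibre carries form degree one additionally invokes Kontsevich's vanishing lemma (\cite{K}*{Lemma 6.6}, or \cite{K3}*{Lemma 2.2}) exactly as in \emph{loc.\ cit.}; the logarithmic prefactors $\log|w|$ and $\log|z-w|$ play no role here, being transverse to the directions in which the degree count fails.

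The main obstacle will be bookkeeping: $\alpha$ has one logarithmic factor more than the differential forms systematically treated in Sections~\ref{s-3}--\ref{s-6}, so the regularizability statement and the computation of $\Reg(\alpha)$ on the faces have to be re-derived rather than quoted verbatim. However, this is a routine extension of Propositions~\ref{prop:splitting}--\ref{prop:bdry1} and Theorem~\ref{thm:factoring}: no new geometric phenomena arise, and --- crucially --- the degree counts that kill the boundary terms are insensitive to the logarithmic prefactors, so the arguments go through unchanged. Consequently each $\int_{\partial_B}\Reg(\alpha)$ vanishes and $\int_{\Conf_2(\mathbb C\smallsetminus\{0,1\})}d\alpha=0$.
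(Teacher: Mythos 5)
Your overall strategy is the one the paper intends (the paper itself only offers the hint ``use Stokes' Theorem, all boundary contributions vanish''), and your face-by-face analysis of the boundary restrictions is essentially sound. The genuine gap is the step where you claim that $\alpha$ is regularizable ``exactly as in Propositions~\ref{prop:splitting}--\ref{prop:regularizable}'', so that Theorem~\ref{thm:regstokes} applies off the shelf. In those propositions the counterterms come out $\xi_i$-basic only because of the antisymmetrization over edges (the double sums over $e',e''$ whose summands cancel in pairs); that mechanism is not available for the single product $\alpha$, and strict regularizability in the sense of Subsection~\ref{sec:regstokes} in fact fails. Concretely, at the face where $w$ collides with $0$, writing $w=re^{i\theta}$ one finds, modulo terms that extend,
\[
\alpha \;=\; -\cos\theta\,\log r\,\log|z|\;d\log|z|\wedge d\log|z-1|\wedge dr\;+\;O(r\log r),
\]
and similarly at the face $z\to w$ the coefficient of $\log r$ is proportional to $\mathrm{Re}\bigl(e^{i\theta}/(\zeta-1)\bigr)\,d\log|\zeta|\wedge d\log|\zeta-1|\wedge dr$, with $\zeta$ the cluster's centre of mass. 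These divergent coefficients depend on the cluster angle $\theta$, whereas a $\xi_i$-basic counterterm is horizontal and invariant, hence $\theta$-independent; so no admissible counterterm can make $\alpha$ minus the counterterm extend to these faces. Thus the hypothesis of the regularized Stokes' Theorem is not verified by the route you cite, and ``no new geometric phenomena arise'' is precisely where the argument breaks as written.

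The gap is reparable, and your estimates already contain the repair: every offending divergent term is proportional to $dr_B$, so it dies upon pullback to the level sets $r_B=\epsilon$, and the tangential pullbacks of $\alpha$ to all codimension-one faces either vanish identically (on the faces that are level sets of $|z|$, $|z-1|$ or $|w-1|$, i.e.\ $z\to0,1,\infty$ and $w\to1$) or are $O(r_B\log^2 r_B)$ (at $z\to w$, $w\to0$, $w\to\infty$ and the triple collapses). Hence the honest proof is an ordinary cutoff Stokes argument, $\int_{M_\epsilon}d\alpha=\int_{\partial M_\epsilon}\alpha$, with each boundary piece contributing $O(\epsilon\log^2\epsilon)$ and with absolute integrability of $d\alpha$ (which does follow from the considerations of Proposition~\ref{prop:tildeomega}) justifying the limit on the left; if you insist on the ARTW formalism you must prove a genuinely new regularizability-type statement adapted to $\alpha$ rather than quote Propositions~\ref{prop:splitting}--\ref{prop:regularizable}. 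A small additional point: no face actually needs Kontsevich's vanishing lemma --- every boundary contribution vanishes pointwise on the face, not merely after integration.
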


Together with Leibniz' rule, we hence find that
\begin{multline*}
 \int_{\mathrm{Conf}_2(\mathbb C\smallsetminus\{0,1\})}\log(|z-w|)d\log(|w|)d\log(|z|)d\log(|z-1|)d\log(|w-1|)
 \\=
 \int_{\mathrm{Conf}_2(\mathbb C\smallsetminus\{0,1\})}\log(|w|)d\log(|z-w|)d\log(|z|)d\log(|z-1|)d\log(|w-1|).
\end{multline*}
But since $\mathrm{Conf}_2 (\mathbb C\smallsetminus\{0,1\})$ is a complex manifold and hence only terms with an equal number of holomorphic and antiholomorphic form components contribute (cf. \cite{K}*{section 6.6.1}),
\begin{align*}
\log(|z-w|)d\log(|w|)d\log(|z|)d\log(|z-1|)d\log(|w-1|) &= \log(|z-w|)d\arg(|w|)d\arg(|z|)d\arg(|z-1|)d\arg(|w-1|) \\
 \log(|w|)d\log(|z-w|)d\log(|z|)d\log(|z-1|)d\log(|w-1|) &= \log(|w|)d\arg(|z-w|)d\arg(|z|)d\arg(|z-1|)d\arg(|w-1|).
\end{align*}
The right hand sides are the integrands for the type I and for the fourth contribution of type II to $c_\Gamma^{\frac 1 2}$, respectively.
Hence the contribution of type I equals the fourth contribution of type II and we may just compute the contribution of type I and multiply it by 5 to get the desired result $c_\Gamma^{\frac 1 2}$.

\begin{figure}
\begin{tikzpicture}[scale=.5]
\tikzstyle{ext}=[circle, draw, minimum size=5, inner sep=0]
\tikzstyle{int}=[circle, draw, fill, minimum size=5, inner sep=0]

\node [int] (v4) at (-6,-0.5) {};
\node [int] (v1) at (-6,1.5) {};
\node [int] (v2) at (-4,1.5) {};
\node [int] (v3) at (-4,-0.5) {};
\node [int, label=-90:{$0$}] (v5) at (-2,-0.5) {};
\node [int, label=90:{$z$}] (v6) at (-2,1.5) {};
\node [int, label=90:{$w$}] (v8) at (0,1.5) {};
\node [int, label=-90:{$1$}] (v7) at (0,-0.5) {};
\draw  (v1) edge (v2);
\draw  (v2) edge (v3);
\draw  (v1) edge (v3);
\draw  (v4) edge (v1);
\draw  (v4) edge (v2);
\draw  (v4) edge (v3);
\draw  (v5) edge (v6);
\draw  (v6) edge (v7);
\draw  (v7) edge (v8);
\draw  (v8) edge (v6);
\draw  (v5) edge (v8);
\begin{scope}[shift={(1,0.5)}]
\node [int, label=-90:{$0$}] (v5_1) at (2,-1) {};
\node [int] (v6_1) at (2,1) {};
\node [int] (v8_1) at (4,1) {};
\node [int, label=-90:{$1$}] (v7_1) at (4,-1) {};
\draw  (v5_1) edge (v6_1);
\draw  (v6_1) edge (v7_1);
\draw  (v7_1) edge (v8_1);
\draw  (v8_1) edge (v5_1);
\draw  (v6_1) edge[dashed] (v8_1);
\end{scope}

\begin{scope}[shift={(7,2)}, scale=0.6]
\node [int, label=-90:{$0$}] (v5_1) at (2,-1) {};
\node [int] (v6_1) at (2,1) {};
\node [int] (v8_1) at (4,1) {};
\node [int, label=-90:{$1$}] (v7_1) at (4,-1) {};
\draw  (v5_1) edge (v6_1);
\draw  (v6_1) edge[dashed]  (v7_1);
\draw  (v7_1) edge (v8_1);
\draw  (v8_1) edge (v5_1);
\draw  (v6_1) edge(v8_1);
\end{scope}

\begin{scope}[shift={(10,2)}, scale=0.6]
\node [int, label=-90:{$0$}] (v5_1) at (2,-1) {};
\node [int] (v6_1) at (2,1) {};
\node [int] (v8_1) at (4,1) {};
\node [int, label=-90:{$1$}] (v7_1) at (4,-1) {};
\draw  (v5_1) edge (v6_1);
\draw  (v6_1) edge (v7_1);
\draw  (v7_1) edge[dashed] (v8_1);
\draw  (v8_1) edge (v5_1);
\draw  (v6_1) edge (v8_1);
\end{scope}

\begin{scope}[shift={(10,-1)}, scale=0.6]
\node [int, label=-90:{$0$}] (v5_1) at (2,-1) {};
\node [int] (v6_1) at (2,1) {};
\node [int] (v8_1) at (4,1) {};
\node [int, label=-90:{$1$}] (v7_1) at (4,-1) {};
\draw  (v5_1) edge (v6_1);
\draw  (v6_1) edge (v7_1);
\draw  (v7_1) edge (v8_1);
\draw  (v8_1) edge[dashed] (v5_1);
\draw  (v6_1) edge (v8_1);
\end{scope}

\begin{scope}[shift={(7,-1)}, scale=0.6]
\node [int, label=-90:{$0$}] (v5_1) at (2,-1) {};
\node [int] (v6_1) at (2,1) {};
\node [int] (v8_1) at (4,1) {};
\node [int, label=-90:{$1$}] (v7_1) at (4,-1) {};
\draw  (v5_1) edge[dashed] (v6_1);
\draw  (v6_1) edge (v7_1);
\draw  (v7_1) edge (v8_1);
\draw  (v8_1) edge (v5_1);
\draw  (v6_1) edge (v8_1);
\end{scope}

\node at (-5,-3) {(a)};
\node at (-1,-3) {(b)};
\node at (4,-3) {Type I};
\node at (10.5,-4) {Type II};
\end{tikzpicture}
\caption{\label{fig:tetrahedron} (a) The tetrahedron graph $\Gamma$, (b) after contracting by the vector field generating the infinitesimal $S^1$-action and fixing two vertices at 0,1, all contributions come from this graph. 
The replacement of each of the remaining edges by the function $\log|\cdot - \cdot|$, represented by the dashed edge, yields 5 possible graphs. 
The type II contributions are all equal by symmetry; it is furthermore shown in Lemma~\ref{l-reduction} that the contribution of the type I graph is the same as that of any graph of type II.}
\end{figure}
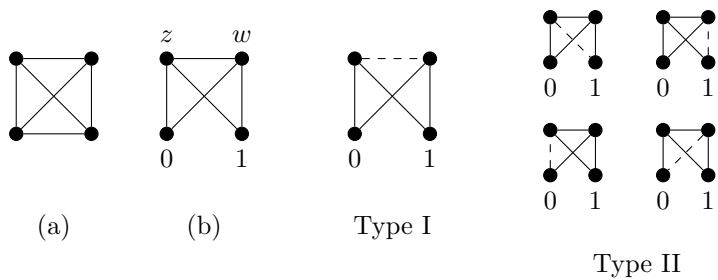

\begin{Prop}\label{p-polylog}
The function $F(w)$ defined by the following graph:\footnote{The three labeled vertices are to be kept fixed and the unlabeled vertex is to be integrated over.}
\[
\begin{tikzpicture}[scale=.5]
\tikzstyle{ext}=[circle, draw, minimum size=5, inner sep=0]
\tikzstyle{int}=[circle, draw, fill, minimum size=5, inner sep=0]
\node [int, label=-90:{$0$}] (v1) at (-2,0) {};
\node [int, label=-90:{$1$}] (v3) at (1,0) {};
\node [int] (v2) at (-0.5,1) {};
\node [int, label=180:w] (v4) at (-1,2.5) {};
\draw  (v1) edge (v2);
\draw  (v2) edge (v3);
\draw  (v2) edge[dashed] (v4);
\end{tikzpicture}
\]
is given by the following explicit formula:
\[
F(w) = \frac{2}{\pi^2}\Im\left(\Li_2(w) + \log(|w|)\log(1-w)\right),
\]
where $\mathfrak I$ denotes the imaginary part of a complex number.
\end{Prop}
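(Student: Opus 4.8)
The plan is to reduce $F(w)$ to the single integral already analyzed in Subsection~\ref{sss-8-1-1}, and then to pin the answer down by an ordinary differential equation in the parameter $w$. Unwinding the definition of the graph weight, $F(w)$ equals a fixed real multiple of the convergent integral
\[
G(w):=\int_{\bbC}\log|z-w|\;d\arg(z)\wedge d\arg(z-1),
\]
where $z$ ranges over $\bbC\smallsetminus\{0,1,w\}$; convergence near $z=0,1,w$ and near $z=\infty$ follows exactly as in Lemma~\ref{l-conv-fib}, the same estimates show that $G$ is real-analytic on $\bbC\smallsetminus\{0,1\}$, and $G$ is real-valued, being an integral of a product of real forms. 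The exact value of the multiplicative constant (including its sign) will be pinned down at the end of the computation, so I will not fuss over it here.

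Next I would differentiate under the integral sign. Since $\partial_w\log|z-w|=-\tfrac1{2(z-w)}$, one gets $\partial_w G(w)=-\tfrac12\int_{\bbC}\tfrac1{z-w}\,d\arg(z)\wedge d\arg(z-1)$, which is precisely the integral computed in Subsection~\ref{sss-8-1-1} (with the roles of the integration variable and the parameter exchanged). For self-containedness the proof should carry out the relevant steps, since Subsection~\ref{sss-8-1-1} in turn invokes them: writing $z=re^{i\theta}$ one checks the identity $d\arg(z)\wedge d\arg(z-1)=\frac{\sin\theta}{1-2r\cos\theta+r^2}\,dr\wedge d\theta$; expanding the geometric series $\frac1{1-2r\cos\theta+r^2}=\sum_{j,k\ge 0}r^{j+k}e^{i(j-k)\theta}$ for $r<1$ and integrating out $\theta$ yields the ``inner integrals''
\[
\int_{|z|<\rho}z^m\,d\arg(z)\wedge d\arg(z-1)=\frac{i\pi\,\rho^{2m}}{2m}\quad(m\ge 1),\qquad \int_{|z|<\rho}d\arg(z)\wedge d\arg(z-1)=0,
\]
together with the analogous evaluation of $\int_{\rho_1<|z|<\rho_2}z^{-m-1}\,d\arg(z)\wedge d\arg(z-1)$ over annuli. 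Splitting $\bbC$ according to $|z|\lessgtr 1$ (using the inversion $z\mapsto 1/z$, under which $d\arg(z)\wedge d\arg(z-1)$ transforms by an overall sign) and, inside the disk, according to $|z|\lessgtr|w|$, and then summing the geometric expansions of $\tfrac1{z-w}$ term by term, one obtains that $\partial_w G(w)$ is a constant multiple of $\frac{\log|1-w|}{w}+\frac{\log|w|}{1-w}$; the only slightly delicate point is the interchange of summation and integration, justified by uniform convergence on compact subsets of each region.

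It then remains to match this with the right-hand side. Put $D(w):=\Im\bigl(\Li_2(w)+\log|w|\log(1-w)\bigr)$; note $\Im(\log|w|\log(1-w))=\log|w|\,\arg(1-w)$, so $D$ is nothing but the Bloch--Wigner dilogarithm. A direct Wirtinger-derivative computation using $\Li_2'(w)=-\frac{\log(1-w)}{w}$, $\partial_w\log|w|=\frac1{2w}$ and $\log(1-w)+\log(1-\bar w)=2\log|1-w|$ gives
\[
\partial_w D(w)=-\tfrac1{2i}\Bigl(\tfrac{\log|1-w|}{w}+\tfrac{\log|w|}{1-w}\Bigr).
\]
Hence $\partial_w F$ and $\partial_w\bigl(\tfrac2{\pi^2}D\bigr)$ are proportional; comparing the two constant prefactors (which simultaneously fixes the normalization constant in the definition of $F$) shows the two derivatives coincide on $\{0<|w|<1\}$, hence on all of $\bbC\smallsetminus\{0,1\}$ by real-analyticity. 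Since $F$ and $\tfrac2{\pi^2}D$ are both real, taking complex conjugates gives equality of the $\partial_{\bar w}$-derivatives as well, so $F-\tfrac2{\pi^2}D$ is locally constant, hence constant on the connected set $\bbC\smallsetminus\{0,1\}$. Finally the constant is $0$: both sides tend to $0$ as $w\to 0$ — for $F$ this uses that $\int_{\bbC}\log|z|\,d\arg(z)\wedge d\arg(z-1)=0$, which follows from the orientation-reversing symmetry $z\mapsto\bar z$ (under which the integrand is preserved while the orientation flips).

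The main obstacle is the bookkeeping in the evaluation of $\int_{\bbC}\tfrac1{z-w}\,d\arg(z)\wedge d\arg(z-1)$: correctly tracking the several integration regions, the (conditional versus absolute) convergence of the boundary power series, and the orientation signs picked up under $z\mapsto 1/z$ and $z\mapsto\bar z$. Everything else — convergence of $G$, differentiation under the integral sign, the Wirtinger computation, and the analytic-continuation/constant-matching argument — is routine once that integral is in hand.
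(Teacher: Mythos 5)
Your argument is correct in substance, but it takes a genuinely different route from the paper's. The paper evaluates $\int\log|z-w|\,d\mathrm{arg}(z)\,d\mathrm{arg}(z-1)$ head-on: it splits the plane at $|z|=1$, applies the inversion $z\mapsto 1/z$, expands the logarithms in power series, integrates term by term against explicitly computed moments $\int z^{\pm m}\,d\mathrm{arg}(z)\,d\mathrm{arg}(z-1)$ over disks and annuli, resums into $\Li_2(1/w)$, and finishes with the dilogarithm inversion formula. You instead differentiate in the parameter: $\p_w G$ reduces to the integral of Subsection~\ref{sss-8-1-1}, you match it against the Wirtinger derivative of $D(w)=\Im\bigl(\Li_2(w)+\log|w|\log(1-w)\bigr)$ (your identity $\p_w D=-\tfrac1{2i}\bigl(\tfrac{\log|1-w|}{w}+\tfrac{\log|w|}{1-w}\bigr)$ is correct), conclude $G-\pi D$ is constant by reality and connectedness, and fix the constant at $w\to0$ (the conjugation antisymmetries $F(\bar w)=-F(w)$, $D(\bar w)=-D(w)$ would do this even more cheaply). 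What your route buys is that no inversion formula and no resummation into $\Li_2$ is needed; what it costs are the routine justifications of differentiation under the integral with a moving singularity and of continuity as $w\to 0$. You also correctly spotted that Subsection~\ref{sss-8-1-1} borrows its inner moment integrals from the paper's proof of Proposition~\ref{p-polylog}, so quoting it blindly would be circular, and your plan to recompute them via $d\mathrm{arg}(z)\wedge d\mathrm{arg}(z-1)=\tfrac{\sin\theta}{1-2r\cos\theta+r^2}\,dr\wedge d\theta$ and the double geometric series is exactly what is needed; the values you state are the ones consistent with the standard orientation (the paper tracks this computation only up to overall sign and constant anyway).

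One caveat, which you should repair before calling the proof complete: as phrased, your handling of the overall prefactor is circular for the statement as given. You cannot ``fix the normalization constant in the definition of $F$'' by comparing with the desired answer; that constant is dictated by the paper's weight conventions (a factor $\tfrac{i}{\pi}\log|\cdot|$ for the dashed edge, $\tfrac1{2\pi}d\mathrm{arg}(\cdot)$ for each solid edge, together with the orientation), and to land on the stated $\tfrac{2}{\pi^2}$ you must compute it from those conventions rather than read it off backwards. Without that step your argument establishes that $F$ is an explicit real constant multiple of $\Im\bigl(\Li_2(w)+\log|w|\log(1-w)\bigr)$ --- which is all that is used downstream, and the paper's own proof is equally cavalier (``up to signs and constant factors'') --- but it falls short of the literal equality claimed in the Proposition.
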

\begin{proof}
Up to signs and constant factors, we have to compute explicitly the following integral:
\[
\int_z\log(|z-w|)d\mathrm{arg}(z)d\mathrm{arg}(z-1),
\]
where the integration domain is $\mathbb C\smallsetminus\{0,1,w\}$, for $w\neq 0,1$.
Standard arguments imply that we may actually integrate over $\mathbb C$. 
We split the integration domain
\[
\int_\mathbb C\log(|z-w|)d\mathrm{arg}(z)d\mathrm{arg}(z-1)=\int_{|z|< 1}\log(|z-w|)d\mathrm{arg}(z)d\mathrm{arg}(z-1)+\int_{|z|> 1}\log(|z-w|)d\mathrm{arg}(z)d\mathrm{arg}(z-1).
\]
Using the involution $z\mapsto 1/z$ we reduce the previous integral to 
\begin{equation}\label{eq-int-log}
\begin{aligned}
\int_\mathbb C\log(|z-w|)d\mathrm{arg}(z)d\mathrm{arg}(z-1)=&\int_{|z|< 1}\left(\log(|z-\overline w|)-\log(|1-z\overline w|)\right)d\mathrm{arg}(z)d\mathrm{arg}(z-1)+\\
&+\int_{|z|< 1}\log(|z|))d\mathrm{arg}(z)d\mathrm{arg}(z-1).
\end{aligned}
\end{equation}

The second term on the right-hand side of~\eqref{eq-int-log} vanishes, because
\[
\int_{|z|< 1}\log(|z|)d\mathrm{arg}(z)d\mathrm{arg}(z-1)=-\frac{1}{2 i}\sum_{n\geq 0}\int_{|z|< 1}\log(|z|)d\arg(z)z^ndz+\frac{1}{2 i}\sum_{n\geq 0}\int_{|z|< 1}\log(|z|)d\arg(z)\overline z^nd\overline z,
\]
using the Taylor expansion for the complex logarithm. Then it is immediate to verify the vanishing of the integral by resorting to polar coordinates on $\{|z|<1\}$.

We first consider $|w|\geq 1$: then, $|zw|\geq 1$, if $|z|\geq 1$; further, let us write $\{|z|\geq 1\}=\{|z|\geq |w|\}\cup \{|w|\geq |z|\geq 1\}$.
Let us also write 
\[
\log(|z-w|)=\log(|w|)+\log\left(\left|1-\frac{z}w\right|\right),\ \log(|z-w|)=\log(|z|)+\log\left(\left|1-\frac{w}z\right|\right),
\]
on $\{|z|<|w|\}$ and $\{|w|<|z|<1\}$ respectively.

Recalling the convergent power series expansion of the complex logarithm function and the (orientation-reversing) involution $z\mapsto \overline z$ of $\mathbb C$, we find for $|w|<1$,
\[
\begin{aligned}
&\int_{|z|<1}\left(\log(|z-\overline w|)-\log(|1-z\overline w|)\right)d\mathrm{arg}(z)d\mathrm{arg}(z-1)=-\frac{1}2\sum_{n\geq 1}\frac{1}m\left(\frac{1}{w^m}+\frac{1}{\overline w^m}\right)\left(\int_{|z|<|w|}z^m d\mathrm{arg}(z)d\mathrm{arg}(z-1)\right)-\\
&-\frac{1}2\sum_{n\geq 1}\frac{1}m\left(w^m+\overline w^m\right)\left(\int_{|w|<|z|< 1}\frac{1}{z^m}d\mathrm{arg}(z)d\mathrm{arg}(z-1)\right)+\frac{1}2\sum_{n\geq 1}\frac{1}m\left(w^m+\overline w^m\right)\left(\int_{|z|<1}z^m d\mathrm{arg}(z)d\mathrm{arg}(z-1)\right).
\end{aligned}
\]
Using polar coordinates, it is easy to verify, for $|w|<1$, 
\[
\begin{aligned}
\int_{|z|<|w|}z^m d\mathrm{arg}(z)d\mathrm{arg}(z-1)&=-i\pi\ \frac{w^m \overline w^m}{2m},\ &\ \int_{|w|<|z|<1}\frac{1}{z^m}d\mathrm{arg}(z)d\mathrm{arg}(z-1)&=i\pi\ \log(|w|),\\
\int_{|z|<1}z^m d\mathrm{arg}(z)d\mathrm{arg}(z-1)&=-i\pi\ \frac{1}{2m},\ m\geq 1,
\end{aligned}
\]
whence
\[
\int_{|z|<1}\left(\log(|z-\overline w|)-\log(|1-z\overline w|)\right)d\mathrm{arg}(z)d\mathrm{arg}(z-1)=\pi\mathfrak I\left(-\Li_2\left(\frac{1}w\right)+\log(|w|)\log\left(1-\frac{1}w\right)\right).
\]
Similar computations hold true for $|w|\geq 1$.

The final result is consequence of the formul\ae\ relating $\Li_n(w)$ and $\Li_n(1/w)$, for $n$ a positive integer.
\end{proof}
One can easily see that the function $F$ computed in Proposition~\ref{p-polylog} has the following properties:
\begin{enumerate}
\item[$i)$] $F(\overline w) = -F(w)$;
\item[$ii)$] $F$ is (strictly) positive for $\Im w>0$ and strictly negative for $\Im w<0$.
\end{enumerate}

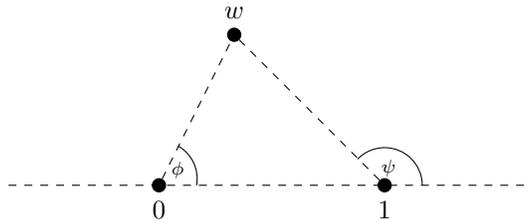
\begin{figure}
\centering
\begin{tikzpicture}
\tikzstyle{ext}=[circle, draw, minimum size=5, inner sep=0]
\tikzstyle{int}=[circle, draw, fill, minimum size=5, inner sep=0]

\draw[dashed] (-4,-1) -- (3,-1);
\node [int, label=90:{$w$}] (v2) at (-1,1) {};
\node [int, label=-90:1] (v3) at (1,-1) {};
\node [int, label=-90:0] (v1) at (-2,-1) {};
\draw  (v1) edge[dashed] (v2);
\draw  (v2) edge[dashed] (v3);
\draw (v1) +(.5,0) arc (-9.9974:60:0.5);
\draw (v3) +(.5,0) arc (0:136:0.5);

\node at (-1.7493,-0.8021) {\scriptsize  $\phi$};
\node at (1.0552,-0.7651) {\scriptsize $\psi$};
\end{tikzpicture}
\caption{\label{fig:angles} The integral to be computed in order to determine the coefficient of the tetrahedron graph is $\int F(w) d\phi(w) d\psi(w)$. }
\end{figure}
The contribution of the type I graph is an integral 
\[
\int_{w\in\mathbb C\smallsetminus\{0,1\}} F(w)d\phi(w)d\psi(w)
\]
see Figure \ref{fig:angles}. The form $d\phi(w)d\psi(w)$ is a positive volume form on the lower half-plane, and a negative volume form on the upper half-plane. 
By Properties $i)$ and $ii)$ of $F$ above we immediately see that the corresponding integral is positive.

\begin{proof}[proof of Proposition \ref{p-w-tetra}]
We have to compute the integral 
 \[
\int_w F(w) d\phi(w) d\psi(w) = 
\int_{|w|<1} F(w) d\phi(w) d\psi(w)
+
\int_{|w|>1} F(w) d\phi(w) d\psi(w).
\]
We compute the first term by using a suitable Taylor expansion of the terms in the integrand. The second term we write as 
\[
\int_{|w|>1} F(w) d\phi(w) d\psi(w)=\int_{|u|<1} F(1/u) d\phi(1/u) d\psi(1/u)
\]
by using the map $u=\frac 1 z$. 
Then, we may again write a Taylor expansion of the terms on the right-hand side and compute the integral explicitly. 

First define 
\[
f(w):=\Li_2(w)+\log|w|\log(1-w)   
\]
so that 
\[
F(w) = \frac{2}{\pi^2}\Im\left( f(w) \right).
\]
The function $f(w)$ has the following expansion for $|w|<1$:
\[
f(w) = \sum_{n\geq 1} w^n \left( \frac{1}{n^2} - \log|w| \frac{1}{n}   \right).
\]

By using the following well-known identity for the dilogarithm:\footnote{Here $B_2$ is the second Bernoulli polynomial.}
\[
\Li_2(w)+\Li_2(1/w) 
= 2\pi^2 B_2\!\left(\frac 1 2 -\frac{\log (-1/w)}{2\pi i}\right)
=-\frac{\pi^2}{6} - \frac{\log^2(-1/u)}{2}
\]
we deduce that 
\[
f(1/u) = -f(u) -\frac{\pi^2}{6} + \frac{1}{2} \log(-1/u) \log(-\bar u).
\]
The two terms on the right are real, and since we take the imaginary part of $f$, they can be omitted, i.~e., 
\[
f(1/u) = -f(u) +( \text{irrelevant}).
\]


Similarly the forms $d\phi(w) d\psi(w)$ and $d\phi(1/u) d\psi(1/u)$ can be expanded as follows for $|z|<1$ (resp. $|u|<1$):
\begin{align*}
d\phi(w) d\psi(w) &=  \frac{d w d\bar w}{16 \pi^2}\left(
\frac{1}{w(1-\bar w)} 
-
\frac{1}{\bar w(1-w)} 
\right)
=
 \frac{dwd\bar w}{16 \pi^2}
 \sum_{m\geq 0} \left( \frac{\bar w^m}{w}-\frac{w^m}{\bar w}\right)
\\
d\phi(1/u) d\psi(1/u) &= 
\frac{d u d\bar u}{16 \pi^2 u \bar u}\left(
\frac{1}{1-u} 
-
\frac{1}{1-\bar u}
\right)
=
\frac{d u d\bar u}{16 \pi^2 u \bar u}
\left(
u^m
-
\bar u^m
\right).
\end{align*}
Inserting, we have to calculate the following integral over the unit disk:
\begin{align*}
&\frac{2}{\pi^2}
\Im \int_{|z|<1}\frac{dzd\bar z}{16 \pi^2 z\bar z}
\sum_{m\geq 0}\left((1+\bar z)\bar z^m  -(1+z)z^m \right)
\sum_{n\geq 1} z^n \left( \frac{1}{n^2} - \frac{1}{n}\log|z|    \right)
\\ &=
\frac{2}{\pi^2}
\Im \int_{|z|<1}\frac{dzd\bar z}{16 \pi^2 z\bar z}
\sum_{m\geq 0}\sum_{n\geq 1}(1+\bar z)\bar z^m
 z^n \left( \frac{1}{n^2} - \frac{1}{n}\log|z|    \right)
 \\ &=
\frac{2}{\pi^2}
\Im 
(-4\pi i)
\int_{0}^1\frac{r dr}{16 \pi^2 r^2}
\sum_{n\geq 1}2 r^{2n} \left( \frac{1}{n^2} - \frac{1}{n}\log r    \right)
 \\ &=
-\frac{1}{\pi^3}
\Im 
 i
\sum_{n\geq 1} \left( \frac{1}{2 n^3} - \frac{1}{n(2n)^2} \right)
 \\ &=
-\frac{1}{4 \pi^3}
\zeta(3)
\end{align*}

This yields the desired claim.
\end{proof}

\section{Singularities of the AT connection}\label{app:ATSing}

The goal of this Appendix  is to show that the connection one-forms of the Alekseev-Torossian connection on configuration space have at most logarithmic singularities. The result is used in the proof of Proposition \ref{prop:fastdecay} used in section \ref{sec:prooffastdecay}, but may as well be of interest in its own right.

\subsection{Introduction and basic definitions}
We will decompose the Alekseev-Torossian connection \eqref{eq-AT-bound-1} on $\underline C_n(\bbC)$ as follows 
\[
 \nabla^{\frac 1 2}_n = d - \omega_{{\rm AT},k}^{\frac 1 2} = d - \sum_{j=1}^n A_j dz_j + \bar A_j d\bar z_j
\]
where $A_j$ is a smooth function on $\Conf_n(\bbC)$ with values in $\sder_n$.
Recall that $\omega_{{\rm AT},k}^{\frac 1 2}$ is given by the sum-of-graphs formula \eqref{eq-AT-bound-1}.
We will furthermore decompose the coefficients as
\[
 \vartheta^t_\Gamma  = \sum_{j=0}^n A_{\Gamma,j} dz_j + \bar A_{\Gamma,j} d\bar z_j.
\]

\begin{Thm}\label{thm:logsing}
 For any connected $\sder_n$-graph $\Gamma$ with at least one internal vertex and any $j$ there are constants $C\in \bbR$ and $N\in \bbN_0$ such that
\[
 |A_{\Gamma,j}(z)| \leq C \frac{ ( 1+\sum_{i<j} |\log|z_i-z_j||)^N }{\sum_{i<j}|z_i-z_j|}.
\]
In other words, the connection forms of the AT connection have at most logarithmic singularities as points collide.
\end{Thm}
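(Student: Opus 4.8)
\textbf{Proof plan for Theorem \ref{thm:logsing}.}

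The strategy is to reduce the estimate for $A_{\Gamma,j}$ to a controlled integral over the fibre of the forgetful map $\pi_{n,m}\colon\underline C_{n+m}\to\underline C_n$ (where $m$ is the number of internal vertices of $\Gamma$), and then to analyse the potential divergences of that fibre integral by working in the local coordinates near the boundary strata of the fibrewise compactification $\overline C^f_{n,m}\to\underline C_n$ from Subsection \ref{ss-8-0}. By definition \eqref{eq-weight-AT-1}--\eqref{eq-weight-AT-1-integrand}, $A_{\Gamma,j}(z)$ is (up to a constant) $\int_{\underline C_{n+m}/\underline C_n}\iota_{\p_{z_j}}\theta^{\frac12}_\Gamma$, an integral of a smooth form over a compact fibre; the integral itself is therefore finite for every fixed $z$ in the open configuration space, and the only way it can blow up is when $z$ approaches the boundary of $\overline{\underline C}_n$, i.e.\ when some of the external points $z_i$ collide. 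So the real content is a uniform bound as $z\to\p\overline{\underline C}_n$. By translation and scaling invariance it suffices to treat the situation where a cluster of external points, including $z_j$, collapses; write $\rho=\sum_{i<j}|z_i-z_j|$ for the relevant small parameter.

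First I would record the scaling behaviour: under $z_i\mapsto \lambda z_i$ for $i$ in the colliding cluster, the propagator $\theta^{\frac12}(z_a-z_b)=\frac1{2\pi}d\arg(z_a-z_b)+(\text{reg.})$ transforms with a shift $\frac1{2\pi i}\frac{d\lambda}{\lambda}$, exactly as in the proof of Proposition \ref{prop:splitting} and in Appendix \ref{app:stokesbundle}. Consequently, in the local coordinates $\{r_B, z^{(j)}_\bullet\}$ of Subsection \ref{sec:3-4} adapted to the nested family describing the collision, the only singular terms in $\theta^{\frac12}_\Gamma$ (and hence in $\iota_{\p_{z_j}}\theta^{\frac12}_\Gamma$) are of the form $\frac{dr_B}{r_B}$ and $\log r_B$ attached to internal subgraphs sitting inside the colliding cluster; this is the statement already used in Proposition \ref{prop:splitting} and in Appendix \ref{app:thetaextendsproof}. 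The extra factor $\iota_{\p_{z_j}}$ contributes at worst a factor $r_B^{-1}$ for the single innermost $r_B$ corresponding to the cluster containing $z_j$, which is the source of the $\rho^{-1}$ on the right-hand side of the claimed inequality. Integrating over the fibre, each iterated $r$-variable is integrated over $(0,\text{const})$; a simple term $\int_0^{\epsilon} r^{a}(\log r)^k\,dr$ is finite and produces at most extra powers of $\log\epsilon$, while the antisymmetry/involution arguments of \cite{K3}*{Lemma 2.2} (as already invoked for $\tilde\beta^t_\Gamma$ in Proposition \ref{prop:tildeomega} and in Lemma \ref{l-triv-int-bas}) kill the genuinely problematic terms where more than one $\frac{dr_B}{r_B}$ survives. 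Carefully bookkeeping these contributions yields a bound of the form $\rho^{-1}(1+|\log\rho|)^N$ with $N$ depending only on the number of internal vertices of $\Gamma$, and replacing $\rho$ by $\sum_{i<j}|z_i-z_j|$ only strengthens the bound since the various pairwise distances in the cluster are comparable.

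The main obstacle, as in the companion statements of Section \ref{s-4} and the Appendices, is the combinatorics of the boundary analysis: one must show that although individual terms in the expanded integrand can carry several singular $\frac{dr_B}{r_B}$ factors (which would give a divergence worse than logarithmic), all such ``doubly singular'' contributions cancel in pairs by antisymmetry of the summand under interchange of edges, exactly as in the proofs of Propositions \ref{prop:splitting}, \ref{prop:tildeomega} and Lemma \ref{l-triv-int-bas}. I would organise this by classifying edges of $\Gamma$ relative to each subset $B$ of the nested family (both endpoints inside $B$, one endpoint inside, or both outside), as is done throughout Section \ref{s-4}, and then arguing that the surviving singular part in each variable $r_B$ is at most $\frac{dr_B}{r_B}$ times a regular form or $\log r_B$ times a regular form, never their product. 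Once this is in place the estimate follows from elementary integration of $r^a(\log r)^k$, and the claim that $N$ is finite is immediate because only finitely many graphs and finitely many nested families contribute for a fixed $\Gamma$.
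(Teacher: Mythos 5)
Your overall plan (analyze the fibre integral near collisions in adapted coordinates and count log/pole contributions) has the right flavor, but as written it has a genuine gap, and the mechanism you rely on is not available. At $t=\tfrac12$ the integrand $\theta^{\frac12}_\Gamma$ is a product of angle forms $\tfrac1{2\pi}d\arg(z_{s(e)}-z_{t(e)})$ and extends \emph{regularly} to the Fulton--MacPherson compactification: there are no $\tfrac{dr_B}{r_B}$ or $\log r_B$ terms in the integrand at all, so the singularity structure you describe is not where the difficulty lies. (The antisymmetry cancellations you invoke, as in Propositions \ref{prop:splitting} and \ref{prop:tildeomega}, exist only for the forms $\tilde\omega$, $\tilde\beta$, $\tilde\theta$, which are \emph{signed sums over a marked edge} carrying a $\log$-factor; $\theta^{\frac12}_\Gamma$ is a plain product, so there is no antisymmetric summand to cancel anything.) The actual source of the divergence of $A_{\Gamma,j}$ is the degeneration of the fibres of $\overline C^f_{n,m}\to\underline C_n$ as the \emph{external} points collide, i.e.\ at the boundary of the base, which the fibrewise compactification does not resolve; the logarithms come from fibre regions where internal points sit at scales intermediate between the external collapse scale $\rho$ and $O(1)$, producing integrals of the type $\int_\rho^1 dr/r\sim|\log\rho|$ --- not from $\int_0^\epsilon r^a(\log r)^k\,dr$, which is uniformly bounded and yields no $\rho$-dependence. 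Your sketch never sets up the decomposition of the fibre adapted to the collapsing external cluster that is needed to exhibit and bound these terms uniformly in $\rho$, and it never uses the connectedness of $\Gamma$, without which the statement is false (a disconnected component collapsing with the cluster produces a genuine extra pole); an estimate that never invokes connectedness cannot be complete.

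For comparison, the paper avoids the multi-dimensional fibre analysis altogether by an induction on the number of internal vertices: one internal vertex is made external, so that $A_\Gamma=\int_{z_{n+1}\in\bbC}A_{\Gamma_1}A_{\Gamma_2}A_{\Gamma_3}$ with the three factors controlled by the inductive hypothesis (or equal to a single-edge pole $1/|z_i-z_{n+1}|$). The single $\bbC$-integration is then cut into regions $V^{T'}$ obtained by attaching the new leaf $n+1$ to a coordinate-tree $T$ describing the external configuration (Lemmas \ref{lem:finite cover} and \ref{lem:altcondition} reduce the desired bound to chart-wise estimates in the coordinates $\rho_e,Z_\nu$). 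The decisive step, which replaces your cancellation argument, is a counting one: connectedness of $\Gamma$ guarantees that each $|\rho_e|$ appears in at most two of the three potential pole factors, while $d^2z_{n+1}$ contributes $\prod_e|\rho_e|^2$, so no net pole in any $\rho_e$ survives and only polynomially many logarithms remain; the leftover $Z_{n+1}$-integrals of $|\log|Z_{n+1}||^N$ over bounded regions, and of $|\log|z_{n+1}||^{N'}/|z_{n+1}|^3$ over the unbounded region, are finite uniformly in the external configuration. If you want to salvage a direct (non-inductive) proof along your lines, you would have to carry out an analogous scale decomposition of the whole fibre $\bbC^m$ relative to the degenerating external cluster and redo this counting there, which is substantially more bookkeeping than your sketch acknowledges.
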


\begin{Rem}
 Connectedness is important since otherwise there may be an additional pole at configurations where the points corresponding to a connected component collapse. (In fact, by definition of $\sder$, there will be only one connected component with edges, plus possible multiple external vertices of valence 0.)
\end{Rem}

\subsection{Forms with logarithmic singularities}
\begin{Def}
 We say that a 1-form $\alpha$ on $\Conf_n(\mathbb C)$ has logarithmic singularities if 
\[
 \alpha = \sum_j \alpha_j dz_j + \beta_j d \bar z_j
\]
and there are constants $C$ and $N$ such that for all $k$:
\begin{equation}\label{equ:logsingdef}
  |\alpha_{k}|+|\beta_k| \leq C \frac{ ( 1+\sum_{i<j} |\log|z_i-z_j||)^N}{\sum_{i<j}|z_i-z_j|}.
\end{equation}
\end{Def}

\subsection{Coordinate systems and a criterion for logarithmic singularities}

We call a rooted tree with leaf set $\{1,\dots,n\}$, such that all internal vertices have at least two children and such that for each internal vertex two children are marked by labels $0$ and $1$ an \emph{admissible tree}.
We distinguish here between internal vertices and leaves: observe that the root is considered an internal vertex.
We consider a partial order on the vertices of $T$ by declaring the root to lie at level $0$, and, inductively, vertices to lie at level $k\geq 1$, if they are directly connected to vertices lying at level $k-1$.
Observe that, with this partial order, every vertex at $k\geq 1$ is connected to exactly one internal vertex at level $k-1$ and, if it is not a leaf, to at least two distinct vertices at level $k+1$.
For any internal vertex $\nu$, say at level $k$, we consider the family of edges $\{e(\nu)\}$ connecting $\nu$ to the vertices of level $k+1$.
To each admissible tree $T$ we may associate a system of local coordinates. 
It involves variables $\rho_e=\rho_{e(\nu)}\in \bbC^\times$, one for each internal vertex $\mu$, and variables $Z_\nu$, one for each vertex $\nu$ not labelled by 0 or 1. In fact, we will set $Z_\nu=0$ or $Z_\nu=1$ if the vertex $\nu$ is labelled by 0 or 1 for notational convenience.
The mapping from this set of coordinates to a configuration is realized by the formula
\begin{equation}\label{equ:zcoord}
 z_j = \sum_{\nu \vartriangleright j} Z_\nu \prod_{e \blacktriangleright \nu} \rho_e
\end{equation}
where the notation $\nu \vartriangleright j$ means that we sum over all internal vertices $\nu$ which are ancestors of the leaf labeled $j$, including the leaf $j$ ({\em i.e.} the vertices lying at a level lower or equal than the one of $j$), and the notation $e \blacktriangleright \nu$ means that we consider all internal edges $e$ connecting $\nu$ to the root of $T$.

\begin{Rem}
 Note that the above trees have nothing to do with the (internal) trees occurring in the definition of $\sder_n$. If there is a risk of confusion, we will call the trees above \emph{coordinate-trees}.
\end{Rem}

Furthermore, for each $R>1$ we associate a subset $U^T_R\subset \overline C_n$, for which the coordinates can vary within the following bounds:
\begin{align*}
0 < |\rho_e| < \frac 1 R \\
|Z_\mu|<1 \\
|Z_\mu - Z_\nu| > \frac {10} R
\end{align*}
where the last condition is imposed only to vertices $\mu$, $\nu$ having a common parent. \footnote{The 10 here is a relatively arbitrary "sufficiently large" constant.}

Note that the sets $U^T_R$ trivially cover the interior $C_n$ of $\overline C_n$, since for example the sets $U^{T_0}_R$ do, for $T_0$ the trivial tree with all leaves attached to one vertex and $R>1$. Less trivially, one has the following result.

\begin{Lem}\label{lem:finite cover}
There is a finite set $S$ of pairs $(T,R)$ such that $C_n$ is covered by the corresponding $U^T_R$, i.~e., 
$$\bigcup_{(T,R)\in S} U^T_R = C_n.$$
\end{Lem}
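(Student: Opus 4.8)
The plan is to deduce Lemma~\ref{lem:finite cover} from the compactness of the Fulton--MacPherson compactification $\overline C_n$ (Subsection~\ref{ss-3-3}) together with the following local statement: for every $p\in\overline C_n$ there are an admissible coordinate‑tree $T=T(p)$, a radius $R(p)>1$, and an open neighbourhood $V_p$ of $p$ \emph{in $\overline C_n$} such that $V_p\cap C_n\subseteq U^T_{R(p)}$. One should first note that each $U^T_R$ is open in $C_n$ (all its defining relations are strict inequalities) and hence open in $\overline C_n$, but that $U^T_R$ never meets the boundary $\p\overline C_n$, since there the collapsing scales force $\rho_e=0$, violating $|\rho_e|>0$; this is precisely why one cannot apply compactness directly to the family $\{U^T_R\}$ and must instead inflate to the neighbourhoods $V_p$, which do touch $\p\overline C_n$. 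Granting the local statement, $\{V_p\}_{p\in\overline C_n}$ is an open cover of the compact space $\overline C_n$, so one can extract a finite subcover $V_{p_1},\dots,V_{p_k}$ and put $S:=\{(T(p_i),R(p_i))\}_{i=1}^k$; then $C_n=\bigcup_i(V_{p_i}\cap C_n)\subseteq\bigcup_{(T,R)\in S}U^T_R\subseteq C_n$, which is the Lemma.

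To prove the local statement I would read off the ``tree of scales'' of $p$. The point $p$ lies in a unique boundary stratum of $\overline C_n$, which by \eqref{eq-strat-1} is a product $\prod_\nu\overline C_{\mathrm{star}(\nu)}$ indexed by the vertices $\nu$ of a rooted tree $\sigma$ with leaf set $[n]$, and $p$ lies in the \emph{open} part $\prod_\nu C_{\mathrm{star}(\nu)}$ of that stratum, so the local configuration at every $\nu$ is a genuine configuration of $\geq 2$ pairwise distinct points of $\bbC$ modulo $\bbR_+\ltimes\bbC$. I would then promote $\sigma$ to an admissible coordinate‑tree $T$ by marking, at each internal vertex $\nu$, two of its children by $0$ and $1$ using two sub‑clusters sitting (after a suitable rescaling of the local configuration) at positions $0$ and $1$; one may first have to \emph{refine} $\sigma$ at $\nu$, replacing $\nu$ by a short chain of vertices, so that in this normalisation all remaining children fall inside the unit disc. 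This refinement is exactly the statement that each (already compactified, lower‑dimensional) local configuration space is covered by charts of the form \eqref{equ:zcoord} with $|Z_\mu|<1$, which one proves by an inner induction on the number of points (or simply records as part of the construction of the coordinate systems \eqref{equ:zcoord}). In the resulting $T$‑coordinates $p$ is the locus where the $\rho_e$ of the collapsing scales vanish, the remaining $\rho_e$ have modulus $<1$, every $|Z_\mu|<1$, and $|Z_\mu-Z_\nu|=d_{\mu\nu}>0$ for siblings $\mu,\nu$. By continuity there is a neighbourhood $V_p$ of $p$ on which $|\rho_e|<\rho_0<1$ and $|Z_\mu-Z_\nu|>d/2$ with $d:=\min_{\mu,\nu}d_{\mu\nu}>0$; choosing $R(p)>1$ with $1/R(p)<\rho_0$ and $10/R(p)<d/2$ then yields $V_p\cap C_n\subseteq U^T_{R(p)}$, since on $V_p\cap C_n$ one automatically has $0<|\rho_e|<1/R(p)$.

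The soft parts --- the compactness reduction, and the check that all defining inequalities of $U^T_R$ are open and hold on a neighbourhood of $p$ for a suitable $R$ --- are routine. The main obstacle is the construction of the coordinate‑tree $T$ and the verification that the charts \eqref{equ:zcoord} genuinely exhaust $\overline C_n$, i.e.\ that every configuration, after the normalisation built into \eqref{equ:zcoord}, lands in some $U^T_R$. This is where one must use the structure of the Fulton--MacPherson compactification from Subsection~\ref{ss-3-3} (the bijection between strata and nested families, and the local product structure \eqref{eq-strat-1}), and, crucially, must carry out the normalisation argument handling the constraint $|Z_\mu|<1$ --- equivalently, choosing $T$ fine enough to separate any pair of sub‑clusters that are widely separated relative to the scale of their common parent. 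An economical alternative, avoiding the stratification language, would be a direct induction on $n$: normalise a configuration using the two points realising its smallest pairwise distance, split off the clusters that lie far away at that scale, and recurse; the finiteness of $S$ again comes from compactness of $\overline C_n$, since for a fixed $T$ no single radius $R$ suffices (the $\rho$‑conditions and the $Z$‑separation conditions in $U^T_R$ pull in opposite directions as $R\to\infty$).
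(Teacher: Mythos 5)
Your argument is correct and is essentially the paper's own: the paper likewise passes to enlarged sets $\tilde U^T_R\subset\overline C_n$ (allowing the radial parts of the $\rho_e$ to vanish and attaching the corresponding boundary points), asserts that every boundary point of $\overline C_n$ has a neighbourhood contained in $\tilde U^{T_1}_R$ for the tree $T_1$ labelling its stratum and a suitable $R$, and then extracts a finite subcover by compactness of $\overline C_n$ before restricting back to $C_n$. Your local statement $V_p\cap C_n\subseteq U^{T(p)}_{R(p)}$ is exactly this assertion, phrased via shrinking neighbourhoods instead of enlarging charts; the normalisation details you sketch (marking two children $0,1$ and handling $|Z_\mu|<1$) are precisely what the paper leaves implicit in "the above coordinates are defined such that\dots".
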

\begin{proof}
Write $\rho_e= r_e e^{i\phi_e}$ for the coordinates above.
Consider slightly larger subsets $\tilde U^T_R\subset \overline C_n$ obtained by allowing the $r_e$ to approach zero, and in that case assign the corresponding boundary point on $\overline C_n$. We claim that we can find a finite set $S$ of pairs $(T,R)$ such that  $\bigcup_{(T,R)\in S}\tilde U^T_R = \overline C_n$.
Indeed, consider an arbitrary boundary point of $\overline C_n$. The corresponding boundary stratum is labelled by a tree $T_1$, and the above coordinates are defined such that the set $\tilde U^{T_1}_R$ for some $R$ contains a neighborhood of that boundary point. Hence $\overline C_n$ is covered by the collection of all $\tilde U^T_R$. Hence, by compactness of $\overline C_n$ it is covered by a finite subset, corresponding to a set $S$ of pairs $(T,R)$. But since the $\tilde U^T_R$ differs from $U_R^T$ only by adding boundary points of $\overline C_n$ we arrive at the desired conclusion $\cup_{(T,R)\in S} U^T_R = C_n$.
\end{proof}

We will denote the coordinates $\rho_e$ and $Z_\nu$ collectively by $x_1,x_2,\dots$ for simplicity of notation.
The-one forms $\alpha$ on $\Conf_n$ iwe are interested in are $\bbC^\times\ltimes \bbC$-basic. Hence, in particular they are $\bbR^+\ltimes \bbC$-basic and descend to $C_n\cong \Conf_n/\bbR^+\ltimes \bbC$.
For such one-forms the condition of having logarithmic singularities can be translated in a set of conditions that can be checked on the charts $U^T_R$.

\begin{Lem}\label{lem:altcondition}
 Suppose the 1-form $\alpha$ on $\Conf_n$ is $\bbC^\times\ltimes \bbC$-basic. Suppose further that the restriction of $\alpha$ to each $U^T_R$, for $R>1$ and $T$ ranging over admissible trees, satisfies the following estimate. 
If $\alpha$ is 
\[
 \alpha = \sum_j \alpha_j(x_1,x_2,\dots) dx_j
\]
for $x_j$ the coordinates on $U^T_R$ as above, then there are numbers $C$ and $N$, possibly depending on $R$ and $T$ such that 
\be{equ:altestimate}
 |\alpha_k(x_1,x_2,\dots)| \leq C (1+\sum_{j}|\log|\rho_j|| )^N
\ee
for all $k$ on $U^T_R$. 
Then $\alpha$ has only logarithmic singularities.

Conversely, if $\alpha$ has only logarithmic singularities, then the above estimate is satisfied.
\end{Lem}
\begin{proof}
First note that the converse direction is trivial, one just needs to insert the expression \eqref{equ:zcoord} for $z_j$ in terms of the $x_i$.

For the forward direction choose a finite set of pairs $(T,R)$ such that the corresponding $U^T_R$ cover $C_n$. The existence of such a finite set $S$ is guaranteed by Lemma \ref{lem:finite cover} above. It will be sufficient to show that on each $U_R^T$ the defining conditions \eqref{equ:logsingdef} hold for some $C$, $N$ depending on $(T,R)$ provided the estimate in the Lemma holds. Then, since we consider only a finite number of such $(T,R)$, the condition \eqref{equ:logsingdef} holds for $C$ and $N$ large enough.

Now the $x_j$ may be expressed as rational functions in the $z_j$. These rational functions have poles when certain points collide. However, by definition of $U_R^T$ the poles lie all outside the closure of $U_R^T$. Hence inserting the expression for $x_j$ in terms of the $z_i$ in the above estimate yields (essentially) the defining estimate \eqref{equ:logsingdef} for forms with logarithmic singularities.
\end{proof}

\subsection{Fiber integrals, and the proof of Theorem \ref{thm:logsing}}
We will show the main Theorem by an induction on the number of internal vertices.
For one internal vertex the form $A_\Gamma$ may be explicitly computed, see Proposition \ref{p-AT-3}, and one verifies easily that this form has only logarithmic singularities.

Suppose $\Gamma$ is an $\sder_n$-graph with at least two internal vertices. We pick some internal vertex and make it external with label $n+1$, obtaining a ``join'' of three graphs $\Gamma_1, \Gamma_2, \Gamma_3\in \sder_{n+1}$.
By definition, the form $A_\Gamma$ satisfies
\be{equ:AGamma}
 A_\Gamma = \int_{z_{n+1}\in \bbC}  A_{\Gamma_1}A_{\Gamma_2}A_{\Gamma_3}.
\ee
Our goal is to show that $A_\Gamma$ has only logarithmic singularities. By induction we may assume that all $A_{\Gamma_j}$ either have a single edge, or $A_{\Gamma_j^0}$ has only logarithmic singularities, where $\Gamma_j^0$ is the connected component. 

We will in fact check the conditions of Lemma \ref{lem:altcondition} above.
So fix some admissible (coordinate-)tree $T$, and $R>0$. Our goal is to check the estimates \eqref{equ:altestimate} on $U^T_R$.
For fixed $(z_1,\dots , z_n)\in U^T_R$ we will decompose the integration region into a union of subsets $V^{T'}\subset \bbC$ (depending on $z_1,\dots , z_n$), where $T'$ is a graph obtained from $T$ by adding one leaf labelled $n+1$ at either of the following positions:
\begin{itemize}
 \item We may add the leaf at the root, creating a new internal vertex. Pictorially:  
 \[
  T'= 
  \begin{tikzpicture}[baseline=-.65ex]
   \node[int] (v) at (0,0) {};
   \node (T) at (-.5,-.5) {$T$};
   \node (w) at (.5,-.5) {$n+1$};
   \draw (v) edge (w) edge (T) edge +(0,.5);
  \end{tikzpicture}
 \]
\item We may add the leaf at an existing internal vertex.
Pictorially: 
 \[
  T'= 
  \begin{tikzpicture}[baseline=-.65ex]
   \node[int] (v) at (0,0) {};
   \node (T) at (-.5,-.5) {$\cdots$};
   \node (w) at (.5,-.5) {$n+1$};
   \node (u) at (0,.5) {$\cdots$};
   \draw (v) edge (w) edge (T) edge (u) edge +(-.15,-.3) edge +(-.65,-.3);
  \end{tikzpicture}
 \]
\item We may add the leaf at an existing internal edge, creating a new internal vertex. Pictorially:
 \[
  \begin{tikzpicture}[baseline=-.65ex]
   \node[int] (v) at (0,.3) {};
   \node[int] (w) at (0,-.3) {};
   \node (T) at (0,-1) {$\cdots$};
   \draw (v) edge (w) edge +(-.3,-.3) edge +(0,.5) (w) edge +(-.3,-.3) edge +(0,-.3) edge +(.3,-.3);
  \end{tikzpicture}
  \mapsto
    \begin{tikzpicture}[baseline=-.65ex]
   \node[int] (v) at (0,.3) {};
   \node[int] (vv) at (0,0) {};
   \node (ww) at (.75,-.5) {$\scriptstyle n+1$};
   \node[int] (w) at (0,-.3) {};
   \node (T) at (0,-1) {$\cdots$};
   \draw (v) edge (vv) edge +(-.3,-.3) edge +(0,.5) (w) edge +(-.3,-.3) edge +(0,-.3) edge +(.3,-.3)
         (vv) edge (w) edge (ww);
  \end{tikzpicture}
 \]
\item We may add the leaf at an existing leaf, creating a new internal vertex. Pictorially:
 \[
  \begin{tikzpicture}[baseline=-.65ex]
   \node (v) at (0,0) {$\scriptstyle j$};
   \draw (v) edge +(0,.5);
  \end{tikzpicture}
  \mapsto
    \begin{tikzpicture}[baseline=-.65ex]
   \node[int] (v) at (0,0) {};
   \node (w) at (-.5,-.5) {$\scriptstyle j$};
   \node (ww) at (.5,-.5) {$\scriptstyle n+1$};
   \draw (v) edge +(0,.5) edge (w) edge (ww);
  \end{tikzpicture}
 \]
\end{itemize}

For each such tree $T'$ we may extend our coordinate system on (a subset of) $\overline C_n$ defined by $T$ to a coordinate system on (a subset of) $\overline C_{n+1}$ defined by $T'$. Concretely, the coordinate system changes as follows. 
\begin{itemize}
 \item If $T'$ is obtained by adding the additional leaf at an existing internal vertex, we just add another variable $Z_{n+1}$ that is related to $z_{n+1}$ by the formula
\[
 z_{n+1}= \sum_{\nu \vartriangleright n+1} Z_\nu \prod_{e \blacktriangleright \nu} \rho_e .
\]
\item If we add the leaf at an existing leaf, one variable $Z_{n+1}:=\rho_e$ is added for the newly created internal edge and the formula for $z_{n+1}$ is similar.
\item If we add the leaf at an internal edge, some coordinate $\rho_e$ is replaced by two coordinates $\rho_{e'}, \rho_{e''}$ such that $\rho_{e}=\rho_{e'} \rho_{e''}$. We will denote the additional coordinate $Z_{n+1}:=\rho_{e'}$, the formula for $z_{n+1}$ is again that from above.
\end{itemize}
Now the subset $V^{T'}$ of the fiber over some fixed point $(z_1,\dots, z_n)\in U^T_R$ is defined as follows. We require that $|Z_{n+1}-Z_j|>\frac{1}{2R}$ for vertices on the same level. Furthermore we require that $|Z_{n+1}|\leq 2R$. Finally we require that $|z_{n+1}-z_j|\geq R^{-d(j)} \prod_{e \blacktriangleright n+1,j} |\rho_e|$ (explain notation).
If the leaf is added at the root, we just require $|z_{n+1}|>2R$.

The subsets $V^{T'}$ cover the fiber over $(z_1,\dots, z_n)$. Note that they are not disjoint, but have overlaps (we don't care, at the end this just means that we will overestimate the left hand side of \eqref{equ:AGamma}).
Furthermore, all $V^{T'}$ are bounded, except the one for which $n+1$ is added at the root.

Clearly it suffices to show that for each $T'$ there are constants $C$ and $N$ such that
\[
\left| \int_{z_{n+1}\in V^{T'}} A_{\Gamma_1}A_{\Gamma_2}A_{\Gamma_3} 
\right|
\leq 
C (1+\sum_{e}|\log|\rho_e|| )^N
\]
for all $(z_1,\dots, z_n)\in U^T_R$. Here we define $|\alpha|$ for a form $\alpha$ to be the sum of the absolute values of the coefficient functions in the coordinate system provided; in our case in the coordinate system given by the $Z_\nu$ and $\rho_e$ as above.

We will show the estimnate by using the induction hypothesis to bound the absolute values of the coefficient functions occurring in $A_{\Gamma_1},A_{\Gamma_2},A_{\Gamma_3}$ by powers of logarithms. Expanding a power if necessary and ignoring irrelevant constants, it suffices to bound integrals of the form
\[
 \int_{V^{T'}}
\frac{|\prod_{i<j\leq n+1} \log|z_i-z_j||^N }{\prod_{k=1}^3 \sum_{i,j\in V_{ext}(\Gamma_k^0)} |z_i-z_j|} d^2z_{n+1}
\]
where $V_{ext}(\Gamma_k^0)$ denotes the set of external vertices in $\Gamma_k$ that have valence $\geq 1$, necessarily including $n+1$.

Let us express the potentially singular terms in the integrand in our coordinate system and find bounds if possible.
We will treat first the case of bounded $V^{T'}$.
\begin{itemize}
 \item First $|\log|z_i-z_{n+1}||\leq (const) \sum_{e \blacktriangleright i,j} | \log|\rho_e||$.
So these terms contribute powers of $\log|\rho_e|$ that may be taken out of the integral, or a power of $\log|Z_{n+1}|$.
\item There may be one $\frac 1 {|z_i-z_{n+1}|}\leq (const) \prod_{e \blacktriangleright i, n+1} \frac 1 {|\rho_e|}$ for each subgraph $\Gamma_k$ that is a single edge. 
\item By similar considerations, a subgraph $\Gamma_k$ may produce a divergent factor $(const)\prod_{e \blacktriangleright V_{ext}(\Gamma_k^0), n+1} \frac 1 {|\rho_e|}$, where the product is over internal edges $e$ of $T'$ that lie above all vertices in $T'$ corresponding to valence $\geq 1$-vertices of $\Gamma_k$ and above the added leaf $n+1$.
\item So in total, there may be one, two or three such terms with poles, contributing three products $\prod \frac 1 {|\rho_e|}$. However, note that any $|\rho_e|$ may occur in at most two such products, because there are no internal edges $e$ of $T$ that lie above all vertices of $\Gamma$ by the assumption of connectedness of $\Gamma$.
Note furthermore that $d^2z_{n+1}=\prod_{e\blacktriangleright n+1} |\rho_e|^2 d^2 Z_{n+1}$, where the product is over all internal edges above $n+1$. Hence all $\rho_j$ involved in the potentially divergent terms occur twice in the numerator, so no pole in $\rho_j$ is produced in effect.
\end{itemize}
Taking constants and the $|\rho_e|$ and $\log|\rho_e|$-terms out of the integral and disregrading them, it eventually suffices to show that the integrals 
\[
 \int_{z_{n+1}\in V^{T'}}
|\log|Z_{n+1}| |^N d^2 Z_{n+1}
\]
are finite and bounded uniformly in $(z_1,\dots, z_n)$. Note that the integrand does not involve $(z_1,\dots, z_n)$ any more, but the integration domain does. However, since $|\log|Z_{n+1}| |^N$ is integrable on all bounded domains, we may just enlarge the integration domain to, say $|Z_{n+1}|\leq 10 R$ and bound the integral by a constant, independent of $(z_1,\dots, z_n)$.

Next consider the unbounded case, i. e., the case when the leaf $n+1$ is added at the root of $T$ to produce $T'$. Here all factors $\log|z_i-z_{n+1}|$ may be estimated by $\log|z_{n+1}|+(const)$, and all factors $\frac 1 {|z_i-z_{n+1}|}$ may be estimated by $\frac {(const)}{|z_{n+1}|}$. Finally, again after expanding powers where necessary and taking out and disregrading constants and factors $\log|z_i-z_j|$ for $i,j<n+1$, one obtains integrals of the form
\[
 \int_{|z_{n+1}|\geq 2R} \frac{\log|z_{n+1}|^{N'}}{|z_{n+1}|^3}d^2z_{n+1}.
\]
These integrals are clearly finite constants, independent of the configuration $(z_1,\dots, z_n)$.
Hence we are done. 
\hfill\qed

\begin{bibdiv}
\begin{biblist}


\bib{ARTW}{article}{
  author={Alekseev, Anton},
  author={Rossi, Carlo A.},
  author={Torossian, Charles},
  author={Willwacher, Thomas},
  title={Logarithms and Deformation Quantization},
  journal={arXiv:1401.3200},
  date={2014}
}

\bib{AT-1}{article}{
   author={Alekseev, Anton},
   author={Torossian, Charles},
   title={Kontsevich deformation quantization and flat connections},
   journal={Comm. Math. Phys.},
   volume={300},
   date={2010},
   number={1},
   pages={47--64},
   issn={0010-3616},
   review={\MR{2725182 (2011j:53181)}},
   doi={10.1007/s00220-010-1106-8},
}

\bib{AT-2}{article}{
   author={Alekseev, Anton},
   author={Torossian, Charles},
   title={The Kashiwara-Vergne conjecture and Drinfel{\cprime}d's associators},
   journal={Ann. of Math. (2)},
   volume={175},
   date={2012},
   number={2},
   pages={415--463},
   issn={0003-486X},
   review={\MR{2877064}},
   doi={10.4007/annals.2012.175.2.1},
}



\bib{BGV}{book}{
   author={Berline, Nicole},
   author={Getzler, Ezra},
   author={Vergne, Mich{\`e}le},
   title={Heat kernels and Dirac operators},
   series={Grundlehren Text Editions},
   note={Corrected reprint of the 1992 original},
   publisher={Springer-Verlag},
   place={Berlin},
   date={2004},
   pages={x+363},
   isbn={3-540-20062-2},
   review={\MR{2273508 (2007m:58033)}},
}

\bib{Br}{article}{
   author={Brown, Francis C. S.},
   title={Multiple zeta values and periods of moduli spaces $\overline{\germ
   M}_{0,n}$},
   language={English, with English and French summaries},
   journal={Ann. Sci. \'Ec. Norm. Sup\'er. (4)},
   volume={42},
   date={2009},
   number={3},
   pages={371--489},
   issn={0012-9593},
   review={\MR{2543329 (2010f:32013)}},
}

\bib{Br-2}{article}{
   author={Brown, Francis C. S.},
   title={{Mixed Tate Motives over Spec($\Z$)}},
   journal={Annals of Math.},
   volume={175},
   date={2012},
   number={2},
   pages={949--976},
}

\bib{BCKT}{collection}{
   author={Brugui{\`e}res, Alain},
   author={Cattaneo, Alberto},
   author={Keller, Bernhard},
   author={Torossian, Charles},
   title={D\'eformation, quantification, th\'eorie de Lie},
   language={French, with English and French summaries},
   series={Panoramas et Synth\`eses [Panoramas and Syntheses]},
   volume={20},
   publisher={Soci\'et\'e Math\'ematique de France},
   place={Paris},
   date={2005},
   pages={viii+186},
   isbn={2-85629-183-X},
   review={\MR{2274222 (2008b:53118)}},
}

\bib{Dol}{article}{
  author={Dolgushev, Vasily},
  title={Stable Formality Quasi-isomorphisms for Hochschild Cochains I},
  eprint={arXiv:1109.6031},
  date={2011}
}

\bib{Dr}{article}{
   author={Drinfel{\cprime}d, V. G.},
   title={On quasitriangular quasi-Hopf algebras and on a group that is
   closely connected with ${\rm Gal}(\overline{\bf Q}/{\bf Q})$},
   language={Russian},
   journal={Algebra i Analiz},
   volume={2},
   date={1990},
   number={4},
   pages={149--181},
   issn={0234-0852},
   translation={
      journal={Leningrad Math. J.},
      volume={2},
      date={1991},
      number={4},
      pages={829--860},
      issn={1048-9924},
   },
   review={\MR{1080203 (92f:16047)}},
}

\bib{Fur}{article}{
   author={Furusho, Hidekazu},
   title={Pentagon and hexagon equations},
   journal={Ann. of Math. (2)},
   volume={171},
   date={2010},
   number={1},
   pages={545--556},
   issn={0003-486X},
   review={\MR{2630048 (2011f:16092)}},
   doi={10.4007/annals.2010.171.545},
}


\bib{K3}{article}{
   author={Kontsevich, Maxim},
   title={Feynman diagrams and low-dimensional topology},
   conference={
      title={First European Congress of Mathematics, Vol.\ II},
      address={Paris},
      date={1992},
   },
   book={
      series={Progr. Math.},
      volume={120},
      publisher={Birkh\"auser},
      place={Basel},
   },
   date={1994},
   pages={97--121},
   review={\MR{1341841 (96h:57027)}},
}

\bib{K2}{article}{
   author={Kontsevich, Maxim},
   title={Operads and motives in deformation quantization},
   note={Mosh\'e Flato (1937--1998)},
   journal={Lett. Math. Phys.},
   volume={48},
   date={1999},
   number={1},
   pages={35--72},
   issn={0377-9017},
   review={\MR{1718044 (2000j:53119)}},
   doi={10.1023/A:1007555725247},
}

\bib{K}{article}{
   author={Kontsevich, Maxim},
   title={Deformation quantization of Poisson manifolds},
   journal={Lett. Math. Phys.},
   volume={66},
   date={2003},
   number={3},
   pages={157--216},
   issn={0377-9017},
   review={\MR{2062626 (2005i:53122)}},
}

\bib{Le-Mu}{article}{
   author={Le, Thang Tu Quoc},
   author={Murakami, Jun},
   title={Kontsevich's integral for the Kauffman polynomial},
   journal={Nagoya Math. J.},
   volume={142},
   date={1996},
   pages={39--65},
   issn={0027-7630},
   review={\MR{1399467 (97d:57009)}},
}

 \bib{LV}{book}{
author={J.-L. Loday},
author={B. Vallette},
title={Algebraic Operads},
number={346},
series= {Grundlehren der mathematischen Wissenschaften},
 publisher= {Springer},
 address={Berlin}, 
 year={2012},
}




\bib{SW}{article}{
   author={{\v{S}}evera, Pavol},
   author={Willwacher, Thomas},
   title={Equivalence of formalities of the little discs operad},
   journal={Duke Math. J.},
   volume={160},
   date={2011},
   number={1},
   pages={175--206},
   issn={0012-7094},
   review={\MR{2838354}},
   doi={10.1215/00127094-1443502},
}


		

\bib{Will}{article}{
  author={Willwacher, Thomas},
  title={M. Kontsevich's graph complex and the Grothendieck-Teichmueller Lie algebra},
  eprint={arXiv:1009.1654},
  date={2010}
}

\bib{Will-2}{article}{
   author = {{Willwacher}, Thomas},
    title = {A Note on Br-infinity and KS-infinity formality},
   eprint = {arXiv:1109.3520},
     date = {2011},
}

\end{biblist}
\end{bibdiv}

\end{document}